\documentclass{amsart}
\usepackage{aliascnt}
\usepackage{hyperref}
\usepackage{cleveref}
\usepackage[margin=2.5cm]{geometry}
\numberwithin{equation}{section}
\newtheorem{theorem}{Theorem}[section]
\newaliascnt{lemma}{theorem}
\newtheorem{lemma}[lemma]{Lemma}
\aliascntresetthe{lemma}
\newaliascnt{proposition}{theorem}
\newtheorem{proposition}[proposition]{Proposition}
\aliascntresetthe{proposition}
\newaliascnt{corollary}{theorem}
\newtheorem{corollary}[corollary]{Corollary}
\aliascntresetthe{corollary}
\newaliascnt{definition}{theorem}
\newtheorem{definition}[definition]{Definition}
\aliascntresetthe{definition}
\newaliascnt{example}{theorem}

\aliascntresetthe{example}
\theoremstyle{remark}
\newaliascnt{remark}{theorem}
\newtheorem{remark}[remark]{Remark}
\aliascntresetthe{remark}
\newcommand{\Vol}{\mathrm{Vol}}
\newcommand{\Rm}{\mathrm{Rm}}
\newcommand{\Ric}{\mathrm{Ric}}
\newcommand{\R}{R}
\DeclareMathOperator{\tr}{tr}
\DeclareMathOperator{\diag}{diag}
\DeclareMathOperator{\grad}{grad}
\DeclareMathOperator{\divv}{div}

\begin{document}
\title{A geometric flow associated to $Q$-curvature}

\author{Jiaqi Chen}
\address{Xiamen University of Technology}
\email{chenjiaqi@xmut.edu.cn}

\author{Xudong Kang}
\address{Sun Yat-sen University}
\email{kangxd3@mail2.sysu.edu.cn}

\author{Wei Yuan}
\address{Sun Yat-sen University}
\email{yuanw9@sysu.edu.cn}

\thanks{Xudong Kang and Wei Yuan were partially supported by NSFC (Grant No.~12571065).}

\begin{abstract}
Branson's $Q$-curvature is a natural fourth-order counterpart of scalar curvature in conformal geometry. In analogy with the classical recovery of the Ricci tensor from the linearized scalar curvature, Lin and Yuan introduced a symmetric $2$-tensor $J_g$ through the formal adjoint of the linearized $Q$-curvature and established its explicit decomposition in terms of the Bach tensor and auxiliary curvature terms. In this paper, we study the associated fourth-order geometric flow $\partial_t g = -2J_g$ on closed Riemannian manifolds of dimension $n \ge 4$. We establish short-time existence and uniqueness using a biharmonic DeTurck gauge and derive global and local integral smoothing estimates under curvature bounds. In dimension four, we prove long-time existence assuming a uniform Sobolev inequality and sufficiently small total $L^2$ curvature energy throughout the maximal interval of existence. Under uniform curvature and Sobolev bounds, time integrability of the $L^2$-norm of $J$ implies smooth convergence in fixed coordinates to a $J$-flat metric. Explicit Einstein and product solutions illustrate the evolution and show that small curvature energy alone does not prevent degeneration. Positive initial Yamabe constant and positive total $Q$-curvature give a uniform positive Yamabe lower bound throughout the smooth four-dimensional flow, whereas in higher dimensions total $Q$-curvature can decrease even near flat metrics. Finally, we prove that every closed four-dimensional $J$-soliton is trivial, its metric is Bach-flat with constant $Q$-curvature, and its soliton vector field is Killing.
\end{abstract}

\maketitle

\section{Introduction}\label{sec:introduction}

A central theme in geometric analysis is the construction of canonical metrics by parabolic evolution equations. The most famous example is Hamilton's Ricci flow~\cite{hamilton1982three}
\begin{equation*}
	\frac{\partial g}{\partial t} = -2 \Ric_g,
\end{equation*}
whose stationary points are Ricci-flat metrics and whose normalized variants deform toward Einstein metrics in many settings. Closely related second-order flows include the Yamabe flow~\cite{Hamilton1988,Chow1992,Ye1994}, which evolves within a conformal class toward constant scalar curvature.

For a large class of natural geometric functionals, the associated gradient flows are of order four or higher. Typical examples are the $L^2$-curvature flow of total Riemann curvature tensor~\cite{Streets2008,Streets2012,Streets2013},
\begin{equation*}
	\frac{\partial g}{\partial t} = -\grad \int_M \left| \Rm_g \right|^2 dv_g,
\end{equation*}
the Bach flow on four-manifolds~\cite{BahuaudHelliwell2011,ChenLuQing2023}, and the Calabi flow in K\"ahler geometry~\cite{Calabi1982,ChenHe2008}. We remark that the flow studied here is unrelated to the $J$-flow for K\"ahler potentials. To motivate the J-tensor under consideration, recall the structure of the scalar curvature. Viewing the scalar curvature as a nonlinear map
\begin{equation*}
	R: \mathcal{M} \longrightarrow C^ \infty(M), \qquad g \mapsto R_g
\end{equation*}
on the space $\mathcal{M}$ of Riemannian metrics on a smooth manifold $M$, its linearization at $g$ is the second-order operator
\begin{equation*}
	\gamma_g h := D R_g \cdot h = -\Delta_g ( \tr_g h ) + \delta_g^2 h - \langle \Ric_g,h \rangle_g,
\end{equation*}
where $h \in S_2(M)$ and $\delta_g = -\divv_g$. The formal $L^2$-adjoint $\gamma_g^* : C^\infty(M) \to S_2(M)$ satisfies
\begin{equation*}
	\gamma_g^*f = \nabla_g^2 f - (\Delta_g f) g - f \Ric_g,
\end{equation*}
and therefore
\begin{equation}\label{eq:Ric-from-gamma}
	\Ric_g = -\gamma_g^*1, \qquad R_g = -\tr_g(\gamma_g^*1).
\end{equation}
Thus both the Ricci tensor and the scalar curvature are recovered by evaluating $\gamma_g^*$ on the constant function~$f \equiv 1$.

The natural fourth-order counterpart of scalar curvature in conformal geometry is the $Q$-curvature of Branson~\cite{Branson1985,Branson1995}, closely related to the Paneitz operator~\cite{Paneitz2008} and to the Fefferman--Graham ambient space construction~\cite{FeffermanGraham1985,FeffermanGraham2012}. On an $n$-dimensional Riemannian manifold with $n \ge 3$,
\begin{equation}\label{eq:intro-Q}
	Q_g = A_n \Delta_g R_g + B_n |\Ric_g|_g^2 + C_n R_g^2,
\end{equation}
where the dimensional constants $A_n$, $B_n$, and $C_n$ are given in~\Cref{eq:constants} below. In dimension four one recovers the classical expression
\begin{equation}\label{eq:Q-dim4-intro}
	Q_g = -\frac{1}{6} \Delta_g R_g - \frac{1}{2} |\Ric_g|_g^2 + \frac{1}{6} R_g^2,
\end{equation}
and the Chern--Gauss--Bonnet formula~\cite{ChangGurskyYang2002, ChangYang1995} takes the form
\begin{equation*}
	\int_{M^4} \left(Q_g + \frac{1}{4} |W_g|_g^2 \right) dv_g = 8 \pi^2 \chi(M^4).
\end{equation*}
In particular, on a closed four-manifold the total $Q$-curvature differs from a topological invariant by a multiple of the Weyl energy. The prescribed $Q$-curvature problem and related conformal variational questions have been studied extensively, we refer to~\cite{ChangYang1988,ChangYang1995,Gursky1999,Ndiaye2007} and the surveys~\cite{Chang2005,ChangYangHandbook}.

The same formal procedure that recovers Ricci curvature from the scalar curvature can be applied to $Q$-curvature. Lin and Yuan~\cite{LinYuan2016,LinYuan2017} introduce a symmetric $2$-tensor that stands in the same relation to $Q$-curvature as the Ricci tensor does to scalar curvature. Writing
\begin{equation*}
	\mathcal{Q}: \mathcal{M} \longrightarrow C^ \infty(M),\qquad g \mapsto Q_g,
\end{equation*}
and letting $\Gamma_g = D \mathcal{Q}_g : S_2(M) \to C^\infty(M)$ denote its linearization with formal $L^2$-adjoint $\Gamma_g^*$, the $J$-tensor is defined by
\begin{equation*}
	J_g := - \frac{1}{2} \Gamma_g^*1.
\end{equation*}
Thus $\tr_g J_g = Q_g$ is analogous to~\Cref{eq:Ric-from-gamma}. A metric is called $J$-Einstein if $J_g = \Lambda g$ for some $\Lambda \in C^\infty(M)$, and $J$-flat if $J_g \equiv 0$. In dimension four, $J$-Einstein means precisely Bach-flat, constancy of $Q_g$ is an additional condition. When $\Lambda$ is required to be constant, we say explicitly that the metric is $J$-Einstein with constant coefficient.

The total $Q$-curvature functional
\begin{equation*}
	\mathcal{J} (g) := \int_M Q_g dv_g
\end{equation*}
is closely related to $J_g$, but because variations of $g$ also change the volume form, its $L^2$-gradient on the full space of metrics is not simply a multiple of $J_g$. Precisely,
\begin{equation*}
	D \mathcal{J}_g \cdot h = \int_M \left\langle - 2J_g + \frac{1}{2} Q_g g, h \right\rangle_g dv_g,
\end{equation*}
so $\mathrm{grad} \mathcal{J} = -2 J_g + \frac{1}{2} Q_g g$. Motivated by the Ricci-flow analogy $\Ric = - \gamma_g^*1$ and by the definition $J_g = - \frac{1}{2} \Gamma_g^*1$, we study the following evolution equation
\begin{equation}\label{eq:intro-J-flow}
	\frac{\partial g}{\partial t} = -2 J_g,
\end{equation}
which we call the $J$-flow, which is a quasilinear fourth-order equation for the metric. In dimension four, Lin and Yuan's decomposition yields 
\begin{equation*}
	J_g = \frac{1}{4} Q_g g - \frac{1}{2} B_g,
\end{equation*}
where $B_g$ denotes the Bach tensor, and therefore
\begin{equation}\label{eq:intro-J-flow-dim4}
	\frac{\partial g}{\partial t} = B_g - \frac{1}{2} Q_g g.
\end{equation}
In this sense the four-dimensional $J$-flow is a modification of the Bach flow.

We first state the existence and smoothing results, followed by continuation and conditional convergence results in dimension four. The geometric hypotheses in \Cref{theorem:C,theorem:D} and the additional analytic hypotheses in \Cref{theorem:E,theorem:F} are part of the statements. The role of a uniform positive Yamabe lower bound in the Sobolev estimates is discussed in \Cref{rmk:Yamabe-curvature-scale}; neither initial Yamabe positivity alone nor initial small curvature energy is asserted to propagate the hypotheses of these theorems.

\begin{theorem}\label{theorem:A}
	Let $(M^n, g_0)$ be a closed Riemannian manifold of dimension $n \ge 4$. There exist $T = T(g_0) > 0$ and a unique smooth solution $g(t)$ of the $J$-flow on $M^n \times [0,T]$ with $g(0) = g_0$.
\end{theorem}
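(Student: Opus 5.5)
The plan is the standard DeTurck strategy adapted to fourth order. First I compute the principal symbol of the linearization of $g \mapsto -2J_g$. Since $J_g = -\frac12\Gamma_g^*1$ and $Q_g = A_n\Delta_g R_g + \dots$, the leading part comes from linearizing $A_n\Delta R$ and taking the adjoint. This gives a fourth-order operator of the schematic form
\begin{equation*}
-2DJ_g\cdot h = -A_n\left(\nabla^2\Delta(\tr h) - \Delta^2(\tr h)\,g - \nabla^2\delta^2 h + (\Delta\delta^2 h)\,g\right)+\dots
\end{equation*}
Only $\tr h$ and $\delta^2 h$ enter at top order, up to additional terms from $B_n|\Ric|^2$, $C_n R^2$. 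The $B_n$ term contributes $\Delta^2 h$-type terms via $\Delta \Ric$; I would organize it through Lin--Yuan's decomposition, in which the Bach-type part carries $\Delta^2 h$. The resulting symbol degenerates along infinitesimal diffeomorphisms $h=\mathcal L_X g$, as it must, since $J$ is natural. It also fails to be elliptic in the trace direction, because of the $-\frac12 Q g$ conformal piece.

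Second, I choose a gauge. Fix a background metric $\tilde g = g_0$ and let $W^k = g^{pq}(\Gamma^k_{pq}-\tilde\Gamma^k_{pq})$ be the DeTurck vector field. I consider the modified flow
\begin{equation*}
\partial_t g = -2J_g + \mathcal L_{V} g, \qquad V = a\,\Delta W + b\,\nabla \tr_{\tilde g}(\cdot)\text{-type corrections},
\end{equation*}
that is, a third-order vector field built from $\Delta_g W$ together with a gradient of a scalar quantity such as $\nabla R$. The constants are chosen so that the full symbol becomes $-c|\xi|^4\,\mathrm{id}$ plus lower-order terms, with $c>0$. Concretely, I decompose $h$ symbolically as $h = \phi\,g + \xi\odot\omega + h^{TT}$ and check each block. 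The $TT$ part needs $\Delta^2$ with the correct sign from the $B_n$ term. The $\xi\odot\omega$ part is fixed by $\mathcal L_{\Delta W}g$. The trace part is fixed by the $\nabla R$ gradient term, or in dimension four by the $Q$-term, taking care with the sign of $A_n$. With strong ellipticity in hand, standard quasilinear parabolic theory for fourth-order systems on closed manifolds, for instance via Hölder spaces and the inverse function theorem as in Bahuaud--Helliwell for the Bach flow, yields a short-time smooth solution $\bar g(t)$ with $\bar g(0)=g_0$.

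Third, I pull back. I solve the ODE $\partial_t\phi_t = -V(\bar g(t))\circ\phi_t$ with $\phi_0=\mathrm{id}$, and set $g=\phi_t^*\bar g$; naturality of $J$ shows $g$ solves the $J$-flow. For uniqueness, the ODE approach fails because $V$ is third order in $g$. Instead I show that $\phi_t$ satisfies a fourth-order parabolic system, a biharmonic-map-type heat flow $\partial_t\phi = -\Delta^2_{g,\tilde g}\phi + \dots$, coupled to $g$. Given two $J$-flow solutions with the same data, I solve this harmonic-map-type flow for each, which is possible since it is strictly parabolic for fixed smooth $g(t)$. The push-forwards both solve the gauged flow, so they coincide by uniqueness for strictly parabolic systems. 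The diffeomorphisms then agree by ODE uniqueness, and hence so do the solutions.

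The main obstacle is the symbol computation in the second step. Choosing $V$ so that every block, especially the trace/conformal block, becomes positive requires careful bookkeeping of $A_n,B_n,C_n$ in general dimension. Ensuring that the same gauge makes the map-flow in the uniqueness argument strictly parabolic is part of the same difficulty. If a single $\mathcal L_{\Delta W}g$ term does not suffice, I would add a term $\mathcal L_{\nabla(\text{scalar})}g$ to repair the trace block.
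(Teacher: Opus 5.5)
Your existence argument has the same architecture as the paper's: a gauge field $X=a\,\Delta V+b\,\nabla R$, fourth-order parabolic theory for the gauged system, and pullback by $\partial_t\varphi_t=-X\circ\varphi_t$. However, the symbol computation, which is the whole content of that step, is not done, and several of the heuristics you would use to guide it are wrong.

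First, the top-order part of $J_g=-\frac12\Gamma_g^*1$ does not come from ``linearizing $A_n\Delta R$ and taking the adjoint''. At $f\equiv1$ every term of $\Gamma_g^*f$ that differentiates $f$ drops out. What survives is $-2J_g=\frac{2}{(n-2)^2}\Delta\Ric+\beta_n\nabla^2R+\gamma_n(\Delta R)g+\text{l.o.t.}$, and $A_n$ enters only through the coefficient of $(\Delta R)g$.

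Second, the ungauged operator is not degenerate in the trace direction. At top order $\tr_g(DJ\cdot h)=DQ\cdot h$ and $DQ\cdot(\mu g)=\frac12\Delta^2\mu$, so $\langle\sigma_\xi(\mu g),\mu g\rangle=-\mu^2|\xi|^4$. In dimension four the term $-\frac12Qg$ is precisely what removes the conformal degeneracy of the Bach flow; it does not cause one. The only degeneracy is along $\xi\otimes\omega+\omega\otimes\xi$, and the two gauge terms deal with it cleanly. In the paper's notation ($V=-W$), the term $\mathcal L_{-\frac{\beta_n}{2}\nabla R}\,g=-\beta_n\nabla^2R$ cancels the Hessian term identically. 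The combination $\frac{2}{(n-2)^2}\Delta\Ric+\frac1{(n-2)^2}\mathcal L_{\Delta V}g$ agrees at top order with $-\frac1{(n-2)^2}\Delta$ applied to the Ricci--DeTurck operator $-2\Ric-\mathcal L_Vg$. What remains is $-\frac1{(n-2)^2}\Delta^2h-\gamma_n\,g\,(\Delta^2\tr h-\Delta\delta^2h)$.

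Third, your target symbol $-c|\xi|^4\,\mathrm{id}$ cannot be reached for $n>4$. Take $H$ with $H(\xi,\cdot)=0$. The restriction of $\sigma_\xi(H)$ to $\xi^\perp$ is untouched by any gauge term, since those have symbol of the form $\xi\otimes\omega+\omega\otimes\xi$. That restriction equals $-\frac1{(n-2)^2}|\xi|^4H-\gamma_n|\xi|^4(\tr H)\,g|_{\xi^\perp}$, which is not a multiple of $H$ because $\gamma_n>0$. What you must prove instead is negative definiteness. The paper obtains it from Young's inequality together with $\frac1{(n-2)^2}-\frac{\gamma_n}{2}>0$.

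For uniqueness you genuinely depart from the paper. The paper writes the principal part of $-2J$ as $c\,(\Delta\Ric+a(\Delta R)g-b\nabla^2R)$ with $c=\frac{2}{(n-2)^2}$, $a=\gamma_n/c\ge0$, $b=-\beta_n/c$. It checks $c>0$ and $a>-\frac{1}{2(n-1)}$ and then invokes the intrinsic uniqueness theorem of Bahuaud--Helliwell. Your fourth-order map flow is a self-contained version of that mechanism, and it works. With $\bar g=\varphi_*g$ one has $V(\bar g,\tilde g)\circ\varphi=\tau_{g,\tilde g}(\varphi)$ and $(\nabla R_{\bar g})\circ\varphi=d\varphi(\nabla R_g)$. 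Hence $\partial_t\varphi=-X(\varphi_*g,\tilde g)\circ\varphi$ has principal part $-\frac{1}{(n-2)^2}\Delta^2\varphi$, and the $\nabla R$ part of the gauge is only first order in $\varphi$. Strict parabolicity of the map flow is therefore automatic; it is not ``part of the same difficulty'' as the metric symbol. Two points need care. The equation contains $d\varphi^{-1}$, so it must be posed among diffeomorphisms near the identity. A continuation argument is then needed to reach all of $[0,T]$. Your route removes the dependence on an external theorem's coefficient conditions, while the paper's route is shorter.
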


The proof proceeds by a biharmonic DeTurck gauge, which restores strict parabolicity. This is in the same spirit as the gauge-fixing arguments for the Ricci flow~\cite{DeTurck1983,hamilton1982three} and for fourth-order curvature flows~\cite{Streets2008}.

\begin{theorem}\label{theorem:B}
	Let $(M^n, g_0)$ be a closed Riemannian manifold of dimension $n \ge 4$, and let $g(t)$ be a smooth solution of the $J$-flow on $M^n \times [0, \tau]$ with $g(0) = g_0$. Suppose that $|\Rm_{g(t)}| \le K$ and $\tau \le K^{-2}$. Then for every $m \ge 0$ there exists $C_m = C_m(n, m)$ such that
	\begin{equation}\label{eq:BBS}
		\left\| \nabla^m \Rm \right\|_{L^2(g(t))} \le C_m \left\| \Rm \right\|_{L^2(g(0))} t^{-\frac{m}{4}}
	\end{equation}
	for all $0 < t \le \min\{\tau, 1\}$.
\end{theorem}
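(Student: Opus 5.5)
The plan follows the Bernstein--Bando--Shi scheme that Streets used for the $L^2$-curvature flow~\cite{Streets2008}, carried out on $L^2$ norms rather than pointwise.

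\textbf{Evolution equations.} Write the $J$-flow as $\partial_t g = -2J_g$. By the Lin--Yuan decomposition, $J_g = a\,\nabla^2 R + b\,\Delta \Ric + (\text{lower order})$ modulo the usual contractions. The leading part is therefore $\nabla^2\Rm$, and the lower-order terms are of the schematic form $\Rm * \Rm$. The first step is the evolution of curvature. The variation formula $\partial_t \Rm = \nabla^2 (\partial_t g) + \Rm * \partial_t g$ together with the contracted Bianchi identities should give
\[
\partial_t \Rm = -\Delta^2 \Rm + \mathcal{L}(\nabla^4 \Rm) + \nabla^2(\Rm*\Rm) + \Rm*\nabla^2\Rm .
\]
Here $\mathcal{L}$ collects fourth-order terms that are not strongly elliptic, because of the diffeomorphism degeneracy. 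Commuting derivatives, one expects for each $m$
\[
\partial_t \nabla^m\Rm = -\Delta^2\nabla^m\Rm + \nabla^{m+2}(\text{div-type terms}) + \sum_{i+j=m+2}\nabla^i\Rm*\nabla^j\Rm + \sum_{i+j+k=m}\nabla^i\Rm*\nabla^j\Rm*\nabla^k\Rm .
\]
The volume form evolves by $-Q_g\,dv_g$, which contributes a term of type $\Rm*\Rm$. A cleaner route to the same $L^2$ identity is to use the DeTurck-gauged flow of \Cref{theorem:A}. Since the norms are diffeomorphism invariant, the estimate may be proved there, where the principal symbol is $-\Delta^2$. Alternatively, one can integrate by parts so that the non-elliptic fourth-order terms produce $\le \varepsilon\|\nabla^{m+2}\Rm\|^2$ plus lower order, using that they are of the form $\nabla^2\nabla^2 \tr$-type quantities.

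\textbf{Integral estimates.} Differentiating in time gives
\[
\frac{d}{dt}\|\nabla^m\Rm\|_{L^2}^2 \le -\|\nabla^{m+2}\Rm\|_{L^2}^2 + C\sum \int |\nabla^i\Rm||\nabla^j\Rm||\nabla^{m+2}\Rm| + \dots
\]
I would integrate by parts so that at most $m+2$ derivatives fall on any factor. Then bound one factor in $L^\infty$ by $K$ and interpolate the remaining factors with the Hamilton-type $L^2$ interpolation $\|\nabla^i\Rm\|_{L^2}\le C\|\nabla^{k}\Rm\|_{L^2}^{i/k}\|\Rm\|_{L^2}^{1-i/k}$. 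On closed manifolds this inequality holds with constants depending only on $n$ and $k$. This yields
\[
\frac{d}{dt}\|\nabla^m\Rm\|^2 \le -\tfrac12\|\nabla^{m+2}\Rm\|^2 + C K^{4}\|\nabla^{m}\Rm\|^2\text{-type terms}+ C K^{2+m}\text{-weighted }\|\Rm\|^2 .
\]
Together with the evolution of $\|\Rm\|^2$, and using $t\le\tau\le K^{-2}$ so that factors $e^{CK^2t}$ are bounded, this shows that $\|\Rm\|_{L^2(g(t))}\le C\|\Rm\|_{L^2(g(0))}$.

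\textbf{Bernstein-type functional.} Set
\[
F(t)=\sum_{k=0}^{m} \beta_k\, t^{k/2}\,\|\nabla^{2k}\Rm\|_{L^2}^2
\]
(even orders first; odd orders then follow by interpolation). The constants $\beta_k$ are chosen decreasing rapidly so that the good term $-t^{k/2}\|\nabla^{2k+2}\Rm\|^2$ absorbs the time-derivative term $\tfrac{k}{2}t^{k/2-1}\|\nabla^{2k}\Rm\|^2$ coming from the next-lower level via interpolation. This gives $F'(t)\le C F(t)\cdot(1+K^2)$, hence $F(t)\le C F(0)=C\|\Rm\|^2_{L^2(g_0)}$ for $t\le \min\{\tau,1\}$, which is \eqref{eq:BBS}.

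\textbf{Main obstacle.} The hard step is controlling the nonlinear terms with dimension-free constants $C_m(n,m)$ using only $|\Rm|\le K$. In particular, terms like $\nabla^i\Rm*\nabla^j\Rm*\nabla^{m+2}\Rm$ must be absorbed without Sobolev inequalities, which would depend on the geometry. I would first put a single factor in $L^\infty$ (bounded by $K$) and apply Gagliardo--Nirenberg-type interpolations that are valid on any closed manifold. If some factor cannot be handled this way, I would use the pointwise Shi-type estimates available under $|\Rm|\le K$ on short times, obtained by the maximum principle for the fourth-order system in the gauge. The scaling $\tau\le K^{-2}$ then ensures that all constants are universal.
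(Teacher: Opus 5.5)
Your overall architecture (energy inequalities for $\|\nabla^m\Rm\|_2^2$, interpolation, a time-weighted functional, odd orders by interpolation, rescaling to $K=1$) matches the paper's. But there is a genuine gap exactly where this flow differs from Streets' setting: the fourth-order part of $\partial_t\Rm$ other than the biharmonic term. That part is not a diffeomorphism degeneracy. Look at the alternating Hessian of $h=-2J_g$ in the variation formula for $R_{ijkl}$. The $\Delta\Ric$ summand gives $-(n-2)^{-2}\Delta^2\Rm$ by the second Bianchi identity, and the $\nabla^2 R$ summand cancels up to commutators. The only surviving non-elliptic term is $-\frac{\gamma_n}{2}\,g\odot\nabla^2\Delta R$. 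It comes from the pure-trace summand $\gamma_n(\Delta R)g$ of $-2J_g$, where $\gamma_n=\frac{(3n-4)(n-4)}{4(n-1)^2(n-2)^2}$ vanishes only for $n=4$.

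Neither of your remedies handles this term. The DeTurck gauge changes $\partial_t\Rm$ only by $\mathcal L_X\Rm$, whose contribution to $\frac{d}{dt}\int_M|\nabla^m\Rm|^2dv_g$ integrates to zero. So the gauged and ungauged energy identities are identical, and for $n>4$ even the gauged metric symbol is not $-\Delta^2$. Nor can the term be absorbed into $\varepsilon\|\nabla^{m+2}\Rm\|_2^2$: it is of top order with a fixed coefficient, so interpolation gains nothing. The obvious bound $|\nabla^m\Delta R|^2\le\frac{n^2(n-1)}{2}|\nabla^{m+2}\Rm|^2$ already produces a constant larger than $2(n-2)^{-2}$ when $n=5$.

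The missing idea is that this term has a sign. Use $\langle\Rm,g\odot H\rangle=4\langle\Ric,H\rangle$, integrate by parts twice, and apply the contracted Bianchi identity $\divv^2\Ric=\frac12\Delta R$:
\[
-\gamma_n\int_M\bigl\langle\nabla^m\Rm,\nabla^m(g\odot\nabla^2\Delta R)\bigr\rangle dv_g=-2\gamma_n\|\nabla^m\Delta R\|_2^2+\int_M P_{3,2m+2}(\Rm)\,dv_g .
\]
At top order this is nonpositive and can be discarded. That is what gives $\mathcal E_m'+(n-2)^{-2}\mathcal E_{m+2}\le C_mK^{m+2}\mathcal E_0$.

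Two smaller corrections:

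\textbf{Weights.} The weights $t^{k/2}$ on $\|\nabla^{2k}\Rm\|_2^2$ do not work for a fourth-order flow. The term $\frac k2 t^{k/2-1}\|\nabla^{2k}\Rm\|_2^2$ exceeds, by a factor $t^{-1/2}$, the good term $t^{(k-1)/2}\|\nabla^{2k}\Rm\|_2^2$ coming from the level below. Even if the argument closed, it would give the rate $t^{-m/8}$ rather than $t^{-m/4}$. The weights must be $t^k$.

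\textbf{Nonlinear terms.} Bounding one factor by $K$ and interpolating the rest in $L^2$ fails when every factor carries derivatives. Use the $L^p$ form of Hamilton's inequality instead, for instance with $r=m+4$, so that $\|\Rm\|_{m+4}^{m+4}\le K^{m+2}\|\Rm\|_2^2$. Do not fall back on pointwise Shi estimates: for a fourth-order system they cannot be obtained from a maximum principle.
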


\begin{theorem}\label{theorem:C}
	Let $(M^n, g_0)$ be a closed Riemannian manifold of dimension $n \ge 4$. Fix $p\in M$ and $r > 0$, and let $g(t)$ be a smooth solution of the $J$-flow on $B_{2r}(p,g_0) \times [0,\tau]$ with $g(0) = g_0$. Assume $|\Rm| \le K$, $\tau \le K^{-2}$. Suppose that there are time-independent smooth cutoffs $0 \le \phi,\Phi \le 1$ supported in $B_{2r}(p,g_0)$, with $\phi = 1$ on $B_r(p,g_0)$ and $\Phi = 1$ on a neighborhood of $\overline{\{ \phi > 0\}}$, satisfying
	\begin{equation}\label{eq:cutoff-estimate}
		\sup_{0 \le t \le \tau} \left(|\nabla \phi| + |\nabla^2 \phi| + |\nabla \Phi| + |\nabla^2 \Phi| \right)_{g(t)} \le L.
	\end{equation}
	Then for every $m \ge 0$ there is $C = C(n,m,K,L)$ such that
	\begin{equation}\label{eq:local-BBS}
		\int_{B_r(p,g_0)} |\nabla^m \Rm|_{g(t)}^2 dv_{g(t)} \le C t^{-\frac{m}{2}} \sup_{0 \le s \le t} \int_{B_{2r} (p,g_0)} |\Rm|_{g(s)}^2 dv_{g(s)}
	\end{equation}
	for $0 < t \le \min\{\tau, 1\}$. 
\end{theorem}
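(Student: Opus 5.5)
We follow the localized Bernstein--Bando--Shi strategy used by Streets for the $L^2$-curvature flow, and argue by induction on $m$. The case $m=0$ is trivial, since $\phi\le 1$ and the volume forms are comparable under $|\Rm|\le K$, $\tau\le K^{-2}$. The first step is the evolution of curvature under the $J$-flow. Writing $J_g$ via the Lin--Yuan decomposition, its leading part is schematically $J = a_n\nabla^2 \R + b_n\Delta \Ric + \Rm*\Rm$, with Bianchi-type identities relating these terms. Using the contracted Bianchi identity and the DeTurck computation behind \Cref{theorem:A}, we obtain
\[
\partial_t \nabla^m \Rm = -\Delta^2 \nabla^m \Rm + \mathcal{L}(\nabla^{m+4}\Rm) + \sum_{i+j+k=m+2}\nabla^i\Rm*\nabla^j\Rm*\nabla^k(\cdots),
\]
where $\mathcal{L}$ collects the non-elliptic directions coming from diffeomorphism invariance. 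Rather than using the full curvature tensor, we work with energies that are coercive modulo lower order terms. Concretely, after integrating by parts against $\nabla^m\Rm$, the leading term becomes $-\|\nabla^{m+2}\Rm\|^2$ up to controllable terms. This is the same structure used in the global estimate \Cref{theorem:B}, whose proof we take as given and now localize.

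Next we define
\[
F(t)=\sum_{k=0}^{m} t^{k/2}\int_M \phi^{2(k+2)}|\nabla^k\Rm|^2\,dv_{g(t)}
\]
(suitably weighted), using $\phi^{s}$ with large power so that each cutoff derivative can be absorbed. Differentiating, we get a good term $-c\,t^{k/2}\int\phi^{2(k+2)}|\nabla^{k+2}\Rm|^2$. The time derivative of $dv_g$ contributes $\Rm*\nabla^2\Rm$-type terms, and the $t^{k/2-1}$ terms are absorbed by interpolation against the good term from index $k-1$. For the remaining error terms, integration by parts with cutoffs places up to $4$ derivatives on $\phi$. We avoid needing control of $\nabla^3\phi$ or $\nabla^4\phi$ by integrating by parts only twice, keeping the energy in $\nabla^{k+2}$ form so that only $|\nabla\phi|$ and $|\nabla^2\phi|$ (bounded by $L$) appear. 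Nonlinear terms are handled using $|\Rm|\le K$ and local Gagliardo--Nirenberg interpolation with weights,
\[
\int\phi^{2s}|\nabla^j\Rm|^2\le \epsilon\int\phi^{2s}|\nabla^{k+2}\Rm|^2+C(\epsilon,L)\int_{\{\phi>0\}}|\Rm|^2 .
\]
The factor $\Phi$ is used to control the lower-order remainders supported where $\phi>0$ but away from where the full weight helps, and it is what produces the $\int_{B_{2r}}|\Rm|^2$ on the right. Since these remainders depend on $s$, we obtain a supremum over $s\le t$.

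This yields $F'(t)\le C(n,m,K,L)\sup_{s\le t}\int_{B_{2r}}|\Rm|^2_{g(s)}$. Integrating over $[0,t]$ with $t\le 1$ gives $t^{m/2}\int_{B_r}|\nabla^m\Rm|^2\le C\sup_s\int_{B_{2r}}|\Rm|^2$, which is \Cref{eq:local-BBS}. The main obstacle is the leading-order structure. The $J$-flow is only weakly parabolic, so the raw evolution of $\Rm$ contains the $\mathcal{L}$-terms, namely $\nabla^2$ of $\divv$-type expressions. We expect to handle them by integrating by parts to form $\divv\Rm$, which Bianchi converts to $\nabla\Ric$, and then to show that the resulting quadratic form in $\nabla^{k+2}\Rm$ is nonnegative up to commutators $\Rm*\nabla^{k+1}\Rm$. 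If that fails in general dimension, our fallback is to run the estimate for the DeTurck-modified flow in local coordinates and transfer it back, since the estimates are diffeomorphism-invariant quantities.
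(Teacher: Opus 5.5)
There is a genuine gap at exactly the step you flag as the main obstacle. You never establish the fourth-order structure of $\partial_t\nabla^m\Rm$, and the coercive local energy inequality, on which the rest of the argument depends, rests entirely on it.

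Your diagnosis is also off. Weak parabolicity of the metric equation does not produce degenerate ``diffeomorphism directions'' in the curvature equation, just as for Ricci flow. Substitute $h=-2J$ into \Cref{eq:Rm-variation}; its fourth-order part is $\frac{2}{(n-2)^2}\Delta\Ric+\beta_n\nabla^2 R+\gamma_n g\,\Delta R$.
\begin{itemize}
\item The $\Delta\Ric$ part gives $-(n-2)^{-2}\Delta^2\Rm$ by the second Bianchi identity.
\item The Hessian part, which is the Lie-derivative-type piece, cancels in the alternating combination up to $P_{2,2}(\Rm)$ commutators.
\item The only surviving extra term is $-\frac{\gamma_n}{2}g\odot\nabla^2\Delta R$, which comes from the pure-trace part of $J$ (\Cref{lem:Rm-evolution}).
\end{itemize}
The paper controls this last term using $\langle\Rm,g\odot H\rangle=4\langle\Ric,H\rangle$, two integrations by parts, and $\divv^2\Ric=\frac12\Delta R$. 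These turn its $\eta^s$-weighted pairing into $-2\gamma_n\int\eta^s|\nabla^j\Delta R|^2$, plus terms carrying $\nabla(\eta^s)$ or $\nabla^2(\eta^s)$ that \Cref{lem:fixed-weight-interpolation} absorbs. The sign is favorable because $\gamma_n\ge 0$ for $n\ge 4$, and this yields \Cref{eq:local-energy-inequality}.

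Your DeTurck fallback cannot replace this computation, for three reasons.
\begin{itemize}
\item The curvature of a gauged solution satisfies the same geometric equation plus the transport term $\mathcal L_X\Rm$, which is first order in $\Rm$. Gauging therefore leaves the fourth-order structure unchanged.
\item $X$ involves third derivatives of the metric relative to a background, and $|\Rm|\le K$ does not control these.
\item The gauge diffeomorphisms move the fixed sets $B_r(p,g_0)$ and $B_{2r}(p,g_0)$ and the cutoffs, so the localized quantities are not gauge invariant.
\end{itemize}

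Your bookkeeping also needs repair. If every level $k$ enters $F$ with weight $t^{k/2}$, differentiating the $k=1$ term produces $\frac12 t^{-1/2}\int\phi^6|\nabla\Rm|^2$, and $|\nabla\Rm|^2$ never appears as a good term. Interpolating against the level-$0$ good term $\int\phi^4|\nabla^2\Rm|^2$ leaves $C t^{-1}\int_{\{\phi>0\}}|\Rm|^2$, which is not integrable at $t=0$. So the scheme does not close as written. The paper avoids this as follows:
\begin{itemize}
\item It runs the cascade over even levels only, with weight $t^j$ on level $2j$, so that $j t^{j-1}E^{\Phi}_{2j,s}$ is absorbed by the good term of level $2j-2$.
\item It uses $\Phi$ for these energies.
\item It recovers odd levels afterwards from the $\phi$-weighted interpolation \Cref{lem:weighted-interpolation}.
\end{itemize}

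Two smaller points. Because of the $K^2E$ terms, the differential inequality is $F'\le CF+CH_*$ rather than $F'\le CH_*$; Gronwall handles this on $[0,1]$. The case $m=0$ needs no volume comparison, which a curvature bound alone does not give for a fourth-order flow, since $s=t$ is included in the supremum.
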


\begin{theorem}\label{theorem:D}
	Let $(M^4, g_0)$ be a closed Riemannian four-manifold, and let $g(t)$ be the maximal smooth solution of the $J$-flow on $M^4 \times [0, T_{\max})$ with $g(0) = g_0$. Suppose there are $C_S < \infty$ and $r_0 > 0$, independent of time, such that for every smooth function $u$ and every $t < T_{\max}$,
	\begin{equation}\label{eq:uniform-Sobolev-D}
		\|u\|_{L^4(g(t))}^2 \le C_S  \left( \|\nabla u\|_{L^2(g(t))}^2 + r_0^{-2} \|u\|_{L^2(g(t))}^2 \right).
	\end{equation}
	There is $\varepsilon(C_S) > 0$, independent of $g_0$ and $r_0$, such that
	\begin{equation}\label{eq:global-L2-small}
		\sup_{0 \le t < T_{\max}} \int_M |\Rm_{g(t)}|^2 dv_{g(t)} < \varepsilon(C_S)
	\end{equation}
	implies $T_{\max} = \infty$. 
\end{theorem}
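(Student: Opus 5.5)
We argue by contradiction: assume $T_{\max}<\infty$. The first ingredient is a continuation criterion. By \Cref{theorem:A} and \Cref{theorem:B}, if $\sup_{M\times[0,T_{\max})}|\Rm|<\infty$, the flow extends past $T_{\max}$. Indeed, a curvature bound $|\Rm|\le K$ gives, via \Cref{theorem:B} applied on windows of length $\min\{K^{-2},1\}$, uniform $L^2$ bounds on all $\nabla^m\Rm$. Sobolev embedding then upgrades these to uniform $C^k$ bounds, using that the flow equation controls distortion of the metric on a finite interval since $|J|\lesssim |\nabla^2\Rm|+|\Rm|^2$. So $g(t)$ converges smoothly as $t\to T_{\max}$, and \Cref{theorem:A} restarts the flow. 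Hence it suffices to show that $|\Rm|$ stays bounded on $[0,T_{\max})$ under the smallness hypothesis \eqref{eq:global-L2-small}.

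The key step is an $\varepsilon$-regularity estimate, which needs only the Sobolev inequality \eqref{eq:uniform-Sobolev-D} and small $L^2$ energy, not a priori pointwise curvature. We differentiate $\int|\nabla^k\Rm|^2$ along the flow. Since the flow is fourth order, with leading term of $\partial_t\Rm$ equal to $-\Delta^2\Rm$ modulo lower order (in dimension four via $B_g-\tfrac12 Q g$), we obtain
\begin{equation*}
\frac{d}{dt}\int|\nabla^k\Rm|^2 + c\int|\nabla^{k+2}\Rm|^2 \le C\int \sum |\nabla^{a}\Rm||\nabla^{b}\Rm||\nabla^{c}\Rm||\nabla^{k}\Rm| + \dots,
\end{equation*}
with $a+b+c=k+2$, plus quintic terms. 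Each nonlinear term is estimated by Hölder and \eqref{eq:uniform-Sobolev-D} (Gagliardo--Nirenberg type interpolation in dimension four, where $\|\Rm\|_{L^2}$ is scale invariant). This puts it in the form $C\,\|\Rm\|_{L^2}^{\theta}\,\|\nabla^{k+2}\Rm\|_{L^2}^2$ plus lower-order terms carrying factors $r_0^{-2}$. When $\|\Rm\|_{L^2}<\varepsilon(C_S)$, these absorb into the good term. For $k=0,1,2$ this gives differential inequalities yielding bounds on $\|\nabla^k\Rm\|_{L^2}$ that depend only on $C_S$, $r_0$, $\varepsilon$, $T_{\max}$, and the initial data. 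Gronwall is applied on the finite interval, and the lower-order $r_0^{-2}$ terms grow at most exponentially in $T_{\max}<\infty$. Finally, in dimension four, \eqref{eq:uniform-Sobolev-D} applied iteratively (Moser iteration, or $W^{2,2}\cdot$-type embedding of $|\Rm|$ using Kato's inequality) converts the bounds on $\|\nabla^k\Rm\|_{L^2}$, $k\le 2$, into $\sup|\Rm|<\infty$. This contradicts the first step.

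The main obstacle is the absorption step. The quartic and quintic terms, such as $\Rm*\Rm*\nabla^2\Rm*\Rm$ and $\nabla\Rm*\nabla\Rm*\Rm$ contracted against $\nabla^k\Rm$, must be controlled by exactly $\|\Rm\|_{L^2}^\theta\|\nabla^{k+2}\Rm\|^2$ without a pointwise bound. This requires careful use of $\|u\|_{L^4}$ via \eqref{eq:uniform-Sobolev-D}, applied to $u=|\nabla^j\Rm|$ with Kato, and integration by parts to balance derivative counts. The quintic terms need $\|\Rm\|_{L^4}^2$-type factors, which again reduce through Sobolev to $\|\nabla\Rm\|_{L^2}$ and interpolation. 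I would first do $k=0$ explicitly, using $\|\Rm\|_{L^4}^2\le C_S(\|\nabla\Rm\|^2+r_0^{-2}\|\Rm\|^2)$ and $\|\nabla\Rm\|^2\le\|\Rm\|\|\nabla^2\Rm\|$, then induct on $k$.
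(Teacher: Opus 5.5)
Your strategy is a genuinely different route from the paper's, and it can be made to work, but three points in it need correcting. The paper never estimates the nonlinearity through the Sobolev inequality. It argues by contradiction at first times $t_i$ where $\|\Rm\|_\infty = K_i \to \infty$. It rescales the window $[t_i - K_i^{-2}, t_i]$ to unit length with $|\Rm| \le 1$. It then combines \Cref{theorem:B}, whose constants depend only on $n,m$ because the pointwise bound feeds Hamilton's interpolation, with \Cref{lem:tensor-Sobolev} at the single final time. Scale invariance of $\|\Rm\|_2$ in dimension four then yields $1 < C(C_S)\varepsilon^{1/2}$, which is impossible for small $\varepsilon$, and \Cref{prop:curvature-continuation} finishes. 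That argument is short and uses the Sobolev inequality only at the blow-up times. It also shows curvature is bounded on the whole maximal interval regardless of its length, which is exactly what is reused for \Cref{theorem:F} on $[0,\infty)$. Your direct $\varepsilon$-regularity scheme instead uses the Sobolev and smallness hypotheses at all times and produces explicit bounds. As you state them they depend on $T_{\max}$, which suffices for this statement. Feeding $\|\nabla^k\Rm\|_2^2 \le \delta\|\nabla^{k+2}\Rm\|_2^2 + C_\delta\|\Rm\|_2^2$ back into your differential inequality would even make them time-uniform.

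First, you have misidentified the nonlinearity. By \Cref{cor:Rm-derivative-evolution}, after pairing with $\nabla^k\Rm$ and including the variation of the metric and volume form, only integrals of type $P_{3,2k+2}(\Rm)$ and $P_{4,2k}(\Rm)$ occur. These are cubic terms $\nabla^a\Rm * \nabla^b\Rm * \nabla^k\Rm$ with $a+b = k+2$, and quartic terms with $2k$ derivatives in total. In dimension four these scale exactly like $\|\nabla^{k+2}\Rm\|_2^2$. That is why H\"older, Kato, the Sobolev inequality and Hamilton's $L^2$ interpolation bound them by $C(C_S,k)\left(\|\Rm\|_2 + \|\Rm\|_2^2\right)\|\nabla^{k+2}\Rm\|_2^2$, plus lower-order terms carrying negative powers of $r_0$ in which $\|\nabla^{k+2}\Rm\|_2$ appears with exponent below two. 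The terms you list carry one extra curvature factor: quartic terms with $a+b+c = k+2$ (hence $2k+2$ derivatives), such as $\Rm*\Rm*\nabla^2\Rm*\Rm$ at $k=0$, and quintic ones. They are supercritical by two powers of the length scale, and no smallness of the scale-invariant quantity $\|\Rm\|_2$ could absorb them. Fortunately they do not arise.

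Second, bounds for $k \le 2$ do not give $\sup|\Rm| < \infty$. In dimension four $W^{2,2}$ does not embed into $L^\infty$. Moser iteration is also unavailable, since $|\Rm|$ satisfies no second-order subsolution inequality along a fourth-order flow. You need $k \le 3$, as in \Cref{lem:tensor-Sobolev}.

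Third, the absorption threshold at level $k$ depends on $k$ through the interpolation constants, so you cannot induct over all $k$ with a single $\varepsilon$. Stop at $k = 3$ and let $\varepsilon(C_S)$ be the minimum of the four thresholds; it is independent of $r_0$ and $g_0$, since $r_0$ enters only the lower-order terms. Then obtain all higher derivatives from your continuation step, which through \Cref{theorem:B} needs no smallness.
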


The next theorem gives a criterion for smooth convergence without assuming compactness of the family of metrics. Its proof uses estimates for the $J$-tensor to deduce convergence in fixed coordinates from time integrability of the $L^2$-norm of $J$.

\begin{theorem}\label{theorem:E}
	Let $(M^4, g_0)$ be a closed Riemannian four-manifold, and let $g(t)$ be a smooth solution of the $J$-flow on $M^4 \times [0, \infty)$ with $g(0) = g_0$. Suppose that $|\Rm_{g(t)}| \le K$ for all $t \ge 0$ and that the Sobolev inequality in \Cref{eq:uniform-Sobolev-D} holds for all $t \ge 0$ with time-independent constants $C_S < \infty$ and $r_0 > 0$. If
	\begin{equation}\label{eq:J-time-integrability}
		\int_0^\infty \|J_{g(t)}\|_{L^2(g(t))} dt < \infty,
	\end{equation}
	Then $g(t)$ converges as $t \to \infty$ in every $C^k(\bar{g})$ norm, for any fixed smooth background metric $\bar{g}$, to a $J$-flat Riemannian metric.
\end{theorem}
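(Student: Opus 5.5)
The strategy is the standard one for curvature flows: show that the metrics $g(t)$ form a Cauchy family in every $C^k(\bar g)$ norm. Since $\partial_t g = -2J_g$, for $s<t$ we have
\begin{equation*}
|g(t)-g(s)|_{C^k(\bar g)} \le 2\int_s^t |J_{g(\sigma)}|_{C^k(\bar g)}\,d\sigma,
\end{equation*}
so it suffices to prove $\int_0^\infty \sup_M |J_{g(t)}|_{C^k(\bar g)}\,dt<\infty$ for every $k$. The limit $g_\infty$ must also be nondegenerate and $J$-flat.

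\emph{Step 1: uniform equivalence of metrics.} Take $k=0$. Pointwise, $|\partial_t g|_{g} = 2|J|_{g}$, hence $|\log(g(t)/g(s))| \le 2\int_s^t \sup_M|J|\,d\sigma$. This requires an $L^\infty$ bound on $J$ in terms of $\|J\|_{L^2}$. I would obtain it from the smoothing estimates. Since $|\Rm|\le K$ for all time, \Cref{theorem:B} and \Cref{theorem:C}, applied on unit time intervals $[t-1,t]$ (after rescaling to arrange $\tau\le K^{-2}$), give uniform bounds $\|\nabla^m\Rm\|_{L^\infty}\le C_m$ for $t\ge1$. Because $J$ is built from $\nabla^2\Rm$ and $\Rm*\Rm$, all covariant derivatives of $J$ are then uniformly bounded. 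The uniform Sobolev inequality \eqref{eq:uniform-Sobolev-D} gives, via Moser iteration or Gagliardo--Nirenberg on a four-manifold, the interpolation
\begin{equation*}
\sup_M|J|\le C\,\|J\|_{L^2}^{\theta}\,\|J\|_{W^{m,2}}^{1-\theta},
\end{equation*}
which reduces to $\sup|J|\le C\|J\|_{L^2}^\theta$ with $\theta<1$. A power $\theta<1$ is not integrable from $L^1_t$ data, so the interpolation is too weak as it stands. A better route is a parabolic mean value inequality for $|J|^2$: derive the evolution $\partial_t J = -\Delta^2 J + \text{lower order}$, whose lower-order terms are bounded by uniform constants. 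Then $\partial_t|J|^2\le -2\langle \Delta^2 J,J\rangle + C|J|^2$-type estimates on unit time intervals, combined with Sobolev, yield $\sup_M|J|(t)\le C\int_{t-1}^{t}\|J\|_{L^2}\,ds$. Integrating in $t$ gives a finite bound. This gives uniform equivalence $C^{-1}\bar g\le g(t)\le C\bar g$ and $C^0$ convergence to a continuous positive-definite limit.

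\emph{Step 2: higher derivatives.} With the metrics uniformly equivalent, write $\bar\nabla - \nabla_{g(t)}$ as a tensor $A(t)$ with $\partial_t A = g^{-1}*\nabla J$. The same mean-value argument applied to $\nabla^m J$, whose evolution is again biharmonic plus bounded lower-order terms, gives $\int_0^\infty\sup|\nabla^mJ|\,dt<\infty$. Inductively this bounds $\bar\nabla^j A$ and converts $g$-covariant bounds into $C^k(\bar g)$ bounds. Hence $\int_0^\infty |J|_{C^k(\bar g)}\,dt<\infty$ for all $k$, which gives smooth convergence $g(t)\to g_\infty$.

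\emph{Step 3: the limit is $J$-flat.} Since $\int_0^\infty\|J\|_{L^2}\,dt<\infty$, there is a sequence $t_i\to\infty$ with $\|J_{g(t_i)}\|_{L^2}\to0$. By smooth convergence $J_{g(t_i)}\to J_{g_\infty}$, so $J_{g_\infty}=0$.

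The main obstacle is the $L^\infty$-from-$L^1_tL^2_x$ estimate in Step 1: upgrading $\|J\|_{L^2}$ linearly, not with a power, to pointwise control. This requires the evolution equation of $J$ and a fourth-order parabolic local estimate using the uniform Sobolev constant.
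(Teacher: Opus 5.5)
Your plan is essentially the paper's proof. The paper establishes exactly the linear estimate you single out, in the form $\|\nabla^m J\|_{\infty,g(t+1)}\le C_m\|J\|_{2,g(t)}$ for $t\ge t_0$, integrates it in $t$, and then runs the same uniform-equivalence, connection-difference induction, Cauchy-in-$C^k$ and $J$-flat-limit arguments.

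The one thing to change is how you obtain that estimate. A mean-value bound derived from a pointwise differential inequality for $|J|^2$ (Moser style) is not available for a biharmonic operator: $-2\langle\Delta^2J,J\rangle$ has no sign, and $\langle \Rm*\nabla^2 J,J\rangle$ is not pointwise bounded by $C|J|^2$. The paper instead works with integrated energies, as follows.
\begin{enumerate}
\item It uses the four-dimensional evolution $\partial_t J=-\frac{1}{4}\Delta^2 J+\Rm*\nabla^2J+\nabla\Rm*\nabla J+\nabla^2\Rm*J+\Rm*\Rm*J$. Here every term is linear in $J$, and the mixed fourth-order terms cancel up to commutators because $\tr_g J=Q$ and $\divv_g J=\frac{1}{4}dQ$.
\item From this it derives $\frac{d}{dt}\|\nabla^m J\|_2^2+\frac{1}{8}\|\nabla^{m+2}J\|_2^2\le C_m\|J\|_2^2$.
\item It feeds this into the time-weighted cascade of \Cref{lem:energy-cascade} and passes to $L^\infty$ with \Cref{lem:tensor-Sobolev}.
\end{enumerate}
The uniform bounds on $\nabla^m\Rm$ come from the pointwise estimate of \Cref{cor:BBS-pointwise}, not from the integral estimates of \Cref{theorem:B,theorem:C}. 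Those integral estimates would additionally require Sobolev and a volume bound.
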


Combining this criterion with continuation and the variational identities gives the following conditional convergence result.

\begin{theorem}\label{theorem:F}
	Let $(M^4, g_0)$ be a closed Riemannian four-manifold, and let $g(t)$ be the maximal smooth solution of the $J$-flow on $M^4 \times [0, T_{\max})$ with $g(0) = g_0$. Suppose that the hypotheses of \Cref{theorem:D} hold, so that $T_{\max} = \infty$. Assume that
	\begin{equation}\label{eq:Q-time-integrability-F}
		\int_0^{\infty} \|Q_{g(t)}\|_{L^2(g(t))} dt < \infty,
	\end{equation}
	and that there exist $C_{\mathrm L} < \infty$ and $0 < \theta \le \frac{1}{2}$ such that, for all sufficiently large $t$,
	\begin{equation}\label{eq:LS-hypothesis-F}
		\left[ -\mathcal J(g(t)) \right]^{1-\theta} \le C_{\mathrm L} \|B_{g(t)}\|_{L^2(g(t))}.
	\end{equation}
	Then $g(t)$ converges as $t \to \infty$ in every $C^k(\bar{g})$ norm, for any fixed smooth background metric $\bar{g}$, to a $J$-flat Riemannian metric.
\end{theorem}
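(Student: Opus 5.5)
The strategy is to reduce to \Cref{theorem:E}. That requires three inputs: a uniform curvature bound $|\Rm|\le K$ for all time, the uniform Sobolev inequality (already among the hypotheses of \Cref{theorem:D}), and the time integrability $\int_0^\infty\|J\|_{L^2}\,dt<\infty$.

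\textbf{Step 1: curvature bound.} The uniform pointwise bound should come from the continuation argument behind \Cref{theorem:D}. Small $L^2$ curvature energy together with the uniform Sobolev inequality gives, via an $\varepsilon$-regularity / Moser iteration built on the local smoothing estimates of \Cref{theorem:C}, an $L^\infty$ bound on $\Rm$ at scale $r_0$. This bound depends only on $C_S$ and $r_0$, and I would extract it in exactly that form. This settles the curvature hypothesis of \Cref{theorem:E}.

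\textbf{Step 2: integrability of $J$.} In dimension four $J=\frac14 Qg-\frac12 B$, so
\[
\|J\|_{L^2}\le \tfrac12\|Q\|_{L^2}+\tfrac12\|B\|_{L^2}.
\]
The $Q$ term is integrable by \Cref{eq:Q-time-integrability-F}, so it remains to show $\int^\infty\|B\|_{L^2}\,dt<\infty$. Here I use the variation formula $D\mathcal J_g\cdot h=\int\langle -2J+\frac12 Qg,h\rangle$ with $h=-2J=B-\frac12 Qg$. Since $\tr B=0$, we have $-2J+\frac12Qg=B$, and hence
\[
\frac{d}{dt}\mathcal J=\int_M\left\langle B,\,B-\tfrac12 Qg\right\rangle=\|B\|_{L^2}^2 .
\]
So $\mathcal J$ is nondecreasing. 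By Chern--Gauss--Bonnet, $\mathcal J\le 8\pi^2\chi(M)$ is bounded, which gives $\int_0^\infty\|B\|^2\,dt<\infty$.

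To upgrade this to $L^1$ integrability I use a Łojasiewicz argument. The hypothesis \Cref{eq:LS-hypothesis-F} is written in terms of $-\mathcal J$, so the natural reference value is $0$. The first task is to show $\mathcal J(g(t))\to 0$, or at least that $\mathcal J\le 0$ for large $t$, so that $-\mathcal J\ge0$ decreases to a limit $\ell\ge 0$. For this I note that $|\mathcal J|\le \Vol^{1/2}\|Q\|_{L^2}$. The volume evolves by $\frac{d}{dt}\Vol=-\int Q$, so $|\frac{d}{dt}\Vol|\le \Vol^{1/2}\|Q\|_{L^2}$. Integrating, $\Vol^{1/2}$ is bounded thanks to \Cref{eq:Q-time-integrability-F}. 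Since $\|Q\|_{L^2}$ is integrable, it tends to zero along a sequence of times, so $\mathcal J\to 0$ along that sequence. Monotonicity of $\mathcal J$ then forces $\mathcal J\uparrow 0$, and hence $-\mathcal J\ge0$.

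Now set $\psi=(-\mathcal J)^\theta$. Then
\[
-\frac{d\psi}{dt}=\theta(-\mathcal J)^{\theta-1}\|B\|^2\ge \frac{\theta}{C_L}\|B\|_{L^2}.
\]
Integrating gives $\int^\infty\|B\|\,dt\le C_L\theta^{-1}\psi(t_0)<\infty$. At times where $\mathcal J=0$ we have $\|B\|=0$ by monotonicity, so $\mathcal J$ is constant from then on and $B\equiv 0$; these degenerate times are handled trivially. Hence $\|J\|_{L^2}$ is integrable.

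\textbf{Step 3: conclusion.} Applying \Cref{theorem:E} yields convergence in every $C^k(\bar g)$ to a $J$-flat metric.

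I expect the main obstacle to be Step 1, namely showing that the argument of \Cref{theorem:D} actually produces a \emph{time-uniform} pointwise bound rather than merely excluding finite-time blow-up. If the $\varepsilon$-regularity constants are uniform at scale $r_0$, the bound follows. Otherwise I would use the smoothing estimates of \Cref{theorem:B} on unit time intervals $[t-1,t]$, with the Sobolev inequality controlling $\sup|\Rm|$ by the $L^2$ norms of $\nabla^m\Rm$. The sign bookkeeping showing $\mathcal J\to0$ from above is the secondary delicate point.
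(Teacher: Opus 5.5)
Your proposal is correct in outline and has the same skeleton as the paper's proof. The ingredients are a global curvature bound from the proof of \Cref{theorem:D}, the identity $\frac{d}{dt}\mathcal J=\|B\|_2^2$, the gradient-inequality integration $\int_a^\infty\|B\|_2\,dt\le C_{\mathrm L}\theta^{-1}\bigl(-\mathcal J(g(a))\bigr)^\theta$, the bound $\|J\|_2\le\frac12\bigl(\|B\|_2+\|Q\|_2\bigr)$, and finally \Cref{theorem:E}.

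The genuine difference is how you obtain $\mathcal J(g(t))\nearrow 0$. The paper (\Cref{lem:compact-J-limit,lem:Bach-time-integrability}) uses immortality and the gradient inequality. If $\mathcal J(g(t_1))>0$, then $\frac{d}{dt}\Vol_{g(t)}(M)\le-\mathcal J(g(t_1))$ for $t\ge t_1$ drives the volume negative in finite time, so $\mathcal J\le 0$. A negative limit $L$ would give $\|B\|_2\ge C_{\mathrm L}^{-1}(-L)^{1-\theta}$ for all large $t$, so $\mathcal J$ would grow at least linearly, again a contradiction. You instead spend the $Q$-hypothesis: $\bigl|\frac{d}{dt}\Vol^{1/2}\bigr|\le\frac12\|Q\|_2$ bounds the volume, $|\mathcal J|\le\Vol^{1/2}\|Q\|_2$ then gives $\mathcal J\to 0$ along a sequence, and monotonicity does the rest. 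Both arguments are valid. Yours gives a uniform volume bound as a by-product and identifies the limit without the gradient inequality. The paper's keeps \Cref{lem:Bach-time-integrability} independent of the $Q$-hypothesis. The Chern--Gauss--Bonnet bound and $\int_0^\infty\|B\|_2^2\,dt<\infty$ are not needed in either version.

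On Step 1, the concern you flag is already resolved. The proof of \Cref{theorem:D} is written for a maximal interval that may be finite or infinite, and it concludes that $\|\Rm\|_\infty$ is bounded on all of $[0,T_{\max})$. That is exactly the constant $K$ required by \Cref{theorem:E}; a bound depending on the solution suffices, so it need not depend only on $C_S,r_0$.

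However, the mechanisms you sketch would not produce this bound on their own. \Cref{theorem:B,theorem:C} both presuppose $|\Rm|\le K$ with $\tau\le K^{-2}$. Applying them on $[t-1,t]$, or inside an $\varepsilon$-regularity scheme, therefore assumes what is to be proved, and the paper establishes no local $\varepsilon$-regularity. The circularity is broken by a first-time selection. Let $t_i$ be the first time with $\|\Rm\|_\infty=K_i\to\infty$, and rescale $[t_i-K_i^{-2},t_i]$ to unit length so that $|\Rm|\le 1$ on it. Then apply \Cref{lem:integral-pointwise}, together with the small energy at $t_i-K_i^{-2}$, to reach a contradiction.
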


The remainder of the paper is organized as follows. \Cref{sec:notation} fixes notation and collects the basic formulas for $Q$-curvature, the operators $\Gamma_g$ and $\Gamma_g^*$, and the $J$-tensor. \Cref{sec:examples} studies explicit Einstein and product solutions, compares the normalized $J$-flow with the Ricci flow, and illustrates the role of the convergence hypotheses. \Cref{sec:short-time} establishes short-time existence. \Cref{sec:integral-smoothing} derives curvature evolution equations, integral smoothing estimates and the tensor Sobolev estimate. \Cref{sec:long-time} proves continuation and long-time existence under Sobolev control. \Cref{sec:J-estimates} establishes estimates for the $J$-tensor and proves smooth convergence under time integrability of its $L^2$-norm. \Cref{sec:variational} develops the variational identities, proves the Yamabe lower bound and the higher-dimensional decrease of total $Q$-curvature, and combines four-dimensional monotonicity with the preceding estimates to establish conditional convergence. Finally, \Cref{sec:J-soliton-future} proves the triviality of closed four-dimensional $J$-solitons and discusses variational questions, conformal geometry and the stability of flat tori.

\section{Preliminaries}\label{sec:notation}

Throughout this paper, $(M^n, g)$ denotes a smooth closed Riemannian manifold. The $J$-tensor is defined for $n \ge 4$, the $J$-flow and all flow results below assume $n \ge 4$. We write $\mathcal{M}$ for the space of smooth Riemannian metrics on $M$ and $S_2(M)$ for the space of smooth symmetric $(0, 2)$-tensors. The Levi-Civita connection of $g$ is denoted by $\nabla = \nabla_g$, and $d v_g$ is the Riemannian volume form. 

\subsection{Curvature conventions}\label{subsec:curvature-conventions}

Our sign conventions for curvature follow \cite{ChowLuNi2006}. The Riemann curvature tensor is determined by
\begin{equation*}
    R(X,Y) Z = \nabla_X \nabla_Y Z - \nabla_Y \nabla_X Z - \nabla_{[X,Y]}Z,
\end{equation*}
and in local coordinates
\begin{equation*}
    R_{ijkl} = g \left( R(\partial_i, \partial_j) \partial_k, \partial_l \right).
\end{equation*}
The Ricci curvature tensor and the scalar curvature are given by
\begin{equation*}
    \R_{jk} = g^{il} R_{ijkl}, \qquad R = g^{jk} \R_{jk}.
\end{equation*}
We write
\begin{equation*}
	E = \Ric - \frac{R}{n}g
\end{equation*}
for the traceless Ricci tensor and $W$ for the Weyl curvature. For $n \ge 4$ one has the orthogonal decomposition
\begin{equation}\label{eq:Rm-decomp}
	\Rm = W + \frac{1}{n-2} E \odot g + \frac{R}{2n(n-1)} g \odot g,
\end{equation}
where $\odot$ denotes the Kulkarni--Nomizu product
\begin{equation}\label{eq:KN}
	(\alpha \odot \beta)_{ijkl} = \alpha_{il} \beta_{jk} - \alpha_{ik} \beta_{jl} + \alpha_{jk} \beta_{il} - \alpha_{jl} \beta_{ik}.
\end{equation}
In particular,
\begin{equation}\label{eq:Rm-norm}
	| \Rm|^2 = |W|^2 + \frac{4}{n-2} |E|^2 + \frac{2}{n(n-1)} R^2.
\end{equation}

The divergence of a symmetric $2$-tensor $h$ is $(\delta_g h)_i = - \nabla^j h_{ji}$, and we write $\delta_g^2 h = \delta_g (\delta_g h)$. The rough Laplacian on tensors is $\Delta_g = g^{ij} \nabla_i \nabla_j$, so that $\Delta_g$ has non-positive spectrum on functions. We adopt the following definition of the Lichnerowicz Laplacian
\begin{equation*}
	(\Delta_L h)_{ij} = \Delta_g h_{ij} - 2R_{ikjl} h^{kl} - \R_i{}^k h_{kj} - \R_j{}^k h_{ki}.
\end{equation*}
For symmetric $2$-tensors $\alpha$ and $\beta$ we use the product
\begin{equation*}
	(\alpha \times \beta)_{ij} := \alpha_i{}^k \beta_{kj},
\end{equation*}
following~\cite{LinYuan2016,LinYuan2017}, and we write $\mathring{\Rm} \cdot \Ric$ for the contraction $(\mathring{\Rm} \cdot \Ric)_{ij} = R_{kijl} \R^{kl}$. With these slots,
\begin{equation*}
	\tr_g(\mathring{\Rm} \cdot \Ric) = |\Ric|^2, \qquad \mathring{\Rm} \cdot g = \Ric.
\end{equation*}
These identities fix the signs in the explicit formulas below.

\subsection{Q-curvature and the Schouten tensor}\label{subsec:Q-Schouten}

The dimensional constants in~\Cref{eq:intro-Q} are
\begin{equation}\label{eq:constants}
	A_n = -\frac{1}{2(n-1)}, \qquad B_n = -\frac{2}{(n-2)^2}, \qquad C_n = \frac{n^2 (n-4) + 16 (n-1)}{8 (n-1)^2 (n-2)^2}.
\end{equation}
The Schouten tensor
\begin{equation*}
	S_g := \frac{1}{n-2} \left(\Ric_g - \frac{R_g}{2 (n-1)} g \right)
\end{equation*}
satisfies
\begin{equation*}
	\tr_g S_g = \frac{R_g}{2(n-1)}, \qquad \Ric_g = (n - 2) S_g + (\tr_g S_g) g.
\end{equation*}
In terms of $S_g$, the $Q$-curvature admits the equivalent expression~\cite{Branson1995,FeffermanGraham2012}
\begin{equation*}
	Q_g = -\Delta_g(\tr_g S_g) - 2|S_g|_g^2 + \frac{n}{2} (\tr_g S_g)^2.
\end{equation*}

\subsection{Linearization of Q-curvature}\label{subsec:Gamma}

We first recall the standard linearizations of the Ricci tensor and the scalar curvature, which motivate the fourth-order formulas below. For $h\in S_2(M)$,
\begin{equation}\label{eq:Ric-linearization}
	(D \Ric_g \cdot h)_{ij} = -\frac{1}{2} \left( \Delta_g h_{ij} + \nabla_i(\delta_g h)_j + \nabla_j(\delta_g h)_i + \nabla_i \nabla_j \tr_g h \right) + \text{l.o.t.},
\end{equation}
where the lower-order terms involve curvature contracted with $h$. Similarly,
\begin{equation}\label{eq:R-linearization}
	DR_g \cdot h = -\Delta_g \tr_g h + \delta_g^2 h - \Ric_g \cdot h .
\end{equation}

Following Lin--Yuan~\cite{LinYuan2016,LinYuan2017}, the linearization $\Gamma_g = D\mathcal{Q}_g$ of the $Q$-curvature map at a metric $g$ is the fourth-order operator
\begin{equation*}
	\begin{split}
		\Gamma_g h
		=& A_n \left( -\Delta_g^2 (\tr_g h) + \Delta_g \delta_g^2 h - \Delta_g (\Ric_g \cdot h) + \frac{1}{2} \mathrm{d} R_g \cdot \left(\mathrm{d} (\tr_g h) + 2 \delta_g h \right) - \nabla_g^2 R_g \cdot h \right) \\
		& - B_n \left( \Ric_g \cdot \Delta_L h + \Ric_g \cdot \nabla_g^2(\tr_g h) + 2 \Ric_g \cdot \nabla_g (\delta_g h) + 2(\Ric_g \times \Ric_g) \cdot h \right)\\
		& + 2 C_n R_g \left( -\Delta_g(\tr_g h) + \delta_g^2 h - \Ric_g \cdot h \right).
	\end{split}
\end{equation*}
Its formal $L^2$-adjoint $\Gamma_g^*: C^\infty(M) \to S_2(M)$ is given by
\begin{equation*}
	\begin{split}
		\Gamma_g^* f
		=& A_n \left( -g \Delta_g^2 f + \nabla_g^2 \Delta_g f - \Ric_g \Delta_g f + \frac{1}{2} g \delta_g(f \mathrm{d} R_g) + \nabla_g(f \mathrm{d} R_g) - f \nabla_g^2 R_g \right) \\
		& - B_n \left( \Delta_g(f \Ric_g) + 2 f \mathring{\Rm}_g \cdot \Ric_g + g \delta_g^2 (f \Ric_g) + 2 \nabla_g \delta_g (f \Ric_g) \right)\\
		& - 2C_n \left( g \Delta_g (f R_g) - \nabla_g^2 (f R_g) + f R_g \Ric_g \right).
	\end{split}
\end{equation*}
In particular, setting $f \equiv 1$ and using the contracted Bianchi identity $\delta_g \Ric_g = -\frac{1}{2} \mathrm{d} R_g$ yields
\begin{equation}\label{eq:Gamma-star-1}
	\Gamma_g^* 1 = -B_n \Delta_g \Ric_g - \left(\frac{1}{2} A_n + \frac{1}{2} B_n + 2C_n \right) g \Delta_g R_g + \left(B_n + 2 C_n \right) \nabla_g^2 R_g - 2B_n \left(\mathring{\Rm}_g \cdot \Ric_g \right) - 2C_n R_g \Ric_g.
\end{equation}

\subsection{The Bach tensor and an auxiliary tensor}\label{subsec:Bach-T}

To express $J_g$ geometrically we introduce two auxiliary tensors built from the Schouten tensor. The Bach tensor is
\begin{equation*}
	B_g := \Delta_L S_g - \nabla_g^2(\tr_g S_g) + n \left(S_g \times S_g - \frac{1}{n} |S_g|_g^2 g \right).
\end{equation*}
In dimension four this coincides with the classical Bach tensor
\begin{equation}\label{eq:Bach-classical}
	B_{ij} = \nabla^k \nabla^l W_{iklj} + \frac{1}{2} \R^{kl} W_{iklj}
\end{equation}
of~\cite{Bach1921,Besse1987}, which is symmetric, traceless, and divergence-free. With the full tensor norm used here, the Weyl functional satisfies
\begin{equation*}
	\mathcal{W}(g) = \int_M |W|^2 dv_g, \qquad \mathrm{grad} \mathcal{W} = -4 B.
\end{equation*}
We also set
\begin{equation*}
	T_g := (n - 2) \left( \nabla_g^2 (\tr_g S_g) - \frac{1}{n} g \Delta_g ( \tr_g S_g) \right) + 4(n - 1) \left(S_g \times S_g - \frac{1}{n} |S_g|_g^2 g \right) - n^2(\tr_g S_g) \mathring{S}_g,
\end{equation*}
where $\mathring{S}_g = S_g - \frac{1}{n}(\tr_g S_g) g$.

\subsection{The J-tensor}\label{subsec:J-tensor}

We now collect the basic identities for the $J$-tensor that will be used throughout the paper. 

\begin{definition}\label{def:J-tensor}
	Let $(M^n,g)$ be a Riemannian manifold with $n \ge 4$. The $J$-tensor is the symmetric $2$-tensor
	\begin{equation*}
		J_g := - \frac{1}{2} \Gamma_g^*1.
	\end{equation*}
\end{definition}

The following decomposition is due to Lin and Yuan~\cite[Theorem~1.3]{LinYuan2017}.

\begin{proposition}\label{prop:J-explicit}
	Let $(M^n, g)$ be a Riemannian manifold with $n \ge 4$. Then
	\begin{equation*}
		J_g = -\frac{1}{2} \Gamma_g^*1 = \frac{1}{n} Q_g g - \frac{1}{n-2} B_g - \frac{n-4}{4 (n-1) (n-2)} T_g.
	\end{equation*}
	In particular, when $n = 4$,
	\begin{equation}\label{eq:J-dim4}
		J_g = \frac{1}{4} Q_g g - \frac{1}{2} B_g,
	\end{equation}
	and the $J$-flow becomes
	\begin{equation}\label{eq:J-flow-dim4-again}
		\frac{\partial g}{\partial t} = -2 J_g = B_g - \frac{1}{2} Q_g g.
	\end{equation}
\end{proposition}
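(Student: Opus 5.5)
The proof is a direct computation: start from \Cref{eq:Gamma-star-1} with the constants \Cref{eq:constants}, and rewrite everything in terms of the Schouten tensor $S = S_g$ and $\sigma := \tr_g S = R/(2(n-1))$. I would first substitute $\Ric = (n-2)S + \sigma g$ and $R = 2(n-1)\sigma$ into \Cref{eq:Gamma-star-1}. This gives $-\tfrac12\Gamma_g^*1$ as a combination of three kinds of terms:
\begin{itemize}
\item second-order terms $\Delta S$, $\nabla^2\sigma$ and $g\Delta\sigma$;
\item the curvature contraction $\mathring{\Rm}\cdot S$;
\item quadratic terms $S\times S$, $\sigma S$, $|S|^2 g$ and $\sigma^2 g$.
\end{itemize}
Since $\tr_g(\mathring{\Rm}\cdot\Ric) = |\Ric|^2$ and $\tr J = Q$, a quick check of the trace confirms that the coefficient of $g$ in the final answer must be $Q_g/n$.

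Next I would expand the proposed right-hand side $\frac1n Q g - \frac1{n-2}B - \frac{n-4}{4(n-1)(n-2)}T$ in the same basis. The key step is to eliminate the full Riemann tensor in $\Delta_L S$ using \Cref{eq:Rm-decomp}:
\[
R_{ikjl}S^{kl} = W_{ikjl}S^{kl} + \big(S\odot g\big)_{ikjl}S^{kl},
\]
where $\Rm = W + S\odot g$ follows from \Cref{eq:Rm-decomp}. The same rewriting applies to $\mathring{\Rm}\cdot\Ric$ in \Cref{eq:Gamma-star-1}: it equals $(n-2)\,\mathring{\Rm}\cdot S + \sigma\Ric$, and $\mathring{\Rm}\cdot S$ splits as $W\cdot S$ plus explicit quadratic terms in $S$ and $\sigma$.

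I would then compare coefficients term by term:
\begin{itemize}
\item \emph{$W\cdot S$ terms.} On both sides these come only from $B$ and from $\mathring{\Rm}\cdot\Ric$, via $-B_n\cdot(-2)\cdot\tfrac12(n-2) = \tfrac{2}{n-2}$ versus $-\tfrac{1}{n-2}\cdot(-2)$. They should match immediately, with sign conventions fixed by the slot identities in \Cref{subsec:curvature-conventions}.
\item \emph{Leading terms $\Delta S$.} Both sides give $\tfrac{1}{n-2}\Delta S$; note $-B_n(n-2)/2\cdot\ldots$ reduces to this.
\item \emph{Remaining terms.} The coefficients of $\nabla^2\sigma$, $g\Delta\sigma$, $S\times S$, $\sigma S$, $|S|^2g$ and $\sigma^2 g$ become polynomial identities in $n$. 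These are checked using the explicit $C_n$, and it is here that the $(n-4)$ factor multiplying $T$ appears.
\end{itemize}

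The main obstacle is bookkeeping with $C_n$: the $\sigma\mathring S$ and $\nabla^2\sigma$ coefficients must combine into exactly $-\frac{n-4}{4(n-1)(n-2)}$ times the corresponding entries of $T$. As a cross-check I would take the traceless part of both sides separately, since $B$ and $T$ are traceless, and verify the trace part against the $Q$ formula written in Schouten form. For $n=4$ the $T$ term drops out and \Cref{eq:J-dim4} follows. Substituting it into the flow equation gives \Cref{eq:J-flow-dim4-again}.
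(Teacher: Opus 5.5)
Your proposal is correct, but it does not follow the paper, which gives no computation for this statement and simply attributes the decomposition to Lin--Yuan \cite[Theorem~1.3]{LinYuan2017}. You instead verify it directly from \Cref{eq:Gamma-star-1}, and that verification does close with the conventions of \Cref{subsec:curvature-conventions}.

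Write $\sigma=\tr_g S$ and $(W\cdot S)_{ij}:=W_{kijl}S^{kl}$, using the same slot as $\mathring{\Rm}\cdot$. Then
\[
\mathring{\Rm}\cdot S=W\cdot S+|S|^2g-2\,S\times S+\sigma S ,
\]
hence $\Delta_L S=\Delta S+2W\cdot S+2|S|^2g-2n\,S\times S$ and $B=\Delta S-\nabla^2\sigma+2W\cdot S-n\,S\times S+|S|^2g$. Both sides of the proposition then expand to
\begin{multline*}
-\tfrac{1}{n-2}\Delta S-\tfrac{n^2-10n+12}{4(n-1)(n-2)}\nabla^2\sigma-\tfrac{3}{4(n-1)}g\Delta\sigma-\tfrac{2}{n-2}W\cdot S+\tfrac{4}{n-2}S\times S\\
-\tfrac{2}{n-2}|S|^2g+\tfrac{n^2(n-4)}{4(n-1)(n-2)}\sigma S+\tfrac{n^2(n-4)+8(n-1)}{4(n-1)(n-2)^2}\sigma^2g .
\end{multline*}

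The citation buys brevity. However, it tacitly assumes that Lin--Yuan's conventions for $\Delta_L$, for the curvature contraction and for the normalization of $B$ and $T$ agree with the ones fixed here. Your computation actually checks this inside the paper. It also shows where the factor $n-4$ comes from: in the $\sigma S$ coefficient, the $16(n-1)$ in $C_n$ cancels against the $\mathring{\Rm}\cdot\Ric$ contribution, leaving $n^2(n-4)$. The cost is that you verify a known target rather than explain where $B$ and $T$ come from. Your argument also still rests on \Cref{eq:Gamma-star-1}, i.e.\ on the cited formula for $\Gamma_g^*$.

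Two of your stated coefficients have sign slips, although the corrected values still agree on both sides.

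First, the $\Delta S$ coefficient is $-\tfrac{1}{n-2}$ on both sides, not $+\tfrac{1}{n-2}$.

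Second, $-B_n\cdot(-2)\cdot\tfrac12(n-2)=B_n(n-2)=-\tfrac{2}{n-2}$. Since the paper's term $-2R_{ikjl}S^{kl}$ in $\Delta_L S$ equals $+2R_{kijl}S^{kl}$, the Bach side also gives $-\tfrac{2}{n-2}W\cdot S$ once a single slot convention is used throughout. Your ``$-\tfrac{1}{n-2}\cdot(-2)$'' mixes the two slots.

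Finally, do not take $\tr_g J=Q$ from \Cref{prop:trace-J} for your trace check, since the paper deduces that proposition from this one. It follows directly from \Cref{eq:Gamma-star-1}: the $\Delta R$ coefficient of $\tr_g(-\tfrac12\Gamma_g^*1)$ is $\tfrac n4A_n+\tfrac n4B_n+(n-1)C_n$, and this equals $A_n$.
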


\begin{proposition}\label{prop:trace-J}
	For any Riemannian manifold $(M^n, g)$ with $n \ge 4$,
	\begin{equation*}
		\tr_g J_g = Q_g.
	\end{equation*}
	Consequently, along the $J$-flow, the volume element evolves by
	\begin{equation}\label{eq:volume-evolution}
		\frac{\partial}{\partial t} dv_g = -Q_g dv_g.
	\end{equation}
\end{proposition}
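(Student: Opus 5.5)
Two routes are available for $\tr_g J_g = Q_g$: a direct trace of \Cref{eq:Gamma-star-1}, or the trace of the decomposition in \Cref{prop:J-explicit}. The direct trace is self-contained, so I will use it and keep the other as a check.

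\textbf{Direct trace.} Taking $\tr_g$ of \Cref{eq:Gamma-star-1} uses only
\begin{equation*}
	\tr_g \Delta_g \Ric = \Delta_g R, \qquad \tr_g(g\Delta_g R) = n\Delta_g R, \qquad \tr_g \nabla^2 R = \Delta_g R,
\end{equation*}
together with $\tr_g(\mathring{\Rm}\cdot\Ric) = |\Ric|^2$ and $\tr_g(R\Ric) = R^2$. This gives
\begin{equation*}
	\tr_g \Gamma_g^*1 = \Big(-B_n - \tfrac{n}{2}(A_n+B_n+4C_n) + B_n + 2C_n\Big)\Delta_g R - 2B_n|\Ric|^2 - 2C_n R^2 .
\end{equation*}
Multiplying by $-\tfrac12$, the $|\Ric|^2$ and $R^2$ coefficients become $B_n$ and $C_n$, matching \Cref{eq:intro-Q}.

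\textbf{Laplacian coefficient.} The coefficient of $\Delta_g R$ in $-\tfrac12\tr_g\Gamma_g^*1$ is
\begin{equation*}
	\tfrac{n}{4}(A_n + B_n + 4C_n) - C_n .
\end{equation*}
Equating this with $A_n$ requires
\begin{equation*}
	(n-4)A_n + nB_n + 4(n-1)C_n = 0 .
\end{equation*}
Checking this identity from \Cref{eq:constants} is the main obstacle, so I carry it out explicitly.

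Multiply through by $2(n-1)(n-2)^2$. The three terms become:
\begin{itemize}
	\item $-(n-4)(n-2)^2$ from $(n-4)A_n$;
	\item $-4n(n-1)$ from $nB_n$;
	\item $n^2(n-4) + 16(n-1)$ from $4(n-1)C_n$.
\end{itemize}
Expanding $(n-4)(n-2)^2 = n^3 - 8n^2 + 20n - 16$, the sum is
\begin{equation*}
	-n^3 + 8n^2 - 20n + 16 - 4n^2 + 4n + n^3 - 4n^2 + 16n - 16 = 0.
\end{equation*}
Hence $\tr_g J_g = A_n\Delta_g R + B_n|\Ric|^2 + C_nR^2 = Q_g$.

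\textbf{Cross-check.} Alternatively, trace \Cref{prop:J-explicit}. Since $\Delta_L$ commutes with trace, $\tr\Delta_L S = \Delta\tr S$. Then
\begin{equation*}
	\tr B = \Delta\tr S - \Delta\tr S + n\big(|S|^2 - |S|^2\big) = 0,
\end{equation*}
and $T$ is visibly traceless. So $\tr J = \tfrac1n Q_g\cdot n = Q_g$, in agreement with the direct computation.

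\textbf{Volume evolution.} This is the standard first variation $\partial_t\, dv_g = \tfrac12 \tr_g(\partial_t g)\, dv_g$. Along the flow $\partial_t g = -2J_g$, so
\begin{equation*}
	\partial_t\, dv_g = \tfrac12\tr_g(-2J_g)\, dv_g = -\tr_g J_g\, dv_g = -Q_g\, dv_g ,
\end{equation*}
which is \Cref{eq:volume-evolution}.
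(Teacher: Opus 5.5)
Your proposal is correct. Your ``cross-check'' is in fact the paper's entire proof: the paper traces the decomposition in \Cref{prop:J-explicit}, using only that $B_g$ and $T_g$ are trace-free. It then obtains the volume formula from $\partial_t\,dv_g=\tfrac12\tr_g(\partial_t g)\,dv_g$, exactly as you do.

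Your primary route is genuinely different. You trace the raw adjoint formula \Cref{eq:Gamma-star-1} and reduce the claim to the coefficient identity $(n-4)A_n+nB_n+4(n-1)C_n=0$. Your verification of that identity from \Cref{eq:constants} is correct, as are the $\Delta_g R$ coefficient $\tfrac n4(A_n+B_n+4C_n)-C_n$ and the trace identities you use.

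The trade-off is as follows. The paper's argument is one line, but it rests on the cited Lin--Yuan decomposition, which is a substantial computation in its own right. Yours needs only $\Gamma_g^*1$ and elementary algebra, so it is self-contained and mechanically checkable, at the price of making the cancellation look coincidental.

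It is not coincidental. Since $\int_M f\,\tr_g\Gamma_g^*1\,dv_g=\int_M\Gamma_g(fg)\,dv_g$, the conformal covariance of $Q$ gives $\tr_g\Gamma_g^*1=-2Q_g$ directly. In dimension four, the $s$-derivative of $Q_{e^{2sf}g}$ at $s=0$ is $P_gf-4Q_gf$ with $\int_M P_gf\,dv_g=0$, as in \Cref{prop:conformal-invariance}. In general dimension the same holds with the Paneitz--Branson operator minus its zeroth-order term. Your identity among $A_n$, $B_n$, $C_n$ is exactly what this covariance forces.
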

\begin{proof}
	Both $B_g$ and $T_g$ are trace-free, so tracing \Cref{prop:J-explicit} gives $\tr_g J_g = Q_g$. The volume formula follows from $\partial_t dv_g = \frac{1}{2} \tr_g(\partial_t g) dv_g$ and $\partial_t g = -2 J_g$.
\end{proof}

\begin{proposition}\label{prop:J-Schur}
	The tensor $J_g$ satisfies $\divv_g J_g = \frac{1}{4} dQ_g$. Consequently a $J$-Einstein metric has constant $Q$ on each connected component when $n \ne 4$. In dimension four,
	\begin{equation*}
		J_g \text{-Einstein} \quad \Longleftrightarrow \quad B_g = 0,
	\end{equation*}
	and $J$-Einstein with constant coefficient is equivalent to $B_g = 0$ and $Q_g$ constant.
\end{proposition}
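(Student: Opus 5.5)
The cleanest route to $\divv_g J_g = \frac{1}{4}\,dQ_g$ avoids the explicit curvature formula altogether and uses diffeomorphism invariance of the $Q$-curvature. Since $Q_{\varphi^* g} = \varphi^* Q_g$ for every diffeomorphism $\varphi$, differentiating along the flow of a compactly supported vector field $X$ gives
\begin{equation*}
	\Gamma_g(\mathcal{L}_X g) = X(Q_g) = \langle dQ_g, X\rangle .
\end{equation*}
Recall that $\mathcal{L}_X g = 2\,\mathrm{Sym}\,\nabla X^\flat = -2\delta_g^* X^\flat$, where $\delta_g^*$ is the formal adjoint of $\delta_g$. Pairing with the constant function $1$ and using the definition of the adjoint, we get
\begin{equation*}
	\int_M \langle \Gamma_g^* 1, \mathcal{L}_X g\rangle\, dv_g = \int_M \langle dQ_g, X\rangle\, dv_g .
\end{equation*}
By \Cref{def:J-tensor} we have $\Gamma_g^*1 = -2J_g$. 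For the left side, $\int \langle J, \mathcal{L}_X g\rangle = 2\int J^{ij}\nabla_i X_j = -2\int (\divv J)(X)$. Hence $4\int (\divv J)(X) = \int \langle dQ, X\rangle$ for all $X$, which yields $\divv_g J_g = \frac14 dQ_g$.

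One point needs checking: the ordering and sign conventions, namely that $\Gamma_g^*$ is the $L^2$-adjoint with respect to $dv_g$ and that the identity is pointwise. Since $X$ is arbitrary with compact support, the integrated identity localizes, so this is fine even on noncompact $M$. As a sanity check I would verify the identity in dimension four against \Cref{eq:J-dim4}: there $\divv J = \frac14 dQ - \frac12 \divv B = \frac14 dQ$, because the Bach tensor is divergence-free in dimension four. This is consistent.

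For the consequences, suppose $J_g = \Lambda g$ with $\Lambda$ smooth. Then $\divv J = d\Lambda$, and tracing with \Cref{prop:trace-J} gives $Q_g = n\Lambda$. The divergence identity becomes $d\Lambda = \frac{n}{4} d\Lambda$, that is $(1-\frac n4)\,d\Lambda = 0$. When $n\neq 4$ this forces $d\Lambda = 0$, so $\Lambda$, and hence $Q_g = n\Lambda$, is locally constant.

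In dimension four, \Cref{eq:J-dim4} shows that $J_g - \frac14 Q_g g = -\frac12 B_g$. So $J_g$ is pure trace exactly when $B_g = 0$, and then necessarily $\Lambda = Q_g/4$; this gives the equivalence $J_g$-Einstein $\Leftrightarrow B_g = 0$. With constant coefficient, $\Lambda$ constant is the same as $Q_g$ constant, so $J$-Einstein with constant coefficient is equivalent to $B_g = 0$ together with constant $Q_g$.

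The only delicate step is justifying the invariance $\Gamma_g(\mathcal L_X g) = X(Q_g)$, which is simply the naturality of $Q$. It is standard, and no other obstacle arises.
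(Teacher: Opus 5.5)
Your proof is correct and is essentially the paper's argument. The paper pairs the gradient $-2J_g+\frac12 Q_g g$ of the diffeomorphism-invariant functional $\int_M Q_g\,dv_g$ with a Lie derivative, whereas you pair the pointwise naturality identity $\Gamma_g(\mathcal L_X g)=X(Q_g)$ against $1$; these amount to the same computation, yours simply not involving the volume-form term. The consequences, including the four-dimensional case via \Cref{eq:J-dim4}, are handled identically in both.

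There is one harmless slip. With the paper's convention $\delta_g=-\divv_g$ one has $\mathcal L_X g = 2\delta_g^* X^\flat$, not $-2\delta_g^* X^\flat$. You never use that formula, though, and your direct integration by parts has the correct sign.
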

\begin{proof}
	The functional $\int_M Q_g dv_g$ is invariant under pullback by diffeomorphisms, and its first variation is the $L^2$-pairing with $-2 J + \frac{1}{2} Qg$. Pairing this tensor with a Lie derivative and integrating by parts shows that
	\begin{equation*}
		\divv(-2 J + \frac{1}{2} Q g) = 0.
	\end{equation*}
	For $J$-Einstein, $J = \frac{1}{n} Q g$, this becomes $(\frac{1}{n} - \frac{1}{4}) dQ = 0$. At $n = 4$ the equation is an identity, and both assertions follow instead from \Cref{eq:J-dim4}. 
\end{proof}

Expanding~\Cref{eq:Gamma-star-1} and collecting terms by differential order yields an explicit curvature formula for $J_g$.

\begin{corollary}\label{cor:J-full}
	The $J$-tensor admits the expression
	\begin{equation}\label{eq:J-full}
		\begin{split}
			J_g 
			& = -\frac{(3n-4)(n-4)}{8(n-1)^2(n-2)^2} g \Delta_g R_g - \frac{n^2-10n+12}{8(n-1)^2(n-2)} \nabla_g^2 R_g - \frac{1}{(n-2)^2} \Delta_g \Ric_g \\
			& \quad - \frac{2}{(n-2)^2} \mathring{\Rm}_g \cdot \Ric_g + \frac{n^2(n-4)+16(n-1)}{8(n-1)^2(n-2)^2} R_g \Ric_g.
		\end{split}
	\end{equation}
\end{corollary}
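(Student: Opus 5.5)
The plan is to start from \Cref{eq:Gamma-star-1} and multiply by $-\frac{1}{2}$, which gives
\begin{equation*}
	J_g = \frac{B_n}{2} \Delta_g \Ric_g + \frac{1}{2}\Big(\frac{1}{2}A_n + \frac{1}{2}B_n + 2C_n\Big) g \Delta_g R_g - \frac{1}{2}(B_n + 2C_n)\nabla_g^2 R_g + B_n\, \mathring{\Rm}_g \cdot \Ric_g + C_n R_g \Ric_g.
\end{equation*}
We then substitute the constants from \Cref{eq:constants}. The coefficients of $\Delta_g \Ric_g$, of $\mathring{\Rm}_g\cdot\Ric_g$ and of $R_g\Ric_g$ follow at once.
\begin{itemize}
	\item $\frac{B_n}{2} = -\frac{1}{(n-2)^2}$, the coefficient of $\Delta_g\Ric_g$.
	\item $B_n = -\frac{2}{(n-2)^2}$, the coefficient of $\mathring{\Rm}_g\cdot\Ric_g$.
	\item $C_n = \frac{n^2(n-4)+16(n-1)}{8(n-1)^2(n-2)^2}$, the coefficient of $R_g\Ric_g$.
\end{itemize}
These match \Cref{eq:J-full}.

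The remaining work is two rational-function simplifications over the common denominator $8(n-1)^2(n-2)^2$. For the Hessian term, $-\frac{1}{2}(B_n+2C_n) = -\frac{1}{(n-2)^2}\cdot\frac{-8(n-1)^2 + n^3 - 4n^2 + 16n - 16}{8(n-1)^2}$. The numerator is
\begin{equation*}
	n^3 - 12n^2 + 32n - 24 = (n-2)(n^2 - 10n + 12),
\end{equation*}
which gives $-\frac{n^2-10n+12}{8(n-1)^2(n-2)}$. For the $g\Delta_g R_g$ term, we compute $\frac{1}{4}A_n + \frac{1}{4}B_n + C_n$. Over the same denominator its numerator is
\begin{equation*}
	-(n-1)(n-2)^2 - 4(n-1)^2 + n^3 - 4n^2 + 16n - 16 = -3n^2 + 16n - 16 = -(3n-4)(n-4).
\end{equation*}
This matches the stated coefficient.

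The only real obstacle is therefore the correctness of \Cref{eq:Gamma-star-1} itself, which we are allowed to assume. If a check is wanted, we would verify the derivation from $\Gamma_g^*f$ at $f\equiv1$. The terms $\nabla^2\Delta f$ and $\Delta^2 f$ vanish. We use $\delta\Ric = -\frac{1}{2}dR$, so $\delta^2\Ric = \frac{1}{2}\Delta R$ (with $\delta=-\divv$, $\delta\, dR = -\Delta R$), and $\nabla\delta\Ric = -\frac{1}{2}\nabla^2R$. The $A_n$ block then reduces to $A_n\big(-\frac{1}{2}g\Delta R + \nabla^2R - \nabla^2R\big) = -\frac{1}{2}A_n g\Delta R$. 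Collecting the $B_n$ and $C_n$ blocks reproduces \Cref{eq:Gamma-star-1}, and the corollary follows.
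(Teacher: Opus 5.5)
Your proposal is correct and follows the same route as the paper, which obtains \Cref{eq:J-full} by multiplying \Cref{eq:Gamma-star-1} by $-\frac{1}{2}$, substituting the constants from \Cref{eq:constants}, and collecting terms; your two numerator simplifications, $n^3-12n^2+32n-24=(n-2)(n^2-10n+12)$ and $-3n^2+16n-16=-(3n-4)(n-4)$, are right. Your optional re-derivation of \Cref{eq:Gamma-star-1} from $\Gamma_g^*f$ at $f\equiv 1$, using $\delta\Ric=-\frac{1}{2}dR$ and $\delta dR=-\Delta R$, is also correct.
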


\begin{corollary}\label{cor:J-dim4-full}
	When $n = 4$, formula~\Cref{eq:J-full} specializes to
	\begin{equation}\label{eq:J-dim4-full}
		J_g = \frac{1}{12} \nabla_g^2 R_g - \frac{1}{4} \Delta_g \Ric_g - \frac{1}{2} \mathring{\Rm}_g \cdot \Ric_g + \frac{1}{6} R_g \Ric_g.
	\end{equation}
\end{corollary}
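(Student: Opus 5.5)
The plan is to substitute $n=4$ into the coefficients of \Cref{eq:J-full}, which itself comes from \Cref{eq:Gamma-star-1} with $J_g=-\frac12\Gamma_g^*1$. The first step is to record the four-dimensional constants from \Cref{eq:constants}:
\begin{equation*}
A_4=-\tfrac16,\qquad B_4=-\tfrac12,\qquad C_4=\tfrac{16\cdot 3}{8\cdot 9\cdot 4}=\tfrac16 .
\end{equation*}
As a consistency check, these agree with \Cref{eq:Q-dim4-intro}.

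I would then evaluate each coefficient of \Cref{eq:J-full} at $n=4$.
\begin{itemize}
\item The coefficient of $g\Delta_g R_g$ contains the factor $(n-4)$, so it vanishes.
\item For $\nabla_g^2R_g$ we have $n^2-10n+12=-12$ and $8(n-1)^2(n-2)=144$, so the term is $+\frac{1}{12}\nabla_g^2R_g$.
\item The coefficients of $\Delta_g\Ric_g$ and $\mathring{\Rm}_g\cdot\Ric_g$ are $-\frac14$ and $-\frac12$.
\item For the last term, $n^2(n-4)+16(n-1)=48$ and $8\cdot 9\cdot 4=288$, giving $+\frac16 R_g\Ric_g$.
\end{itemize}
Together these produce exactly \Cref{eq:J-dim4-full}.

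To be safe, I would also check the result directly against \Cref{eq:Gamma-star-1}. With $J=-\frac12\Gamma^*1$, the $\Delta\Ric$ coefficient is $\frac12B_4=-\frac14$. The $g\Delta R$ coefficient is $\frac12\left(\frac12A_4+\frac12B_4+2C_4\right)=\frac12\left(-\frac1{12}-\frac14+\frac13\right)=0$. The $\nabla^2R$ coefficient is $-\frac12(B_4+2C_4)=-\frac12\left(-\frac12+\frac13\right)=\frac1{12}$. The $\mathring{\Rm}\cdot\Ric$ coefficient is $B_4=-\frac12$, and the $R\Ric$ coefficient is $C_4=\frac16$.

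Finally, as a sanity check I would take the trace. Using $\tr(\mathring{\Rm}\cdot\Ric)=|\Ric|^2$, the trace is $\frac1{12}\Delta R-\frac14\Delta R-\frac12|\Ric|^2+\frac16R^2=Q_g$, consistent with \Cref{prop:trace-J}. The main point requiring care is arithmetic and sign bookkeeping; there is no conceptual obstacle.
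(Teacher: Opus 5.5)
Your proposal is correct and follows the same route as the paper, which gives no separate argument and obtains \Cref{eq:J-dim4-full} simply by setting $n=4$ in the coefficients of \Cref{eq:J-full}. Your arithmetic checks out, including the cross-check against \Cref{eq:Gamma-star-1} with $A_4=-\tfrac16$, $B_4=-\tfrac12$, $C_4=\tfrac16$, and the trace identity $\tr_g J_g=Q_g$.
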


\section{Einstein and product solutions}\label{sec:examples}

Before developing the general theory, we study explicit solutions on Einstein four-manifolds and products of constant-curvature surfaces. These examples illustrate four features of the flow: homothetic shrinking of every non-Ricci-flat Einstein metric; convergence after rescaling to a $J$-Einstein metric on $S^2 \times \Sigma_\gamma$, which admits no Einstein metric; stability of the symmetric metric on $S^2 \times S^2$ under the volume-normalized flow within the product family; and failure of convergence on $S^2 \times \mathbb T^2$ despite decreasing curvature energy and positive Yamabe constant at every finite time. The comparison with the Ricci flow on $S^2 \times \Sigma_\gamma$ parallels the behavior observed by Streets~\cite{Streets2008} for the gradient flow of $\int_M |\Rm|^2$.

\subsection{\texorpdfstring{$J$-flow on Einstein four-manifolds}{J-flow on Einstein four-manifolds}}\label{subsec:Einstein-solutions}

Let $(M^4, g_0)$ be an Einstein four-manifold, $\Ric_0 = \frac{1}{4} R_0 g_0$ with $R_0$ constant. Then it is Bach flat and
\begin{equation*}
	Q_0 = -\frac{1}{2} |\Ric_0|_{g_0}^2 + \frac{1}{6} R_0^2 = \frac{1}{24} R_0^2 \ge 0,
\end{equation*}
with equality if and only if $g_0$ is Ricci-flat. Hence $J_0 = \frac{1}{4} Q_0 g_0$, so $g_0$ is $J$-Einstein, and the flow preserves the homothety class. Writing $g(t) = f(t) g_0$ and using the scaling relations $Q_{fg} = f^{-2} Q_g$ and $B_{fg} = f^{-1} B_g$, \Cref{eq:intro-J-flow-dim4} reduces to
\begin{equation*}
	f'(t) = -\frac{Q_0}{2} f(t)^{-1}, \qquad f(0) = 1,
\end{equation*}
with solution $f(t) = \sqrt{1-Q_0 t}$. 

Therefore, if $g_0$ is Ricci-flat, then $Q_0 = 0$ and $g_0$ is $J$-flat, hence a static solution. If $R_0 \neq 0$, then $Q_0 > 0$ and the solution shrinks homothetically, becoming extinct at time $T_{\max} = Q_0^{-1} = 24 R_0^{-2}$. In particular, on the round sphere $(\mathbb S^4, g_{\mathbb S^4})$ with sectional curvature $1$ one has $Q_0 = 6$, and the flow is a pure conformal shrinking with extinction time $\frac{1}{6}$. Notably, hyperbolic manifolds also shrink, because $Q_0$ depends on $R_0^2$.

\subsection{\texorpdfstring{$J$-flow on $S^2 \times \Sigma_\gamma$}{J-flow on S² × Σγ}}\label{subsec:sphere-hyperbolic}

We now turn to products of surfaces. The following curvature formulas will be used in all three product examples below.

\begin{lemma}\label[lemma]{lem:product-curvature}
	Let $(M_1^2, g_1)$ and $(M_2^2, g_2)$ be surfaces of constant Gauss curvature $\kappa_1$ and $\kappa_2$ respectively, and let $g = g_1 \oplus g_2$ on $M = M_1 \times M_2$. Then
	\begin{equation*}
		Q_g = -\frac{1}{3} \left(\kappa_1^2 - 4 \kappa_1 \kappa_2 + \kappa_2^2 \right),
	\end{equation*}
	the Bach tensor is diagonal with
	\begin{equation*}
		B_g = \left( \frac{\kappa_1^2 - \kappa_2^2}{6} \right)  g_1 \oplus \left( -\frac{\kappa_1^2 - \kappa_2^2}{6} \right) g_2,
	\end{equation*}
	and consequently
	\begin{equation}\label{eq:product-J}
		J_g = \left( -\frac{\kappa_1(\kappa_1 - 2\kappa_2)}{6} \right) g_1  \oplus \left( -\frac{\kappa_2(\kappa_2 - 2 \kappa_1)}{6} \right) g_2 .
	\end{equation}
\end{lemma}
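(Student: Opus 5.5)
We compute the curvature of the product directly and then read off $Q_g$, $B_g$ and $J_g$ from the four-dimensional formulas already recorded. The first step collects the basic data for $g = g_1 \oplus g_2$. The curvature tensor splits as $\Rm = \kappa_1\, \tfrac12 g_1\odot g_1 \oplus \kappa_2\, \tfrac12 g_2 \odot g_2$, with the normalization fixed by \Cref{eq:KN} and the conventions of \Cref{subsec:curvature-conventions}. Consequently
\begin{equation*}
	\Ric_g = \kappa_1 g_1 \oplus \kappa_2 g_2, \qquad R_g = 2(\kappa_1+\kappa_2), \qquad |\Ric_g|^2 = 2\kappa_1^2 + 2\kappa_2^2 .
\end{equation*}
All these quantities are parallel, so $\nabla R_g = 0$ and $\nabla \Ric_g = 0$.

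For $Q_g$ we use \Cref{eq:Q-dim4-intro}. Since $\Delta R_g = 0$,
\begin{equation*}
	Q_g = -(\kappa_1^2+\kappa_2^2) + \tfrac{2}{3}(\kappa_1+\kappa_2)^2 = -\tfrac13\left(\kappa_1^2 - 4\kappa_1\kappa_2 + \kappa_2^2\right).
\end{equation*}

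Rather than computing the Bach tensor from \Cref{eq:Bach-classical}, we compute $J_g$ first from \Cref{cor:J-dim4-full}. Since $\nabla \Ric = 0$, the first two terms drop out. The remaining term needed is $(\mathring{\Rm}\cdot\Ric)_{ij} = R_{kijl}R^{kl}$. Because $\Rm$ has no mixed components, for $i,j$ tangent to $M_1$ this contraction involves only the $M_1$ factor. There it equals $\kappa_1$ times the Ricci tensor of the surface, which is $\kappa_1 \cdot \kappa_1 (g_1)_{ij}$. This can be checked using $\mathring{\Rm}\cdot g = \Ric$ applied factorwise, together with $\Ric|_{M_1} = \kappa_1 g_1$. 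Hence $\mathring{\Rm}\cdot\Ric = \kappa_1^2 g_1 \oplus \kappa_2^2 g_2$.

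Substituting gives, on the $g_1$ block,
\begin{equation*}
	J_g\big|_{M_1} = \left(-\tfrac12\kappa_1^2 + \tfrac13(\kappa_1+\kappa_2)\kappa_1\right) g_1 = -\tfrac{\kappa_1(\kappa_1-2\kappa_2)}{6}\, g_1 ,
\end{equation*}
and symmetrically on the $g_2$ block. This is \Cref{eq:product-J}.

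The Bach tensor then follows from \Cref{eq:J-dim4}: $B_g = \tfrac12 Q_g g - 2J_g$. On the $g_1$ block this gives the coefficient
\begin{equation*}
	-\tfrac16(\kappa_1^2-4\kappa_1\kappa_2+\kappa_2^2) + \tfrac13(\kappa_1^2 - 2\kappa_1\kappa_2) = \tfrac{\kappa_1^2-\kappa_2^2}{6},
\end{equation*}
and on the $g_2$ block the coefficient is its negative. This agrees with tracelessness of $B_g$.

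The only delicate point is the sign and index-slot convention in $\mathring{\Rm}\cdot\Ric$ and in the Kulkarni--Nomizu normalization. We therefore verify these against the stated identities $\tr(\mathring{\Rm}\cdot\Ric) = |\Ric|^2$ and $\mathring{\Rm}\cdot g = \Ric$, and not against a memorized formula. As an independent check, the trace of $J_g$ is $-\tfrac13(\kappa_1^2 - 2\kappa_1\kappa_2 + \kappa_2^2 - 2\kappa_1\kappa_2)$, which equals $Q_g$ as required by \Cref{prop:trace-J}.
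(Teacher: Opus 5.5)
Your proof is correct, but it runs in the opposite direction from the paper's.

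The paper's route:
\begin{enumerate}
\item It computes the Weyl tensor explicitly from \Cref{eq:Rm-decomp}.
\item It uses local symmetry to reduce the classical formula \Cref{eq:Bach-classical} to $B_{ij} = \frac12 R^{kl}W_{iklj}$, which gives $B_g$.
\item Only then does it obtain $J_g$ from $J_g = \frac14 Q_g g - \frac12 B_g$.
\end{enumerate}

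Your route:
\begin{enumerate}
\item You evaluate $J_g$ directly from \Cref{cor:J-dim4-full}. Parallelism of $\Ric$ leaves only $-\frac12\mathring{\Rm}\cdot\Ric + \frac16 R_g\Ric_g$.
\item The single contraction $\mathring{\Rm}\cdot\Ric = \kappa_1^2 g_1\oplus\kappa_2^2 g_2$ follows at once from the block structure of $\Rm$.
\item You then recover $B_g$ by reading \Cref{eq:J-dim4} backwards.
\end{enumerate}

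What your route buys: it is shorter and never touches the Weyl tensor. Its only sign-sensitive input is the slot convention for $\mathring{\Rm}\cdot\Ric$, and you pin that down correctly via $\mathring{\Rm}\cdot g = \Ric$ and the trace identity. Your final check $\tr_g J_g = Q_g$ is also consistent.

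What the paper's route buys: it costs an explicit Weyl computation, but it yields the components \Cref{eq:product-Weyl}. These carry geometric information beyond the lemma's statement, notably that $W\equiv 0$ when $\kappa_1=-\kappa_2$. The paper uses this later to call the limiting metric on $S^2\times\Sigma_\gamma$ locally conformally flat, although Bach-flatness there also follows directly from your formula for $B_g$. The paper's route also computes $B_g$ from the standard classical formula, independently of the expansion of $\Gamma_g^*1$.

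Both arguments use the Lin--Yuan decomposition, just in opposite directions. The fact that the two computations agree is a consistency check between \Cref{cor:J-dim4-full} and \Cref{eq:Bach-classical}.
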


\begin{proof}
	Fix a local orthonormal frame $\{e_1, e_2, e_3, e_4\}$ with $e_1, e_2$ tangent to $M_1$ and $e_3, e_4$ tangent to $M_2$. Up to the symmetries of $\Rm$, the only non-vanishing curvature components are $R_{1221} = \kappa_1$ and $R_{3443} = \kappa_2$, whence
	\begin{equation*}
		\Ric = \diag(\kappa_1, \kappa_1, \kappa_2, \kappa_2), \qquad R = 2(\kappa_1 + \kappa_2), \qquad | \Ric |^2 = 2 \kappa_1^2 + 2 \kappa_2^2 .
	\end{equation*}
	Since $R$ is constant, formula \Cref{eq:Q-dim4-intro} gives
	\begin{equation*}
		Q_g = -\frac{1}{2} \left(2 \kappa_1^2 + 2 \kappa_2^2 \right) + \frac{1}{6} \cdot 4( \kappa_1 + \kappa_2)^2 = -\frac{1}{3} \left(\kappa_1^2 - 4 \kappa_1 \kappa_2 + \kappa_2^2 \right).
	\end{equation*}
	Next we compute the Weyl tensor from the decomposition in \Cref{eq:Rm-decomp}. The traceless Ricci tensor is 
	\begin{equation*}
		E = \diag(\frac{\kappa_1 - \kappa_2}{2}, \frac{\kappa_1 - \kappa_2}{2}, - \frac{\kappa_1 - \kappa_2}{2}, -\frac{\kappa_1 - \kappa_2}{2}).
	\end{equation*}
	A direct evaluation of the Kulkarni--Nomizu products in the frame above yields
	\begin{equation}\label{eq:product-Weyl}
		W_{1221} = W_{3443} = \frac{\kappa_1 + \kappa_2}{3}, \qquad W_{1331} = W_{1441} = W_{2332} = W_{2442} = -\frac{\kappa_1 + \kappa_2}{6},
	\end{equation}
	with all components not obtained from these by the symmetries of $W$ equal to zero. Because $g$ is a product of constant-curvature metrics it is locally symmetric, so $\nabla W = 0$ and the classical formula \Cref{eq:Bach-classical} reduces to the zeroth-order term
	\begin{equation*}
		B_{ij} = \frac{1}{2} \R^{kl} W_{iklj} .
	\end{equation*}
	Using \Cref{eq:product-Weyl},
	\begin{equation*}
		B_{11} = \frac{1}{2} \left( \kappa_1 W_{1221} + \kappa_2 W_{1331} + \kappa_2 W_{1441} \right) = \frac{1}{2}( \kappa_1 + \kappa_2) \left(\frac{\kappa_1}{3} - \frac{\kappa_2}{3} \right) = \frac{\kappa_1^2 - \kappa_2^2}{6},
	\end{equation*}
	and the analogous computation gives $B_{33} = -B_{11}$. As consistency checks, $B$ vanishes when $\kappa_1 = \kappa_2$ (the Einstein case) and when $\kappa_1 = -\kappa_2$, where \Cref{eq:product-Weyl} shows $W \equiv 0$, so the metric is locally conformally flat and scalar-flat. Finally, \Cref{eq:J-dim4} gives, on the first factor,
	\begin{equation*}
		J_{11} = \frac{Q_g}{4} - \frac{B_{11}}{2} = -\frac{\kappa_1^2 - 4\kappa_1 \kappa_2 + \kappa_2^2}{12} - \frac{\kappa_1^2 - \kappa_2^2}{12} = -\frac{\kappa_1( \kappa_1 - 2 \kappa_2)}{6},
	\end{equation*}
	and similarly on the second factor, proving \Cref{eq:product-J}. 
\end{proof}

Let $\Sigma_\gamma$ be a closed oriented surface of genus $\gamma \ge 2$ carrying a hyperbolic metric $g_{\mathrm{hyp}}$ of curvature $-1$, and let $g_{S^2}$ denote the round metric of curvature $+1$ on $S^2$. On $M = S^2 \times \Sigma_\gamma$, consider the family
\begin{equation*}
	g_{A,B} := A g_{S^2} \oplus B g_{\mathrm{hyp}},
\end{equation*}
where $A(t), B(t) > 0$, with initial values $A_0 = A(0)$ and $B_0 = B(0)$. The factors have Gauss curvatures $\kappa_1 = \frac{1}{A}$ and $\kappa_2 = - \frac{1}{B}$.

The Euler characteristic and signature are
\begin{equation*}
	\chi(M) = \chi(S^2) \chi(\Sigma_\gamma) = 2( 2 - 2 \gamma) < 0, \qquad \tau(M) = 0,
\end{equation*}
which violate the Hitchin--Thorpe necessary condition $\chi \ge \frac{3}{2} |\tau|$ for Einstein four-manifolds~\cite{Hitchin1974,Thorpe1969}. Thus $M$ admits no Einstein metric.

Under the Ricci flow, the family $g_{A, B}$ evolves by $A' = -2$ and $B' = 2$, so the spherical factor collapses at time $\frac{1}{2}A_0$. Since $\frac{A}{B} \to 0$, no common homothetic normalization converges to a nondegenerate metric on the same compact product. This provides the comparison with the $J$-flow within this family.

We now examine the $J$-flow. Substituting the product metric into \Cref{eq:product-J} shows that \Cref{eq:intro-J-flow} preserves this family and reduces to the system
\begin{equation}\label{eq:product-ODE}
	A' = \frac{\kappa_1( \kappa_1 - 2 \kappa_2)}{3} A = \frac{B + 2A}{3AB}, \qquad
	B' = \frac{\kappa_2( \kappa_2 - 2 \kappa_1)}{3} B = \frac{A + 2B}{3AB} .
\end{equation}
Both right-hand sides are positive. Since $A \ge A_0$ and $B \ge B_0$, they also satisfy $A' \le (3A_0)^{-1} + 2 (3B_0)^{-1}$ and $B' \le (3B_0)^{-1} + 2 (3A_0)^{-1}$. Thus neither factor reaches zero or infinity in finite time, and the ODE solution exists for all time. The behavior of the shape is governed by the ratio $\rho := \frac{A}{B}$, with initial value $\rho_0 := \rho(0) = \frac{A_0}{B_0}$. It satisfies
\begin{equation}\label{eq:rho-ODE}
	\rho' = \frac{A'B - AB'}{B^2} = \frac{B^2 - A^2}{3A B^3} = \frac{1 - \rho^2}{3 \rho B^2} .
\end{equation}
Thus $\rho$ increases when $\rho < 1$ and decreases when $\rho > 1$, and stays in $[\min \{\rho_0, 1\}, \max\{\rho_0, 1\}]$. From \Cref{eq:product-ODE},
\begin{equation*}
	(A^2)' = \frac{2}{3}(1 + 2\rho), \qquad (B^2)' = \frac{2}{3} \left(1 + \frac{2}{\rho} \right).
\end{equation*}
In particular, $B^2$ is bounded above and below by positive affine functions of $t$ since $\rho$ is bounded above and below, and we have
\begin{equation*}
	\sigma(t) :=  \int_0^t \frac{1}{3 B(s)^2} ds \to \infty
\end{equation*}
as $t \to \infty$. Then $d \rho / d \sigma = \rho^{-1} - \rho$. Since $\sigma \to \infty$ and $\rho$ stays in a positive compact interval, its monotone limit must be $1$. Consequently $(A^2)' \to 2$ and $(B^2)' \to 2$, hence
\begin{equation*}
	A(t)^2 = 2 t \left(1 + o(1) \right), \qquad B(t)^2 = 2 t \left(1 + o(1) \right), \qquad \frac{A(t)}{B(t)} \longrightarrow 1 ,
\end{equation*}
and the rescaled metrics converge:
\begin{equation*}
	\frac{1}{\sqrt{2t}} g(t) \longrightarrow \tilde g := g_{S^2} \oplus g_{\mathrm{hyp}} \qquad \text{as } t \to \infty .
\end{equation*}
Since the factor metrics are fixed, convergence of the two coefficients implies convergence in every $C^k$-norm with respect to any fixed smooth background metric.

By \Cref{lem:product-curvature}, the limiting metric $\tilde g$ is scalar-flat and locally conformally flat, hence Bach-flat, and has constant $Q$-curvature $Q_{\tilde g} = -2$. Therefore
\begin{equation*}
	J_{\tilde g} = -\frac{1}{2} \tilde g .
\end{equation*}
Thus $\tilde g$ is $J$-Einstein. For initial values $A_0 = B_0 = a > 0$, the diagonal solution of \Cref{eq:product-ODE} is the homothetically expanding solution
\begin{equation*}
	g(t)=\sqrt{a^2 + 2 t} \tilde g.
\end{equation*}

It is also useful to describe the volume-normalized $J$-flow, with time parameter $\theta$:
\begin{equation}\label{eq:normalized-J-flow}
	\frac{\partial}{\partial\theta} g = -2 J_g + \frac{\bar Q_g}{2} g, \qquad \bar Q_g := \frac{1}{\Vol_g(M)} \int_M Q_g dv_g.
\end{equation}
Since $\tr_g J_g = Q_g$ in dimension four, this equation preserves total volume. For the product metrics, it therefore preserves $A B = A_0 B_0 =: P$. The common conformal term cancels from the ratio equation, giving
\begin{equation}\label{eq:normalized-product-ratio}
	\frac{d \rho}{d \theta} = \frac{1 - \rho^2}{3 P}.
\end{equation}
Setting $z = (\rho - 1)/(\rho + 1)$ gives $dz / d\theta = -2 z / (3 P)$, and hence
\begin{equation}\label{eq:normalized-product-ratio-solution}
	\frac{\rho(\theta) - 1}{\rho(\theta) + 1} = \frac{\rho_0 - 1}{\rho_0 + 1} e^{- \frac{2\theta}{3P}}.
\end{equation}
The right-hand side has absolute value less than one for all $\theta\ge0$. Thus $\rho$ stays positive and bounded, exists for all normalized time, and converges exponentially to $1$. Since $A = \sqrt{P \rho}$ and $B = \sqrt{P / \rho}$, the normalized metrics converge exponentially in every $C^k$-norm to the stationary metric
\begin{equation*}
	\sqrt{P} (g_{S^2} \oplus g_{\mathrm{hyp}}).
\end{equation*}
This limit equals $\tilde g$ when $P = 1$.

\subsection{\texorpdfstring{$J$-flow on $S^2 \times S^2$}{J-flow on S² × S²}}\label{subsec:sphere-sphere}

On $S^2 \times S^2$, consider $g_{A,B} = A g_{S^2} \oplus B g_{S^2}$, with factors of Gauss curvature $\kappa_1 = A^{-1}$ and $\kappa_2 = B^{-1}$. By \Cref{lem:product-curvature}, the unnormalized $J$-flow equations are
\begin{equation*}
	A' = \frac{B-2A}{3AB}, \qquad B' = \frac{A-2B}{3AB}.
\end{equation*}
The ratio $\rho := A/B$ still satisfies \Cref{eq:rho-ODE}. Under the volume-normalized flow, $AB = P = A_0B_0$ is preserved, and $\rho$ satisfies \Cref{eq:normalized-product-ratio}. Its explicit solution \Cref{eq:normalized-product-ratio-solution} shows that the normalized flow is immortal and converges exponentially in every $C^k$-norm to the symmetric Einstein metric $\sqrt P(g_{S^2} \oplus g_{S^2})$. In particular, this metric is asymptotically stable within the product family of fixed volume. For comparison, the volume-normalized Ricci flow preserves $P$ but satisfies $d\rho/d\theta = 2(\rho-1)/B$, so its symmetric product is unstable within the same family.

The corresponding unnormalized $J$-flow becomes extinct in finite time while its ratio tends to $1$. To see this without assuming the ratio limit, write $g(t(\theta)) = f(\theta) \bar g(\theta)$, where $\bar g$ is the normalized solution, and set
\begin{equation*}
	\frac{dt}{d\theta} = f^2, \qquad \frac{df}{d\theta} = -\frac{1}{2} \bar Q(\theta) f, \qquad f(0)=1.
\end{equation*}
The normalized limit has $\bar Q \to 2/(3P)>0$. Thus $f$ decays exponentially for large $\theta$, $t(\theta)$ approaches a finite extinction time, and $A/B \to 1$ there.

\subsection{\texorpdfstring{$J$-flow on $S^2 \times \mathbb T^2$}{J-flow on S² × T²}}\label{subsec:sphere-torus}

The following product solution shows that decreasing curvature energy need not yield convergence of the unnormalized flow.

Let $g_{S^2}$ have Gauss curvature one and area $4 \pi$, and let $h$ be a flat metric on $\mathbb T^2$, whose volume with respect to $h$ is denoted by $\Vol(\mathbb T^2)$. For $A_0 > 0$ set
\begin{equation}\label{eq:sphere-torus-flow}
	g(t) = A(t) g_{S^2} \oplus h, \qquad A(t) = \sqrt{A_0^2 + \frac{2}{3} t}.
\end{equation}
This is an immortal $J$-flow with positive initial Yamabe constant, decreasing total curvature energy, and $\mathcal J(g_0)<0$. Although both $\|Q_{g(t)}\|_2$ and $\|J_{g(t)}\|_2$ tend to zero, neither is integrable in time, and the volume diverges.

Indeed, substituting $\kappa_1=A^{-1}$ and $\kappa_2=0$ into \Cref{eq:product-J} gives $J=-(6A^2)^{-1}(A g_{S^2})\oplus0$. Thus $A'=1/(3A)$ and $\partial_t h=0$, proving \Cref{eq:sphere-torus-flow}. Direct calculation gives
\begin{equation}\label{eq:sphere-torus-scalar}
	R= \frac{2}{A}, \qquad Q = - \frac{1}{3 A^2}, \qquad \Vol_g(M) = 4 \pi A \Vol(\mathbb T^2),
\end{equation}
\begin{equation}\label{eq:sphere-torus-energy}
	\int_M |\Rm|^2 dv_g = 16 \pi \frac{\Vol(\mathbb T^2)}{A}, \qquad \mathcal J(g) = -4 \pi \frac{\Vol(\mathbb T^2)}{3A}, \qquad \|Q\|_2^2 = \|B\|_2^2 = 4\pi \frac{\Vol(\mathbb T^2)}{9A^3}.
\end{equation}
Here $B=(6A^2)^{-1}(A g_{S^2}\oplus(-h))$, which verifies the last norm identity and $\mathcal J'=\|B\|_2^2$. The formula for $J$ also gives
\begin{equation}\label{eq:sphere-torus-J-norm}
	\|J\|_2^2=\frac{1}{2}\|Q\|_2^2
	=\frac{2\pi\Vol(\mathbb T^2)}{9A^3}.
\end{equation}

Since $R > 0$, the conformal Laplacian is positive, and the initial Yamabe constant is positive. More explicitly, a Sobolev inequality for each fixed compact metric gives positivity of its Yamabe quotient. Testing this quotient with $u = 1$ yields
\begin{equation*}
	0 < Y(M,[g(t)]) \le \frac{2}{A} \sqrt{4 \pi A\Vol(\mathbb T^2)} \longrightarrow 0.
\end{equation*}
Thus positivity at each finite time does not give a uniform positive Yamabe lower bound. The hypothesis $\mathcal J(g_0)>0$ in \Cref{prop:Yamabe-propagation} fails in this example.

The volume and the spherical diameter diverge. Hence no sequence $g(t_i)$ with $t_i\to\infty$ can converge smoothly to a positive definite metric on the compact manifold, even after pullback by diffeomorphisms, which preserve total volume. Moreover, \Cref{eq:sphere-torus-energy,eq:sphere-torus-J-norm} show that both $\|Q\|_2$ and $\|J\|_2$ decay at rate $t^{-3/4}$, so
\begin{equation*}
	\int_0^\infty\|Q_{g(t)}\|_2\,dt=\infty,
	\qquad \int_0^\infty\|J_{g(t)}\|_2\,dt=\infty.
\end{equation*}
In particular, the time-integrability hypotheses in \Cref{theorem:E,theorem:F} do not follow from decay of these norms alone.

At fixed $A_0$, choosing $\Vol(\mathbb T^2)$ small makes the initial curvature energy arbitrarily small across this family, and the energy subsequently decreases. Thus initial Yamabe positivity and a universal absolute small-energy bound do not by themselves imply a uniform positive Yamabe lower bound, smooth subsequential convergence, or time integrability of $\|Q\|_2$ and $\|J\|_2$. The example clarifies the role of the additional hypotheses in \Cref{theorem:D,theorem:E,theorem:F}.

\section{Short-time existence}\label{sec:short-time}

In this section we establish the short-time existence and uniqueness of solutions to the $J$-flow on closed manifolds of dimension $n \ge 4$. Because the right-hand side is of fourth order in the metric, the classical first-order DeTurck gauge used for the Ricci flow is no longer sufficient. We first explain the obstruction and then introduce a biharmonic DeTurck gauge that recover strict parabolicity. 

\subsection{Principal symbol of the ungauged flow}

Using the explicit formula \Cref{eq:J-full}, the highest-order terms in the right-hand side of \Cref{eq:intro-J-flow} are
\begin{equation}\label{eq:J-principal}
	\frac{2}{(n-2)^2} (\Delta \Ric)_{ij} + \frac{n^2 - 10n + 12}{4 (n-1)^2 (n-2)} \nabla_i \nabla_j R + \frac{(3n-4) (n-4)}{4 (n-1)^2 (n-2)^2} g_{ij} \Delta R.
\end{equation}
To analyze ellipticity, we linearize the ungauged operator at a fixed metric $g$. Let $h \in S_2(M)$ be a variation. For brevity set
\begin{equation}\label{eq:alpha-beta-gamma}
	\alpha_n  := \frac{n^3 - 8n^2 + 24n - 20}{4 (n-1)^2 (n-2)^2}, \qquad
	\beta_n := \frac{n^2 - 10n + 12}{4 (n-1)^2 (n-2)}, \qquad
	\gamma_n  := \frac{(3n-4) (n-4)}{4(n-1)^2 (n-2)^2}.
\end{equation}
A direct computation using \Cref{eq:Ric-linearization,eq:R-linearization,eq:J-full} gives the following expression for the principal part of the linearized ungauged operator:

\begin{lemma}\label[lemma]{lem:ungauged-linearization}
	The linearization of the ungauged $J$-flow operator $-2 J_g$ at $g$ has principal part
	\begin{equation}\label{eq:ungauged-linearization}
		\begin{split}
			\left(D( -2J_g) \cdot h \right)_{ij}
			=& -\frac{1}{(n-2)^2} \left( \Delta^2 h_{ij} + \nabla_i \Delta(\delta h)_j + \nabla_j \Delta(\delta h)_i \right) - \alpha_n \nabla_i \nabla_j \Delta \tr_gh + \beta_n \nabla_i \nabla_j \delta^2h \\
			&- \gamma_n g_{ij} \left( \Delta^2 \tr_g h - \Delta \delta^2 h \right) + \text{l.o.t.}
		\end{split}
	\end{equation}
\end{lemma}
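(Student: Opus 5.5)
The plan is to linearize only the fourth-order part of $-2J_g$ and throw every term with at most three derivatives of $h$ into the lower-order terms. By \Cref{eq:J-full}, the lower-order curvature terms $\mathring{\Rm}_g\cdot\Ric_g$ and $R_g\Ric_g$ have linearizations involving at most two derivatives of $h$, so they contribute only lower-order terms. Hence the principal part of $D(-2J_g)\cdot h$ is that of the linearization of \Cref{eq:J-principal}, namely
\begin{equation*}
	\frac{2}{(n-2)^2}\Delta(D\Ric_g\cdot h) + \beta_n \nabla^2(DR_g\cdot h) + \gamma_n g\,\Delta(DR_g\cdot h).
\end{equation*}
Three facts justify dropping the remaining terms.
\begin{itemize}
	\item The $\beta_n$ term comes from $-2$ times the coefficient $-\frac{n^2-10n+12}{8(n-1)^2(n-2)}$ in \Cref{eq:J-full}, and the $\gamma_n$ term arises in the same way.
	\item Varying the metric inside the operators $\Delta_g$, $\nabla_g^2$ and $g$ produces terms of the form $\partial h \ast \nabla\Rm$ or $h\ast\nabla^2\Rm$. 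These involve at most one derivative of $h$, so they are lower order.
	\item Derivatives of curvature are treated as coefficients and never raise the order in $h$.
\end{itemize}

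Next I substitute \Cref{eq:Ric-linearization,eq:R-linearization}, using $DR_g\cdot h = -\Delta\tr_g h + \delta^2 h + \text{l.o.t.}$. The first term becomes
\begin{equation*}
	-\frac{1}{(n-2)^2}\left(\Delta^2 h_{ij} + \Delta\nabla_i(\delta h)_j + \Delta\nabla_j(\delta h)_i + \Delta\nabla_i\nabla_j\tr_g h\right).
\end{equation*}
I then commute $\Delta$ past $\nabla_i$ and $\nabla_i\nabla_j$. Each commutator is a curvature term contracted with at most two derivatives of $h$ (or of $\delta h$, $\tr_g h$). This gives $\nabla_i\Delta(\delta h)_j$ and $\nabla_i\nabla_j\Delta\tr_g h$ modulo lower-order terms. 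The same commutation turns $\nabla^2\Delta\tr_g h$ from the $\beta_n$ term into that form. Finally, the $\gamma_n$ term gives $-\gamma_n g_{ij}(\Delta^2\tr_g h - \Delta\delta^2 h)$ directly.

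The only real check is that the coefficients collect as stated. The $\nabla_i\nabla_j\Delta\tr_g h$ coefficient is $-\frac{1}{(n-2)^2} - \beta_n$. Over the common denominator $4(n-1)^2(n-2)^2$ its numerator is
\begin{equation*}
	-\left(4(n-1)^2 + (n-2)(n^2-10n+12)\right) = -\left(n^3 - 8n^2 + 24n - 20\right),
\end{equation*}
so the coefficient is exactly $-\alpha_n$. The $\nabla_i\nabla_j\delta^2 h$ term keeps coefficient $+\beta_n$, since $\Ric$ contributes no term of that type. I expect the main obstacle to be sign bookkeeping rather than anything conceptual. This means tracking the convention $\delta = -\divv$ and the sign of $\Delta$ in \Cref{eq:Ric-linearization}. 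It also means confirming that $\nabla\delta h$ in $D\Ric$ pairs with $\Delta$ to give the stated symmetric term.
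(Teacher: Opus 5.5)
Your proposal is correct and follows essentially the same route as the paper. You linearize the three fourth-order summands of \Cref{eq:J-principal} using \Cref{eq:Ric-linearization,eq:R-linearization}, commute $\Delta$ past $\nabla_i$ and $\nabla_i\nabla_j$ modulo curvature terms, and check that $\frac{1}{(n-2)^2}+\beta_n=\alpha_n$, which your numerator computation confirms.

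One small inaccuracy does not affect the argument. Varying the rough Laplacian acting on the tensor $\Ric$ produces terms of the form $\nabla^2 h * \Rm$, coming from $\nabla$ of the variation of the Christoffel symbols, so these involve two derivatives of $h$ rather than at most one. They are still of order at most two and hence lower order.
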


\begin{proof}
	The highest-order part of $-2 J_g$ is given by \Cref{eq:J-principal}. We linearize each of the three fourth-order summands separately, retaining only terms of differential order four in $h$. Throughout, commutators of covariant derivatives with the linearization produce curvature contractions against at most three derivatives of $h$, and are therefore absorbed into the remainder.

	We begin with the Hessian of the scalar curvature. The linearization of scalar curvature is~\Cref{eq:R-linearization}. Discarding the zeroth-order term $\Ric\cdot h$, we have
	\begin{equation*}
		D R_g \cdot h = -\Delta \tr_g h + \delta^2 h + \text{l.o.t.},
	\end{equation*}
	which yields
	\begin{equation*}
		D(\nabla_i \nabla_j R) \cdot h = \nabla_i \nabla_j (DR_g \cdot h) = -\nabla_i \nabla_j \Delta \tr_gh + \nabla_i \nabla_j \delta^2 h + \text{l.o.t.}
	\end{equation*}
	Next we treat the rough Laplacian of the Ricci tensor. From~\Cref{eq:Ric-linearization},
	\begin{equation*}
		(D \Ric_g \cdot h)_{ij} = -\frac{1}{2} \left( \Delta h_{ij} + \nabla_i(\delta h)_j + \nabla_j(\delta h)_i + \nabla_i \nabla_j \tr_g h \right) + \text{l.o.t.}
	\end{equation*}
	Hence
	\begin{equation*}
		\left(D(\Delta \Ric) \cdot h \right)_{ij}
		= \Delta \left(D \Ric_g \cdot h \right)_{ij} + \text{l.o.t.} 
		= -\frac{1}{2} \left( \Delta^2h_{ij} + \nabla_i \Delta(\delta h)_j + \nabla_j \Delta(\delta h)_i + \nabla_i \nabla_j \Delta \tr_g h \right) + \text{l.o.t.},
	\end{equation*}
	where we have used $\Delta \nabla_i (\delta h)_j = \nabla_i \Delta (\delta h)_j + \text{l.o.t.}$. It remains to linearize the pure-trace term $g\Delta R$, the product rule gives
	\begin{equation*}
		D(g \Delta R) \cdot h = (D g \cdot h) \Delta R + g (D (\Delta R) \cdot h).
	\end{equation*}
	The first summand is zeroth order in the derivatives of $h$. For the second, the variation of the scalar Laplacian applied to the fixed function $R$ produces at most first-order terms in $h$, while
	\begin{equation*}
		D(\Delta R) \cdot h = \Delta (D R_g \cdot h) + \text{l.o.t.} = -\Delta^2 \tr_gh + \Delta \delta^2 h + \text{l.o.t.}
	\end{equation*}
	Consequently
	\begin{equation*}
		\left(D( g \Delta R ) \cdot h \right)_{ij} = g_{ij} \left( -\Delta^2 \tr_g h + \Delta \delta^2 h \right) + \text{l.o.t.}
	\end{equation*}
	Collecting these three contributions against the coefficients in~\Cref{eq:J-principal}, we obtain
	\begin{equation*}
		\begin{split}
			\left(D( -2J_g) \cdot h \right)_{ij}
			=& \frac{2}{(n-2)^2} \left(D (\Delta \Ric) \cdot h \right)_{ij} + \beta_n D(\nabla_i \nabla_j R) \cdot h + \gamma_n D(g_{ij} \Delta R) \cdot h + \text{l.o.t.} \\
			=& -\frac{1}{(n-2)^2} \left( \Delta^2 h_{ij} + \nabla_i \Delta(\delta h)_j + \nabla_j \Delta(\delta h)_i \right) - \left(\frac{1}{(n-2)^2} + \beta_n \right) \nabla_i \nabla_j \Delta \tr_gh \\
			&+ \beta_n \nabla_i \nabla_j \delta^2h - \gamma_n g_{ij} \left( \Delta^2 \tr_g h - \Delta \delta^2 h \right) + \text{l.o.t.}
		\end{split}
	\end{equation*}
	The algebraic identity $\frac{1}{(n-2)^2} + \beta_n = \alpha_n$ yields~\Cref{eq:ungauged-linearization}.
\end{proof}

\subsection{The biharmonic DeTurck gauge}

To overcome the mixed-term obstruction, fix a smooth background metric $\tilde g$ on $M$. The DeTurck vector field of $g$ relative to $\tilde g$ is
\begin{equation*}
	V^k := g^{pq} \left( \tilde \Gamma^k_{pq} - \Gamma^k_{pq}(g) \right),
\end{equation*}
where $\Gamma^k_{pq} (g)$ denotes the Christoffel symbols of $g$, and we write $V_j := g_{jk} V^k$ for the associated covector. Its linearization is
\begin{equation*}
	(D V \cdot h)_j = (\delta h)_j + \frac{1}{2} \nabla_j \tr_g h + \text{l.o.t.}
\end{equation*}

\begin{definition}[Biharmonic DeTurck gauge]\label{def:biharmonic-gauge}
	The biharmonic DeTurck gauge vector field $X$ is defined by the covector
	\begin{equation}\label{eq:biharmonic-X}
		X_j := \frac{1}{(n-2)^2} \Delta_g V_j - \frac{\beta_n}{2} \nabla_j R_g .
	\end{equation}
	The biharmonic-gauged $J$-flow is
	\begin{equation}\label{eq:J-DeTurck}
		\frac{\partial}{\partial t} g = -2 J_g + \mathcal{L}_X g .
	\end{equation}
\end{definition}

The coefficients in \Cref{eq:biharmonic-X} are chosen precisely so that the following cancellation occurs:

\begin{proposition}\label{prop:gauge-linearization}
	The linearization of the gauge term $\mathcal{L}_X g$ at $g$ satisfies
	\begin{equation}\label{eq:gauge-linearization}
		\left(D (\mathcal{L}_X g) \cdot h \right)_{ij} = \frac{1}{(n-2)^2} \left( \nabla_i \Delta (\delta h)_j + \nabla_j \Delta (\delta h)_i \right) + \alpha_n \nabla_i \nabla_j \Delta \tr_g h - \beta_n \nabla_i \nabla_j \delta^2 h + \text{l.o.t.}
	\end{equation}
\end{proposition}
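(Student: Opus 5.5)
The plan is to compute the principal (fourth-order) part of $D(\mathcal{L}_X g)\cdot h$ directly. We use $(\mathcal{L}_X g)_{ij} = \nabla_i X_j + \nabla_j X_i$, with $X_j$ given by \Cref{eq:biharmonic-X}. Since $X$ already involves three derivatives of $g$, the product rule gives
\begin{equation*}
	D(\mathcal{L}_X g)\cdot h = \nabla_i (DX\cdot h)_j + \nabla_j (DX\cdot h)_i + \text{l.o.t.}
\end{equation*}
Indeed, the variation of the Christoffel symbols appearing in $\nabla_i X_j$ is first order in $h$ and is multiplied by $X$, which is fixed. It therefore suffices to find the third-order part of $DX\cdot h$.

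For the first summand, the variation of $\Delta_g$ acting on the fixed covector $V$ is at most second order in $h$. Hence
\begin{equation*}
	D(\Delta V)\cdot h = \Delta (DV\cdot h) + \text{l.o.t.} = \Delta(\delta h)_j + \tfrac12 \nabla_j \Delta \tr_g h + \text{l.o.t.}
\end{equation*}
Here we use the stated linearization $(DV\cdot h)_j = (\delta h)_j + \frac12 \nabla_j \tr_g h + \text{l.o.t.}$ and commute $\Delta$ past $\nabla_j$ modulo curvature terms. The variation of $g_{jk}$ in lowering the index of $V$ is zeroth order, so it is harmless.

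For the second summand, \Cref{eq:R-linearization} gives
\begin{equation*}
	D(\nabla_j R)\cdot h = \nabla_j(-\Delta \tr_g h + \delta^2 h) + \text{l.o.t.}
\end{equation*}
Combining the two pieces,
\begin{equation*}
	(DX\cdot h)_j = \tfrac{1}{(n-2)^2}\Delta(\delta h)_j + \left(\tfrac{1}{2(n-2)^2} + \tfrac{\beta_n}{2}\right)\nabla_j\Delta\tr_g h - \tfrac{\beta_n}{2}\nabla_j\delta^2 h + \text{l.o.t.}
\end{equation*}

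We then apply $\nabla_i$ and symmetrize. The gradient terms $\nabla_j(\cdot)$ become Hessians $\nabla_i\nabla_j(\cdot)$. These are symmetric modulo lower-order terms, since commuting covariant derivatives on a function is exact, and so they double. The symmetrized coefficient of $\nabla_i\nabla_j\Delta\tr_g h$ is therefore $\frac{1}{(n-2)^2}+\beta_n$. By the algebraic identity already used in \Cref{lem:ungauged-linearization}, this equals $\alpha_n$. The coefficient of $\nabla_i\nabla_j\delta^2 h$ becomes $-\beta_n$, and the $\Delta(\delta h)$ terms give $\frac{1}{(n-2)^2}\left(\nabla_i\Delta(\delta h)_j+\nabla_j\Delta(\delta h)_i\right)$. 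Together these yield \Cref{eq:gauge-linearization}.

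The only delicate point is bookkeeping: we must check that every commutator and every variation of the connection or metric inside $\Delta_g$, $\nabla$, and the index lowering contributes at most third-order terms in $h$. This holds because each such term is a curvature or Christoffel-variation factor multiplied by fewer than four derivatives of $h$.
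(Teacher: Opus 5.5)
Your proposal is correct and follows essentially the same route as the paper: you linearize $X$ term by term (the rough Laplacian of $DV\cdot h$ plus the scalar-curvature correction via the linearization of $R$), combine coefficients using $\alpha_n = \frac{1}{(n-2)^2}+\beta_n$, and pass to $\nabla_i (DX\cdot h)_j + \nabla_j (DX\cdot h)_i$ modulo lower-order terms. The only difference is that you spell out slightly more explicitly why the variation of $\Delta_g$ acting on the fixed $V$ and the index lowering contribute only lower-order terms.
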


\begin{proof}
	We first compute the linearization of the gauge vector field $X$. Applying the rough Laplacian to the linearization of $V$ and commuting derivatives gives
	\begin{equation*}
		\Delta((DV \cdot h)_j) = \Delta(\delta h)_j + \frac{1}{2} \nabla_j \Delta \tr_gh + \text{l.o.t.}
	\end{equation*}
	The scalar-curvature correction contributes
	\begin{equation*}
		-\frac{\beta_n}{2} \nabla_j(D R_g \cdot h) = \frac{\beta_n}{2} \nabla_j \Delta \tr_g h - \frac{\beta_n}{2} \nabla_j \delta^2 h + \text{l.o.t.}
	\end{equation*}
	Adding these and using $\alpha_n = \frac{1}{(n-2)^2} + \beta_n$, we obtain
	\begin{equation*}
		\begin{split}
			(DX \cdot h)_j
			 & = \frac{1}{(n-2)^2} \left( \Delta(\delta h)_j + \frac{1}{2} \nabla_j \Delta \tr_g h \right) + \frac{\beta_n}{2} \nabla_j \Delta \tr_g h - \frac{\beta_n}{2} \nabla_j \delta^2 h + \text{l.o.t.} \\
			 & = \frac{1}{(n-2)^2} \Delta(\delta h)_j + \frac{\alpha_n}{2} \nabla_j \Delta \tr_g h - \frac{\beta_n}{2} \nabla_j \delta^2 h + \text{l.o.t.}
		\end{split}
	\end{equation*}
	Passing to the Lie derivative, recall that $(\mathcal{L}_X g)_{ij} = \nabla_i X_j + \nabla_j X_i$. Since the variation of the Levi-Civita connection contributes only first-order terms in $h$, one has
	\begin{equation*}
		\left( D(\mathcal{L}_X g) \cdot h \right)_{ij} = \nabla_i (D X \cdot h)_j + \nabla_j(D X \cdot h)_i + \text{l.o.t.}
	\end{equation*}
	Substituting the expression for $D X \cdot h$ obtained above and the result follows.
\end{proof}

\begin{proposition}\label{thm:linearized-symbol}
	The linearization of the biharmonic-gauged $J$-flow operator at $g$ has principal part
	\begin{equation}\label{eq:gauged-principal}
		\left(D( -2J_g + \mathcal{L}_X g) \cdot h \right)_{ij} = -\frac{1}{(n-2)^2} \Delta^2 h_{ij} - \gamma_n g_{ij} \Delta^2 \tr_g h + \gamma_n g_{ij} \Delta \delta^2 h + \text{l.o.t.}
	\end{equation}
	In particular, the system \Cref{eq:J-DeTurck} is strictly parabolic on the bundle of symmetric $2$-tensors.
\end{proposition}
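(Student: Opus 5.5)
The plan is to add the two linearizations already computed. \Cref{lem:ungauged-linearization} gives the principal part of $D(-2J_g)\cdot h$, and \Cref{prop:gauge-linearization} gives that of $D(\mathcal L_X g)\cdot h$. Adding them, the mixed terms $\frac{1}{(n-2)^2}\bigl(\nabla_i\Delta(\delta h)_j+\nabla_j\Delta(\delta h)_i\bigr)$ cancel exactly, and so do $\alpha_n\nabla_i\nabla_j\Delta\tr_g h$ and $\beta_n\nabla_i\nabla_j\delta^2 h$. This cancellation is what the coefficients in \Cref{eq:biharmonic-X} were chosen for, and it uses the identity $\alpha_n=\frac{1}{(n-2)^2}+\beta_n$. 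What survives is $-\frac{1}{(n-2)^2}\Delta^2 h_{ij}-\gamma_n g_{ij}\bigl(\Delta^2\tr_g h-\Delta\delta^2 h\bigr)$ modulo terms of order at most three in $h$. That is exactly \Cref{eq:gauged-principal}.

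Next I pass to the principal symbol, replacing $\nabla$ by $i\xi$. With the convention $\delta=-\divv$, one has $(\delta h)_j\mapsto -i\xi^k h_{kj}$, $\delta^2h\mapsto -h(\xi,\xi)$, and $\Delta\mapsto-|\xi|^2$. Hence the symbol acting on $h$ is
\begin{equation*}
	\sigma_\xi(h)= -|\xi|^4\Bigl(\tfrac{1}{(n-2)^2}h+\gamma_n\bigl(\tr_g h-h(\hat\xi,\hat\xi)\bigr)g\Bigr),\qquad \hat\xi=\xi/|\xi|.
\end{equation*}
So $-\sigma_\xi=|\xi|^4 A_{\hat\xi}$, where $A_{\hat\xi}=c\,\mathrm{Id}+\gamma_n\, g\otimes\ell$ with $c=\frac{1}{(n-2)^2}>0$ and $\ell(h)=\tr_g h-h(\hat\xi,\hat\xi)$. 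This is a rank-one perturbation of a multiple of the identity. On $\ker\ell$, which has codimension one, $A_{\hat\xi}$ acts as $c$. On the line spanned by $g$ it acts as $c+\gamma_n\ell(g)=c+(n-1)\gamma_n$. Therefore $A_{\hat\xi}$ is diagonalizable, with eigenvalues $c$ and $c+(n-1)\gamma_n$.

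The remaining step is to check positivity and conclude. By \Cref{eq:alpha-beta-gamma}, $\gamma_n=\frac{(3n-4)(n-4)}{4(n-1)^2(n-2)^2}\ge0$ for $n\ge4$, with equality exactly when $n=4$. Hence all eigenvalues of $A_{\hat\xi}$ are bounded below by $c>0$, uniformly in $\hat\xi$ and in $x\in M$. The symbol is not self-adjoint for $n>4$, so I would state parabolicity in the Petrovskii sense: every eigenvalue of $\sigma_\xi$ has real part at most $-c|\xi|^4$. That is what the standard theory of quasilinear fourth-order parabolic systems requires. Alternatively, one can obtain the Legendre-type condition directly: decompose $h=\mathring h+\frac{\tr h}{n}g$ and observe that, after an equivalent change of fiber inner product adapted to the two eigenspaces, $A_{\hat\xi}$ becomes symmetric positive.

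I expect the main obstacle to be bookkeeping rather than the conceptual step. The sign conventions for $\delta$ and $\Delta$ are easy to get wrong, and a sign error in the $\gamma_n$ block would destroy positivity. I must also make sure that the commutator and connection-variation terms discarded in \Cref{lem:ungauged-linearization} and \Cref{prop:gauge-linearization} really have order at most three. In dimension four the proof simplifies: $\gamma_4=0$, and the principal part is simply $-\frac14\Delta^2h$.
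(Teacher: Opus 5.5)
Your cancellation step is the same as the paper's. The difference is in how you verify parabolicity.

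You write the symbol as $-|\xi|^4A_{\hat\xi}$, where $A_{\hat\xi}=c\,\mathrm{Id}+\gamma_n\,g\otimes\ell$ is a rank-one perturbation of $c\,\mathrm{Id}$, and read off its exact spectrum. It acts as $c$ on the hyperplane $\ker\ell$ and as $c+(n-1)\gamma_n=\frac{3}{4(n-1)}$ on $g$; the latter matches the paper's pure-trace eigenvalue. This gives parabolicity in the Petrovskii sense.

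The paper instead estimates the quadratic form in the standard fiber metric. It bounds the cross term $\gamma_n|\xi|^2H(\xi,\xi)\tr H$ by Young's inequality and obtains $\langle\sigma_\xi(H),H\rangle\le-\bigl(\frac{1}{(n-2)^2}-\frac12\gamma_n\bigr)|\xi|^4|H|^2$. This is a Legendre-type, or strong, parabolicity condition.

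Your route gives the sharp spectrum and suffices for Solonnikov/Eidelman-type linear theory on a closed manifold. The paper's route gives the stronger coercive condition in a fixed inner product, which is what energy and G{\aa}rding-type arguments use.

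In particular, the fact that the symbol is not self-adjoint does not force you to settle for Petrovskii parabolicity. One has $\langle A_{\hat\xi}h,h\rangle=c|h|^2+\gamma_n\tr h\,\bigl(\tr h-h(\hat\xi,\hat\xi)\bigr)\ge\bigl(c-\frac14\gamma_n\bigr)|h|^2$, and $c-\frac14\gamma_n>0$.

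Your alternative remark about an inner product adapted to the eigenspaces should be corrected. The hyperplane $\ker\ell$ moves with $\hat\xi$, and the trace-free subspace is not invariant under $A_{\hat\xi}$ for $n>4$. So an inner product making $A_{\hat\xi}$ self-adjoint would have to depend on $\xi$. The unchanged standard inner product already gives positivity of the quadratic form.
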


\begin{proof}
	Adding~\Cref{eq:ungauged-linearization,eq:gauge-linearization}, the mixed fourth-order terms cancel identically. What remains is precisely~\Cref{eq:gauged-principal}. It remains to check that the principal symbol $\sigma_{\xi}$ is negative definite. 
	
	Let $\xi \in T_p^*M \setminus \{0\}$ and let $H \in \mathrm{Sym}^2 (T_p^*M)$. With the convention $\Delta = g^{ij} \nabla_i \nabla_j$, one has
	\begin{equation*}
		\sigma_{\xi} (\Delta) = -|\xi|^2, \qquad \sigma_{\xi} (\Delta^2) = |\xi|^4.
	\end{equation*}
	The divergence satisfies
	\begin{equation*}
		\left( \sigma_{\xi} (\delta) H \right)_j = -i \xi^p H_{pj}, \quad \sigma_{\xi}(\delta^2) H = -\xi^p \xi^q H_{pq}, \quad \sigma_{\xi}(\Delta \delta^2) H = | \xi|^2 \xi^p \xi^q H_{pq}.
	\end{equation*}
	The principal symbol of the right-hand side of~\Cref{eq:gauged-principal} is consequently the endomorphism
	\begin{equation*}
		\left(\sigma_{\xi}(H) \right)_{ij} = -\frac{1}{(n-2)^2} | \xi|^4 H_{ij} - \gamma_n| \xi|^4 (\tr_g H) g_{ij} + \gamma_n| \xi|^2 \xi^p \xi^q H_{pq} g_{ij}.
	\end{equation*}
	We claim that $\langle \sigma_{\xi} (H), H \rangle_g < 0$ whenever $H \neq 0$. Expanding,
	\begin{equation*}
		\langle \sigma_{\xi}(H),H \rangle_g = -\frac{1}{(n-2)^2} | \xi|^4 |H|_g^2 - \gamma_n | \xi|^4 (\tr_g H)^2 + \gamma_n | \xi|^2 \xi^p \xi^q H_{pq} (\tr_g H).
	\end{equation*}
	If $\tr_g H = 0$, the last two terms vanish and the first is strictly negative. If $H = \mu g$ is pure trace, then $\tr_g H = n \mu$, $|H|_g^2 = n \mu^2$, and $\xi^p \xi^q H_{pq} = \mu |\xi|^2$, so $\sigma_{\xi} (\mu g) = \lambda_n (\xi) (\mu g)$, where
	\begin{equation}\label{eq:pure-trace-eigenvalue}
		\lambda_n(\xi) = -\left(\frac{1}{(n-2)^2} + (n - 1) \gamma_n \right) | \xi|^4 = -\frac{4(n-1) + (3n-4)(n-4)}{4(n-1)(n-2)^2} | \xi|^4 = -\frac{3}{4(n-1)} | \xi|^4,
	\end{equation}
	which is strictly negative for every $n\ge4$ and $\mu\neq0$. In particular $\gamma_4=0$ yields $\lambda_4 = -\frac{|\xi|^4}{4}$. For a general $H$, the cross term satisfies
	\begin{equation*}
		\left| \gamma_n| \xi|^2 \xi^p \xi^q H_{pq} (\tr_g H) \right| \le \gamma_n| \xi|^4 |H|_g | \tr_g H|,
	\end{equation*}
	and Young's inequality yields
	\begin{equation*}
		-\gamma_n | \xi|^4 (\tr_g H)^2 + \gamma_n | \xi|^4 |H|_g  |\tr_g H| \le \frac{1}{2} \gamma_n| \xi|^4 |H|_g^2.
	\end{equation*}
	Hence
	\begin{equation*}
		\langle \sigma_{\xi}(H), H \rangle_g \le - \left( \frac{1}{(n-2)^2} - \frac{1}{2} \gamma_n \right) | \xi|^4 |H|_g^2.
	\end{equation*}
	For $n = 4$ one has $\gamma_4 = 0$. For $n \ge 5$,
	\begin{equation*}
		\frac{1}{(n-2)^2} - \frac{1}{2} \gamma_n = \frac{8(n-1)^2-(3n-4)(n-4)}{8(n-1)^2(n-2)^2} = \frac{5n^2-8}{8(n-1)^2(n-2)^2} > 0.
	\end{equation*}
	Thus $\langle \sigma_{\xi}(H),H \rangle_g \le - c_n |\xi|^4 |H|_g^2$ with $c_n > 0$, so $\sigma_{\xi}$ is negative definite. Therefore~\Cref{eq:J-DeTurck} is a strictly parabolic fourth-order system.
\end{proof}

\subsection{Short-time existence and uniqueness}

With strict parabolicity of the gauged equation, existence and uniqueness of a short-time solution follow from the Schauder theory for quasilinear parabolic systems of order four~\cite{LadyzhenskayaSolonnikovUralceva1968}. The same short-time existence may equivalently be obtained in the smooth category from Hamilton's Nash--Moser inverse function theorem~\cite{Hamilton1981}.

\begin{proof}[Proof of \Cref{theorem:A}]
	Fix a smooth background metric $\tilde g = g_0$. The operator $-2 J_g + \mathcal L_{X(g, \tilde g)}g$ is a smooth quasilinear fourth-order operator with strongly parabolic linearization by \Cref{thm:linearized-symbol}. The parabolic construction in the proof of Bahuaud--Helliwell~\cite[Theorem~A]{BahuaudHelliwell2011} therefore provides a solution $\bar g(t)$ of \Cref{eq:J-DeTurck}. Equivalently, one may use fourth-order parabolic Schauder theory in $C^{4 + \alpha, 1 + \alpha/4}(M \times [0,T])$, with the initial trace prescribed. Positivity persists for short time and parabolic regularity gives smoothness for smooth initial data.

	Let $\varphi_t$ solve
	\begin{equation*}
		\partial_t \varphi_t = -X(\bar g(t), \tilde g) \circ \varphi_t, \qquad \varphi_0 = \mathrm{id}.
	\end{equation*}
	For a time-dependent tensor $h_t$ and a diffeomorphism satisfying $\partial_t \psi_t = Y_t \circ \psi_t$, we have
	\begin{equation}\label{eq:pullback-pushforward}
		\begin{split}
		\frac d{dt}( \psi_t^* h_t) &= \psi_t^* (\partial_t h_t + \mathcal L_{Y_t}h_t),\\
		\frac d{dt}((\psi_t)_* h_t) &= (\psi_t)_* (\partial_t h_t) - \mathcal L_{Y_t} ( (\psi_t)_* h_t).
		\end{split}
	\end{equation}
	Naturality of $J$ and the first identity imply that $g(t) = \varphi_t^* \bar g(t)$ solves $\partial_t g = -2 J_g$ with $g(0) = g_0$.

	For uniqueness we apply the intrinsic uniqueness theorem~\cite[Theorem~A]{BahuaudHelliwell2015}, verifying its coefficients. Its fourth-order class has the form
	\begin{equation*}
		T_g = c \left(\Delta \Ric + a( \Delta R) g - b \nabla^2 R \right) + T_{\le 3}, \qquad c>0,\quad a > -\frac{1}{2(n-1)}.
	\end{equation*}
	Here $T_{\le 3}$ is a polynomial natural tensor of lower differential order. By \Cref{eq:J-principal}, our choices are
	\begin{equation*}
		c = \frac2{(n-2)^2}, \qquad a = \frac{\gamma_n}{c} \ge 0, \qquad b = -\frac{\beta_n}{c},
	\end{equation*}
	and the remaining terms are quadratic curvature contractions. All hypotheses are satisfied for $n \ge 4 $. The theorem therefore gives uniqueness of the ungauged solution. 				
\end{proof}

\subsection{Comparison with the Bach flow}\label{subsec:Bach-comparison}

It is illuminating to contrast the preceding construction with the standard Bach flow. In any dimension $n \ge 4$ the Bach tensor has the schematic expansion~\cite{ChenLuQing2023}
\begin{equation}\label{eq:Bach-schematic}
	B_{ij} = \frac{1}{n-2} \Delta \R_{ij} - \frac{1}{2(n-1)(n-2)} (\Delta R) g_{ij} - \frac{1}{2(n-1)} \nabla_i \nabla_j R + \text{l.o.t.}
\end{equation}
With the present sign convention, the equation $\partial_tg=B_g$ has the forward sign in its nonconformal principal part, but remains degenerate in the conformal direction. After introducing the vector field
\begin{equation*}
	X^k = \frac{1}{2(n-2)} \Delta_g V^k + \frac{1}{4(n-1)}\nabla^kR,
\end{equation*}
the gauged operator has principal part
\begin{equation*}
	\left(D( B + \mathcal{L}_X g) \cdot h \right)_{ij} = -\frac{1}{2(n-2)} \Delta^2 h_{ij} + \frac{1}{2(n-1)(n-2)} g_{ij} \left[ \Delta^2(\tr_g h) - \Delta(\delta^2 h) \right].
\end{equation*}
In the pure-trace direction $h_{ij} = g_{ij}$ one has 
\begin{equation*}
	\sigma_{\xi} (\Delta^2 \tr_g h) = n |\xi|^4, \qquad\sigma_{\xi} (\Delta \delta^2 h) = |\xi|^4,
\end{equation*}
so the bracket contributes
\begin{equation*}
	\frac{n-1}{2(n-1)(n-2)} |\xi|^4 g = \frac{1}{2(n-2)} |\xi|^4 g,
\end{equation*}
which exactly cancels the rough biharmonic term, and $\sigma_{\xi}$ vanishes. Consequently the standard Bach flow is not strictly parabolic.

If we want to restore strict parabolicity, we need to add to the right-hand side the term
\begin{equation*}
	\frac{1}{2(n-1)(n-2)} (\Delta R) g_{ij}.
\end{equation*}
After the same DeTurck adjustment, the resulting modified equation has scalar biharmonic principal symbol and is strictly parabolic. 

For the $J$-flow the situation is different. The coefficient of $g \Delta R$ in the fully expanded $J$-tensor \Cref{eq:J-full} is $-\frac{\gamma_n}{2} (\Delta R) g_{ij}$, so that in $-2 J_g$ it becomes the positive constant $\gamma_n$ defined in \Cref{eq:alpha-beta-gamma}. This coefficient vanishes precisely at $n=4$, which explains why the four-dimensional $J$-flow has no residual pure-trace fourth-order term after gauging. For $n>4$ the coefficient is non-zero, as shown in~\Cref{eq:gauged-principal} and verified by the estimate on $\sigma_{\xi}$ in the proof of \Cref{thm:linearized-symbol}, the full symbol remains negative definite. In particular, the pure-trace eigenvalue~\Cref{eq:pure-trace-eigenvalue} remains strictly negative in every dimension $n \ge 4$. The mixed divergence/trace terms are removed by the biharmonic DeTurck gauge. 

Having established short-time existence, we turn in the next section to the differential inequalities that control the solution on its maximal existence interval. The principal analytic input is the evolution equation for the Riemann curvature tensor under~\Cref{eq:intro-J-flow}, from which we derive global and local $L^2$ energy inequalities and Bernstein--Bando--Shi smoothing estimates. The global smoothing estimates enter the continuation and convergence arguments in \Cref{sec:long-time,sec:J-estimates}, the local estimates provide a separate smoothing result. Local small-energy regularity and geometric compactness require additional arguments.

\section{Integral estimates and smoothing}\label{sec:integral-smoothing}
We begin with the standard variational formula for the Riemann curvature tensor. If $\frac{\partial}{\partial t} g = h$, then
\begin{equation}\label{eq:Rm-variation}
	\begin{split}
		\frac{\partial}{\partial t} R_{ijkl} = \frac{1}{2} \left(\nabla_i \nabla_k h_{jl} + \nabla_j \nabla_l h_{ik} - \nabla_i \nabla_l h_{jk} - \nabla_j \nabla_k h_{il} \right) + \frac{1}{2} \left( R_{ijkp} h_l{}^p - R_{ijlp} h_k{}^p \right) .
	\end{split}
\end{equation}
Throughout this section we write $P_{s,m}(\Rm)$ for a universal polynomial in the Riemann curvature tensor and its covariant derivatives, obtained by contracting $s$ curvature factors of total derivative order $m$. A generic term of $P_{s,m}(\Rm)$ thus has the form
\begin{equation*}
	\nabla^{i_1}\Rm * \cdots * \nabla^{i_s}\Rm, \qquad i_1 + \cdots + i_s = m,
\end{equation*}
where $*$ denotes a contraction with the metric.

\begin{lemma}\label[lemma]{lem:Rm-evolution}
	Let $g(t)$ be a solution of the $J$-flow on $M \times [0,T)$ with $n \ge 4$. Then the Riemann curvature tensor satisfies the schematic evolution equation
	\begin{equation}\label{eq:Rm-evolution}
		\frac{\partial}{\partial t} \Rm = -\frac{1}{(n-2)^2} \Delta^2 \Rm - \frac{\gamma_n}{2} g \odot \nabla^2 \Delta R + P_{2,2}(\Rm) + P_{3,0}(\Rm).
	\end{equation}
\end{lemma}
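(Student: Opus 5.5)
We substitute $h = -2J_g$ into the variation formula \Cref{eq:Rm-variation} and use the explicit expression \Cref{eq:J-full} from \Cref{cor:J-full}. The zeroth-order part of \Cref{eq:Rm-variation}, $\frac12(R_{ijkp}h_l{}^p - R_{ijlp}h_k{}^p)$, is of the form $\Rm * J$. The terms of $J$ are $\nabla^2\Rm$ (which is $P_{1,2}$) and $\Rm*\Rm$, so this part contributes $P_{2,2}(\Rm)+P_{3,0}(\Rm)$. The work lies in the second-order part $\frac12(\nabla_i\nabla_k h_{jl}+\cdots)$. Up to sign, this part equals $\frac12\, \nabla^2 h$ arranged as a Kulkarni--Nomizu-type antisymmetrization; we write it as $-\frac12\mathcal{A}(\nabla^2 h)$. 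Applying it to the quadratic terms $\mathring{\Rm}\cdot\Ric$ and $R\Ric$ of $J$ produces $\nabla^2(\Rm*\Rm)$, which is again $P_{2,2}$. The remaining task is the fourth-order terms coming from $\Delta\Ric$, $\nabla^2 R$ and $g\Delta R$.

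First, the pure-trace term. Here $h \supset \gamma_n(\Delta R) g$, so only $\nabla^2$ of the scalar $\Delta R$ appears. The antisymmetrized second derivatives of $f g$ give exactly $-\frac12 g\odot\nabla^2 f$ with the convention \Cref{eq:KN}, and this yields the term $-\frac{\gamma_n}{2}g\odot\nabla^2\Delta R$. The sign should be checked against \Cref{eq:KN}.

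Second, the $\nabla^2 R$ term. Inserting $h\supset -2\beta' \nabla_j\nabla_l R$, the antisymmetrized expression contains $\nabla_i\nabla_k\nabla_j\nabla_l R - \nabla_i\nabla_l\nabla_j\nabla_k R$ and similar terms. We commute derivatives so that the antisymmetric pair $(k,l)$ is adjacent. Since $\nabla_j\nabla_l R$ is symmetric, commutators turn every such combination into $\Rm*\nabla^2 R$, which is $P_{2,2}$. So this term is entirely lower order.

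Third, the $\Delta\Ric$ term, with $h \supset \frac{2}{(n-2)^2}\Delta\Ric$; this is the main step. We must show
\[
-\tfrac12\mathcal{A}(\nabla^2\Delta \Ric) = -\Delta^2\Rm + \text{(terms cancelling against the above or l.o.t.)}.
\]
The tool is the standard identity, from the second Bianchi identity and commutation,
\[
\Delta R_{ijkl} = -\mathcal{A}(\nabla^2\Ric)_{ijkl} + \Rm*\Rm,
\]
with the normalization matching Hamilton's formula for Ricci flow. Commuting one more $\Delta$ through gives $\mathcal{A}(\nabla^2\Delta\Ric)= -\Delta^2\Rm + P_{2,2}$. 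Multiplying by $\frac{1}{(n-2)^2}$ gives the leading term $-\frac{1}{(n-2)^2}\Delta^2\Rm$. Every commutator $[\nabla,\Delta]$ acting on $\nabla^a \Rm$ yields $\Rm*\nabla^{a}\Rm$ or $\nabla\Rm*\nabla^{a-1}\Rm$, with total derivative order two and two factors. Hence it is $P_{2,2}$.

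The expected obstacle is bookkeeping: the sign conventions from \cite{ChowLuNi2006}, and confirming that no residual fourth-order term in $\nabla^2\nabla^2 R$ survives beyond the stated $g\odot\nabla^2\Delta R$. The key point for the latter is that Hessian-of-$R$ contributions from the Bianchi identity and from $\beta_n$ are absorbed by the symmetric-commutation argument above. Otherwise the lemma is a direct computation, since all lower-order terms are only claimed schematically.
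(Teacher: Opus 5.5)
Your proposal is correct and follows the paper's proof essentially step for step. The $\Delta\Ric$ term is turned into $-\Delta^2\Rm$ via the second Bianchi identity and commutation of $\Delta$. The $\nabla^2 R$ term cancels modulo commutators because $\nabla^4 R$ is symmetric up to curvature terms. The $\gamma_n(\Delta R)g$ term gives $-\frac{\gamma_n}{2}g\odot\nabla^2\Delta R$ directly from \Cref{eq:KN}. Everything else is absorbed into $P_{2,2}(\Rm)+P_{3,0}(\Rm)$.

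The one thing to repair is the sign bookkeeping you flagged. The combination $\nabla_i\nabla_k h_{jl}+\nabla_j\nabla_l h_{ik}-\nabla_i\nabla_l h_{jk}-\nabla_j\nabla_k h_{il}$ enters \Cref{eq:Rm-variation} with coefficient $+\frac12$, and it is this combination, evaluated on $\Ric$, that equals $-\Delta\Rm+\Rm*\Rm$. So with your convention of writing the second-order part as $-\frac12\mathcal{A}(\nabla^2 h)$, the correct identity is $\Delta\Rm=+\mathcal{A}(\nabla^2\Ric)+\Rm*\Rm$, which is also what Hamilton's normalization forces. Your displayed identity combined with that convention would instead produce $+\frac{1}{(n-2)^2}\Delta^2\Rm$, even though the coefficient you finally state is the right one.
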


\begin{proof}
	Write $h = -2 J_g$. By \Cref{eq:J-principal} the fourth-order part of $h$ is
	\begin{equation*}
		h^{(4)}_{ij} = \frac{2}{(n-2)^2} (\Delta \Ric)_{ij} + \beta_n \nabla_i \nabla_j R + \gamma_n g_{ij} \Delta R .
	\end{equation*}
	We substitute this into the first group of terms in the variational formula \Cref{eq:Rm-variation}. A typical term is
	\begin{equation*}
		\frac{1}{2} \nabla_i \nabla_k h^{(4)}_{jl} = \frac{1}{(n-2)^2} \nabla_i \nabla_k (\Delta \Ric)_{jl} + \frac{\beta_n}{2} \nabla_i \nabla_k \nabla_j \nabla_l R + \frac{\gamma_n}{2} g_{jl}  \nabla_i \nabla_k \Delta R .
	\end{equation*}
	The first summand produces
	\begin{equation*}
		\frac{1}{(n-2)^2}\left(\nabla_i \nabla_k \Delta \R_{jl} + \nabla_j \nabla_l \Delta \R_{ik} - \nabla_i \nabla_l \Delta \R_{jk} - \nabla_j \nabla_k \Delta \R_{il} \right) + P_{2,2}(\Rm)  = - \frac{1}{(n-2)^2} \Delta^2R_{ijkl} + P_{2,2}(\Rm).
	\end{equation*}
	In the alternating combination appearing in \Cref{eq:Rm-variation}, the fully symmetric four-derivative parts cancel in pairs, and only commutator terms remain, in which curvature is contracted against at most two derivatives of $\Rm$. The same applies to the Hessian term $\nabla_i \nabla_j R$. By contrast, the pure-trace term $g_{jl} \Delta R$ carries only two derivatives of $R$, so its alternating combination does not vanish. With the Kulkarni--Nomizu convention~\Cref{eq:KN} one has
	\begin{equation*}
		g_{jl} \nabla_i \nabla_k \Delta R + g_{ik} \nabla_j \nabla_l \Delta R - g_{jk}\nabla_i \nabla_l \Delta R - g_{il} \nabla_j \nabla_k \Delta R = -(g \odot \nabla^2 \Delta R)_{ijkl}.
	\end{equation*}
	All remaining contributions are absorbed into $P_{2,2}(\Rm)$.

	The remaining terms in \Cref{eq:Rm-variation} are of two types. Those coming from $h^{(4)}$ but with fewer than four derivatives, and those coming from the lower-order part of $h$, are of the schematic types $P_{2,2}(\Rm)$ and $P_{3,0}(\Rm)$.
\end{proof}

\begin{corollary}\label{cor:Rm-derivative-evolution}
	For every integer $m \ge 0$ the $m$-th covariant derivative of the Riemann tensor satisfies
	\begin{equation}\label{eq:Rm-derivative-evolution}
		\frac{\partial}{\partial t} \nabla^m \Rm = -\frac{1}{(n-2)^2} \Delta^2 \nabla^m \Rm - \frac{\gamma_n}{2} \nabla^m (g \odot \nabla^2 \Delta R) + P_{2,m+2}(\Rm) + P_{3,m}(\Rm).
	\end{equation}
\end{corollary}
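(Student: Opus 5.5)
The corollary follows from \Cref{lem:Rm-evolution} by induction on $m$. The case $m=0$ is \Cref{eq:Rm-evolution} itself. For the inductive step we differentiate $\nabla^m \Rm$ in time and use two standard facts. First, the time derivative of the Christoffel symbols under $\partial_t g = h$ is
\begin{equation*}
	\partial_t \Gamma^k_{ij} = \tfrac12 g^{kl}\left(\nabla_i h_{jl} + \nabla_j h_{il} - \nabla_l h_{ij}\right),
\end{equation*}
so $\partial_t \Gamma = \nabla J$. Second, covariant derivatives commute with each other up to curvature terms.

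Assume the formula for $\nabla^{m}\Rm$. Then
\begin{equation*}
	\partial_t \nabla^{m+1}\Rm = \nabla(\partial_t \nabla^m \Rm) + \partial_t\Gamma * \nabla^m\Rm .
\end{equation*}
By \Cref{eq:J-full}, $J = \nabla^2\Rm + \Rm*\Rm$ schematically. Hence $\partial_t \Gamma = \nabla^3\Rm + \Rm*\nabla\Rm$, and the correction $\partial_t\Gamma*\nabla^m\Rm$ lies in $P_{2,m+3}(\Rm)+P_{3,m+1}(\Rm)$.

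Now apply $\nabla$ to the inductive right-hand side:
\begin{enumerate}
	\item $\nabla$ of $P_{2,m+2}$ and $P_{3,m}$ gives $P_{2,m+3}$ and $P_{3,m+1}$, by the Leibniz rule and $\nabla g=0$.
	\item The term $\nabla\nabla^m(g\odot\nabla^2\Delta R)$ is exactly $\nabla^{m+1}(g\odot\nabla^2\Delta R)$, with no error.
	\item The term $\nabla\Delta^2\nabla^m\Rm$ must be commuted to $\Delta^2\nabla^{m+1}\Rm$. This is the only step needing care.
\end{enumerate}

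For the commutation, swapping $\nabla$ past each of the four derivatives in $\Delta^2$ uses the Ricci identity. Each swap produces terms of the form $\nabla^a\Rm * \nabla^b\nabla^m\Rm$ with $a+b=3$, that is, two curvature factors with total derivative order $m+3$. All such terms belong to $P_{2,m+3}(\Rm)$.

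One should check that no cubic terms of higher order appear. Cubic terms arise only from the metric contractions and from the correction $\partial_t\Gamma$. Both were already shown to have the stated orders, so the cubic part stays in $P_{3,m+1}(\Rm)$.

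This closes the induction. Since $\gamma_n$ is a constant, the pure-trace term keeps its exact form at every step.
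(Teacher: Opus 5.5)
Your proof is correct and is essentially the paper's argument. The paper applies $\nabla^m$ to \Cref{eq:Rm-evolution} in one step: it commutes $\nabla^m$ past $\Delta^2$ using the Ricci identity, and writes $[\partial_t,\nabla^m]\Rm=\sum_{i=0}^{m-1}\nabla^i(\partial_t\Gamma)*\nabla^{m-1-i}\Rm$ with $\partial_t\Gamma\sim\nabla^3\Rm+\Rm*\nabla\Rm$, which is exactly your one-derivative induction unrolled, with the same $P_{2,\cdot}$ and $P_{3,\cdot}$ bookkeeping.
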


\begin{proof}
	Applying $\nabla^m$ to \Cref{eq:Rm-evolution} gives
	\begin{equation*}
		\nabla^m P_{2,2}(\Rm) = P_{2,m+2}(\Rm), \qquad \nabla^m P_{3,0}(\Rm) = P_{3,m}(\Rm).
	\end{equation*}
	We first commute the spatial derivatives. The Ricci commutation identity gives
	\begin{equation*}
		\nabla^m \Delta^2 \Rm = \Delta^2 \nabla^m \Rm + P_{2,m+2} (\Rm).
   	\end{equation*}
	It remains to commute the time derivative through $\nabla^m$. For a general metric variation $\partial_t g = h$, one has
	\begin{equation*}
		[\partial_t, \nabla^m] \Rm = \sum_{i=0}^{m-1} \nabla^i( \partial_t \Gamma) * \nabla^{m-1-i} \Rm,
	\end{equation*}
	where the sum is empty when $m = 0$. Here $\partial_t \Gamma$
	is a tensor satisfying
	\begin{equation*}
		(\partial_t \Gamma)^k_{ij} = \frac{1}{2} g^{k l} \left( \nabla_i h_{jl} + \nabla_j h_{il} - \nabla_l h_{ij} \right).
	\end{equation*}
	Since $h = -2 J_g = P_{1,2} (\Rm) + P_{2,0} (\Rm)$, the quadratic curvature terms in this commutator have total	derivative order $(i+3) + (m-1-i) = m+2$, whereas the cubic curvature terms have total derivative order $(i+1) + (m-1-i) = m$. Consequently,
	\begin{equation*}
		[\partial_t, \nabla^m] \Rm = P_{2,m+2}(\Rm) + P_{3,m}(\Rm).
	\end{equation*}
	Combining these identities proves \Cref{eq:Rm-derivative-evolution}.
\end{proof}

\subsection{Global energy inequalities}
Set 
\begin{equation*}
	\mathcal E_j = \int_M |\nabla^j \Rm|^2 dv_g.
\end{equation*}
We use the following covariant interpolation inequality of Hamilton~\cite{hamilton1982three}, its general-exponent formulation is also recorded in~\cite[Proposition~9.9]{Bour2010}.

\begin{lemma}\label[lemma]{lem:Hamilton-interpolation}
Let $T$ be a smooth tensor of fixed rank on a closed Riemannian $n$-manifold. If $k \ge 1$, $ 2 \le r < \infty$, and $0 \le i \le k$, then
\begin{equation}\label{eq:Hamilton-interpolation}
	\| \nabla^i T \|_{p_i} \le C \| T \|_r^{1 - i/k} \| \nabla^k T \|_2^{i/k}, \qquad \frac{1}{p_i} = \frac{1 - i/k}{r} + \frac{i}{2k}.
\end{equation}
The constant depends only on $n,k,r$ and the rank of $T$.
\end{lemma}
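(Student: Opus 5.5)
The plan is to deduce the lemma from the Gagliardo--Nirenberg inequality on $\mathbb{R}^n$ plus Kato's inequality, but with an argument that never uses a Sobolev constant of $(M,g)$; this is why $C$ can depend only on $n,k,r$ and the rank. Following Hamilton, I would argue by integration by parts on the closed manifold, using only the pointwise Cauchy--Schwarz inequality and H\"older's inequality. The curvature never enters, because the inequality involves no commutation of derivatives: we only integrate $\nabla^{i}T$ against $\nabla^{i}T$.

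\textbf{Step 1: the basic three-term estimate.} For $1 \le i \le k-1$ and exponents with $\frac{2}{p}=\frac{1}{q}+\frac{1}{s}$, I would show
\[
\|\nabla^i T\|_{p}^2 \le C\, \|\nabla^{i-1}T\|_{q}\, \|\nabla^{i+1}T\|_{s}.
\]
Write
\[
\int |\nabla^i T|^{p} = \int \langle \nabla \nabla^{i-1}T, |\nabla^i T|^{p-2}\nabla^i T\rangle
\]
and integrate by parts; $M$ is closed, so there are no boundary terms. The derivative falls either on $\nabla^iT$ or on $|\nabla^iT|^{p-2}$, and both give pointwise bounds $\le C(n,p)\,|\nabla^{i-1}T|\,|\nabla^{i+1}T|\,|\nabla^iT|^{p-2}$. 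Applying H\"older's inequality with exponents $q$, $s$ and $p/(p-2)$ and then dividing yields the estimate. For $p<2$, or when $\nabla^iT$ vanishes, the singular weight should be regularized by $(|\nabla^iT|^2+\epsilon)^{(p-2)/2}$ with $\epsilon\to0$. This is routine but needs care for small $p$.

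\textbf{Step 2: iteration.} Put $a_i=\|\nabla^iT\|_{p_i}$ with $\frac{1}{p_i}=\frac{1-i/k}{r}+\frac{i}{2k}$. The exponents $1/p_i$ are affine in $i$, so $\frac{2}{p_i}=\frac{1}{p_{i-1}}+\frac{1}{p_{i+1}}$. Hence Step 1 gives the log-convexity relation $a_i^2\le C a_{i-1}a_{i+1}$ for $1\le i\le k-1$, where $p_0=r$ and $p_k=2$. A standard discrete convexity argument then yields
\[
a_i\le C\, a_0^{1-i/k}a_k^{i/k}.
\]
Concretely, set $b_i=\log a_i + c\, i(k-i)$ with $c$ chosen to absorb $\log C$; then $b$ is convex, so $b_i \le (1-i/k)b_0+(i/k)b_k$. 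Alternatively, one can induct on $k$ as in Hamilton's paper.

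\textbf{Main obstacle.} The delicate point is Step 1 when $p_i<2$, or more generally in checking that the H\"older exponents are admissible, i.e.\ all lie in $[1,\infty]$. Here $r\ge2$ guarantees $p_i\ge2$ for every $i$, since each $1/p_i$ is a convex combination of $1/r\le\tfrac12$ and $\tfrac12$. So the weight $|\nabla^iT|^{p-2}$ is nonsingular and the regularization is only needed for differentiability at zeros of $\nabla^iT$. Given this, the constant depends only on $n$, $k$, $r$ and the rank of $T$, as claimed; this matches \cite[Proposition~9.9]{Bour2010}, which one may cite for the general-exponent bookkeeping.
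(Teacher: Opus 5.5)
Your argument is correct and is essentially the one the paper relies on. The paper gives no proof of its own: it cites Hamilton's integration-by-parts/H\"older interpolation \cite{hamilton1982three}, with the general-exponent bookkeeping of \cite[Proposition~9.9]{Bour2010}. That argument is exactly your three-term estimate $\|\nabla^iT\|_{p_i}^2\le C\|\nabla^{i-1}T\|_{p_{i-1}}\|\nabla^{i+1}T\|_{p_{i+1}}$, which is valid since $r\ge2$ forces every $p_i\ge2$, followed by discrete log-convexity; the latter works with $c=-\tfrac12\log C$ and gives $a_i\le C^{i(k-i)/2}a_0^{1-i/k}a_k^{i/k}$.

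Only cosmetic fixes are needed. Drop the opening mention of Gagliardo--Nirenberg, which would bring in geometry-dependent Sobolev constants and is not what you actually use. Also note that taking logarithms is harmless: on a closed manifold $\nabla^{j+1}T\equiv0$ forces $\nabla^jT\equiv0$ for $j\ge1$, so any vanishing $a_i$ with $i\ge1$ makes the inequality trivial.
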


The constants in this inequality arise from covariant integration by parts and H\"older's inequality. 

\begin{lemma}\label[lemma]{lem:adjacent-interpolation}
	For $j \ge 0$ and any smooth tensor $T$ on a closed manifold,
	\begin{equation}\label{eq:adjacent-Young}
	\|\nabla^{j+1} T\|_2^2 \le C(n) \|\nabla^j T\|_2 \|\nabla^{j+2} T\|_2 \le \varepsilon \|\nabla^{j+2} T \|_2^2 + C(n, \varepsilon) \|\nabla^j T\|_2^2.
	\end{equation}
\end{lemma}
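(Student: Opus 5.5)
The plan is to prove the first inequality in \Cref{eq:adjacent-Young} by a single covariant integration by parts and then deduce the second from Young's inequality. Since $M$ is closed, there are no boundary terms. We write
\begin{equation*}
	\|\nabla^{j+1} T\|_2^2 = \int_M \langle \nabla \nabla^j T, \nabla \nabla^j T \rangle \, dv_g = -\int_M \langle \nabla^j T, \nabla^* \nabla \nabla^j T \rangle \, dv_g,
\end{equation*}
where $\nabla^*\nabla \nabla^j T$ is the contraction $-g^{pq}\nabla_p\nabla_q \nabla^j T = -\Delta \nabla^j T$. Equivalently, we integrate the divergence of the $1$-form $\langle \nabla^j T, \nabla_p \nabla^j T\rangle$.

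Cauchy--Schwarz then gives
\begin{equation*}
	\|\nabla^{j+1}T\|_2^2 \le \|\nabla^j T\|_2 \,\|\Delta \nabla^j T\|_2.
\end{equation*}
Pointwise, $|\Delta \nabla^j T| = |g^{pq}\nabla_p\nabla_q\nabla^j T| \le \sqrt{n}\,|\nabla^{j+2}T|$, because the trace of a $2$-slot tensor is bounded by $\sqrt n$ times its full norm. This yields the first inequality with $C(n)=\sqrt n$.

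The key point is that no commutation of derivatives is needed. The Laplacian appearing here is literally the trace of the iterated covariant derivative $\nabla^{j+2}T = \nabla\nabla\nabla^j T$ in its first two slots. Consequently, no curvature terms enter, and the constant is independent of the metric, of $j$, and of the rank of $T$. The only thing to check carefully is this indexing: $\nabla^{j+2}T$ must be taken with the two new derivative slots in front, so that the trace really is the rough Laplacian of $\nabla^j T$ and the integration by parts is the plain divergence theorem. I expect this bookkeeping to be the only delicate step, since one must avoid inadvertently using a commuted form such as $\nabla^j \Delta T$, which would introduce curvature.

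For the second inequality, apply $ab \le \frac{\varepsilon}{C(n)} a^2 + \frac{C(n)}{4\varepsilon} b^2$ with $a = \|\nabla^{j+2}T\|_2$ and $b = \|\nabla^j T\|_2$. This gives
\begin{equation*}
	\|\nabla^{j+1} T\|_2^2 \le \varepsilon \|\nabla^{j+2}T\|_2^2 + \frac{n}{4\varepsilon}\|\nabla^j T\|_2^2,
\end{equation*}
so we may take $C(n,\varepsilon) = n/(4\varepsilon)$.
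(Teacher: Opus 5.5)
Your proposal is correct and follows the paper's proof exactly: one integration by parts, Cauchy--Schwarz, the pointwise bound $|\Delta \nabla^j T| \le \sqrt{n}\,|\nabla^{2}\nabla^j T|$, and then Young's inequality. There is one harmless sign slip: with your convention $\nabla^*\nabla = -\Delta$, the identity should read $\|\nabla^{j+1}T\|_2^2 = \int_M \langle \nabla^j T, \nabla^*\nabla \nabla^j T\rangle\, dv_g = -\int_M \langle \nabla^j T, \Delta \nabla^j T\rangle\, dv_g$, without the extra minus sign you wrote, and this does not matter because you take absolute values immediately afterwards.
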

\begin{proof}
	Integrate by parts once, apply Cauchy--Schwarz and use $|\Delta \nabla^j T | \le \sqrt{n} |\nabla^2 \nabla^j T |$.
\end{proof}

\begin{lemma}\label[lemma]{lem:polynomial-interpolation}
	Let $m \ge 0$ be an integer and suppose $|T| \le K$. For any scalar contractions of the indicated types and every $\varepsilon > 0$,
	\begin{equation}\label{eq:polynomial-Young}
		\left| \int_M P_{3, 2m+2} (T) + P_{4, 2m} (T) dv_g \right| \le \varepsilon \| \nabla^{m+2} T \|_2^2 + C_\varepsilon K^{m+2} \| T \|_2^2.
	\end{equation}
	The constant depends only on $n, m, \varepsilon$, the tensor rank and the finitely many contraction coefficients.
\end{lemma}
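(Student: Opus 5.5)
The plan is a scale-invariant Hölder--interpolation argument in the spirit of Hamilton. Each term is reduced to a multiple of a single intermediate energy, $K\|\nabla^{m+1}T\|_2^2$ for the cubic terms and $K^2\|\nabla^m T\|_2^2$ for the quartic terms. That energy is then interpolated between $\|T\|_2$ and $\|\nabla^{m+2}T\|_2$, and Young's inequality splits it into the two terms on the right of \Cref{eq:polynomial-Young}.

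\emph{Normalizing the derivative distribution.} A generic cubic term is $\nabla^{i_1}T*\nabla^{i_2}T*\nabla^{i_3}T$ with $i_1+i_2+i_3=2m+2$. Since the integrand is a full scalar contraction on a closed manifold, I first integrate by parts to move derivatives off any factor carrying more than $m+1$ of them. By the Leibniz rule each such step produces terms of the same type $P_{3,2m+2}$, with the excess derivative placed on a factor of lower order. After finitely many steps every term satisfies $i_j\le m+1$. In the same way every quartic term can be arranged to have $i_j\le m$ with $\sum i_j=2m$. The contraction coefficients change only by universal combinatorial factors.

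\emph{Hölder with Hamilton's interpolation at $r=\infty$.} For a cubic term set $k=m+1$. Put factors with $i_j=0$ in $L^\infty$, where they are bounded by $K$. Put a factor with $i_j\ge1$ in $L^{2k/i_j}$; since $\sum i_j/(2k)=1$, these exponents are Hölder-admissible. I then apply \Cref{lem:Hamilton-interpolation} in its endpoint form $\|\nabla^iT\|_{2k/i}\le C\|T\|_\infty^{1-i/k}\|\nabla^kT\|_2^{i/k}$. This is Hamilton's original version; the constant is uniform as $r\to\infty$, since the proof only uses integration by parts and Hölder. The total power of $\|T\|_\infty$ is $3-\sum i_j/k=1$, which gives
\begin{equation*}
\Big|\int_M \nabla^{i_1}T*\nabla^{i_2}T*\nabla^{i_3}T\, dv_g\Big|\le C K\|\nabla^{m+1}T\|_2^2 .
\end{equation*}
For a quartic term with $m\ge1$, take $k=m$. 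The same bookkeeping gives the power $4-2=2$ of $\|T\|_\infty$, so the term is bounded by $CK^2\|\nabla^mT\|_2^2$. For $m=0$ the quartic term is trivially bounded by $K^2\|T\|_2^2$.

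\emph{Final interpolation and Young.} Write $x=\|\nabla^{m+2}T\|_2^2$ and $y=\|T\|_2^2$. Iterating \Cref{lem:adjacent-interpolation}, or using \Cref{lem:Hamilton-interpolation} with $r=2$, gives
\begin{equation*}
\|\nabla^{m+1}T\|_2^2\le C x^{\frac{m+1}{m+2}}y^{\frac{1}{m+2}},\qquad \|\nabla^{m}T\|_2^2\le C x^{\frac{m}{m+2}}y^{\frac{2}{m+2}}.
\end{equation*}
For the cubic bound, apply Young's inequality to $Kx^{(m+1)/(m+2)}y^{1/(m+2)}$ with exponents $\frac{m+2}{m+1}$ and $m+2$. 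This yields $\varepsilon x+C_\varepsilon K^{m+2}y$. For the quartic bound, apply Young to $K^2x^{m/(m+2)}y^{2/(m+2)}$ with exponents $\frac{m+2}{m}$ and $\frac{m+2}{2}$. This again yields $\varepsilon x+C_\varepsilon K^{m+2}y$. Summing over the finitely many terms and rescaling $\varepsilon$ proves \Cref{eq:polynomial-Young}. The resulting constant depends only on $n,m,\varepsilon$, the rank, and the contraction coefficients.

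\emph{Main obstacle.} I expect the delicate step to be the first one together with the endpoint interpolation. One must check that the integrations by parts genuinely produce a distribution with $\max_j i_j\le k$; no commutator terms arise, since we only move derivatives rather than reorder them. One must also justify the $L^\infty$ endpoint, since \Cref{lem:Hamilton-interpolation} is stated only for $r<\infty$. If the endpoint proves awkward, my fallback is to take $r$ large but finite and estimate $\|T\|_r\le K^{1-2/r}\|T\|_2^{2/r}$. This changes the exponents slightly, and I would then need to verify that the powers of $K$ and $\|T\|_2$ still recombine to $K^{m+2}\|T\|_2^2$ after Young's inequality.
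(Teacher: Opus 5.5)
Your proof is correct, but it extracts the $K$-dependence at a different stage than the paper does.

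The paper normalizes only to $\max_j i_j\le m+2$. It then applies \Cref{lem:Hamilton-interpolation} once, with $k=m+2$ and the finite exponent $r=m+4$. This $r$ is chosen so that the H\"older exponents $1/p_i=(i+2)/(2(m+4))$ sum to one for the cubic and the quartic terms simultaneously. Young's inequality then gives $\varepsilon\|\nabla^{m+2}T\|_2^2+C_\varepsilon\|T\|_{m+4}^{m+4}$, and $K$ enters only at the very end, through $\|T\|_{m+4}^{m+4}\le K^{m+2}\|T\|_2^2$. So your fallback (large finite $r$, with $\|T\|_r$ controlled by $K$ and $\|T\|_2$) is exactly the paper's proof. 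The value $r=m+4$ is the one at which the powers recombine with no further bookkeeping.

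Your main route normalizes further, to $\max_j i_j\le m+1$ for cubic terms and $\le m$ for quartic terms. This is legitimate: at most one factor can exceed the threshold, and once it is brought down to it, the remaining orders sum to the threshold, so none can exceed it. You then use the $L^\infty$ endpoint to get the intermediate bounds $CK\|\nabla^{m+1}T\|_2^2$ and $CK^2\|\nabla^mT\|_2^2$. A purely $L^2$ interpolation and Young's inequality finish the argument. All the exponents check out, including the separate treatment of $P_{4,0}$.

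The endpoint needs no limiting argument. The inequality $\int_M|\nabla^iT|^{2k/i}\,dv_g\le C\max_M|T|^{2(k/i-1)}\int_M|\nabla^kT|^2\,dv_g$ is Hamilton's original form in \cite{hamilton1982three}. It is also the unweighted counterpart of the Kuwert--Sch\"atzle inequality used in \Cref{lem:weighted-Hamilton-interpolation}. Cite it directly rather than appealing to uniformity of constants as $r\to\infty$.

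As for the trade-off: the paper's choice needs a single normalization threshold and stays within the stated range $r<\infty$. Yours costs an extra interpolation step. In return it yields intermediate bounds of exactly the shape in \Cref{lem:fixed-weight-product}, and it makes the power $K^{m+2}$ transparent by scaling.
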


\begin{proof}
	In each complete contraction, if one factor has more than $m+2$ covariant derivatives, integrate its outermost derivative by parts onto the other factors, and repeat until every factor has at most $m + 2$ derivatives. Since the total derivative order is at most $2m+2$, the other factors cannot exceed $m+2$ derivatives during this process. No derivatives are commuted, and $\nabla g = 0$, so no curvature commutator terms arise. As $M$ is closed, there are no boundary terms. The resulting integrals are finite sums of terms of the forms
	\begin{equation*}
		\int_M \nabla^{i_1} T \ast \nabla^{i_2} T \ast \nabla^{i_3} T dv_g, 
		\quad \text{and} \quad
		\int_M \nabla^{i_1} T \ast \nabla^{i_2} T \ast \nabla^{i_3} T \ast \nabla^{i_4} T dv_g.
	\end{equation*}
	Use \Cref{eq:Hamilton-interpolation} with $k = m + 2$ and $r = m + 4$.
	The exponents simplify to
	\begin{equation*}
		\frac{1}{p_i} = \frac{1 - i/(m+2)}{m+4} + \frac{i}{2(m+2)} = \frac{i+2}{2(m+4)}, \qquad 0 \le i \le m+2.
	\end{equation*}
	The first term has derivative orders $i_1 + i_2+ i_3 = 2m + 2$ with $i_\nu \le m+2$, so 
	\begin{equation*}
		\frac{1}{p_{i_1}} + \frac{1}{p_{i_2}} + \frac{1}{p_{i_3}} = \frac{2m+2+6}{2(m+4)} = 1
	\end{equation*}
	therefore,
	\begin{equation*}
		\begin{split}
			\left| \int_M \nabla^{i_1} T \ast \nabla^{i_2} T \ast \nabla^{i_3} T dv_g \right|
			&\le C \left\| \nabla^{i_1} T \right\|_{p_{i_1}} \left\| \nabla^{i_2} T \right\|_{p_{i_2}} \left\| \nabla^{i_3} T \right\|_{p_{i_3}}\\
			&\le C \left\| T \right\|_{m+4}^{3- \frac{i_1 + i_2 + i_3}{m+2}} \| \nabla^{m+2} T \|_2^{\frac{i_1 + i_2 + i_3}{m+2}}\\
			&= C \left\| T \right\|_{m+4}^{\frac{m+4}{m+2}} \| \nabla^{m+2} T \|_2^{\frac{2m+2}{m+2}}\\
			&\le C(n,m,\epsilon) \left\| T \right\|_{m+4}^{m+4}  + \frac{\epsilon}{2} \| \nabla^{m+2} T \|_2^{2} 
		\end{split}
	\end{equation*}
	For the second term, $i_1 + \cdots + i_4 = 2 m$ with $i_\nu \le m+2$, and the corresponding sum is
	\begin{equation*}
		\frac{1}{p_{i_1}} + \frac{1}{p_{i_2}} + \frac{1}{p_{i_3}} + \frac{1}{p_{i_4}} = \frac{2m+8}{2(m+4)} = 1,
	\end{equation*}
	it gives
	\begin{equation*}
		\begin{split}
			\left| \int_M \nabla^{i_1} T \ast \nabla^{i_2} T \ast \nabla^{i_3} T \ast \nabla^{i_4} T dv_g \right|
			&\le C \left\| \nabla^{i_1} T \right\|_{p_{i_1}} \left\| \nabla^{i_2} T \right\|_{p_{i_2}} \left\| \nabla^{i_3} T \right\|_{p_{i_3}} \left\| \nabla^{i_4} T \right\|_{p_{i_4}}\\
			&\le C \left\| T \right\|_{m+4}^{4- \frac{i_1 + i_2 + i_3 + i_4}{m+2}} \| \nabla^{m+2} T \|_2^{\frac{i_1 + i_2 + i_3 + i_4}{m+2}}\\
			&= C \left\| T \right\|_{m+4}^{\frac{2m+8}{m+2}} \| \nabla^{m+2} T \|_2^{\frac{2m}{m+2}} \\
			&\le C(n,m, \epsilon) \left\| T \right\|_{m+4}^{m+4} + \frac{\epsilon}{2}  \| \nabla^{m+2} T \|_2^{2} 
		\end{split}
	\end{equation*}
	Finally,
	\begin{equation*}
		\left\| T \right\|_{m+4}^{m+4} = \int_M |T|^{m+4} dv_g \le K^{m+2} \int_M |T|^2 dv_g.
	\end{equation*}
	No additive constant or volume factor is introduced.
\end{proof}

\begin{proposition}\label{thm:basic-energy-inequality}
	If $|\Rm| \le K$ along a solution on a closed manifold, then for every $m \ge 0$ there is $C_m < \infty$, depending only on $n, m$, such that
	\begin{equation}\label{eq:basic-energy-inequality}
		\mathcal E_m' + a_n \mathcal E_{m+2} \le C_m K^{m+2} \mathcal E_0.
	\end{equation}
\end{proposition}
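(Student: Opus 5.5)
The plan is to differentiate $\mathcal E_m$ in time using \Cref{cor:Rm-derivative-evolution}, to extract a coercive term $-a_n\mathcal E_{m+2}$ from the leading bilaplacian, and to show that everything else has the form $\int_M P_{3,2m+2}(\Rm)+P_{4,2m}(\Rm)\,dv_g$. \Cref{lem:polynomial-interpolation} with $T=\Rm$ then bounds these terms by $\varepsilon\mathcal E_{m+2}+C_\varepsilon K^{m+2}\mathcal E_0$, and we absorb the $\varepsilon$-term. The only nonroutine ingredient is the trace term $-\frac{\gamma_n}{2}\nabla^m(g\odot\nabla^2\Delta R)$. It is genuinely of top order, so it cannot be treated as lower order.

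First I would account for the terms coming from the variation of the metric. The time derivative of the norm $|\cdot|^2_{g(t)}$ produces contractions $h*\nabla^m\Rm*\nabla^m\Rm$. Here $h=-2J=P_{1,2}(\Rm)+P_{2,0}(\Rm)$. The volume form contributes $-Q|\nabla^m\Rm|^2$ by \Cref{eq:volume-evolution}, and $Q$ is also of type $P_{1,2}+P_{2,0}$. Both contributions are therefore of type $P_{3,2m+2}+P_{4,2m}$. The same holds for the pairings of $P_{2,m+2}(\Rm)$ and $P_{3,m}(\Rm)$ against $\nabla^m\Rm$.

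For the leading term I would integrate by parts twice:
\begin{equation*}
-\frac{2}{(n-2)^2}\int_M\langle\Delta^2\nabla^m\Rm,\nabla^m\Rm\rangle\,dv_g=-\frac{2}{(n-2)^2}\int_M|\Delta\nabla^m\Rm|^2\,dv_g.
\end{equation*}
Next I would use the Bochner-type identity $\int|\nabla^2 S|^2=\int|\Delta S|^2+\int\Rm*\nabla S*\nabla S+\int\Rm*\Rm*S*S$ with $S=\nabla^m\Rm$. It converts the right-hand side into $-\frac{2}{(n-2)^2}\mathcal E_{m+2}$, up to errors of the same type $P_{3,2m+2}+P_{4,2m}$.

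For the trace term I would use that pairing $g\odot A$ with an algebraic curvature tensor gives a constant multiple of $\langle A,\Ric\rangle$. The term therefore equals $c\,\gamma_n\int_M\langle\nabla^m\nabla^2\Delta R,\nabla^m\Ric\rangle$ for an explicit constant $c$. I would move the two outer derivatives of the Hessian onto $\Ric$, commuting derivatives at the cost of $P_{3,2m+2}$ errors. The contracted Bianchi identity $\nabla^i\nabla^j\R_{ij}=\frac12\Delta R$ then reduces the term to $c'\gamma_n\int_M|\nabla^m\Delta R|^2$ plus admissible errors.

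This sign computation is the step I expect to be the main obstacle. If $c'\le0$ the term has a good sign and can simply be dropped, giving $a_n=\frac{2}{(n-2)^2}-\varepsilon$. Otherwise I would use the pointwise bound $|\nabla^m\Delta R|^2\le n\cdot 2n(n-1)\,|\nabla^{m+2}\Rm|^2$, obtained from \Cref{eq:Rm-norm} and Cauchy--Schwarz. The explicit smallness of $\gamma_n$ relative to $\frac{2}{(n-2)^2}$, the same kind of comparison as in the symbol estimate of \Cref{thm:linearized-symbol}, would then have to leave a positive $a_n$. If even that comparison fails, I would fall back on bounding $\int|\nabla^m\Delta R|^2$ through the pure-trace part of the evolution. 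Finally, choosing $\varepsilon$ in \Cref{lem:polynomial-interpolation} as half the remaining coercive constant gives \Cref{eq:basic-energy-inequality}.
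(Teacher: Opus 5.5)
You follow the paper's proof step for step, but you leave undecided the one step on which the argument depends: the sign of $c'$. The proposal then hedges with fallbacks. The computation you sketch does settle the sign, and it comes out in your favour.

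With the convention \Cref{eq:KN}, each of the four Kulkarni--Nomizu terms contributes $\langle\Ric,H\rangle$. Hence $\langle\Rm,g\odot H\rangle=4\langle\Ric,H\rangle$, and this survives $\nabla^m$ because $\nabla g=0$. The two integrations by parts carry no net sign, and $\nabla^i\nabla^j\R_{ij}=\frac12\Delta R$. Therefore
\begin{equation*}
	-\gamma_n\int_M\left\langle\nabla^m\Rm,\nabla^m(g\odot\nabla^2\Delta R)\right\rangle dv_g
	=-4\gamma_n\int_M\left\langle\nabla^m\Ric,\nabla^m\nabla^2\Delta R\right\rangle dv_g
	=-2\gamma_n\|\nabla^m\Delta R\|_2^2+\int_M P_{3,2m+2}(\Rm)\,dv_g .
\end{equation*}
In your notation this means $c=-4$ and $c'=-2$. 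Since $\gamma_n\ge0$, the square is simply discarded; this is \Cref{eq:gamma-perfect-square}. As a check, tracing twice shows the term contributes $-(n-1)\gamma_n\Delta^2R$ to $\partial_tR$, which is the dissipative sign.

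Your fallbacks could not have rescued a bad sign, so without this computation the proposal is incomplete rather than merely cautious. If $c'$ were $+2$, you would have to absorb $2\gamma_n\|\nabla^m\Delta R\|_2^2$ into $2a_n\|\Delta\nabla^m\Rm\|_2^2$, where $a_n=(n-2)^{-2}$. With your constant $2n^2(n-1)$ this is hopeless. Even the sharp pointwise bound $|\Delta\nabla^m R|^2\le\frac{n(n-1)}{2}|\Delta\nabla^m\Rm|^2$ from \Cref{eq:Rm-norm}, valid up to commutators, would require $n(n-1)\gamma_n<2a_n$. That is equivalent to $n(3n-4)(n-4)<8(n-1)$, which fails for every $n\ge5$; at $n=5$ it reads $55<32$. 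The comparison in \Cref{thm:linearized-symbol} is not analogous, because there the cross term carries $\gamma_n$ without the factor $n(n-1)$. Your third fallback is not specified. Once the two-line sign computation above is inserted, the rest of your argument coincides with the paper's and the proof is complete. That rest is the metric and volume variation terms, the Bochner conversion to $\mathcal E_{m+2}$, and \Cref{lem:polynomial-interpolation} with $\varepsilon=a_n$.
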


\begin{proof}
	Fix $m$, and all integrals and norms in this proof are taken with respect to $g(t)$. We first differentiate the energy, 
	\begin{equation*}
		\mathcal E_m' = 2 \int_M \langle \partial_t \nabla^m \Rm, \nabla^m \Rm \rangle dv_g	+ \int_M P_{3,2m+2}(\Rm) + P_{4,2m}(\Rm)  dv_g,
	\end{equation*}
	For the rough biharmonic term, the absence of a boundary gives exactly
	\begin{equation*}
		\int_M \langle \nabla^m \Rm, \Delta^2 \nabla^m \Rm \rangle dv_g = -\int_M \langle \nabla \nabla^m \Rm,\nabla \Delta \nabla^m \Rm \rangle dv_g = \int_M | \Delta \nabla^m \Rm |^2 dv_g.
	\end{equation*}
	Substituting \Cref{eq:Rm-derivative-evolution}, we therefore obtain
	\begin{equation}\label{eq:energy-derivative}
		\begin{split}
			\mathcal E_m'
			=& -2 a_n \int_M |\Delta \nabla^m \Rm|^2 dv_g - \gamma_n \int_M \langle \nabla^m \Rm ,\nabla^m (g \odot \nabla^2 \Delta R) \rangle dv_g\\
			& + \int_M \left( P_{3,2m+2}(\Rm) + P_{4,2m}(\Rm) \right) dv_g.
		\end{split}
	\end{equation}
	where $a_n=(n-2)^{-2}$. We next compute the scalar-curvature principal term, keeping track of the commutators when $m > 0$. For any symmetric $2$-tensor $H$, the curvature symmetries and \Cref{eq:KN} give
	\begin{equation*}
		\langle \Rm,g \odot H \rangle = 4 \langle \Ric, H \rangle.
	\end{equation*}
	Since $\nabla g = 0$, the same contraction identity holds after taking
	$m$ covariant derivatives. Thus
	\begin{equation*}
		\begin{split}
			I_m := \int_M \left\langle \nabla^m \Rm, \nabla^m (g \odot \nabla^2 \Delta R) \right\rangle dv_g = 4 \int_M \left\langle \nabla^m \Ric,\nabla^m( \nabla^2 \Delta R) \right\rangle dv_g.
		\end{split}
	\end{equation*}
	In the following calculation, the divergences of $\nabla^m \Ric$ are taken in the Ricci slots, leaving the $m$ derivative slots free. The Hessian slots are paired with the Ricci slots, while the remaining derivative slots are paired in their original order. Commuting the two Hessian derivatives past the $m$ covariant derivatives produces terms of type $P_{2,m+2}(\Rm)$, we obtain
	\begin{equation*}
		\begin{split}
			I_m 
			&= 4 \int_M \left\langle \nabla^m \Ric, \nabla^2( \nabla^m \Delta R) \right\rangle dv_g + \int_M P_{3,2m+2} (\Rm) dv_g\\
			&= 4 \int_M \left\langle \divv^2 (\nabla^m \Ric), \nabla^m \Delta R \right\rangle dv_g +\int_M P_{3,2m+2}(\Rm) dv_g.
		\end{split}
	\end{equation*}
	Commuting the two divergences past the $m$ covariant derivatives and contracted Bianchi identity gives
	\begin{equation*}
		\divv^2 (\nabla^m \Ric) = \nabla^m( \divv^2 \Ric) + P_{2,m}(\Rm) = \frac{1}{2} \nabla^m \Delta R + P_{2,m}(\Rm).
	\end{equation*}
	After pairing with $\nabla^m \Delta R$, the additional term also has type $P_{3,2m+2}(\Rm)$. Consequently,
	\begin{equation*}
		I_m =2 \|\nabla^m \Delta R\|_2^2 + \int_M P_{3,2m+2}(\Rm) dv_g.
	\end{equation*}
	It follows that
	\begin{equation}\label{eq:gamma-perfect-square}
		-\gamma_n \int_M \left\langle \nabla^m \Rm, \nabla^m (g \odot \nabla^2 \Delta R)\right\rangle dv_g = -2 \gamma_n \|\nabla^m \Delta R\|_2^2 + \int_M P_{3,2m+2}(\Rm) dv_g.
	\end{equation}
	When $m = 0$, no commutations are needed. For every $m$, the principal square is nonpositive because $\gamma_n \ge 0$ for $n \ge 4$.

	We now convert the rough-Laplacian into the full $(m+2)$-nd derivative norm as follows
	\begin{equation*}
		\begin{split}
			\int_M |\nabla^2 \nabla^m \Rm|^2 dv_g = \int_M | \Delta \nabla^m \Rm|^2 dv_g + \int_M P_{3,2m+2}(\Rm) dv_g.
		\end{split}
	\end{equation*}
	Thus
	\begin{equation*}
		\|\nabla^{m+2} \Rm\|_2^2 \le \|\Delta \nabla^m \Rm \|_2^2 + \left| \int_M P_{3,2m+2}(\Rm) dv_g \right|.
	\end{equation*}
	All the terms above, including those from the changing metric and volume and the rough-Laplacian calculation, have types $P_{3,2m+2}(\Rm)$ and $P_{4,2m}(\Rm)$. Applying \Cref{lem:polynomial-interpolation} with $T = \Rm$ to these finitely many integrals, with suitably chosen parameters, gives for every $\varepsilon > 0$,
	\begin{equation*}
		\mathcal E_m' \le -(2a_n - \varepsilon) \|\nabla^{m+2} \Rm\|_2^2 - 2 \gamma_n \| \nabla^m \Delta R\|_2^2 + C_\varepsilon K^{m+2} \|\Rm\|_2^2.
	\end{equation*}
	Taking $\varepsilon = a_n$ and discarding the nonpositive scalar square proves \Cref{eq:basic-energy-inequality}.
\end{proof}

\subsection{Bernstein--Bando--Shi estimates}

We use a time-weighted energy argument of Bernstein--Bando--Shi type. The following lemma records the underlying differential inequality argument.

\begin{lemma}\label[lemma]{lem:energy-cascade}
	Fix $k \ge 0$ and $0 < \tau \le 1$, and let $H:[0, \tau] \to [0, \infty)$ be a bounded function. Suppose $E_0, E_2, \ldots, E_{2k+2}$ are nonnegative differentiable functions on $[0, \tau]$ satisfying
	\begin{equation}\label{eq:cascade-ODE}
		E_{2j}' + c_j E_{2j+2} \le b_j E_{2j} + d_j H(t), \qquad 0 \le j \le k,
	\end{equation}
	where $c_j > 0$, $b_j, d_j \ge 0$, and 
	\begin{equation*}
		\bar H := \max\left\{ E_0(0), \displaystyle\sup_{0 \le s \le \tau} H(s) \right\} < \infty.
	\end{equation*}
	Then
	\begin{equation}\label{eq:cascade-bound}
		E_{2k}(t) \le C_k  \bar{H}t^{-k}, \qquad 0< t \le \tau.
	\end{equation}
	The constant depends only on the finitely many coefficients in \Cref{eq:cascade-ODE}.
\end{lemma}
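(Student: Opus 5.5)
The plan is the standard Bernstein--Bando--Shi time-weighted energy argument. I will build a single functional that combines the $E_{2j}$ with time weights and show that it stays bounded by $\bar H$. The natural candidate is
\begin{equation*}
	F(t) := \sum_{j=0}^{k} A_j\, t^{j} E_{2j}(t),
\end{equation*}
with positive constants $A_0 = 1, A_1, \ldots, A_k$ to be fixed. Once we know $F(t) \le C \bar H$ on $[0,\tau]$, we get $A_k t^k E_{2k}(t) \le F(t) \le C\bar H$, which is \Cref{eq:cascade-bound}.

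Differentiating and using \Cref{eq:cascade-ODE} for each $j$ gives
\begin{equation*}
	F' \le \sum_{j=0}^k A_j \left( j t^{j-1} E_{2j} + t^j\left(-c_j E_{2j+2} + b_j E_{2j} + d_j H\right)\right).
\end{equation*}
The term $j t^{j-1}E_{2j}$ with $j \ge 1$ is dangerous, since it has no sign and blows up as $t\to0$ relative to its weight. The idea is to absorb it by the good term $-A_{j-1} c_{j-1} t^{j-1} E_{2j}$ coming from index $j-1$. This works if
\begin{equation*}
	j A_j \le \tfrac12 A_{j-1} c_{j-1},
\end{equation*}
so I define the $A_j$ inductively by $A_j := A_{j-1}c_{j-1}/(2j)$. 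The top good term $-A_k c_k t^k E_{2k+2}$ is simply discarded. Since $t \le \tau \le 1$, the terms $A_j b_j t^j E_{2j}$ are bounded by $(\max_j b_j) F$, and the source terms by $\left(\sum_j A_j d_j\right) H \le D\bar H$. Hence
\begin{equation*}
	F' \le b F + D \bar H, \qquad b := \max_j b_j .
\end{equation*}

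The initial value satisfies $F(0) = E_0(0) \le \bar H$, because all terms with $j \ge 1$ carry a factor $t^j$. I will note that this needs $t^jE_{2j}(t)\to 0$ as $t\to 0$, which holds because each $E_{2j}$ is differentiable, hence continuous and bounded, on the closed interval $[0,\tau]$. Gronwall on $[0,\tau]$ with $\tau\le1$ then gives
\begin{equation*}
	F(t) \le e^{b}\,(1 + D)\,\bar H .
\end{equation*}
Dividing by $A_k t^k$ yields \Cref{eq:cascade-bound} with $C_k = e^b(1+D)/A_k$, which depends only on the coefficients $c_j, b_j, d_j$.

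The only delicate point is the choice of the weights $A_j$ that makes the cascade absorption close, together with checking that $\tau \le 1$ is used to bound $t^j \le 1$ in the $b_j$ and $d_j$ terms. I expect no other obstacle.
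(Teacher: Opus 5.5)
Your proposal is correct and follows the paper's proof essentially step for step. The paper uses the same weighted functional $F_k=\sum_j \alpha_j t^j E_{2j}$ with $j\alpha_j\le \frac12 c_{j-1}\alpha_{j-1}$, the same absorption of $j t^{j-1}E_{2j}$ by the good term at level $j-1$, and the same Gronwall step. That step uses $t\le\tau\le 1$ and $F_k(0)=E_0(0)\le\bar H$, and it gives the identical constant $e^{\max_j b_j}\bigl(1+\sum_j\alpha_j d_j\bigr)/\alpha_k$.
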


\begin{proof}
	Set $\alpha_0 = 1$ and choose the positive coefficients successively so that
	$j \alpha_j \le \frac{1}{2} c_{j-1} \alpha_{j-1}$. Define
	\begin{equation*}
		F_k(t) = \sum_{j=0}^k \alpha_j t^j E_{2j}(t).
	\end{equation*}
	Differentiating each summand and using \Cref{eq:cascade-ODE} gives
	\begin{equation*}
		\begin{split}
			F_k' \le  - \sum_{j=0}^k \alpha_j c_j t^j E_{2j+2} + \sum_{j=1}^k j \alpha_j t^{j-1} E_{2j} + \sum_{j=0}^k \alpha_j b_j t^j E_{2j} + H(t) \sum_{j=0}^k \alpha_j d_j t^j.
		\end{split}
	\end{equation*}
	Reindex the first sum, we have
	\begin{equation*}
		\sum_{j=1}^k (j \alpha_j - c_{j-1} \alpha_{j-1})t^{j-1} E_{2j} - \alpha_k c_k t^k E_{2k+2} \le 0.
	\end{equation*}
	Since $t \le 1$ and $H(t) \le \bar H$, the remaining terms satisfy
	\begin{equation*}
		F_k' \le  B_k F_k + D_k  \bar{H}, \qquad
		B_k  = \max_{0 \le j \le k} b_j, \qquad
		D_k  = \sum_{j=0}^k \alpha_j d_j.
	\end{equation*}
	Since $F_k(0) = E_0(0) \le \bar H$, Gronwall's inequality yields
	\begin{equation*}
		F_k(t) \le e^{B_kt} \left( F_k(0) + D_k \bar{H} t \right) \le e^{B_k} (1+D_k)  \bar{H}.
	\end{equation*}
	Finally $\alpha_k t^k E_{2k}(t) \le F_k(t)$ proves \Cref{eq:cascade-bound}.
\end{proof}
Now we are ready to prove the \Cref{theorem:B}.
\begin{proof}[Proof of \Cref{theorem:B}]
	If $K = 0$, the solution is flat and the assertion is immediate. Otherwise set
	\begin{equation}\label{eq:parabolic-rescaling-B}
		\hat g(s) = K g(K^{-2}s), \qquad 0 \le s \le K^2 \tau \le 1.
	\end{equation}
	Since $J_{cg} = c^{-1} J_g$, it is again a $J$-flow, with $| \Rm_{\hat g} | \le 1$. To see explicitly how the zeroth level controls all higher levels, write 
	\begin{equation*}
		\widehat{\mathcal E}_m(s) := \int_M \left| \nabla_{\hat g(s)}^m \Rm_{\hat g(s)}\right|_{\hat g(s)}^2 dv_{\hat g(s)}.
	\end{equation*}
	In the rescaled flow, \Cref{eq:basic-energy-inequality} reads
	\begin{equation*}
		\widehat{\mathcal E}_m'(s) + a_n \widehat{\mathcal E}_{m+2}(s) \le C_m \widehat{\mathcal E}_0(s).
	\end{equation*}
	Taking $m = 0$, discarding the nonnegative term and applying Gronwall's inequality, we obtain $\widehat{\mathcal E}_0(s) \le e^{C_0s} \widehat{\mathcal E}_0(0) \le e^{C_0}\widehat{\mathcal E}_0(0)$, since $s \le 1$. Consequently, for each even level,
	\begin{equation*}
		\widehat{\mathcal E}_{2j}'(s) + a_n \widehat{\mathcal E}_{2j+2}(s) \le C_{2j} e^{C_0} \widehat{\mathcal E}_0(0).
	\end{equation*}
	Use \Cref{lem:energy-cascade} with $c_j = a_n$, $b_j = 0$ and a constant source, $\sum_{j=0}^k \alpha_j s^j \widehat{\mathcal E}_{2j}(s)$ has initial value $\widehat{\mathcal E}_0(0)$ and derivative bounded above by $C_k \widehat{\mathcal E}_0(0)$, thus
	\begin{equation*}
		\widehat{\mathcal E}_{2j}(s) \le C_j \widehat{\mathcal E}_0(0) s^{-j}.
	\end{equation*}
	For odd levels, the first inequality in \Cref{eq:adjacent-Young} gives
	\begin{equation*}
		\widehat{\mathcal E}_{2j+1}(s) \le C \left(\widehat{\mathcal E}_{2j}(s) \widehat{\mathcal E}_{2j+2}(s) \right)^{\frac{1}{2}} \le C(C_j, C_{j+1}) \widehat{\mathcal E}_0(0) s^{-\frac{2j+1}{2}}.
	\end{equation*}
	By the scaling property, we have
	\begin{equation*}
		\widehat{\mathcal E}_m(K^2t) = K^{\frac{n}{2}-m-2} \mathcal E_m^g(t), 
		\qquad 
		\widehat{\mathcal E}_0(0) = K^{\frac{n}{2}-2} \mathcal E_0^g(0).
	\end{equation*}
	Substituting the rescaled estimate at $s = K^2 t$ gives
	\begin{equation*}
		\mathcal E_m^g(t) \le C_m \mathcal E_0^g(0) t^{-\frac{m}{2}}.
	\end{equation*}
	Taking square roots proves the stated estimate, with constants depending only on $n,m$.
\end{proof}

We recall the tensor Sobolev estimate needed to pass from integral to pointwise bounds.

\begin{lemma}\label[lemma]{lem:tensor-Sobolev}
	Let $(M^4,g)$ be a closed Riemannian manifold satisfying
	\begin{equation}\label{eq:normalized-Sobolev}
		\|u\|_4^2 \le C_S \left( \|\nabla u\|_2^2 + \|u\|_2^2 \right)
	\end{equation}
	for every smooth function $u$. For tensors of fixed rank, there is a constant $C$, depending only on $C_S$ and the rank, such that every smooth tensor $T$ of this rank satisfies
	\begin{equation}\label{eq:tensor-Sobolev}
		\|T\|_\infty \le C \sum_{j=0}^{3} \|\nabla^j T\|_2.
	\end{equation}
\end{lemma}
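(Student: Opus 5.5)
The strategy is a Moser-type bootstrap. First we pass from functions to tensors through Kato's inequality, then we use the Sobolev inequality to gain integrability. For a smooth tensor $S$, apply \Cref{eq:normalized-Sobolev} to the function $u_\epsilon=(|S|^2+\epsilon^2)^{1/2}$. It satisfies $|\nabla u_\epsilon|\le|\nabla S|$, and letting $\epsilon\to0$ gives
\begin{equation*}
	\|S\|_4^2 \le C_S\left(\|\nabla S\|_2^2+\|S\|_2^2\right).
\end{equation*}
In particular, taking $S=\nabla^jT$ shows that $\|\nabla^jT\|_4\le C(\|\nabla^{j+1}T\|_2+\|\nabla^jT\|_2)$ for $j=0,1,2$. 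Only $j\le 2$ will be needed.

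The second step is an $L^\infty$ bound by De Giorgi--Nash--Moser iteration, which uses only the Sobolev inequality and no curvature. For $p\ge 2$, set $u=|T|^{p/2}$ (regularized as above). Then
\begin{equation*}
	|\nabla u|\le \tfrac p2|T|^{p/2-1}|\nabla T|,
\end{equation*}
so that
\begin{equation*}
	\|T\|_{2p}^{p}\le C_S\left(\tfrac{p^2}{4}\int|T|^{p-2}|\nabla T|^2+\int|T|^p\right).
\end{equation*}
Hölder's inequality then gives
\begin{equation*}
	\int|T|^{p-2}|\nabla T|^2\le \|T\|_{2p}^{p-2}\,\|\nabla T\|_{2p/2... }^2.
\end{equation*}
It is cleaner to split this more carefully. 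Note that dimension four is exactly where $L^4$ control of $\nabla T$ (from $\nabla^2 T\in L^2$) combined with the Sobolev inequality still falls short of $L^\infty$; this is why three derivatives are needed. The tidy route is therefore different. Apply Step 1 twice to get $\|\nabla T\|_4\lesssim\sum_{j\le2}\|\nabla^jT\|_2$. Then upgrade once more using $\nabla^3T$. Applying the Sobolev inequality to $u=|\nabla T|^2$ gives
\begin{equation*}
	\|\nabla T\|_8^2=\|\,|\nabla T|^2\|_4\le C\left(\||\nabla T||\nabla^2T|\|_2+\|\nabla T\|_4^2\right)\le C\left(\|\nabla T\|_8\|\nabla^2T\|_{8/3}+\|\nabla T\|_4^2\right),
\end{equation*}
and $\|\nabla^2T\|_{8/3}\le C\|\nabla^2 T\|_2^{1/2}\|\nabla^2T\|_4^{1/2}$ is controlled by Step 1 with $j=2$. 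Hence $\nabla T\in L^8$, with $q=8>4=n$, with norm bounded by $\sum_{j\le3}\|\nabla^jT\|_2$.

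The final step is to convert $\nabla|T|\in L^q$ with $q>n$, together with $T\in L^4$, into a sup bound. This is done by Moser iteration on $u=|T|$. We test the Sobolev inequality with $u^{\beta}$ and write $\int u^{2\beta-2}|\nabla u|^2\le \|\nabla u\|_8^2\|u\|_{(2\beta-2)\cdot 4/3}^{2\beta-2}$. Since $(2\beta-2)\cdot\frac43<4\beta$, this gives a recursion of the form
\begin{equation*}
	\|u\|_{4\beta}^{2\beta}\le C_S\beta^2\left(\|\nabla u\|_8^2\|u\|_{8(\beta-1)/3}^{2\beta-2}+\|u\|_{2\beta}^{2\beta}\right),
\end{equation*}
in which the exponent gain ratio is $3/2>1$. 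Iterating over a geometric sequence of exponents, and using that $\prod (C\beta_k^2)^{1/\beta_k}$ converges, yields $\|u\|_\infty\le C(\|\nabla u\|_8+\|u\|_4)$. Here volume factors are avoided because each step is homogeneous and Young's inequality is used; if a volume lower or upper bound sneaks in, I would handle it by noting that \Cref{eq:normalized-Sobolev} with $u=1$ gives $\Vol\ge C_S^{-2}$, and that the $\|u\|_2$ terms are controlled by the same sum. Combining the three steps yields \Cref{eq:tensor-Sobolev}.

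The main obstacle is the bookkeeping in the Moser iteration: keeping the constant dependent only on $C_S$ and the rank, with no volume upper bound. I expect that the clean way is to run the iteration with the inhomogeneous term absorbed at each stage via Young's inequality, using $\|u\|_{p}$ for $p\ge4$ only in interpolated form between $\|u\|_4$ and $\|u\|_\infty$.
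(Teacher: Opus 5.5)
Your plan is correct and is essentially the paper's argument. Both use Kato plus the scalar inequality, then the Sobolev inequality applied to a squared tensor norm to place $\nabla T$ in $L^8$ with $8>4$; the paper also does this for $T$, via $\|T\|_8^4\le C_S(4\|T\|_4^2\|\nabla T\|_4^2+\|T\|_4^4)$. The only difference is the final Morrey-type sup bound, which the paper quotes as Streets' multiplicative inequality $\|T\|_\infty\le C\|T\|_8^{1/2}(\|\nabla T\|_8+\|T\|_8)^{1/2}$ rather than running a Moser iteration by hand. If you run it yourself, the fallback you mention is genuinely needed: starting from $\|u\|_4$ alone, the first step of your recursion needs a norm of $u$ with exponent other than $4$, and without a volume upper bound it is not controlled (the lower bound $\Vol\ge C_S^{-2}$ does not help). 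So take the first step with $\beta=2$, using $\|u\|_{8/3}\le\|u\|_2^{1/2}\|u\|_4^{1/2}$, to reach $L^8$ (or use the paper's bound on $\|T\|_8$). From then on $\beta_k=\frac{3}{8}p_k+1$ keeps every right-hand exponent in $[4,p_k]$, and the iteration closes with constants depending only on $C_S$.
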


\begin{proof}
	The scalar inequality and Kato's inequality give
	\begin{equation*}
		\|T\|_4^2 \le C_S \left(\|\nabla T\|_2^2 + \|T\|_2^2 \right).
	\end{equation*}
	Applying \Cref{eq:normalized-Sobolev} to $|T|^2$, followed by H\"older's inequality and the preceding estimate for $T$ and $\nabla T$, yields
	\begin{equation*}
		\|T\|_8^4 \le C_S \left( 4 \|T\|_4^2 \|\nabla T\|_4^2 + \|T\|_4^4 \right) \le C \left(\sum_{j=0}^{2} \|\nabla^j T\|_2 \right)^4.
	\end{equation*}
	Apply this estimate to $T$ and $\nabla T$. The multiplicative Sobolev inequality of \cite[Theorem~6.1]{Streets2008}, with both finite exponents equal to eight, and Kato's inequality then give
	\begin{equation*}
		\|T\|_\infty \le C \|T\|_8^{\frac{1}{2}} \left( \|\nabla T\|_8 + \|T\|_8 \right)^{\frac{1}{2}} \le C\sum_{j=0}^{3} \|\nabla^j T\|_2.
	\end{equation*}
	The scalar inequalities apply to tensor norms by smooth approximation.
\end{proof}

\begin{corollary}\label{cor:BBS-pointwise}
	Let $g(t)$ be a smooth solution of the $J$-flow on a closed manifold $M^n$, $n \ge 4$, for $0 \le t \le \tau$, with $g(0) = g_0$. Suppose that $|\Rm|_{g(t)} \le K$ on $M \times [0, \tau]$ for some $K > 0$. Fix an integer $m \ge 0$.
	\begin{enumerate}
		\item If $n = 4$, there exists a constant $C_m$, depending only on $m$, such that
		\begin{equation}\label{eq:BBS-pointwise}
			\sup_M | \nabla^m \Rm |_{g(t)} \le C_m \left( K+t^{-\frac{1}{2}} \right)^{1 + \frac{m}{2}}, \qquad 0 < t \le \tau.
		\end{equation}
		\item For any $n > 4$, assume in addition that $\tau \le K^{-2}$ and that there exists a constant $C_S$, which may depend on $m$ but is independent of $t$, such that
		\begin{equation}
			\|T\|_{\infty, g(t)} \le C_S \sum_{i=0}^{k_0} \| \nabla^i T\|_{2, g(t)}, \qquad k_0 = \left\lfloor \frac{n}{2} \right\rfloor + 1,
		\end{equation}
		for every smooth covariant tensor $T$ of rank $m + 4$ and every $0 < t \le \min\{\tau,1\}$. Then there exists a constant $C(n,m)$, depending only on $n,m$, such that
		\begin{equation*}
			\|\nabla^m \Rm\|_{\infty,g(t)} \le C(n,m) C_S \|\Rm_{g_0}\|_{2,g_0} t^{-\frac{m+k_0}{4}}, \qquad 0 < t \le \min\{\tau, 1\}.
		\end{equation*}
	\end{enumerate}
\end{corollary}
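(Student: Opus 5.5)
\emph{Part (2).} This part follows directly from \Cref{theorem:B} and the assumed Sobolev inequality. Fix $0<t\le\min\{\tau,1\}$ and apply the hypothesis to the covariant tensor $T=\nabla^m\Rm$, which has rank $m+4$. This gives
\begin{equation*}
	\|\nabla^m\Rm\|_{\infty,g(t)} \le C_S \sum_{i=0}^{k_0}\|\nabla^{m+i}\Rm\|_{2,g(t)} .
\end{equation*}
Since $|\Rm|\le K$ and $\tau\le K^{-2}$, \Cref{theorem:B} bounds each summand by $C_{m+i}(n)\|\Rm\|_{L^2(g_0)}\,t^{-(m+i)/4}$. Because $t\le 1$, every power satisfies $t^{-(m+i)/4}\le t^{-(m+k_0)/4}$. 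Summing over $i$ then gives the claim with $C(n,m)=\sum_{i\le k_0}C_{m+i}$. No further work is needed.

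\emph{Part (1), $n=4$.} Here no Sobolev hypothesis is assumed, so the global $L^2$ bounds alone cannot give pointwise control. The exponent $1+\frac m2$ is exactly the parabolic scaling of $\nabla^m\Rm$ when $\Rm\sim t^{-1/2}$: time scales like length$^4$, so $t^{-(2+m)/4}=(t^{-1/2})^{1+m/2}$. I will therefore prove a scale-invariant local statement and rescale.

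Fix $(x_0,t_0)$ and set $\Lambda=K+t_0^{-1/2}$. Define $\hat g(s)=\Lambda g(t_0+\Lambda^{-2}(s-1))$ for $s\in[0,1]$. This is again a $J$-flow (since $J_{cg}=c^{-1}J_g$) with $|\Rm_{\hat g}|\le 1$, and it exists on a parabolic interval of length $1$ ending at $s=1$, because $\Lambda^{-2}\le t_0$. It then suffices to show $|\nabla^m\Rm_{\hat g}|(x_0,1)\le C_m$. To avoid any injectivity radius assumption, I would lift through the exponential map $\exp_{x_0}^{\hat g(0)}$. On the Euclidean ball of radius $\pi/2$ this is a local diffeomorphism, since conjugate radius is at least $\pi$ when the sectional curvature is at most $1$. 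The pulled-back family is a $J$-flow on a ball $B\subset\mathbb R^4$ with $|\Rm|\le1$, and at $s=0$ it is uniformly equivalent to the Euclidean metric. On this ball I then use the following.
\begin{enumerate}
	\item Comparison geometry gives a uniform local Sobolev inequality and volume bound $\mathrm{Vol}(B_{2r})\le Cr^4$. Hence $\sup_s\int_{B_{2r}}|\Rm|^2\le C$.
	\item I would run the local energy argument of \Cref{theorem:C} on this lifted local solution. This gives $\int_{B_r}|\nabla^j\Rm|^2\le C_j s^{-j/2}$, which is $\le C_j$ near $s=1$.
	\item The local (cut-off) version of the argument of \Cref{lem:tensor-Sobolev}, applied to $\phi\nabla^m\Rm$, converts the bounds for $j\le m+3$ into $|\nabla^m\Rm|(x_0,1)\le C_m$.
\end{enumerate}
Scaling back then gives \eqref{eq:BBS-pointwise}.

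\emph{Main obstacle.} The hard step is verifying the cutoff hypothesis \eqref{eq:cutoff-estimate} and the uniform local Sobolev constant for \emph{all} $s\in[0,1]$. The evolution $\partial_s\hat g=-2J$ involves $\nabla^2\Rm$, which is not a priori bounded, so uniform equivalence of the metrics on $B$ is not automatic. I would first try a continuity (bootstrap) argument. Assume $|\nabla^j\Rm|\le 2C_j$ for $j\le 3$ on a maximal subinterval. Then $|\partial_s\hat g|\le C$ and the metric derivatives controlling $|\nabla\phi|$ and $|\nabla^2\phi|$ stay bounded, so \Cref{theorem:C} and the local Sobolev step apply with fixed constants. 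The bootstrap closes because the improved bounds at scale $s$ carry the weight $s^{-j/2}$, which is uniformly controlled away from $s=0$. Near $s=0$ the initial metric is already controlled, so I would handle that range by taking $r$ small and restarting the argument from intermediate times.
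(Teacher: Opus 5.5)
Your Part (2) is exactly the paper's argument. For Part (1) the paper builds no local theory. Since $\gamma_4=0$, \Cref{lem:Rm-evolution} gives $\partial_t\Rm=-\frac14\Delta^2\Rm+P_{2,2}(\Rm)+P_{3,0}(\Rm)$ with $\partial_t g=P_{1,2}(\Rm)+P_{2,0}(\Rm)$. The time change $\tilde g(s)=g(4s)$ puts this in the class of fourth-order flows treated by Streets, and \cite[Theorem~1.3]{Streets2013} then gives \Cref{eq:BBS-pointwise} for complete solutions with $|\Rm|\le K$, with no Sobolev hypothesis. Your rescaling and exponential-map lifting are fine. In effect, though, you are trying to reprove Streets' theorem, and the argument has a genuine gap exactly at the step you flag.

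The local estimate of \Cref{theorem:C} is integrated from the initial time. You therefore need \Cref{eq:cutoff-estimate}, the volume bound and the local Sobolev constant uniformly on all of $[0,1]$, including $s\to 0$. These are controlled by $\int_0^s|J|$ and $\int_0^s|\nabla J|$: the metric drift evolves by $J$, and $|\nabla^2\phi|_{\hat g(s)}$ differs from $|\nabla^2\phi|_{\hat g(0)}$ by the connection drift, which evolves by $\nabla J$. So you need time integrals of $|\nabla^2\Rm|$ and $|\nabla^3\Rm|$ starting at a time where only $|\Rm|\le 1$ is known.

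Your bootstrap hypothesis $|\nabla^j\Rm|\le 2C_j$ cannot hold uniformly near $s=0$. The metric $\hat g(0)$ may essentially be a rescaling of $g_0$, whose curvature derivatives are uncontrolled. ``Restarting from intermediate times'' presupposes derivative bounds at those times, which is what is being proved.

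A time-weighted hypothesis does not close with your pipeline either. \Cref{theorem:C} on a fixed ball followed by the fixed-scale Sobolev step (3) gives only $|\nabla^j\Rm|(s)\lesssim s^{-(j+3)/4}$. Hence $|\nabla^2\Rm|\lesssim s^{-5/4}$ and $|\nabla^3\Rm|\lesssim s^{-3/2}$, neither integrable at $0$, so the drift cannot be bounded.

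A working version would need a continuity argument on a weighted quantity such as $\max_{j\le 3}\sup_M s^{j/4}|\nabla^j\Rm|\le A$, over a short interval $[0,s_1(A)]$. The $L^2$-to-$L^\infty$ conversion would be done on balls of parabolic radius $\rho=s^{1/4}$, using \Cref{rmk:local-higher-dim} and $\int_{B_{2\rho}}|\Rm|^2\le C\rho^4$. Then the drift integrals $\int_0^s\sigma^{-1/2}\,d\sigma$ and $\int_0^s\sigma^{-3/4}\,d\sigma$ are small and the constants do not depend on $A$. The argument can start because the solution is smooth on the compact set $M\times[0,\tau]$. The initial rescaling must then be arranged so that only an interval of length $s_1$ is needed.

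Without this, Part (1) is not established. With it, you would essentially have reproduced the cited theorem, which is why the paper simply invokes it.
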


\begin{proof}
	First suppose $n = 4$. Since $\gamma_4 = 0$, the leading curvature term in \Cref{eq:Rm-evolution} is $- \frac{1}{4} \Delta^2 \Rm$. The time change
	\begin{equation*}
		\tilde g(s) = g(4s), \qquad 0 \le s \le \frac{\tau}{4},
	\end{equation*}
	normalizes this coefficient to $-1$, while the metric evolution has the required schematic form $P_{1,2}(\Rm) + P_{2,0}(\Rm)$. Thus the rescaled solution satisfies the system in~\cite[Equation~(1.1)]{Streets2013}. Since $M$ is closed, the metrics are complete, and the curvature bound remains $K$. Consequently, \cite[Theorem~1.3]{Streets2013} gives
	\begin{equation*}
		\sup_M |\nabla^m \Rm|_{\tilde g(s)} \le C_m \left(K + s^{-\frac{1}{2}} \right)^{1+ \frac{m}{2}}, \qquad 0 < s \le \frac{\tau}{4},
	\end{equation*}
	where the curvature tensor and covariant derivatives are taken with respect to $\tilde g(s)$. Changing back to $t = 4s$ and absorbing the resulting numerical factor into $C_m$ proves \Cref{eq:BBS-pointwise}. This application requires no Sobolev bound.

	Now suppose $ n > 4$. Then $\gamma_n > 0$, and \Cref{eq:Rm-evolution} contains the additional fourth-order term $g \odot \nabla^2 \Delta R$. Under the additional restriction $\tau \le  K^{-2}$, we instead use \Cref{theorem:B}. Applying the assumed Sobolev inequality to the covariant tensor $\nabla^m \Rm$, 
	\begin{equation*}
		\begin{split}
			\|\nabla^m\Rm\|_{\infty,g(t)}
			\le C_S \sum_{i=0}^{k_0} \| \nabla^{m+i} \Rm \|_{2,g(t)}
			\le C_S \|\Rm_{g_0} \|_{2,g_0} \sum_{i=0}^{k_0} C_{m+i} t^{-\frac{m+i}{4}}
			\le C(n,m) C_S \| \Rm_{g_0} \|_{2,g_0} t^{- \frac{m+k_0}{4}}
		\end{split}
	\end{equation*}
	for $0 < t \le \min\{\tau, 1\}$, where the last inequality uses $t \le 1$.
\end{proof}

\subsection{Local energy inequalities}

We recall the commutation and weighted integration-by-parts identities used in the Bochner calculation, see \cite[Lemma~4.4 and Proposition~5.1]{Streets2008}.
\begin{lemma}\label[lemma]{lem:weighted-Bochner-identities}
	Let $(M^n,g)$ be a closed Riemannian manifold, and let $0 \le \eta \le 1$ be a smooth cutoff. For integers $j \ge 0$ and $s \ge 2$, the covariant commutator has the schematic form
	\begin{equation*}
		[\Delta, \nabla_a] \nabla^j \Rm = \Rm * \nabla^{j+1} \Rm + \nabla \Rm * \nabla^j \Rm.
	\end{equation*}
	Moreover,
	\begin{equation*}
		\int_M \nabla^b (\eta^s) \left\langle \nabla^a \nabla^j \Rm, \nabla_b \nabla_a \nabla^j \Rm \right\rangle dv_g = -\frac{1}{2} \int_M \Delta (\eta^s) |\nabla^{j+1} \Rm|^2 dv_g.
	\end{equation*}
\end{lemma}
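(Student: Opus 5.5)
The plan is to prove both assertions by direct computation: the commutator comes from the Ricci identity applied twice, and the weighted identity from recognizing a gradient of a squared norm and integrating by parts once. Throughout, write $S := \nabla^j \Rm$. This is a covariant tensor of rank $j+4$, and $*$ denotes contraction with the metric, as in \Cref{sec:integral-smoothing}.

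\emph{Commutator.} I will split
\begin{equation*}
	[\Delta,\nabla_a] S = \nabla^b\nabla_b\nabla_a S - \nabla_a\nabla^b\nabla_b S = \nabla^b\left( [\nabla_b,\nabla_a] S \right) + \left( \nabla^b\nabla_a\nabla_b S - \nabla_a\nabla^b\nabla_b S \right).
\end{equation*}
The two pieces are handled separately.
\begin{enumerate}
	\item By the Ricci identity for a tensor of rank $j+4$, $[\nabla_b,\nabla_a]S = \Rm * S$, a finite sum of curvature contractions, one for each slot of $S$. Taking $\nabla^b$ and using $\nabla g = 0$ gives $\nabla\Rm * \nabla^j\Rm + \Rm * \nabla^{j+1}\Rm$.
	\item In the second bracket, $\nabla g = 0$ lets me move $\nabla^b$ past the metric contraction. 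The bracket is then the commutator $[\nabla_b,\nabla_a]$ applied to the rank-$(j+5)$ tensor $\nabla S$, contracted in $b$. The Ricci identity gives $\Rm * \nabla S = \Rm * \nabla^{j+1}\Rm$.
\end{enumerate}
Adding the two pieces yields the stated schematic form. The contraction coefficients depend only on $j$ and the rank.

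\emph{Weighted identity.} The key observation is that $\nabla_b\nabla_a S$ is the $b$-component of the covariant derivative of the tensor $\nabla S$, with $a$ one of its slots. Since $\nabla g = 0$, the full contraction over $a$ and the tensor indices of $S$ gives pointwise
\begin{equation*}
	\left\langle \nabla^a S, \nabla_b\nabla_a S \right\rangle = \frac12 \nabla_b |\nabla S|^2 = \frac12 \nabla_b |\nabla^{j+1}\Rm|^2 .
\end{equation*}
Multiplying by $\nabla^b(\eta^s)$ and integrating over the closed manifold $M$, one integration by parts gives
\begin{equation*}
	\frac12\int_M \nabla^b(\eta^s)\,\nabla_b|\nabla^{j+1}\Rm|^2 \,dv_g = -\frac12 \int_M \Delta(\eta^s)\,|\nabla^{j+1}\Rm|^2 \,dv_g .
\end{equation*}
There are no boundary terms because $M$ is closed. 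The assumption $s \ge 2$ only guarantees that $\eta^s$ is $C^2$ with $\Delta(\eta^s) = s\eta^{s-1}\Delta\eta + s(s-1)\eta^{s-2}|\nabla\eta|^2$ bounded; this is the form used later in the local estimates.

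The only point needing care is the index convention. One must check that $\nabla_b\nabla_a$ means the second covariant derivative with $b$ as the outer (differentiating) index. If instead the order were reversed, an extra commutator term $\langle \nabla^a S, \Rm * S\rangle$ would appear, which is of lower order. Under the paper's convention $\nabla^2_{b,a} = \nabla_b(\nabla S)_a$ the identity holds exactly, with no error term. Apart from this bookkeeping, both parts are routine.
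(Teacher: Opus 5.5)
Your proof is correct. The paper gives no argument of its own here; it imports both the commutator formula and the weighted identity from Streets \cite[Lemma~4.4 and Proposition~5.1]{Streets2008}. Your route is a self-contained verification instead. With $S=\nabla^j\Rm$, splitting $[\Delta,\nabla_a]S$ as $\nabla^b([\nabla_b,\nabla_a]S)$ plus the commutator of the first two slots of $\nabla^2(\nabla S)$ reduces the first claim to two applications of the Ricci identity. The second claim is exact because $\langle \nabla^a S,\nabla_b\nabla_a S\rangle=\frac12\nabla_b|\nabla S|^2$ holds pointwise when $b$ is the outer (differentiating) index. A single integration by parts on the closed manifold then finishes.

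The citation buys brevity. Your computation buys independence from Streets' conventions. It also makes explicit that the second identity has no curvature remainder precisely because of this index ordering. That is the ordering in which the term actually arises in the Bochner step of the local energy estimate, after $\nabla_a$ has been commuted past $\Delta$ using the first formula.

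One minor remark: $s$ is an integer and $\eta$ is smooth, so $\eta^s$ is smooth for every $s\ge 1$. The hypothesis $s\ge2$ therefore plays no role in either identity. It only matters later, for bounds such as $|\nabla^2(\eta^s)|\le C\eta^{s-2}$.
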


For a fixed time-independent cutoff function $\eta$ and a fixed integer $s$, define
\begin{equation*}
	E_{j,s}^{\eta} = \int_M \eta^s |\nabla^j \Rm|^2 dv_g.
\end{equation*}
The following three interpolation lemmas are applied at a fixed time, so their constants require bounds on the cutoff for that time's metric.

\begin{lemma}\label[lemma]{lem:fixed-weight-interpolation}
	Let $T$ be a smooth tensor and let $0 \le \eta \le 1$ be a smooth compactly supported cutoff with $|\nabla \eta| + |\nabla^2 \eta| \le L$. For integers $0 \le l < k$, any $q \ge 2l$ and every $\varepsilon > 0$, one has
	\begin{equation}\label{eq:fixed-weight-ladder}
		\begin{split}
			\int_{\{\eta>0\}}\eta^q|\nabla^l T|^2 dv_g
			&\le \varepsilon \int_{\{\eta>0\}} \eta^{q+2(k-l)} |\nabla^k T|^2 dv_g +  C_\varepsilon\int_{\{\eta>0\}}\eta^{q-2l}|T|^2 dv_g\\
			&\le \varepsilon \int_{\{\eta>0\}} \eta^{q+2(k-l)} |\nabla^k T|^2 dv_g +  C_\varepsilon \int_{\{\eta>0\}}  |T|^2 dv_g.
		\end{split}
	\end{equation}
	The constant $C_\varepsilon$ depends only on $n,l,k,q,L,\varepsilon$ and the tensor rank. 
\end{lemma}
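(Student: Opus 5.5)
The plan is to prove the first inequality in \Cref{eq:fixed-weight-ladder} by induction on $k$. The basic step is the adjacent weighted estimate: for integers $j \ge 1$ and weights $p \ge 2j$,
\begin{equation*}
	\int \eta^{p} |\nabla^{j} T|^2 \le \varepsilon \int \eta^{p+2} |\nabla^{j+1} T|^2 + C_\varepsilon \int \eta^{p-2} |\nabla^{j-1} T|^2 .
\end{equation*}
To obtain it, write $\eta^p |\nabla^j T|^2 = \eta^p \langle \nabla \nabla^{j-1} T, \nabla^j T\rangle$ and integrate the outer derivative by parts. Since $\eta$ has compact support inside $\{\eta>0\}$'s closure, no boundary terms appear. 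This produces two terms. The first is $-\int \eta^p \langle \nabla^{j-1} T, \divv \nabla^j T\rangle$, which is bounded by $\int \eta^p |\nabla^{j-1}T|\,|\nabla^{j+1}T|$ up to a factor $\sqrt n$. The second is $-p\int \eta^{p-1} \nabla\eta * \nabla^{j-1}T * \nabla^j T$.

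Each term is then handled by Cauchy--Schwarz with the weights split so that the target weights appear. In the first term I split $\eta^p = \eta^{(p+2)/2}\eta^{(p-2)/2}$ and apply Young's inequality, giving $\varepsilon \int \eta^{p+2}|\nabla^{j+1}T|^2 + C_\varepsilon \int \eta^{p-2}|\nabla^{j-1}T|^2$. In the second I split $\eta^{p-1} = \eta^{p/2}\eta^{(p-2)/2}$ and use $|\nabla \eta|\le L$, giving $\tfrac12\int\eta^p|\nabla^jT|^2 + C p^2 L^2\int \eta^{p-2}|\nabla^{j-1}T|^2$. The half-copy of the left side is then absorbed. The condition $p\ge 2$ ensures the exponents are nonnegative, so $\eta^{p-2}$ is bounded on the support. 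Only $|\nabla\eta|$ enters; the Hessian bound in the hypothesis is harmless extra room.

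The main work is iterating this adjacent estimate, which is the main obstacle: the weights must line up exactly and the errors must be absorbed. I would first prove the case $k=l+1$ by a descending ladder. Apply the adjacent estimate at level $l$ with weight $q$, then at level $l-1$ with weight $q-2$, and continue down to level $0$ with weight $q-2l$. At each lower step, the $\varepsilon$-term reproduces the weighted norm one level up, and I choose its small constant so that this term is absorbed back into the left-hand side. The hypothesis $q\ge 2l$ is exactly what keeps every weight $q-2i$, for $0 \le i \le l$, nonnegative. The outcome is
\begin{equation*}
	\int\eta^q|\nabla^lT|^2 \le \varepsilon\int\eta^{q+2}|\nabla^{l+1}T|^2 + C_\varepsilon\int\eta^{q-2l}|T|^2 .
\end{equation*}

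For general $k$, I would argue by induction on $k-l$. Apply the case just proved at level $l$ with weight $q$ to get the $(l+1)$-th derivative with weight $q+2$. Then apply the inductive hypothesis at level $l+1$, with weight $q+2 \ge 2(l+1)$, up to level $k$; its lower-order error is $\int \eta^{q+2-2(l+1)}|T|^2 = \int\eta^{q-2l}|T|^2$, matching the desired error term. Multiplying the $\varepsilon$'s arranges the final constant. The constants depend only on $n,l,k,q,L,\varepsilon$ and the rank, through $\sqrt n$ and the factors $pL$. Finally, the second line of \Cref{eq:fixed-weight-ladder} follows immediately from $0\le\eta\le1$ and $q-2l\ge0$, which give $\eta^{q-2l}\le 1$ on $\{\eta>0\}$.
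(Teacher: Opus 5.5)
Your proposal is correct and is essentially the paper's proof: the same integration by parts and weight splitting give the adjacent estimate $\int\eta^p|\nabla^jT|^2\le\varepsilon\int\eta^{p+2}|\nabla^{j+1}T|^2+C_\varepsilon\int\eta^{p-2}|\nabla^{j-1}T|^2$; the hypothesis $q\ge 2l$ lets you push the error down to level zero, and you then iterate up to level $k$ with the weight rising by two per step. The one point to tighten is the ``descending ladder'': substituting the lower-level estimates one after another into the top inequality does not close, because the $\varepsilon$-term of the level-$(l-2)$ estimate is $\int\eta^{q-2}|\nabla^{l-1}T|^2$, which no longer appears after the first absorption; run it as the paper does, by induction on the level from the bottom up (substitute the level-$(i-1)$ estimate with zeroth-order error into the level-$i$ adjacent estimate and absorb), choosing the small parameters from the top down.
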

\begin{proof}
	Suppose $l \ge 1$. Integration by parts and Cauchy--Schwarz give
	\begin{equation*}
		\begin{split}
			\int_{\{\eta>0\}}\eta^q|\nabla^l T|^2 dv_g
			& = - \int_M \eta^q \langle \nabla^{l-1} T, \Delta \nabla^{l-1} T \rangle dv_g -q \int_M \eta^{q-1} \nabla_a \eta \langle \nabla^{l-1} T, \nabla^a \nabla^{l-1} T \rangle dv_g\\
			&\le \sqrt{n} \left( \int_{\{\eta>0\}} \eta^{q-2} |\nabla^{l-1} T|^2 dv_g \right)^{\frac{1}{2}} \left( \int_{\{\eta>0\}} \eta^{q+2} |\nabla^{l+1}T|^2 dv_g \right)^{\frac{1}{2}}\\
			&+ q L \left(\int_{\{\eta>0\}} \eta^{q-2} |\nabla^{l-1}T|^2 dv_g \right)^{\frac{1}{2}} \left( \int_{\{\eta>0\}} \eta^q |\nabla^l T|^2 dv_g \right)^{\frac{1}{2}}.
		\end{split}
	\end{equation*}
	Young's inequality and absorption yield, for every $\delta > 0$,
	\begin{equation*}
		\int_{\{\eta>0\}} \eta^q | \nabla^l T |^2 dv_g \le \delta \int_{\{\eta > 0\}} \eta^{q+2} | \nabla^{l+1}T |^2 dv_g + C_\delta \int_{\{\eta>0\}} \eta^{q-2} | \nabla^{l-1} T |^2 dv_g.
	\end{equation*}
	We now use induction on $l$ to remove the last derivative term. For $l = 1$ it is already of order zero. For $l > 1$, apply the inductively obtained estimate between levels $l - 1$ and $l$ with parameter $\theta > 0$:
	\begin{equation*}
		\int_{\{\eta>0\}} \eta^{q-2} | \nabla^{l-1} T |^2 dv_g \le \theta \int_{\{\eta > 0\}}\eta^q | \nabla^l T |^2 dv_g + C_\theta \int_{\{\eta>0\}} \eta^{q - 2l} |T|^2 dv_g.
	\end{equation*}
	Choose $\theta$ so that $C_\delta \theta \le \frac{1}{2}$ and absorb. This gives
	\begin{equation*}
		\int_{\{\eta > 0\}} \eta^q |\nabla^l T|^2 dv_g \le 2 \delta \int_{\{\eta >0 \}} \eta^{q+2} |\nabla^{l + 1}T|^2 dv_g + C_\delta \int_{\{\eta > 0\}} \eta^{q - 2l} |T|^2 dv_g.
	\end{equation*}
	Iterating from level $l$ to level $k$, with the weight increased by two at each step, proves the first inequality in \Cref{eq:fixed-weight-ladder}. Since $q - 2l \ge 0$ and $0 \le \eta \le 1$, the second inequality follows. The case $l = 0$ is immediate.
\end{proof}

\begin{lemma}\label[lemma]{lem:weighted-Hamilton-interpolation}
	Let $T$ be a smooth tensor and let $0 \le \eta \le 1$ be a smooth compactly supported cutoff with $|\nabla \eta| + |\nabla^2 \eta| \le L$. Suppose $|T| \le K$ on $\{\eta>0\}$. If $k \ge 1$ is an integer and $s \ge 2k$, then for every integer $1 \le i \le k$, one has
	\begin{equation}\label{eq:weighted-Hamilton-integral}
		\int_{\{\eta>0\}} \eta^s |\nabla^i T|^{\frac{2k}{i}} dv_g \le C K^{\frac{2k}{i}-2} \left( \int_{\{\eta>0\}} \eta^s |\nabla^k T|^2 dv_g + \int_{\{\eta>0\}} |T|^2 dv_g \right).
	\end{equation}
	The constant $C$ depends only on $n,k,s,L$ and the tensor rank. 
\end{lemma}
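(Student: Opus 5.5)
The plan is to prove \Cref{eq:weighted-Hamilton-integral} by a weighted version of Hamilton's integration by parts argument, inducting on $i$ from $k$ downward. Everything takes place at a fixed time, so only the stated cutoff bounds are needed. The case $i=k$ is trivial, since the exponent is $2$ and the right-hand side already contains $\int \eta^s|\nabla^k T|^2$.

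The basic step is an estimate for the quantity
\begin{equation*}
	\int \eta^s |\nabla^i T|^{2p} dv_g, \qquad p = \frac{k}{i} .
\end{equation*}
I would write $|\nabla^i T|^{2p} = |\nabla^i T|^{2p-2}\langle \nabla\nabla^{i-1}T, \nabla^i T\rangle$ and integrate the outer derivative by parts onto the other factors. This produces three types of terms:
\begin{enumerate}
	\item a term $\int \eta^s |\nabla^{i-1}T|\,|\nabla^i T|^{2p-2}|\nabla^{i+1}T|$, after absorbing a dimensional constant;
	\item a term $\int \eta^s |\nabla^{i-1}T|\,|\nabla^iT|^{2p-3}|\nabla^{i+1}T|\,|\nabla^i T|$ from differentiating $|\nabla^i T|^{2p-2}$, which is of the same form as the first;
	\item a cutoff term $sL\int \eta^{s-1}|\nabla^{i-1}T|\,|\nabla^iT|^{2p-1}$.
\end{enumerate}
Apply H\"older with exponents adapted to the Gagliardo--Nirenberg scaling, namely $\nabla^{i-1}T$ in $L^{2k/(i-1)}$, $\nabla^{i}T$ in $L^{2k/i}$ and $\nabla^{i+1}T$ in $L^{2k/(i+1)}$, with the weights $\eta^s$ split accordingly. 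Young's inequality then absorbs the $\nabla^i$ factor back into the left-hand side. This yields a recursive inequality expressing the level-$i$ quantity in terms of the level-$(i-1)$ and level-$(i+1)$ quantities, plus a cutoff error.

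When $i=1$, the level $i-1=0$ is handled by $|T|\le K$ on $\{\eta>0\}$, which produces the power $K^{2k/i-2}$ by homogeneity. The resulting chain of inequalities over $1\le i\le k$ is then solved in the standard way. Either I take logarithms, so that the levels form a convex sequence, or I run the downward induction directly, ending with the top level $\int\eta^s|\nabla^kT|^2$.

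The main obstacle is the cutoff term together with the weight bookkeeping. Each integration by parts loses one power of $\eta$, so the weights must be distributed consistently. This is where the hypothesis $s\ge 2k$ is used: the lowest level should carry weight $\eta^{s-2k}\le 1$, which can be discarded.

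For the cutoff term I would bound $\eta^{s-1}|\nabla^{i-1}T|\,|\nabla^iT|^{2p-1}$ by Young's inequality. The part $\eta^s|\nabla^iT|^{2p}$ is absorbed, and what remains is of the form $\eta^{s-2p}|\nabla^{i-1}T|^{2p}$. Here $|\nabla^{i-1}T|^{2p}$ should be compared with $|\nabla^{i-1}T|^{2k/(i-1)}$, interpolating against $K$. For $i=1$ this term is simply at most $K^{2k-2}|T|^2$. For $i\ge2$ it produces lower-order weighted integrals. These I would control by applying \Cref{lem:fixed-weight-interpolation}, which bounds them by $\varepsilon\int\eta^s|\nabla^kT|^2+C\int_{\{\eta>0\}}|T|^2$, so the cutoff errors land exactly in the right-hand side of \Cref{eq:weighted-Hamilton-integral}.

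Keeping the powers of $K$ homogeneous in each step, where every factor of $|T|$ beyond the quadratic one is replaced by $K$, gives the final constant $C(n,k,s,L)$ multiplied by $K^{2k/i-2}$.
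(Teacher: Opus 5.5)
Your route differs from the paper's. The paper does not argue here at all: it quotes the weighted Gagliardo--Nirenberg inequality of \cite[Theorem~5.4]{KuwertSchatzle2002}, in the local form of \cite[Lemma~B.22]{Jachan2014}, and raises it to the power $2k/i$. A weighted Hamilton argument like yours is how that cited result is proved, and your main-term bookkeeping is correct. With $Y_j=(\int\eta^s|\nabla^jT|^{2k/j})^{j/(2k)}$ and $Y_0=K$, H\"older at the common weight $\eta^s$ gives $Y_i^2\lesssim Y_{i-1}Y_{i+1}$.

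The gap is in the step you flag yourself, the cutoff remainder for $i\ge2$. After Young's inequality you are left with $\int\eta^{s-2k/i}|\nabla^{i-1}T|^{2k/i}$, whose exponent $2k/i$ exceeds $2$.
\begin{itemize}
\item \Cref{lem:fixed-weight-interpolation} is an $L^2$ estimate and does not apply to this integral.
\item There is no pointwise interpolation against $K$, since only $|T|$ is bounded, not $|\nabla^{i-1}T|$.
\item H\"older against a constant on $\{\eta>0\}$ would bring in the volume of the support, which $C$ may not depend on.
\end{itemize}
By homogeneity, any admissible bound must carry the factor $K^{2k/i-2}$, and here the only source of that factor is the level-$(i-1)$ quantity. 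So the cutoff errors do not simply ``land in the right-hand side'' as you claim.

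The repair is the split
\[
\int\eta^{s-\frac{2k}{i}}|\nabla^{i-1}T|^{\frac{2k}{i}} \le \Big(\int\eta^{s}|\nabla^{i-1}T|^{\frac{2k}{i-1}}\Big)^{\theta}\Big(\int\eta^{s-2(k-i+1)}|\nabla^{i-1}T|^{2}\Big)^{1-\theta}, \qquad \theta=\frac{(i-1)(k-i)}{i(k-i+1)}.
\]
Here $1-\theta=\frac{k}{i(k-i+1)}$, which makes the weights match exactly. \Cref{lem:fixed-weight-interpolation}, taken from level $i-1$ to level $k$, applies to the second factor precisely because $s-2(k-i+1)\ge 2(i-1)$ is equivalent to $s\ge 2k$. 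It then returns exactly the weight $\eta^s$ on $|\nabla^kT|^2$.

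The first factor, however, is $Y_{i-1}^{2k\theta/(i-1)}$, so the cutoff error is coupled to the lower level. Set $A^2=\int\eta^s|\nabla^kT|^2+\int_{\{\eta>0\}}|T|^2$. Then each level $1\le i\le k-1$ satisfies one of two alternatives: either $Y_i\lesssim(Y_{i-1}Y_{i+1})^{1/2}$, or $Y_i\lesssim Y_{i-1}^{(k-i)/(k-i+1)}A^{1/(k-i+1)}$.

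Both alternatives are consistent with the target $Y_i\lesssim K^{1-i/k}A^{i/k}$. But this coupled system is closed neither by downward induction, since both alternatives involve level $i-1$, nor by plain log-convexity. It does close by a discrete maximum principle. For a suitable $c$, set $v_i=\log\big(Y_i/(K^{1-i/k}A^{i/k})\big)+ci^2$, so that $v_0=0$ and $v_k\le ck^2$. Look at the smallest index where $\max_i v_i$ is attained. There the first alternative becomes $v_i\le\frac12(v_{i-1}+v_{i+1})$, which is excluded. The second alternative gives $\max_i v_i\le C+\frac{k-1}{k}\max\{\max_i v_i,0\}$, which bounds the maximum.

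With these repairs your argument proves the lemma directly. You also need the regularization $(|\nabla^iT|^2+\delta)^{k/i-1}$ to justify the integration by parts when $2k/i-2<1$.
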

\begin{proof}
	The case $K = 0$ is immediate. For $K > 0$, the weighted interpolation inequality of Kuwert--Sch\"atzle~\cite[Theorem~5.4]{KuwertSchatzle2002}, in the local form recorded by Jachan~\cite[Lemma~B.22]{Jachan2014}, gives
	\begin{equation*}
		\begin{split}
			\left( \int_{\{\eta>0\}} \eta^s |\nabla^i T|^{\frac{2k}{i}} dv_g \right)^{\frac{i}{2k}} \le C K^{1-\frac{i}{k}} \left[ \left( \int_{\{\eta>0\}} \eta^s |\nabla^k T|^2 dv_g \right)^{\frac{1}{2}} + \left( \int_{\{\eta>0\}} |T|^2 dv_g \right)^{\frac{1}{2}} \right]^{\frac{i}{k}},
		\end{split}
	\end{equation*}
	with the cutoff dependence absorbed into $C$. Raising both sides to the power $\frac{2k}{i}$ and using $(a+b)^2 \le 2 a^2 + 2 b^2$ proves \Cref{eq:weighted-Hamilton-integral}, since $(1-i/k)(2k/i) = 2k/i-2$.
\end{proof}

\begin{lemma}\label[lemma]{lem:fixed-weight-product}
	Let $T$ be a smooth tensor and let $0 \le \eta \le 1$ be a smooth compactly supported cutoff with $|\nabla \eta| + |\nabla^2 \eta| \le L$. If $k \ge 1$ and $r \ge 2$ are integers, $|T| \le K$ on $\{\eta > 0\}$, $s \ge 2k$, and the integers $0 \le i_1, \ldots , i_r \le k$ satisfy $i_1 + \cdots + i_r =2 k$, then
	\begin{equation}\label{eq:fixed-weight-product}
		\left| \int_M \eta^s \nabla^{i_1} T * \cdots * \nabla^{i_r} T dv_g \right| \le C K^{r-2} \left( \int_{\{\eta>0\}} \eta^s |\nabla^k T|^2 dv_g + \int_{ \{\eta>0\}} |T|^2 dv_g \right).
	\end{equation}
	The constant $C$ depends only on $n,k,r,s,L$, the tensor rank and the contraction coefficients. It does not depend on a Sobolev constant.
\end{lemma}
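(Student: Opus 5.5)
Since every factor carries at most $k$ derivatives and the derivative orders add up to exactly $2k$, the plan is to use a weighted H\"older inequality with exponents dictated by the orders, and then the weighted interpolation of \Cref{lem:weighted-Hamilton-interpolation}. The factors with $i_\nu=0$ are handled separately: each is bounded pointwise by $K$ on $\{\eta>0\}$, which is where the integrand lives. Let $r'$ be the number of factors with $i_\nu\ge1$; then $r'\ge 2$ unless one factor carries all $2k>k$ derivatives, which is excluded since $i_\nu\le k$. This gives
\begin{equation*}
	\left|\int_M \eta^s \nabla^{i_1}T*\cdots*\nabla^{i_r}T\,dv_g\right| \le C K^{r-r'}\int_{\{\eta>0\}} \eta^s \prod_{\nu:\, i_\nu\ge1}|\nabla^{i_\nu}T|\,dv_g .
\end{equation*}

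Next apply H\"older's inequality to the remaining product with exponents $p_\nu = 2k/i_\nu \ge 2$. These satisfy $\sum_\nu 1/p_\nu = \sum_\nu i_\nu/(2k) = 1$. Splitting the weight as $\eta^s=\prod_\nu \eta^{s/p_\nu}$, one obtains
\begin{equation*}
	\int \eta^s \prod_\nu |\nabla^{i_\nu}T| \le \prod_\nu \left(\int_{\{\eta>0\}} \eta^s|\nabla^{i_\nu}T|^{2k/i_\nu}\right)^{i_\nu/(2k)}.
\end{equation*}
Then use \Cref{lem:weighted-Hamilton-interpolation}, valid since $s\ge 2k$ and $1\le i_\nu\le k$. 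Writing $\mathcal A:=\int\eta^s|\nabla^kT|^2+\int_{\{\eta>0\}}|T|^2$, each factor is bounded by
\begin{equation*}
	\left(CK^{2k/i_\nu-2}\mathcal A\right)^{i_\nu/(2k)}=C K^{1-i_\nu/k}\mathcal A^{i_\nu/(2k)}.
\end{equation*}

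Multiplying these bounds, the powers of $\mathcal A$ add up to $\sum_\nu i_\nu/(2k)=1$. The powers of $K$ add up to $r'-\sum_\nu i_\nu/k = r'-2$. Combined with the $K^{r-r'}$ from the zero-order factors, the total is $K^{r-2}$, which is exactly \Cref{eq:fixed-weight-product}. The constant depends only on $n,k,r,s,L$, the rank and the contraction coefficients, through \Cref{lem:weighted-Hamilton-interpolation}, and no Sobolev constant enters.

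The only delicate point is the exponent bookkeeping: the H\"older exponents must sum to one, and the weight $\eta^s$ must be distributed so that each factor carries the full weight $\eta^s$ required by \Cref{lem:weighted-Hamilton-interpolation}. Since $\sum_\nu 1/p_\nu=1$, this works out exactly.
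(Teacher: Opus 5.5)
Your proposal is correct and is essentially the paper's own proof: you bound the undifferentiated factors by $K$, apply H\"older with exponents $2k/i_\nu$ so that each differentiated factor carries the full weight $\eta^s$, invoke \Cref{lem:weighted-Hamilton-interpolation} on each of those factors, and sum the exponents to obtain $K^{r-2}$ times the first power of the bracket. The only cosmetic difference is that the paper disposes of $K=0$ separately as trivial, whereas your argument covers it through the interpolation lemma, reading $K^0=1$.
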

\begin{proof}
	If $K = 0$, the assertion is immediate. Assume $K > 0$ and bound the undifferentiated factors by $K$. H\"older's inequality and \Cref{lem:weighted-Hamilton-interpolation} yield
	\begin{equation*}
		\begin{split}
			\left| \int_M \eta^s \nabla^{i_1} T * \cdots * \nabla^{i_r} T dv_g \right|
			&\le C\left( \prod_{\{\nu:i_\nu=0\}} K \right) \prod_{\{\nu:i_\nu>0\}}
			\left( \int_{\{\eta>0\}} \eta^s |\nabla^{i_\nu}T|^{\frac{2k}{i_\nu}} dv_g \right)^{\frac{i_\nu}{2k}}\\
			&\le CK^{\sum_{\nu=1}^r (1- \frac{i_\nu}{k})} \left( \int_{\{ \eta>0\}} \eta^s |\nabla^k T|^2 dv_g +\int_{\{\eta>0\}} |T|^2 dv_g \right)^{\sum_{\nu=1}^r \frac{i_\nu}{2k}}\\
			&=C K^{r-2} \left( \int_{\{\eta>0\}} \eta^s |\nabla^k T|^2 dv_g + \int_{\{\eta>0\}} |T|^2 dv_g \right).
		\end{split}
	\end{equation*}
	This proves \Cref{eq:fixed-weight-product}.
\end{proof}

\begin{proposition}\label{thm:local-energy}
	Let $g(t)$ be a smooth $J$-flow in dimension $n \ge 4$. Suppose $| \Rm | \le K$ on $\{\eta > 0\} \times [0, \tau]$, where $\eta$ is smooth, compactly supported and time independent, $0 \le \eta \le 1$, and $|\nabla \eta| + |\nabla^2 \eta| \le L$ throughout this interval. For $s \ge 2j + 4$,
	\begin{equation}\label{eq:local-energy-inequality}
		\frac{d}{dt} E_{j,s}^{\eta} + c_j E_{j+2,s}^{\eta} \le C(n,j,s,L) K^2 E_{j,s}^{\eta} + C(n,j,s,L)(1 + K^2) H_\eta(t),
	\end{equation}
	where $c_j > 0$ depends only on $n, j$ and $H_\eta(t) = \int_{\{\eta>0\}} |\Rm|^2 dv_g$.
\end{proposition}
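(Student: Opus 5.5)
We differentiate $E_{j,s}^{\eta}$ in time, following the global computation of \Cref{thm:basic-energy-inequality} with the weight $\eta^s$ inserted. Since $\eta$ is time independent,
\begin{equation*}
	\frac{d}{dt} E^{\eta}_{j,s} = 2\int_M \eta^s \langle \partial_t \nabla^j \Rm, \nabla^j \Rm\rangle dv_g + \int_M \eta^s \left( P_{3,2j+2}(\Rm) + P_{4,2j}(\Rm)\right) dv_g,
\end{equation*}
where the second term collects the variation of the metric in the norm and the volume form; here $\partial_t g = P_{1,2}+P_{2,0}$ and $\partial_t dv_g = -Q\,dv_g$. We then substitute \Cref{cor:Rm-derivative-evolution}.

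\emph{Leading term.} For the biharmonic term we integrate by parts twice. Each time a derivative lands on $\eta^s$ we obtain terms of the form $\eta^{s-1}\nabla\eta * \nabla^{j+1}\Rm * \nabla^{j+2}\Rm$ or $\eta^{s-2}(\nabla\eta)^2 * \nabla^{j+1}\Rm*\nabla^{j+1}\Rm$, together with similar terms involving $\nabla^2\eta$. Commuting $\Delta$ past $\nabla$ uses \Cref{lem:weighted-Bochner-identities}. The result is
\begin{equation*}
	-2a_n\int \eta^s|\Delta\nabla^j\Rm|^2 = -2a_n\int \eta^s|\nabla^{j+2}\Rm|^2 + \text{(cutoff and curvature errors)}.
\end{equation*}
The cutoff errors are bounded by Cauchy--Schwarz and Young's inequality:
\begin{equation*}
	\int\eta^{s-1}|\nabla\eta||\nabla^{j+1}\Rm||\nabla^{j+2}\Rm| \le \varepsilon E^\eta_{j+2,s} + C_\varepsilon L^2\int \eta^{s-2}|\nabla^{j+1}\Rm|^2 .
\end{equation*}
The last integral is handled by \Cref{lem:fixed-weight-interpolation} with $l=j+1$, $k=j+2$ and $q=s-2$. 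This requires $q\ge 2l$, i.e.\ $s\ge 2j+4$, which is exactly the hypothesis. It yields $\varepsilon E^\eta_{j+2,s}+C_\varepsilon H_\eta$.

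\emph{Term $g\odot\nabla^2\Delta R$ (present only when $n>4$).} We repeat the computation leading to \Cref{eq:gamma-perfect-square} with the weight inserted. This gives $-2\gamma_n\int\eta^s|\nabla^j\Delta R|^2$, which is nonpositive and can be discarded. The remaining terms are lower-order cutoff terms in which derivatives fall on $\eta^s$; each pairs $\nabla\eta$ or $\nabla^2\eta$ with factors of total order at most $2j+3$. These are absorbed exactly as above.

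\emph{Nonlinear terms.} The integrals $\int\eta^s P_{3,2j+2}$ and $\int\eta^s P_{4,2j}$ are handled as follows. First we integrate by parts, as in \Cref{lem:polynomial-interpolation}, until no factor carries more than $j+2$ derivatives. This creates further cutoff terms involving $\eta^{s-1}\nabla\eta$, which are handled by reducing the weight and applying \Cref{lem:fixed-weight-interpolation}. To each remaining term we apply \Cref{lem:fixed-weight-product} with $k=j+2$, taking $T=\Rm$ and absorbing any surplus undifferentiated factors into powers of $K$.

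The main obstacle is the scaling bookkeeping at this step. \Cref{lem:fixed-weight-product} needs total order exactly $2k$, whereas the cubic term has order $2j+2<2j+4$. We plan to treat the cubic term with $k=j+1$ and then apply \Cref{lem:fixed-weight-interpolation} to $E^\eta_{j+1,s}$, giving
\begin{equation*}
	CK\left(\varepsilon' E^\eta_{j+2,s}+C H_\eta\right).
\end{equation*}
Young's inequality in the form $K\le 1+K^2$ then produces the factor $(1+K^2)H_\eta$. We treat the quartic term in the same way with $k=j$, which yields $K^2(E^\eta_{j,s}+H_\eta)$. This is the source of the term $CK^2E^\eta_{j,s}$ on the right-hand side; when $j=0$ it is bounded simply by $K^2E^\eta_{0,s}$.

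Finally, we choose all the $\varepsilon$'s small compared with $a_n$. Absorbing the resulting multiples of $E^\eta_{j+2,s}$ into the leading term leaves a positive coefficient $c_j$, for example $c_j=a_n$, and this gives \Cref{eq:local-energy-inequality}.
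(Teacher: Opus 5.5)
Your handling of the biharmonic term, including the weight checks $q=s-2\ge 2(j+1)$ and $q=s-4\ge 2j$, and of the $\gamma_n$-term matches the paper. Your estimate of the cubic terms $\int_M\eta^s P_{3,2j+2}(\Rm)$, however, has a genuine gap, in two places.

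First, \Cref{lem:fixed-weight-product} with $k=j+1$ requires every factor to carry at most $j+1$ derivatives. Your reduction only guarantees $j+2$. Terms of type $\Rm*\nabla^j\Rm*\nabla^{j+2}\Rm$ really occur, for example from $\langle\nabla^j\Rm,P_{2,j+2}(\Rm)\rangle$, and your argument does not cover them.

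Second, and more seriously, the additive bound $CK(\varepsilon' E^\eta_{j+2,s}+C_{\varepsilon'}H_\eta)$ can be absorbed into $-2a_nE^\eta_{j+2,s}$ only if $CK\varepsilon'\le a_n/2$, i.e.\ $\varepsilon'\sim K^{-1}$. The ladder constant $C_{\varepsilon'}$ in \Cref{lem:fixed-weight-interpolation}, going from level $j+1$ to $j+2$ with a zeroth-order remainder, must grow at least like $\varepsilon'^{-(j+1)}$; testing on tensors oscillating at frequency $\lambda$ where $\eta=1$ shows this. So for $j\ge 1$ the coefficient of $H_\eta$ becomes of order $K^{j+2}$, not $C(n,j,s,L)(1+K^2)$. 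Writing $K\le 1+K^2$ and ``choosing all $\varepsilon$'s small compared with $a_n$'' hides exactly this $K$-dependence. The weaker inequality would still suffice for \Cref{theorem:C}, whose constants may depend on $K$, but it is not the proposition as stated.

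The missing idea is to estimate the cubic terms multiplicatively, so that $\varepsilon$ can be chosen independently of $K$. For derivative orders $(j+2,a,b)$ with $a+b=j$, the paper splits off the top factor by Cauchy--Schwarz. It then applies \Cref{lem:fixed-weight-product} with $k=j$, $r=4$ to the square of the remaining product:
\[
\int_M \eta^s|\nabla^{j+2}\Rm||\nabla^a\Rm||\nabla^b\Rm|\,dv_g\le CK\left(E^\eta_{j+2,s}\right)^{1/2}\left(E^\eta_{j,s}+H_\eta\right)^{1/2}\le \varepsilon E^\eta_{j+2,s}+C_\varepsilon K^2\left(E^\eta_{j,s}+H_\eta\right).
\]
The remaining polynomial terms are collected in \eqref{eq:weighted-polynomial-bound}, using Streets' weighted interpolation lemmas together with \Cref{lem:fixed-weight-interpolation,lem:fixed-weight-product}. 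The essential point is that $K$ should enter only through products such as $K(E^\eta_{j+2,s})^{1/2}(\cdots)^{1/2}$. Young's inequality then applies with a $K$-independent $\varepsilon$, and $K$ appears only squared in front of $E^\eta_{j,s}$ or $H_\eta$. Additive interpolation of $E^\eta_{j+1,s}$ with a factor $K$ in front does not achieve this.
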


\begin{proof}
	All integrals and covariant derivatives are taken with respect to $g(t)$, and the constants depend on $n, j, s, L$ unless otherwise indicated. Since $\eta$ is time independent, differentiating the energy and then using \Cref{eq:Rm-derivative-evolution} gives
	\begin{equation*}
		\begin{split}
			\frac{d}{dt} E_{j,s}^{\eta}
			&= -2a_n \int_M \eta^s \left\langle \nabla^j \Rm, \Delta^2 \nabla^j \Rm \right\rangle dv_g - \gamma_n \int_M \eta^s \left\langle \nabla^j \Rm, \nabla^j(g \odot \nabla^2 \Delta R) \right\rangle dv_g\\
			&\quad + \int_M \eta^s \left( P_{3,2j+2}(\Rm) + P_{4,2j}(\Rm) \right) dv_g.
		\end{split}
	\end{equation*}
	We first treat the biharmonic term. Integration by parts and \Cref{lem:weighted-Bochner-identities} gives
	\begin{equation*}
		\begin{split}
			&-2 a_n \int_M \eta^s \left\langle \nabla^j \Rm,\Delta^2 \nabla^j \Rm \right\rangle dv_g\\
			=& -2 a_n \int_M \eta^s |\Delta \nabla^j \Rm|^2 dv_g - 4 a_n \int_M \nabla^a(\eta^s) \left\langle \nabla_a \nabla^j \Rm, \Delta \nabla^j \Rm \right\rangle dv_g\\
			&- 2 a_n \int_M \Delta(\eta^s) \left\langle \nabla^j \Rm, \Delta \nabla^j \Rm \right\rangle dv_g\\
			=&-2 a_n E_{j+2,s}^{\eta} + a_n \int_M \Delta(\eta^s) |\nabla^{j+1} \Rm|^2 dv_g - 2a_n \int_M \nabla^a(\eta^s) \left\langle \nabla_a \nabla^j \Rm,\Delta \nabla^j \Rm \right\rangle dv_g \\
			&-2 a_n \int_M \Delta(\eta^s) \left\langle \nabla^j \Rm,\Delta \nabla^j \Rm \right\rangle dv_g - 2a_n \int_M \eta^s \left\langle \nabla^a \nabla^j \Rm,[\Delta, \nabla_a] \nabla^j \Rm \right\rangle dv_g\\
			\le& - (2a_n - \delta) E_{j+2,s}^{\eta} + \left| \int_M \eta^s P_{3,2j+2}(\Rm)dv_g \right| + C_\delta \int_{\{\eta>0\}} \eta^{s-2} |\nabla^{j+1} \Rm|^2 dv_g + C_\delta \int_{\{\eta>0\}} \eta^{s-4} |\nabla^j\Rm|^2 dv_g\\
			\le& - (2a_n - \delta) E_{j+2,s}^{\eta} + C_\delta\theta E_{j+2,s}^{\eta} + \left| \int_M \eta^s P_{3,2j+2}(\Rm)dv_g \right| + C_\delta C_\theta H_\eta(t)\\
			\le& - (2a_n - \varepsilon) E_{j+2,s}^{\eta} + \left| \int_M \eta^s P_{3,2j+2}(\Rm)dv_g \right| + C_\varepsilon H_\eta(t).
		\end{split}
	\end{equation*}
	The first inequality follows from $|\nabla(\eta^s)|\le C\eta^{s-1}$, $|\nabla^2 (\eta^s)| \le C \eta^{s-2}$ and Young's inequality. For any prescribed $\varepsilon > 0$, first choose $0 < \delta \le \varepsilon / 2$ and then apply \Cref{eq:fixed-weight-ladder} to the two lower-weight integrals, from levels $j+1$ and $j$ to $j+2$, with weights $s-2$ and $s-4$, respectively, choosing the interpolation parameters so that the sum of these integrals is bounded by $\theta E_{j+2,s}^{\eta} + C_\theta H_\eta(t)$. Since $s \ge 2j + 4$, the resulting zeroth-order weights are nonnegative, when $j = 0$, the second integral is bounded directly by $H_\eta(t)$. Finally, choose $\theta > 0$ so that $C_\delta\theta\le\varepsilon/2$. Then $\delta+C_\delta \theta \le \varepsilon$, and absorbing $C_\delta C_\theta$ into $C_\varepsilon$ gives the last inequality. The remaining cubic integral is estimated below together with the other curvature polynomial terms.

	We next estimate the curvature polynomial terms. The weighted interpolation estimates \cite[Lemmas~10.1--10.3]{Streets2008}, together with \Cref{eq:fixed-weight-ladder,eq:fixed-weight-product}, give, for every $\varepsilon > 0$,
	\begin{equation}\label{eq:weighted-polynomial-bound}
		\begin{split}
			\left| \int_M \eta^s P_{3, 2j + 2} (\Rm) dv_g \right| + \left| \int_M \eta^s P_{4,2 j}(\Rm) dv_g \right| \le \varepsilon E_{j + 2,s}^{\eta} + C_\varepsilon K^2 E_{j,s}^{\eta} + C_\varepsilon(1 + K^2) H_\eta(t).
		\end{split}
	\end{equation}
	This estimate also covers a cubic term with derivative orders $j + 2, a, b$, where $a+b=j$, Cauchy--Schwarz and \Cref{eq:fixed-weight-product} with $k = j$, $r = 4$ applied to the squared product of the last two factors give
	\begin{equation*}
		\int_M \eta^s |\nabla^{j+2} \Rm| |\nabla^a \Rm| |\nabla^b \Rm| dv_g \le C K \left( E_{j+2,s}^{\eta} \right)^{\frac{1}{2}} \left( E_{j,s}^{\eta} + H_\eta(t) \right)^{\frac{1}{2}},
	\end{equation*}
	and Young's inequality applies. When $j = 0$, the same bound follows directly from $|\Rm| \le K$.

	Applying these bounds to the differentiated energy and choosing the Young parameters small enough, as in \cite[Corollary~5.2]{Streets2008}, gives
	\begin{equation}\label{eq:local-energy-before-scalar}
		\frac{d}{dt} E_{j,s}^{\eta} + \frac{a_n}{2} E_{j+2,s}^{\eta} \le -\gamma_n \int_M \eta^s \left\langle \nabla^j \Rm, \nabla^j (g \odot \nabla^2 \Delta R) \right\rangle dv_g + C K^2 E_{j,s}^{\eta} + C(1+K^2) H_\eta(t).
	\end{equation}

	It remains to estimate the scalar-curvature term. As in the proof of \Cref{eq:gamma-perfect-square}, commuting the Hessian derivatives past $\nabla^j$ and integrating by parts twice gives
	\begin{equation*}
		\int_M \eta^s \left\langle \nabla^j \Rm, \nabla^j (g \odot \nabla^2 \Delta R) \right\rangle dv_g = 4 \int_M \left\langle \divv^2 (\eta^s \nabla^j \Ric), \nabla^j \Delta R \right\rangle dv_g + \int_M \eta^s P_{3,2j+2} (\Rm) dv_g.
	\end{equation*}
	The product rule and the contracted Bianchi identity yield
	\begin{equation*}
		\divv^2 (\eta^s \nabla^j \Ric) = \frac{1}{2} \eta^s \nabla^j \Delta R + \eta^s P_{2,j}(\Rm) + \nabla(\eta^s) * \nabla^{j+1} \Ric + \nabla^2 (\eta^s) * \nabla^j \Ric.
	\end{equation*}
	The curvature commutators vanish when $j = 0$. In general, after pairing with $\nabla^j \Delta R$, all cubic terms have total derivative order $2j + 2$ and no factor has more than $j + 2$ derivatives, so \Cref{eq:weighted-polynomial-bound} applies. For the remaining terms, $|\nabla(\eta^s)| \le C \eta^{s-1}$, $|\nabla^2 (\eta^s)| \le C \eta^{s-2}$ and $|\nabla^j \Delta R| \le C(n) |\nabla^{j+2}\Rm|$ give
	\begin{equation*}
		\begin{split}
			&C \int_{\{\eta>0\}} \eta^{s-1} |\nabla^{j+1} \Rm| |\nabla^{j+2} \Rm| dv_g + C \int_{\{\eta>0\}} \eta^{s-2} |\nabla^j \Rm| |\nabla^{j+2} \Rm| dv_g\\
			\le& \delta E_{j+2,s}^{\eta} + C_\delta \int_{\{\eta>0\}} \eta^{s-2} |\nabla^{j+1} \Rm|^2 dv_g + C_\delta \int_{\{\eta>0\}} \eta^{s-4} |\nabla^j \Rm|^2 dv_g\\
			\le& \varepsilon E_{j+2,s}^{\eta}  + C_\varepsilon H_\eta(t).
		\end{split}
	\end{equation*}
	Here we first choose $\delta$ and then apply \Cref{eq:fixed-weight-ladder} from levels $j+1$ and $j$ to $j+2$, with weights $s-2$ and $s-4$, respectively. When $j = 0$, the second lower-weight integral is bounded directly by $H_\eta(t)$. Since $\gamma_n \ge 0$, it follows that
	\begin{equation}\label{eq:weighted-scalar-bound}
		-\gamma_n \int_M \eta^s \left\langle \nabla^j \Rm, \nabla^j(g \odot \nabla^2 \Delta R) \right\rangle dv_g \le - 2 \gamma_n \int_M \eta^s |\nabla^j \Delta R|^2 dv_g + \varepsilon E_{j+2,s}^{\eta} + C_\varepsilon K^2 E_{j,s}^{\eta} + C_\varepsilon(1+K^2) H_\eta(t).
	\end{equation}
	Combining \Cref{eq:local-energy-before-scalar,eq:weighted-scalar-bound}, taking $\varepsilon = a_n / 4$ and discarding the nonpositive terms proves \Cref{eq:local-energy-inequality} with $c_j = a_n / 4$. In dimension four the calculation is simpler because $\gamma_4 = 0$, so the scalar-curvature principal term is absent.
\end{proof}

\begin{lemma}\label[lemma]{lem:weighted-interpolation}
	Let $T$ be a smooth tensor and let $0 \le \phi \le 1$ be a smooth compactly supported cutoff with $|\nabla \phi| \le L$. Then
	\begin{equation}\label{eq:interpolation-recursion}
		\int_M \phi^2 |\nabla T|^2 dv_g \le C(n) \|T\|_{L^2(\{\phi > 0\})} \|\nabla^2T\|_{L^2(\{\phi>0\})} + C(n) L^2 \|T\|_{L^2(\{\phi > 0\})}^2.
	\end{equation}
\end{lemma}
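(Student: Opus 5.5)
We integrate by parts once against the weight $\phi^2$, exactly as in the proof of \Cref{lem:adjacent-interpolation}, and keep track of the extra term produced when the derivative falls on the cutoff. Since $\phi$ has compact support (and $M$ is closed), there are no boundary terms, and
\begin{equation*}
	\int_M \phi^2 |\nabla T|^2 dv_g = -\int_M \phi^2 \langle T, \Delta T\rangle dv_g - 2\int_M \phi \nabla_a\phi \langle T, \nabla^a T\rangle dv_g .
\end{equation*}
Every integrand on the right is supported in $\{\phi>0\}$, so all norms below may be taken over that set.

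The first term is bounded by Cauchy--Schwarz, using $\phi \le 1$ and $|\Delta T| \le \sqrt n |\nabla^2 T|$:
\begin{equation*}
	\left|\int_M \phi^2 \langle T, \Delta T\rangle dv_g\right| \le \sqrt n \|T\|_{L^2(\{\phi>0\})} \|\nabla^2 T\|_{L^2(\{\phi>0\})}.
\end{equation*}
For the second term, $|\nabla\phi| \le L$ gives the bound
\begin{equation*}
	2L \|T\|_{L^2(\{\phi>0\})} \left(\int_M \phi^2|\nabla T|^2 dv_g\right)^{1/2}.
\end{equation*}
Young's inequality bounds this by
\begin{equation*}
	\frac12\int_M \phi^2|\nabla T|^2 dv_g + 2L^2\|T\|^2_{L^2(\{\phi>0\})}.
\end{equation*}

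The left-hand side is finite because $T$ is smooth and $\phi$ is compactly supported. We may therefore absorb the half of $\int_M\phi^2|\nabla T|^2 dv_g$ into the left-hand side and multiply through by $2$. This yields \Cref{eq:interpolation-recursion} with $C(n) = \max\{2\sqrt n, 4\}$.

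The only point requiring care is this absorption step. In particular, we must avoid introducing $\nabla^2\phi$, which the statement does not control. Performing a single integration by parts, rather than moving two derivatives, achieves this. No step here is a genuine obstacle; the argument is the weighted analogue of \Cref{lem:adjacent-interpolation}.
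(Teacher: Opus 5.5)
Your proposal is correct and is essentially the paper's proof: a single integration by parts against $\phi^2$, the bound $|\Delta T| \le \sqrt{n}\,|\nabla^2 T|$ with Cauchy--Schwarz on the first term, and Young's inequality on the cutoff term, after which half of $\int_M \phi^2 |\nabla T|^2\,dv_g$ is absorbed into the left-hand side. Your explicit constant $C(n)=\max\{2\sqrt{n},4\}$ is consistent with that argument.
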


\begin{proof}
	Compact support permits the integration by parts
	\begin{equation*}
		\int_M \phi^2 |\nabla T|^2 dv_g = - \int_M \phi^2 \langle T, \Delta T \rangle d v_g - 2 \int_M \phi \langle T, \nabla_{\nabla \phi} T \rangle dv_g.
	\end{equation*}
	Using $|\Delta T| \le \sqrt{n} |\nabla^2T|$ in the first integral and Young's inequality in the second gives
	\begin{equation*}
		\int_M\phi^2|\nabla T|^2 dv_g \le \sqrt{n} \|T\|_{L^2(\{\phi>0\})} \|\nabla^2 T \|_{L^2(\{\phi>0\})} + \frac{1}{2} \int_M \phi^2 |\nabla T|^2 dv_g + 2 L^2 \|T\|_{L^2(\{\phi>0\})}^2.
	\end{equation*}
	Moving the middle term to the left proves the assertion. The computation uses no commutation of covariant derivatives.
\end{proof}

\begin{proof}[Proof of \Cref{theorem:C}]
	We apply the same time-weighted energy argument to the localized energies.
	Fix $t_* \le \min\{\tau, 1\}$ and set
	\begin{equation*}
		H_*: = \sup_{0 \le t \le t_*} \int_{B_{2r}(p)} |\Rm|^2 dv_{g(t)}.
	\end{equation*}
	Choose the cutoff $\Phi$ from \Cref{eq:cutoff-estimate}. To estimate all even levels up to $2k$, choose a single integer $s \ge 4k+4$. \Cref{thm:local-energy} gives
	\begin{equation*}
		\frac{d}{dt} E_{2j,s}^{\Phi}(t) + c_{2j} E_{2j+2,s}^{\Phi}(t) \le b_j E_{2j,s}^{\Phi}(t) + d_j H_*, \qquad 0 \le j \le k,
	\end{equation*}
	where $b_j = C(n,2j,s,L) K^2$ and $d_j = C(n,2j,s,L) (1+K^2)$. Here we used $H_\Phi(t)\le H_*$ for $0 \le t \le t_*$, since $\{\Phi>0\} \subset B_{2r}(p)$. The equality of the weights at adjacent levels is essential in the following calculation. Set 
	\begin{equation*}
		G_k(t) = \sum_{j=0}^k \alpha_j t^j E_{2j,s}^{\Phi}(t),
	\end{equation*}
	where $\alpha_0 = 1$ and $j \alpha_j \le \frac{1}{2}c_{2j-2} \alpha_{j-1}$. Differentiating and reindexing, we have
	\begin{equation*}
		\begin{split}
			G_k'(t)
			\le& \sum_{j=1}^k(j \alpha_j - c_{2j-2} \alpha_{j-1}) t^{j-1} E_{2j,s}^{\Phi}(t) - c_{2k} \alpha_k t^k E_{2k+2,s}^{\Phi}(t) + \sum_{j=0}^k \alpha_j b_j t^j E_{2j,s}^{\Phi}(t) + H_* \sum_{j=0}^k \alpha_j d_j t^j\\
			\le& C G_k(t) + C H_*.
		\end{split}
	\end{equation*}
	Since $G_k(0) = E_{0,s}^{\Phi}(0) \le H_*$ and $t_* \le1$, integration gives $G_k (t_*) \le C(n,k,K,L) H_*$. In particular,
	\begin{equation*}
		E_{2k,s}^{\Phi}(t_*) \le C(n,k,K,L) H_* t_*^{-k}.
	\end{equation*}
	Because $\Phi = 1$ on $\{\phi > 0\}$, this bounds the unweighted even-level energy on that entire set. 

	For $m = 2k+1$, obtain the even estimates at levels $2k$ and $2k + 2$ by choosing $s \ge 4k + 8$. Apply \Cref{eq:interpolation-recursion} to $T = \nabla^{2k} \Rm$. Since $\phi = 1$ on $B_r(p)$,
	\begin{equation*}
		\int_{B_r(p)} |\nabla^{2k+1} \Rm|^2 dv_{g(t_*)} \le C H_* \left(t_*^{-\frac{2k+1}{2}} + L^2 t_*^{-k} \right) \le C(n,k,K,L) H_* t_*^{ -\frac{2k+1}{2} }.
	\end{equation*}
	This proves \Cref{eq:local-BBS} for every $m$, since $t_*$ was arbitrary.
\end{proof}

\begin{remark}\label{rmk:local-higher-dim}
	For the scaled version, assume $r |\nabla f| + r^2 |\nabla^2f| \le L$ for both cutoffs and use $\hat g(s) = r^{-2} g(r^4 s)$. Then \Cref{eq:local-BBS} holds for $0 < t \le \min\{\tau, r^4\}$ with $C = C(n,m,K r^2,L)$. 
\end{remark}

\section{Long-time existence under Sobolev control}\label{sec:long-time}\label{subsec:continuation}

Throughout this section, the manifold is closed and four-dimensional. We prove \Cref{theorem:D} by combining the global integral estimates of \Cref{theorem:B} with the assumed Sobolev inequality to rule out unbounded curvature. We first recall the continuation criterion for bounded curvature, whose proof uses the four-dimensional pointwise smoothing estimate.

\begin{proposition}\label{prop:curvature-continuation}
	If a smooth $J$-flow on $[0 , T )$, $T < \infty$, has bounded curvature, it extends smoothly past $T$. In particular a finite maximal time implies $\limsup_{t \rightarrow T}\|\Rm_{g(t)}\|_\infty = \infty$.
\end{proposition}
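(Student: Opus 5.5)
Suppose $\sup_{M\times[0,T)}|\Rm_{g(t)}|\le K<\infty$. The plan is the standard continuation argument for curvature flows. We derive uniform bounds on all derivatives of curvature. From these we get uniform $C^k(g_0)$ control of $g(t)$, so that $g(t)\to g(T)$ smoothly as $t\to T$. We then restart the flow at $g(T)$ using \Cref{theorem:A} and glue the two pieces.

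\textbf{Step 1 (derivative bounds).} In dimension four, \Cref{cor:BBS-pointwise}(1) applies on any interval $[t_0,t_0+\sigma]\subset[0,T)$. It requires no Sobolev constant and no restriction $\tau\le K^{-2}$. Apply it to the time-translated flow starting at $t_0=t-\sigma$ with fixed $\sigma=\min\{T/2,1\}$. This gives $\sup_M|\nabla^m\Rm|_{g(t)}\le C_m(K+\sigma^{-1/2})^{1+m/2}$ for all $t\in[\sigma,T)$. On $[0,\sigma]$ the derivatives are bounded by compactness and smoothness of the solution. Hence $|\nabla^m\Rm|\le C'_m$ on $M\times[0,T)$ for every $m$.

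\textbf{Step 2 (metric equivalence and $C^k$ convergence).} By \Cref{eq:J-dim4-full}, $J_g=P_{1,2}(\Rm)+P_{2,0}(\Rm)$, so $|\partial_t g|_{g}\le C$. Since $T<\infty$, integrating gives uniform equivalence $e^{-CT}g_0\le g(t)\le e^{CT}g_0$. Next we bound $\partial_t(\nabla_{g(t)}-\nabla_{g_0})=\nabla J$ and, inductively, $|\nabla_{g_0}^k g(t)|_{g_0}$. The usual argument of Hamilton (cf.~\cite{hamilton1982three}) applies: express $\nabla_{g_0}^k$ of $g(t)$ through $\nabla_{g(t)}^j J$ and the difference tensor $\nabla_{g(t)}-\nabla_{g_0}$. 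This gives uniform $C^k(g_0)$ bounds on $g(t)$ and on $\partial_t g(t)$. Then $g(T):=g_0+\int_0^T\partial_t g\,dt$ exists as a smooth positive-definite metric, and $g(t)\to g(T)$ in every $C^k$.

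\textbf{Step 3 (extension).} Apply \Cref{theorem:A} to $g(T)$ to obtain a solution on $[T,T+\varepsilon]$. The concatenated family is smooth across $t=T$. To see this, note that all time derivatives $\partial_t^\ell g$ from the left are determined by the equation as spatial differential expressions in $g(T)$, and these expressions agree with those of the new solution at $t=T$. This contradicts maximality, so a finite maximal time forces $\limsup_{t\to T}\|\Rm\|_\infty=\infty$; this is where uniqueness in \Cref{theorem:A} gives consistency.

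\textbf{Main obstacle.} The main care is in Step 2, converting intrinsic derivative bounds into $C^k(g_0)$ bounds. This requires the inductive control of $\nabla_{g_0}^k g$ via the evolution of the connection difference. It is standard, but it must be done for a fourth-order right-hand side, where $\partial_t\Gamma=\nabla J$ involves three derivatives of curvature. These are, however, already bounded by Step 1.
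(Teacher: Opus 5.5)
Your proposal is correct and follows essentially the same route as the paper's proof. Both arguments proceed in three steps:
\begin{enumerate}
\item The four-dimensional case of \Cref{cor:BBS-pointwise} gives uniform bounds on all curvature derivatives away from $t=0$.
\item Integrating $\partial_t g=-2J$ and the evolution of the connection difference gives uniform fixed-metric $C^k$ control and a smooth positive-definite limit $g(T)$.
\item Restarting from $g(T)$ via \Cref{theorem:A} and gluing yields the smooth extension.
\end{enumerate}
The differences are cosmetic. You take $g_0$ rather than $g(t_1)$ as the fixed reference metric, you time-translate the smoothing estimate, and you cover $[0,\sigma]$ by compactness.
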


\begin{proof}
	Fix $t_1 \in (0,T)$. By \Cref{cor:BBS-pointwise}, all covariant curvature derivatives are uniformly bounded for $t \in [t_1,T)$. Since $h = \partial_t g = -2J$ is a universal sum of $\nabla^2 \Rm * g$ and $\Rm * \Rm$, all $\nabla^j h$ are bounded there, in particular $|h|_g \le C$. Integrating $|\partial_t \log g(v,v)| \le C$ gives
	\begin{equation*}
		e^{-C(T - t_1)} g(t_1) \le g(t) \le e^{C(T - t_1)} g(t_1).
	\end{equation*}
	Thus the metrics remain positive and uniformly equivalent on this finite interval.

	These intrinsic estimates also control the metrics in fixed coordinates. The connection variation formula gives
	\begin{equation*}
		\partial_t \left(\nabla^{g(t)} - \nabla^{g(t_1)} \right)^k_{ij} = \frac{1}{2} g^{kl} (\nabla_i h_{jl} + \nabla_j h_{il} - \nabla_l h_{ij}).
	\end{equation*}
	The connection difference vanishes at $t = t_1$, so integration, metric equivalence and the bound on $\nabla h$ give a uniform bound for $\nabla^{g(t)} - \nabla^{g(t_1)}$ in the fixed metric $g(t_1)$. Since $\nabla^{g(t)} g(t) =0$, the identity
	\begin{equation*}
		\nabla^{g(t_1)}g(t) = \left(\nabla^{g(t)} - \nabla^{g(t_1)} \right) * g(t)
	\end{equation*}
	then bounds the first derivatives of $g(t)$ with respect to the fixed connection. 
	
	At each higher order, apply $(\nabla^{g(t_1)})^j$ to the connection variation formula and express the fixed-connection derivatives of $h$ using $\nabla^{g(t)}$ and the connection difference. The resulting equation involves bounded covariant derivatives of $h$, lower fixed-connection derivatives of $\nabla^{g(t)} - \nabla^{g(t_1)}$, and terms at most linear in $(\nabla^{g(t_1)})^j (\nabla^{g(t)} - \nabla^{g(t_1)})$. Gronwall's inequality on the finite interval and the preceding identity therefore bound all fixed-connection derivatives of the connection difference and of $g(t)$. Consequently $(\nabla^{g(t_1)})^j h$ is uniformly bounded for every $j$.

	It follows from $g(t) - g(s) = \int_s^t h(u) du$ that $g(t)$ is Cauchy in every fixed $C^j$ norm as $t \rightarrow T$. Its limit $g(T)$ is smooth and positive definite. Apply \Cref{theorem:A} to $g(T)$ and join the solutions, the equation supplies matching time derivatives, and uniqueness gives a smooth extension.
\end{proof}

Combining the tensor Sobolev estimate in \Cref{lem:tensor-Sobolev} with \Cref{theorem:B} gives the following estimate at the curvature scale.

\begin{lemma}\label[lemma]{lem:integral-pointwise}
	Let $g(t)$ be a smooth $J$-flow on a closed four-manifold for $0 \le t \le1$, with $|\Rm| \le 1$ throughout this interval. If \Cref{eq:normalized-Sobolev} holds with respect to $g(1)$, then
	\begin{equation}\label{eq:integral-pointwise}
		\|\Rm\|_{\infty, g(1)} \le C(C_S) \|\Rm\|_{2,g(0)}.
	\end{equation}
\end{lemma}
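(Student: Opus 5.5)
The plan is to apply \Cref{lem:tensor-Sobolev} at the single time $t=1$ to the tensor $T=\Rm_{g(1)}$, and then bound each of the four resulting $L^2$ norms by \Cref{theorem:B}. Since \Cref{eq:normalized-Sobolev} holds for $g(1)$, \Cref{eq:tensor-Sobolev} gives
\begin{equation*}
	\|\Rm\|_{\infty,g(1)} \le C(C_S) \sum_{j=0}^{3} \|\nabla^j \Rm\|_{2,g(1)},
\end{equation*}
with $C$ depending only on $C_S$, because the rank of $\Rm$ is fixed at four.

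Next I check the hypotheses of \Cref{theorem:B}, taking $n=4$, $K=1$ and $\tau=1$. The assumptions $|\Rm|\le 1$ on $[0,1]$ and $\tau\le K^{-2}=1$ are exactly what is given. The closedness of $M$ and the smoothness of the flow on $M\times[0,1]$ are also assumed. \Cref{eq:BBS} then applies at $t=\min\{\tau,1\}=1$ and yields
\begin{equation*}
	\|\nabla^j \Rm\|_{2,g(1)} \le C_j(4,j)\,\|\Rm\|_{2,g(0)}, \qquad j=0,1,2,3,
\end{equation*}
where the factor $t^{-j/4}$ equals one at $t=1$. Summing these four bounds and substituting into the Sobolev estimate gives \Cref{eq:integral-pointwise}. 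The constant is $C(C_S)\sum_{j\le 3}C_j$, which depends only on $C_S$.

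There is essentially no obstacle here. The one point to watch is that the Sobolev constant is needed only for the single metric $g(1)$, while \Cref{theorem:B} requires no Sobolev input at all. This is consistent with the hypotheses as stated.
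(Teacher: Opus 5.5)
Your proposal is correct and matches the paper's proof: apply \Cref{lem:tensor-Sobolev} to $\Rm$ at time one, then bound the four norms $\|\nabla^j \Rm\|_{2,g(1)}$, $0 \le j \le 3$, by \Cref{theorem:B} with $K=\tau=1$, where the time factor equals one. Your observation that the Sobolev constant is used only for $g(1)$, while \Cref{theorem:B} needs no Sobolev input, is also how the lemma is used in the proof of \Cref{theorem:D}.
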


\begin{proof}
	Apply \Cref{eq:tensor-Sobolev} to the curvature tensor at time one and \Cref{theorem:B} with $K = \tau = 1$ at derivative orders $0,1,2,3$. This gives
	\begin{equation*}
		\| \Rm \|_{\infty,g(1)} \le C(C_S) \sum_{j=0}^{3} \|\nabla^j \Rm \|_{2,g(1)} \le C(C_S) \| \Rm \|_{2,g(0)}.
	\end{equation*}
\end{proof}

\begin{proof}[Proof of \Cref{theorem:D}]
	Let $C(C_S)$ be the constant in \Cref{eq:integral-pointwise} and choose $\varepsilon(C_S) = (4C(C_S)^2)^{-1}$. Suppose that curvature is unbounded on the maximal interval, whether that interval is finite or infinite. Choose $K_i \to \infty$, greater than the initial curvature maximum, and let $t_i$ be the first time at which $\|\Rm\|_{\infty, g(t_i)} = K_i$. The first-time choice gives $|\Rm| \le K_i$ on $[0,t_i]$. Since the solution is smooth on an initial interval, the $t_i$ are bounded away from zero for large $i$. Thus $K_i^2 t_i \ge 1$ and $K_i r_0^2 \ge 1$ for all sufficiently large $i$.

	Rescale the final interval of length $K_i^{-2}$ by setting
	\begin{equation*}
		\hat g_i(s) = K_i g \left(t_i + K_i^{-2}(s-1) \right), \qquad 0 \le s \le 1.
	\end{equation*}
	After scaling, this is again a solution, with $|\Rm_{\hat g_i(s)}| \le 1$ throughout and $\|\Rm\|_{\infty, \hat g_i(1)} = 1$. At its final time, \Cref{eq:uniform-Sobolev-D} becomes
	\begin{equation*}
		\|u\|_{4, \hat g_i(1)}^2
		\le C_S \left( \|\nabla u\|_{2, \hat g_i(1)}^2 + (K_i r_0^2)^{-1} \|u\|_{2, \hat g_i(1)}^2 \right)\\
		\le C_S \left( \|\nabla u\|_{2, \hat g_i(1)}^2 + \|u\|_{2, \hat g_i(1)}^2 \right).
	\end{equation*}
	The four-dimensional curvature energy is unchanged by constant metric rescaling, so the assumed energy bound at the beginning of this interval gives
	\begin{equation*}
		\|\Rm\|_{2, \hat g_i(0)}^2 = \|\Rm\|_{2,g(t_i - K_i^{-2})}^2 < \varepsilon(C_S).
	\end{equation*}
	Applying \Cref{eq:integral-pointwise} to $\hat g_i$ now yields
	\begin{equation*}
		1 = \|\Rm\|_{\infty, \hat g_i(1)} \le C(C_S) \|\Rm\|_{2,\hat g_i(0)} < C(C_S) \sqrt{\varepsilon(C_S)} = \frac{1}{2},
	\end{equation*}
	a contradiction. Curvature is therefore uniformly bounded on the whole maximal interval. If $T_{\max} < \infty$, \Cref{prop:curvature-continuation} extends the solution past $T_{\max}$, a contradiction.
\end{proof}

\begin{remark}\label{rmk:continuation-scope}
	The proof uses the assumed curvature-energy bound at the earlier times $t_i - K_i^{-2}$ and the Sobolev inequality at the first maximum times $t_i$. The constant in \Cref{eq:integral-pointwise} depends only on $C_S$, so the threshold is independent of $g_0$ and $r_0$. 
\end{remark}

\section{Estimates for the J-tensor and smooth convergence}\label{sec:J-estimates}

This section proves \Cref{theorem:E}. We estimate the tensor driving the evolution, using $\partial_t g = -2 J_g$ to relate its time integral to convergence of the metrics. In dimension four, the trace and divergence identities cancel the mixed fourth-order terms in the evolution of $J$. We use this cancellation to prove smoothing estimates that retain its initial $L^2$-norm, and then derive smooth convergence from time integrability of the $L^2$-norm of $J$. Throughout this section, covariant derivatives and unspecified norms are taken with respect to $g(t)$.

\begin{lemma}\label[lemma]{lem:J-evolution}
	Along a smooth $J$-flow on a closed four-manifold, the $J$-tensor satisfies
	\begin{equation}\label{eq:J-tensor-evolution}
		\partial_t J = - \frac{1}{4} \Delta^2 J + \Rm * \nabla^2 J + \nabla \Rm * \nabla J + \nabla^2 \Rm * J + \Rm * \Rm * J.
	\end{equation}
	Each schematic term denotes a finite sum of universal contractions.
\end{lemma}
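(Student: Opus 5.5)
The plan is to compute $\partial_t J$ as the linearization of $J$ in the direction $h=-2J$, and then show that the mixed fourth-order terms cancel by diffeomorphism invariance. Set $\mathcal{P}(g) := -2J_g$. Since the flow is $\partial_t g = \mathcal{P}(g)$, we have
\begin{equation*}
\partial_t J = DJ_g\cdot(-2J) = \tfrac12 D\mathcal{P}_g\cdot \mathcal{P}(g).
\end{equation*}
By \Cref{lem:ungauged-linearization} with $n=4$, where $\gamma_4=0$, the principal part of $D\mathcal{P}_g\cdot h$ is
\begin{equation*}
-\tfrac14\left(\Delta^2 h_{ij}+\nabla_i\Delta(\delta h)_j+\nabla_j\Delta(\delta h)_i\right)-\alpha_4\nabla_i\nabla_j\Delta\tr h+\beta_4\nabla_i\nabla_j\delta^2 h .
\end{equation*}
Rather than track the lower-order terms from \Cref{eq:Ric-linearization} by hand, I will use the exact structure of $D\mathcal{P}$: it is a universal linear operator whose coefficients are polynomial in $\Rm$ and its derivatives. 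After collecting by derivative order, every term is of the form $\nabla^{4-k}h * \nabla^{k-2}\Rm$ for $k\ge 2$, together with $h*\Rm*\Rm$. Substituting $h=-2J$ then yields exactly the schematic terms $\Rm*\nabla^2J$, $\nabla\Rm*\nabla J$, $\nabla^2\Rm*J$ and $\Rm*\Rm*J$. Scaling confirms this: $J$ has weight $-2$ and $\partial_t$ has weight $-4$, so no $\nabla^3$ or $\nabla^4$ derivatives of $J$ can pair with curvature.

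The remaining work is the fourth-order terms involving $\delta h$ and $\tr h$, with $h=-2J$. Here the trace and divergence identities enter. By \Cref{prop:trace-J} we have $\tr J = Q$, and by \Cref{prop:J-Schur} we have $\delta J = -\divv J = -\tfrac14 dQ$. Hence $\delta^2 J = \tfrac14\Delta Q$ (up to the sign convention for $\delta$ on one-forms, which I will fix carefully). Substituting these, the mixed terms become
\begin{equation*}
c_1\nabla^2\Delta Q,\qquad c_2\nabla^2\Delta Q,\qquad c_3\nabla^2\Delta Q
\end{equation*}
with explicit constants. These are fourth-order in $Q$, but $Q = \tr J$, so they are not lower-order in $J$, and the real question is whether their total coefficient vanishes. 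Commutators from moving $\nabla$ past $\Delta$ only produce the allowed lower-order terms.

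The main obstacle is this coefficient check. I would compute $-\tfrac14\cdot 2\cdot(-\tfrac14)\cdot(-2)$ for the divergence terms, where the factor $-2$ comes from $h=-2J$, then compute $-\alpha_4\cdot(-2)$ and $\beta_4\cdot(-2)\cdot\tfrac14$ for the others, and verify the sum is zero. The relevant values are $\alpha_4=\tfrac{4}{144}\cdot\ldots$ from \Cref{eq:alpha-beta-gamma}, namely $\alpha_4 = (64-128+96-20)/(4\cdot9\cdot4)=1/12$ and $\beta_4=(16-40+12)/(4\cdot9\cdot2)=-1/6$.

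A cleaner way to see the cancellation is through invariance. $\mathcal{J}$ is diffeomorphism invariant, and \Cref{prop:J-Schur} gives $\divv(-2J+\tfrac12Qg)=0$. Differentiating the naturality identity $\mathcal{P}(\varphi^*g)=\varphi^*\mathcal{P}(g)$ gives $D\mathcal{P}\cdot\mathcal{L}_Yg=\mathcal{L}_Y\mathcal{P}$, which is lower order. Decomposing $J$ into a part generated by the gauge direction plus a remainder should therefore force the mixed terms to reduce to $\nabla^2$ of a quantity controlled by $J$ times curvature.

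If the direct constant check fails, I would fall back on an alternative. In dimension four, $\tr J=Q$ and $\nabla Q=4\divv J$, so $\nabla^2\Delta Q = 4\nabla^2\Delta\,\divv J$. This is still fourth-order in $J$, however, so the constant check must succeed; I expect the verification via the constants above, or via the Bianchi-type identity, to give exactly $-\tfrac14\Delta^2J$ with vanishing mixed coefficient.
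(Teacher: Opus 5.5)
Your main route is essentially the paper's. The paper substitutes $\tr J = Q$ and $\delta J = -\tfrac14 dQ$ into the linearizations of $\Ric$ and $R$, obtaining $\partial_t \Ric = \Delta J + \tfrac12 \nabla^2 Q + \Rm * J$ and $\partial_t R = \tfrac32 \Delta Q + 2\langle \Ric, J\rangle$. It then differentiates \Cref{eq:J-dim4-full}, where $\tfrac{1}{12}\cdot\tfrac32 = \tfrac14\cdot\tfrac12$ makes the $\nabla^2\Delta Q$ terms cancel up to commutators. This is the same cancellation you find in the assembled linearization of \Cref{lem:ungauged-linearization}, and your constants do close, since $-\tfrac14 + 2\alpha_4 - \tfrac12\beta_4 = -\tfrac14 + \tfrac16 + \tfrac1{12} = 0$.

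Fix one sign: because $J = -\tfrac12\mathcal{P}$, one has $\partial_t J = -\tfrac12 D\mathcal{P}_g\cdot\mathcal{P}(g)$, not $+\tfrac12$. With your sign the leading term would come out as $+\tfrac14\Delta^2 J$. The invariance and fallback paragraphs are not needed and can be dropped.
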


\begin{proof}
	By \Cref{prop:trace-J,prop:J-Schur},
	\begin{equation*}
		\tr_g J = Q, \qquad \delta_g J = - \frac{1}{4} dQ, \qquad \delta_g^2 J = \frac{1}{4} \Delta Q.
	\end{equation*}
	Substituting $\partial_t g = -2 J$ into \Cref{eq:Ric-linearization,eq:R-linearization} therefore gives
	\begin{equation*}
		\partial_t \Ric = \Delta J + \frac{1}{2} \nabla^2 Q + \Rm * J,
		\qquad
		\partial_t R = \frac{3}{2} \Delta Q + 2 \langle \Ric, J \rangle.
	\end{equation*}
	Differentiate \Cref{eq:J-dim4-full}. The terms in which the time derivative falls on $\Ric$ or $R$ inside their second derivatives have fourth-order part
	\begin{equation*}
		\frac{1}{12} \nabla^2 \left( \frac{3}{2} \Delta Q \right) -\frac{1}{4} \Delta\left( \Delta J + \frac{1}{2} \nabla^2 Q \right) = -\frac{1}{4} \Delta^2 J + \frac{1}{8}(\nabla^2 \Delta Q - \Delta \nabla^2 Q).
	\end{equation*}
	The commutator satisfies
	\begin{equation*}
		[\Delta, \nabla^2] Q = \Rm * \nabla^2 Q + \nabla \Rm * \nabla Q.
	\end{equation*}
	Since $\nabla g = 0$ and $Q = \tr_g J$, these are contractions of $\Rm * \nabla^2 J$ and $\nabla \Rm * \nabla J$. The variations of the differential operators contribute
	\begin{equation*}
		(\partial_t \Delta) \Ric = J * \nabla^2 \Rm + \nabla J * \nabla \Rm + \nabla^2 J * \Rm, \qquad (\partial_t \nabla^2) R = \nabla J * \nabla R.
	\end{equation*}
	Finally, the variation of each quadratic curvature term is of type $\Rm * \nabla^2 J+ \Rm * \Rm * J$, and differentiating the lower-order terms in $\partial_t \Ric$ and $\partial_t R$ gives the remaining contractions in \Cref{eq:J-tensor-evolution}. In particular, every term contains $J$ or one of its derivatives.
\end{proof}

\begin{lemma}\label[lemma]{lem:J-energy}
	Let $(M^4, g_0)$ be a closed Riemannian four-manifold, and let $g(t)$ be a smooth solution of the $J$-flow on $M^4 \times [0,\infty)$ with $g(0) = g_0$. Suppose $|\Rm| \le K$. For every $t_0 > 0$ and integer $m \ge 0$, there is $C_m = C(m,K,t_0)$ such that, for $t\ge t_0$,
	\begin{equation}\label{eq:J-energy}
		\frac d{dt} \| \nabla^m J \|_2^2 + \frac{1}{8} \| \nabla^{m+2} J \|_2^2 \le C_m \|J\|_2^2.
	\end{equation}
\end{lemma}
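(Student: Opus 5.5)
The plan is to differentiate $\|\nabla^m J\|_2^2$ using \Cref{lem:J-evolution}, extract the good biharmonic term, and treat everything else as a lower-order perturbation. Every term in \Cref{eq:J-tensor-evolution} apart from the principal one is \emph{linear} in $J$ and its derivatives, with coefficients built from $\Rm$ and its covariant derivatives. By the time-$t_0$ smoothing of \Cref{cor:BBS-pointwise}(1), these coefficients are bounded. Concretely, for $t \ge t_0$, \Cref{eq:BBS-pointwise} on intervals $[t-t_0,t]$ (taking $\tau=t_0$) gives
\begin{equation*}
	\sup_M |\nabla^k \Rm| \le C(k,K,t_0)
\end{equation*}
for every $k$, uniformly in $t\ge t_0$. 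This is the only place where $t_0$ enters.

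First I would commute $\nabla^m$ through the evolution. Commuting $\partial_t$ with $\nabla^m$ costs $\nabla^{i}(\partial_t\Gamma) * \nabla^{m-1-i} J$ terms, and $\partial_t\Gamma = \nabla J$. Commuting $\Delta^2$ with $\nabla^m$ gives curvature terms. The result is
\begin{equation*}
	\partial_t \nabla^m J = -\tfrac14 \Delta^2 \nabla^m J + \sum_{i\le m+2} \mathcal{C}_i * \nabla^i J,
\end{equation*}
where each $\mathcal{C}_i$ is a polynomial in $\nabla^k\Rm$ (bounded) or in $J$, $\nabla J$, $\ldots$. The factors $J$, $\nabla^k J$ are themselves curvature expressions of the type $\nabla^2\Rm + \Rm*\Rm$, hence also bounded, so $\mathcal C_i$ is bounded by $C(m,K,t_0)$.

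The volume variation $-Q\,dv$ and the metric contractions in $|\cdot|^2$ contribute $J*\nabla^mJ*\nabla^mJ$, again of this form. Integrating by parts twice yields
\begin{equation*}
	\frac{d}{dt}\|\nabla^m J\|_2^2 = -\tfrac12\|\Delta\nabla^m J\|_2^2 + \int \mathcal C * \nabla^i J * \nabla^m J, \qquad i \le m+2.
\end{equation*}
Then I would convert $\|\Delta \nabla^m J\|_2^2$ into $\|\nabla^{m+2}J\|_2^2$ modulo $\int \Rm*\nabla^{m+1}J*\nabla^{m+1}J$ plus lower-order terms, using the commutation as in \Cref{thm:basic-energy-inequality}. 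After Cauchy--Schwarz, each error term is bounded by
\begin{equation*}
	C\|\nabla^{m+2}J\|_2\|\nabla^m J\|_2 + C\sum_{i\le m+1}\|\nabla^i J\|_2^2 .
\end{equation*}

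Finally, the intermediate norms are interpolated between $\|J\|_2$ and $\|\nabla^{m+2}J\|_2$ by iterating \Cref{lem:adjacent-interpolation}: $\|\nabla^i J\|_2^2 \le \varepsilon\|\nabla^{m+2}J\|_2^2 + C_\varepsilon\|J\|_2^2$ for $i\le m+1$. Choosing $\varepsilon$ small absorbs all errors into $\tfrac12 - \tfrac18$ of the good term, leaving $\tfrac18\|\nabla^{m+2}J\|_2^2$ on the left and $C_m\|J\|_2^2$ on the right, which is \Cref{eq:J-energy}.

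The main obstacle is the bookkeeping that ensures every error term is genuinely linear in $J$, rather than containing free $\nabla^k\Rm$ factors unpaired with $J$. Such a factor would only give a bound by $\|\Rm\|_2$, not $\|J\|_2$. \Cref{lem:J-evolution} asserts exactly that each term contains $J$, and the commutators inherit this because they act on $\nabla^m J$. A secondary point is that the interpolation constants in \Cref{lem:adjacent-interpolation} depend only on $n$ (no Sobolev constant), so no geometry beyond the curvature bounds is required.
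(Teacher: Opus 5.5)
Your proposal is correct and follows essentially the same route as the paper. Both arguments take uniform bounds on all $\nabla^k\Rm$ (hence on all $\nabla^k J$) for $t\ge t_0$ from \Cref{cor:BBS-pointwise}, and commute $\nabla^m$ and $\partial_t$ through \Cref{lem:J-evolution} so that every error term is linear in some $\nabla^\ell J$ with $\ell\le m+2$. They then integrate the biharmonic term by parts, use the unweighted Bochner identity to pass from $\|\Delta\nabla^m J\|_2$ to $\|\nabla^{m+2}J\|_2$, and absorb the intermediate norms via adjacent-level interpolation. Your explicit application of \eqref{eq:BBS-pointwise} on time-translated intervals $[t-t_0,t]$ only spells out what the paper leaves implicit.
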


\begin{proof}
	By \Cref{cor:BBS-pointwise} and \Cref{eq:J-dim4-full}, all covariant derivatives of $\Rm$ and $J$ are uniformly bounded for $t \ge t_0$. Commuting $\nabla^m$ through \Cref{eq:J-tensor-evolution} produces terms with at most $m + 2$ derivatives of $J$ and uniformly bounded curvature coefficients. The time commutator has the form
	\begin{equation*}
		[\partial_t, \nabla^m] J = \sum_{i=0}^{m-1} \nabla^{i+1} J * \nabla^{m-1-i} J,
	\end{equation*}
	with an empty sum when $m = 0$. Bound the first factor of each summand using the preceding uniform estimates. It follows that
	\begin{equation*}
		\left| \partial_t \nabla^m J + \frac{1}{4} \Delta^2 \nabla^m J \right| \le C_m \sum_{\ell = 0}^{m+2} |\nabla^\ell J|.
	\end{equation*}
	The changing contractions and volume element contribute at most $C_m \|\nabla^m J\|_2^2$. Differentiation and integration by parts thus give
	\begin{equation*}
		\frac{d}{dt} \| \nabla^m J \|_2^2 + \frac{1}{2} \| \Delta \nabla^m J\|_2^2 \le C_m\|\nabla^m J \|_2 \sum_{\ell=0}^{m+2} \| \nabla^\ell J \|_2.
	\end{equation*}
	The unweighted Bochner calculation, using the uniform bounds on $\Rm$ and $\nabla \Rm$, yields
	\begin{equation*}
		\|\Delta \nabla^m J\|_2^2 \ge \frac{1}{2} \|\nabla^{m+2} J \|_2^2 - C_m \left( \|\nabla^{m+1} J \|_2^2 + \|\nabla^m J \|_2^2 \right).
	\end{equation*}
	Apply Young's inequality to the highest-order factor on the right of the energy estimate. All remaining positive derivative levels are controlled by covariant $L^2$ interpolation:
	\begin{equation*}
		\sum_{\ell=1}^{m+1} \|\nabla^\ell J \|_2^2 \le \varepsilon\| \nabla^{m+2} J \|_2^2 + C_{m, \varepsilon} \| J \|_2^2.
	\end{equation*}
	This follows by iterating integration by parts, starting from $\|\nabla^\ell J\|_2^2 \le \sqrt{n} \|\nabla^{\ell-1} J \|_2 \|\nabla^{\ell+1} J \|_2$. Choosing the Young and interpolation parameters sufficiently small proves \Cref{eq:J-energy}. Every other term retains a factor of a derivative of $J$ in each of its two $L^2$ factors, so no additive constant or volume term is introduced.
\end{proof}

\begin{proposition}\label{prop:J-smoothing}
	Let $(M^4, g_0)$ be a closed Riemannian four-manifold, and let $g(t)$ be a smooth solution of the $J$-flow on $M^4 \times [0, \infty)$ with $g(0) = g_0$. Suppose $|\Rm| \le K$ and that \Cref{eq:uniform-Sobolev-D} holds with time-independent constants $C_S,r_0$. For every $t_0 > 0$ and integer $m \ge 0$, there is $C_m = C(m,K,t_0,C_S,r_0)$ such that, for every $t \ge t_0$ and $0 < \sigma \le 1$, we have
	\begin{equation}\label{eq:J-smoothing}
		\|\nabla^m J \|_{2,g(t + \sigma)} \le C_m \sigma^{-\frac{m}{4}} \| J \|_{2,g(t)}, 
	\end{equation}
	and
	\begin{equation*}
		\| \nabla^m J \|_{\infty,g(t+1)} \le C_m \| J \|_{2,g(t)}.
	\end{equation*}
\end{proposition}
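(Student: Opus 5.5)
The plan is to feed the energy inequality of \Cref{lem:J-energy} into the time-weighted cascade of \Cref{lem:energy-cascade}, starting at an arbitrary time $t \ge t_0$. Then the tensor Sobolev estimate of \Cref{lem:tensor-Sobolev} converts the resulting $L^2$ bounds into the pointwise bound. Fix $t \ge t_0$ and write $s = t' - t \in [0,1]$. Set $E_{2j}(s) := \|\nabla^{2j} J\|_{2,g(t+s)}^2$.

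\textbf{Step 1: the zeroth level.} With $m = 0$, \Cref{eq:J-energy} gives $E_0' \le C_0 E_0$, so Gronwall on $[0,1]$ yields $E_0(s) \le e^{C_0} \|J\|_{2,g(t)}^2$. This is the key difference from \Cref{theorem:B}: the source term in \Cref{eq:J-energy} is $\|J\|_2^2$ rather than a curvature energy. Hence the whole cascade is driven by the initial $J$-norm, and the bound is homogeneous in $\|J\|_{2,g(t)}$.

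\textbf{Step 2: even levels.} For $0 \le j \le k$, \Cref{eq:J-energy} at $m = 2j$ gives
\[
E_{2j}' + \tfrac{1}{8} E_{2j+2} \le C_{2j} E_0 \le C_{2j} e^{C_0} \|J\|_{2,g(t)}^2 =: d_j H .
\]
Apply \Cref{lem:energy-cascade} with $c_j = 1/8$, $b_j = 0$ and the constant source $H = \|J\|^2_{2,g(t)}$. Note that $\bar H = \max\{E_0(0), H\} = \|J\|_{2,g(t)}^2$. This gives $E_{2k}(\sigma) \le C_k \|J\|_{2,g(t)}^2 \sigma^{-k}$ for $0 < \sigma \le 1$. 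The constants come from \Cref{lem:J-energy}, which requires $t + s \ge t_0$; this holds since $t \ge t_0$. So they depend only on $m$, $K$ and $t_0$.

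\textbf{Step 3: odd levels.} Use the first inequality in \Cref{eq:adjacent-Young} with $T = J$:
\[
E_{2k+1} \le C\,(E_{2k} E_{2k+2})^{1/2} \le C \|J\|_{2,g(t)}^2 \sigma^{-(2k+1)/2} .
\]
Taking square roots gives \Cref{eq:J-smoothing} with exponent $\sigma^{-m/4}$.

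\textbf{Step 4: the pointwise bound.} Take $\sigma = 1$ and apply \Cref{lem:tensor-Sobolev} to $T = \nabla^m J$ at time $t+1$. This yields $\|\nabla^m J\|_\infty \le C\sum_{i=0}^3 \|\nabla^{m+i} J\|_2 \le C_m \|J\|_{2,g(t)}$.

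The one subtlety, which I expect to be the main (though mild) obstacle, is that \Cref{lem:tensor-Sobolev} is stated with the normalized constant $r_0 = 1$ in \Cref{eq:normalized-Sobolev}. Since no rescaling is performed here, I would simply replace $C_S$ by $C_S\max\{1, r_0^{-2}\}$. The inequality \Cref{eq:uniform-Sobolev-D} then implies \Cref{eq:normalized-Sobolev} with this constant, so the final constant depends on $C_S$ and $r_0$, as stated. I would also check that the Sobolev hypothesis is only needed at the single time $t+1$ and that the tensor rank is fixed once $m$ is fixed, so the Sobolev constant of \Cref{lem:tensor-Sobolev} is uniform in $t$.
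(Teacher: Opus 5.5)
Your proposal is correct and follows the paper's proof essentially step for step. The steps match as follows: Gronwall at level zero from \Cref{lem:J-energy}; the cascade of \Cref{lem:energy-cascade} on the shifted interval, with coefficient $1/8$ and source $e^{C_0}\|J\|_{2,g(t)}^2$, for the even orders; the adjacent integration-by-parts interpolation for the odd orders; and \Cref{lem:tensor-Sobolev} applied to $\nabla^m J$ at time $t+1$, after converting \Cref{eq:uniform-Sobolev-D} into the normalized inequality with a constant depending only on $C_S$ and $r_0$.
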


\begin{proof}
	Fix $t \ge t_0$. The case $m = 0$ of \Cref{eq:J-energy} and Gronwall's inequality give
	\begin{equation*}
		\|J\|_{2, g(t+\sigma)}^2 \le e^{C_0 \sigma} \| J \|_{2,g(t)}^2, \qquad 0 \le \sigma \le 1.
	\end{equation*}
	Apply \Cref{lem:energy-cascade} on this shifted time interval to the energies $\| \nabla^{2j} J \|_{2, g(t + \sigma)}^2$, with coefficient $1/8$ and source bounded by $e^{C_0} \|J\|_{2,g(t)}^2$. This gives
	\begin{equation*}
		\|\nabla^{2j} J \|_{2,g(t+\sigma)}^2 \le C_j\sigma^{-j} \| J \|_{2,g(t)}^2.
	\end{equation*}
	For odd orders, integration by parts gives
	\begin{equation*}
		\|\nabla^{2j+1} J\|_2^2 \le \sqrt{n} \|\nabla^{2j} J\|_2 \| \nabla^{2j+2} J \|_2.
	\end{equation*}
	Substitution proves the first estimate in \Cref{eq:J-smoothing} for every $m$. The scalar Sobolev hypothesis implies the normalized inequality in \Cref{lem:tensor-Sobolev} with a constant depending only on $C_S, r_0$. Apply that lemma to $\nabla^m J$ at time $t + 1$ and use the first estimate at orders $m, m+1, m+2, m+3$ to obtain
	\begin{equation*}
		\|\nabla^m J \|_{\infty, g(t+1)} \le C_m \sum_{\ell=0}^3 \|\nabla^{m + \ell} J \|_{2,g(t+1)} \le C_m \|J\|_{2,g(t)}.
	\end{equation*}
	All constants are uniform in $t\ge t_0$.
\end{proof}

\begin{proof}[Proof of \Cref{theorem:E}]
	Fix $t_0>0$. The curvature bound and uniform Sobolev inequality allow us to apply \Cref{prop:J-smoothing} for $t\ge t_0$. Integrating the second estimate in \Cref{eq:J-smoothing} and using \Cref{eq:J-time-integrability} yields, for every $m\ge0$,
	\begin{equation}\label{eq:J-integrable-derivatives}
		\int_{t_0+1}^\infty \|\nabla^m J \|_{\infty,g(t)} dt \le C_m \int_{t_0}^\infty \|J\|_{2,g(t)} dt < \infty.
	\end{equation}
	Fix $T=t_0+1$. For a nonzero fixed tangent vector $v$, the flow equation gives
	\begin{equation*}
		\left| \partial_t \log g(t)(v,v) \right| \le 2 \|J\|_{\infty,g(t)}.
	\end{equation*}
	Consequently there is a time-independent constant $\Lambda<\infty$ such that
	\begin{equation}\label{eq:J-uniform-metric-equivalence}
		\Lambda^{-1} g(T) \le g(t) \le \Lambda g(T), \qquad t \ge T.
	\end{equation}

	We next control derivatives relative to the fixed connection $\nabla^{g(T)}$. The connection variation formula reads
	\begin{equation*}
		\partial_t \left(\nabla^{g(t)} - \nabla^{g(T)} \right)^k_{ij} = -g^{kl} \left(\nabla_i J_{jl} + \nabla_j J_{il} - \nabla_l J_{ij}\right).
	\end{equation*}
	By \Cref{eq:J-uniform-metric-equivalence,eq:J-integrable-derivatives}, its right-hand side is integrable in the fixed supremum norm. The connection difference, initially zero at $T$, is therefore uniformly bounded. For higher orders, suppose its fixed derivatives of orders less than $q$ are uniformly bounded. Expanding $(\nabla^{g(T)})^q(g^{-1} * \nabla J)$ in terms of $\nabla^{g(t)}$ introduces only these lower derivatives of the connection difference and covariant derivatives $\nabla^\ell J$ with $1 \le \ell \le q+1$. Hence
	\begin{equation*}
		\left\|\partial_t(\nabla^{g(T)})^q \left( \nabla^{g(t)} - \nabla^{g(T)} \right) \right\|_{\infty,g(T)} \le C_q\sum_{\ell=1}^{q+1} \|\nabla^\ell J\|_{\infty,g(t)}.
	\end{equation*}
	The right-hand side is integrable, so induction bounds every fixed derivative of the connection difference. Converting derivatives of $J$ between the two connections now gives
	\begin{equation*}
		\|(\nabla^{g(T)})^m J \|_{\infty, g(T)}
		\le C_m \sum_{\ell=0}^m \|\nabla^\ell J\|_{\infty, g(t)}.
	\end{equation*}
	Thus every fixed $C^m$ norm of $J$ is integrable in time. For $t_2 \ge t_1 \ge T$,
	\begin{equation*}
		\|g(t_2) - g(t_1)\|_{C^m(g(T))}
		\le 2 \int_{t_1}^{t_2} \|J_{g(t)}\|_{C^m(g(T))} dt
		\longrightarrow 0
	\end{equation*}
	as $t_1 \to \infty$. It follows that the entire family converges smoothly to a metric $g_\infty$, which is positive definite by \Cref{eq:J-uniform-metric-equivalence}. Smooth convergence also implies $\|J_{g(t)}\|_{2,g(t)} \to \|J_{g_\infty}\|_{2,g_\infty}$. A positive limit contradicts \Cref{eq:J-time-integrability}, so $J_{g_\infty} = 0$. Norms defined by any other fixed smooth background metric are equivalent to those defined by $g(T)$.
\end{proof}

\section{Variational structure and conditional convergence}\label{sec:variational}

We compute the first variation and evolution of the total $Q$-curvature in general dimension. In dimension four, monotonicity yields a quantitative Yamabe lower bound under positive initial Yamabe constant and positive total $Q$-curvature. A conformal perturbation calculation shows that the same energy can decrease in higher dimensions. We then combine the four-dimensional identities with \Cref{theorem:D,theorem:E} to prove \Cref{theorem:F}.

\subsection{Variational structure and energy monotonicity}\label{subsec:variational}

To study the long-time behavior of the flow, we turn to the total $Q$-curvature
\begin{equation*}
	\mathcal{J}(g) := \int_M Q_g dv_g.
\end{equation*}
We compute its first variation and evolution in general dimension, then use the four-dimensional identities to obtain monotonicity and topological constraints.

\begin{proposition}\label{prop:first-variation}
	Let $h \in S_2(M)$ be a symmetric $2$-tensor. Then
	\begin{equation}\label{eq:first-variation-1}
		D \mathcal{J}_g \cdot h = \int_M \Gamma_g h dv_g + \frac{1}{2} \int_M Q_g \tr_g h dv_g ,
	\end{equation}
	and, using the formal adjoint identity and $J_g = -\frac{1}{2} \Gamma_g^*1$, this can be rewritten as
	\begin{equation}\label{eq:first-variation-2}
		D \mathcal{J}_g \cdot h = \int_M \langle \Gamma_g^*1 + \frac{1}{2} Q_g g, h \rangle_g dv_g = \int_M \langle - 2 J_g + \frac{1}{2} Q_g g, h \rangle_g dv_g .
	\end{equation}
	Consequently the $L^2$-gradient of $\mathcal{J}$ with respect to the standard $L^2$-metric on symmetric $2$-tensors is
	\begin{equation}\label{eq:gradient-J}
		\mathrm{grad} \mathcal{J} (g) = -2 J_g + \frac{1}{2} Q_g g .
	\end{equation}
\end{proposition}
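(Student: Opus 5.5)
The plan is to differentiate the functional $\mathcal J(g)=\int_M Q_g\,dv_g$ directly along a smooth one-parameter family $g(s)$ with $g(0)=g$ and $\partial_s g|_{s=0}=h$. By the product rule the variation splits into two pieces. The first is the variation of the integrand, which by definition is $D\mathcal Q_g\cdot h=\Gamma_g h$. The second is the variation of the volume form, given by the standard formula $\partial_s dv_g=\frac12 \tr_g h\,dv_g$; this is the same identity already used in \Cref{prop:trace-J}. Adding the two pieces gives \Cref{eq:first-variation-1}. Since $M$ is closed, differentiating under the integral sign is justified, because everything is smooth on a compact manifold.

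To obtain \Cref{eq:first-variation-2}, I would use the definition of the formal $L^2$-adjoint $\Gamma_g^*$ with the constant test function $1$:
\begin{equation*}
\int_M \Gamma_g h \, dv_g = \int_M (\Gamma_g h)\cdot 1\, dv_g = \int_M \langle h, \Gamma_g^*1\rangle_g \, dv_g .
\end{equation*}
This step is where closedness of $M$ matters, since the adjoint identity comes from integrating by parts with no boundary terms. The explicit formula for $\Gamma_g^*$ in \Cref{subsec:Gamma} was derived term by term by exactly such integrations by parts; one could spot-check a term such as $\int_M \Delta^2(\tr_g h)\,dv_g=0$ against the coefficient $-g\Delta^2 f$ evaluated at $f=1$. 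Next, I rewrite the trace term as $\frac12\int_M Q_g\tr_g h\,dv_g=\int_M\langle \frac12 Q_g g,h\rangle_g\,dv_g$. Substituting $\Gamma_g^*1=-2J_g$ from \Cref{def:J-tensor} then yields both expressions in \Cref{eq:first-variation-2}.

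Finally, the gradient formula \Cref{eq:gradient-J} follows from the Riesz representation in the $L^2$ inner product $\int_M\langle\cdot,\cdot\rangle_g\,dv_g$ on $S_2(M)$. The tensor $-2J_g+\frac12 Q_g g$ is smooth and symmetric, and it represents $D\mathcal J_g$ for every $h$, so it is the gradient; uniqueness holds because a smooth tensor that pairs to zero against all $h$ must vanish.

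There is no real obstacle in this argument. The only point needing care is confirming that the adjoint identity holds globally with no leftover boundary terms, which I would justify from closedness of $M$ and the derivation of $\Gamma_g^*$.
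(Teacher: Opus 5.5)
Your proposal is correct and follows the paper's proof essentially step for step. You split the variation into $\Gamma_g h$ and the volume term $\frac{1}{2}\tr_g h\,dv_g$, apply the formal adjoint identity with $f\equiv 1$ on the closed manifold, and substitute $\Gamma_g^*1=-2J_g$; your added remark on uniqueness of the $L^2$-gradient via Riesz representation is a harmless elaboration of the paper's last line.
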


\begin{proof}
	Let $h \in S_2(M)$. The total $Q$-curvature is $\mathcal J(g) = \int_M Q_g dv_g$. Differentiating under the integral sign gives
	\begin{equation*}
		D \mathcal J_g \cdot h = \int_M (D Q_g \cdot h) dv_g + \int_M Q_g (D dv_g \cdot h).
	\end{equation*}
	By definition of the linearization, $D Q_g \cdot h = \Gamma_g h$. The standard variational formula for the volume element is
	\begin{equation*}
		D dv_g \cdot h = \frac{1}{2} (\tr_gh) dv_g.
	\end{equation*}
	Substituting these two identities yields \Cref{eq:first-variation-1}.

	To pass to \Cref{eq:first-variation-2}, recall that the $L^2$-inner product on symmetric $2$-tensors is $\int_M \langle u,v \rangle_g dv_g$, and that the formal adjoint satisfies
	\begin{equation*}
		\int_M (\Gamma_g h) dv_g = \int_M \langle \Gamma_g^*1, h \rangle_g dv_g .
	\end{equation*}
	Therefore
	\begin{equation*}
		D \mathcal J_g \cdot h = \int_M \left\langle \Gamma_g^*1 + \frac{1}{2} Q_g g, h \right\rangle_g dv_g .
	\end{equation*}
	Using $J_g = -\frac{1}{2} \Gamma_g^*1$ we obtain \Cref{eq:first-variation-2} and hence \Cref{eq:gradient-J}.
\end{proof}

The following computation uses only the defining relation $J_g = -\frac{1}{2} \Gamma_g^*1$ and the volume evolution \Cref{eq:volume-evolution}.

\begin{proposition}\label{thm:Q-evolution}
	Let $g(t)$ be a solution of the $J$-flow $\frac{\partial}{\partial t} g = -2 J_g$ on $M \times [0,T)$. Then
	\begin{equation}\label{eq:Q-evolution}
		\frac{d}{dt} \mathcal{J} (g(t)) = 4 \int_M |J_{g(t)}|_{g(t)}^2 dv_{g(t)} - \int_M Q_{g(t)}^2 dv_{g(t)} .
	\end{equation}
	Equivalently,
	\begin{equation}\label{eq:Q-evolution-rewritten}
		\frac{d}{dt} \mathcal{J} (g(t)) = 4 \int_M \left|J_{g(t)} - \frac{1}{n} Q_{g(t)}g(t) \right|_{g(t)}^2 dv_{g(t)} - \frac{n-4}{n} \int_M Q_{g(t)}^2 dv_{g(t)} .
	\end{equation}
	In particular, when $n = 4$, the total $Q$-curvature is non-decreasing along the $J$-flow.
\end{proposition}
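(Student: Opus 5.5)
The plan is to apply \Cref{prop:first-variation} directly with the variation $h = \partial_t g = -2J_g$. Since $\mathcal J(g(t))$ is a smooth function of $t$ along a smooth solution on a closed manifold, the chain rule gives $\frac{d}{dt}\mathcal J(g(t)) = D\mathcal J_{g(t)}\cdot(-2J_{g(t)})$. Substituting into \Cref{eq:first-variation-2} yields
\begin{equation*}
	\frac{d}{dt}\mathcal J = \int_M \left\langle -2J + \frac12 Q g, -2J\right\rangle dv_g = 4\int_M |J|^2 dv_g - \int_M Q\,\langle g, J\rangle dv_g .
\end{equation*}
By \Cref{prop:trace-J}, $\langle g,J\rangle = \tr_g J = Q$, so the last integral is $\int_M Q^2 dv_g$. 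This gives \Cref{eq:Q-evolution}. Equivalently, one can differentiate $\int_M Q\,dv_g$ directly, using $\partial_t Q = \Gamma_g(-2J)$, the adjoint identity $\int_M \Gamma_g h\,dv_g = \int_M\langle \Gamma_g^*1,h\rangle dv_g$, and the volume evolution \Cref{eq:volume-evolution}; this is the same computation.

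For \Cref{eq:Q-evolution-rewritten}, I would decompose $J$ orthogonally into its traceless part and its trace part. Write $\mathring J = J - \frac1n Q g$. Then $\langle \mathring J, g\rangle = 0$ and $|g|^2 = n$, so
\begin{equation*}
	|J|^2 = |\mathring J|^2 + \frac1n Q^2 .
\end{equation*}
Hence $4|J|^2 - Q^2 = 4|\mathring J|^2 - \frac{n-4}{n}Q^2$, and integrating gives the second formula. When $n=4$ the $Q^2$ term vanishes, so $\frac{d}{dt}\mathcal J = 4\|\mathring J\|_2^2 \ge 0$. By \Cref{eq:J-dim4}, $\mathring J = -\frac12 B$ in this dimension, so this equals $\|B\|_2^2$, which matches the product example.

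There is no real obstacle. The only point needing care is justifying differentiation under the integral and the integration by parts behind the adjoint identity, and both are immediate on a closed manifold for a smooth solution.
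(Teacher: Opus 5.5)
Your proposal is correct and is essentially the paper's argument. The paper differentiates $\int_M Q\,dv_g$ directly using $\partial_t Q = \Gamma_g(-2J)$, the adjoint identity and the volume evolution \Cref{eq:volume-evolution}, which is exactly what you package through \Cref{prop:first-variation} with $h=-2J$; it then applies the same decomposition $|J|^2 = |\mathring J|^2 + \frac1n Q^2$, and your identification of the four-dimensional derivative with $\|B\|_2^2$ agrees with \Cref{eq:Q-mono-4D}.
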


\begin{proof}
	Since $\mathcal{J}(g) = \int_M Q dv_g$, the product rule gives
	\begin{equation*}
		\frac{d}{dt} \mathcal{J} (g) = \int_M \frac{\partial Q}{\partial t} dv_g + \int_M Q \frac{\partial}{\partial t} dv_g .
	\end{equation*}
	The first term is computed by linearizing the $Q$-curvature along the flow:
	\begin{equation*}
		\frac{\partial Q}{\partial t} = D Q_g \cdot \left(\frac{\partial g}{\partial t} \right) = \Gamma_g( -2J) .
	\end{equation*}
	Using the defining relation $J = -\frac{1}{2} \Gamma_g^*1$, we obtain
	\begin{equation*}
		\int_M \Gamma_g (-2J) dv_g = \int_M \langle \Gamma_g^*1, -2J \rangle_g dv_g   = \int_M \langle - 2J, -2J \rangle_g dv_g  = 4 \int_M |J|_g^2 dv_g .
	\end{equation*}
	For the volume term we use the standard variational formula
	\begin{equation*}
		\frac{\partial}{\partial t} dv_g = \frac{1}{2} \tr_g \left(\frac{\partial g}{\partial t} \right) dv_g = \frac{1}{2} \tr_g( -2J) dv_g = -Q dv_g .
	\end{equation*}
	Consequently,
	\begin{equation*}
		\int_M Q \frac{\partial}{\partial t} dv_g = -\int_M Q^2 dv_g .
	\end{equation*}
	Adding the two contributions gives \Cref{eq:Q-evolution}. To obtain \Cref{eq:Q-evolution-rewritten}, we decompose $J$ into its trace and traceless parts. Because $\tr_g J = Q$,
	\begin{equation*}
		J = \frac{1}{n} Q g + \left(J - \frac{1}{n} Q g \right), \qquad \left|J - \frac{1}{n} Q g \right|_g^2 = |J|_g^2 - \frac{1}{n} Q^2 .
	\end{equation*}
	Therefore
	\begin{equation*}
		4|J|_g^2 - Q^2 = 4 \left|J - \frac{1}{n} Q g \right|_g^2 + \frac{4}{n} Q^2 - Q^2 = 4 \left|J - \frac{1}{n} Q g \right|_g^2 - \frac{n-4}{n} Q^2 .
	\end{equation*}
	Integrating over $M$ yields \Cref{eq:Q-evolution-rewritten}. When $n = 4$ the last term vanishes, so $\frac{d}{dt} \mathcal J(g) \ge 0$.
\end{proof}

\begin{corollary}\label{cor:Q-energy-identity}
	For any solution of the $J$-flow on $[0,T)$ and any $0 \le t_1 \le t_2 < T$, we have
	\begin{equation*}
		\mathcal{J} (g(t_2)) - \mathcal{J} (g(t_1)) = 4 \int_{t_1}^{t_2} \int_M |J_{g(t)}|_{g(t)}^2 dv_{g(t)} dt - \int_{t_1}^{t_2} \int_M Q_{g(t)}^2 dv_{g(t)} dt .
	\end{equation*}
\end{corollary}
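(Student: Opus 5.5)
The corollary follows by integrating the identity \Cref{eq:Q-evolution} of \Cref{thm:Q-evolution} over the interval $[t_1,t_2]$. The plan is to check that $t \mapsto \mathcal J(g(t))$ is continuously differentiable on $[t_1,t_2]$. Then the fundamental theorem of calculus converts the pointwise-in-time derivative formula into the stated integral identity.

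First, regularity. Since $g(t)$ is a smooth solution on the closed manifold $M \times [0,T)$ and $t_2 < T$, all quantities are smooth on the compact set $M \times [t_1,t_2]$. These include $Q_{g(t)}$, $J_{g(t)}$, $|J_{g(t)}|^2_{g(t)}$ and the density of $dv_{g(t)}$ against a fixed background volume form. Hence $\mathcal J(g(t))$ is differentiable in $t$, and differentiation under the integral sign, as used in the proof of \Cref{thm:Q-evolution}, is justified. Likewise the right-hand side of \Cref{eq:Q-evolution},
\begin{equation*}
	t \mapsto 4\int_M |J_{g(t)}|^2_{g(t)}\, dv_{g(t)} - \int_M Q_{g(t)}^2\, dv_{g(t)},
\end{equation*}
is continuous on $[t_1,t_2]$. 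When $t_1 = 0$ we only need one-sided differentiability at the endpoint, which holds because the solution is smooth up to $t=0$.

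Second, integrate \Cref{eq:Q-evolution} from $t_1$ to $t_2$. The left side gives $\mathcal J(g(t_2)) - \mathcal J(g(t_1))$, and the right side splits into the two stated double integrals by linearity. Each integrand is continuous on the compact set, so the double integrals exist. There is no real obstacle here; the only point requiring care is the justification of differentiation under the integral, which is supplied by the smoothness of the flow on compact time subintervals.
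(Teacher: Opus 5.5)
Your proposal is correct and is exactly the intended argument: the paper states this corollary without a separate proof, as the time-integrated form of \Cref{eq:Q-evolution} in \Cref{thm:Q-evolution}. Your justification of the fundamental theorem of calculus via smoothness of the flow on $M \times [t_1,t_2]$ is the only detail needed, and you handle it adequately.
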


\begin{remark}\label{rmk:traceless-gradient}
	Decomposing a variation $h$ into its trace part $\frac{1}{n}(\tr_g h)g$ and its traceless part $\mathring h$, the traceless contribution to $D \mathcal{J}_g \cdot h$ is
	\begin{equation*}
		-2 \int_M \langle \mathring J_g,\mathring h \rangle_g dv_g, \qquad \mathring J_g = J_g - \frac{1}{n} Q_g g .
	\end{equation*}
	In dimension four, $\mathring J_g = -\frac{1}{2} B_g$, so the trace-free part of $\partial_t g$ is $(\partial_t g)^\circ = B_g$. Thus the Bach tensor governs the transverse-to-conformal variation of $\mathcal{J}$.
\end{remark}

In dimension four, the Chern--Gauss--Bonnet formula reads
\begin{equation}\label{eq:CGB-4}
	8 \pi^2 \chi(M^4) = \int_{M^4} \left( Q_g + \frac{1}{4} |W_g|_g^2 \right) dv_g .
\end{equation}
Since the Euler characteristic is a topological invariant, this yields a uniform bound on the total $Q$-curvature in terms of the Weyl energy.

Together with the curvature decomposition, this formula gives a lower bound for the full curvature energy of every metric, without assuming Bach-flatness or constant $Q$.

\begin{proposition}\label{prop:gap}
	Every metric on a closed four-manifold satisfies
	\begin{equation}\label{eq:topological-energy-identity}
		\int_M |\Rm|^2 dv_g = 32 \pi^2 \chi(M) + 4 \int_M |E|^2 dv_g.
	\end{equation}
	If $M$ is simply connected, then
	\begin{equation*}
		\int_M |\Rm|^2 dv_g \ge 64 \pi^2.
	\end{equation*}
	Equality holds precisely when $E = 0$ and $b_2(M) = 0$. In particular this lower bound applies to both $J$-Einstein and $J$-flat metrics.
\end{proposition}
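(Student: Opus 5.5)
The plan is to combine the orthogonal decomposition \Cref{eq:Rm-norm} with the Chern--Gauss--Bonnet formula \Cref{eq:CGB-4}. Together they eliminate the Weyl energy in favour of the Euler characteristic. The scalar-curvature terms should cancel exactly, which is what produces the clean identity \Cref{eq:topological-energy-identity}.

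\textbf{Step 1: the pointwise decomposition.} For $n=4$, \Cref{eq:Rm-norm} reads
\begin{equation*}
	|\Rm|^2 = |W|^2 + 2|E|^2 + \frac{1}{6}R^2 .
\end{equation*}

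\textbf{Step 2: the total $Q$-curvature.} Integrate \Cref{eq:Q-dim4-intro} over the closed manifold. The term $\Delta R$ integrates to zero. Substituting $|\Ric|^2 = |E|^2 + \frac14 R^2$ gives
\begin{equation*}
	\int_M Q_g\, dv_g = \int_M \left( -\frac12 |E|^2 - \frac18 R^2 + \frac16 R^2 \right) dv_g = \int_M \left( -\frac12 |E|^2 + \frac1{24} R^2 \right) dv_g .
\end{equation*}

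\textbf{Step 3: the Weyl energy.} Solving \Cref{eq:CGB-4} for the Weyl energy yields
\begin{equation*}
	\int_M |W|^2\, dv_g = 32\pi^2\chi(M) - 4\int_M Q_g\, dv_g = 32\pi^2\chi(M) + 2\int_M |E|^2\, dv_g - \frac16 \int_M R^2\, dv_g .
\end{equation*}

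\textbf{Step 4: combine.} Integrate Step 1 and substitute Step 3. The $R^2$ terms cancel and the $|E|^2$ terms add up to $4\int_M |E|^2\, dv_g$. This is exactly \Cref{eq:topological-energy-identity}. No Bach-flatness or constant-$Q$ assumption enters, so the identity holds for every metric.

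\textbf{Step 5: the simply connected case.} If $M$ is simply connected, then $b_1 = 0$, and Poincar\'e duality (with orientability, which follows from simple connectivity) gives $b_3 = b_1 = 0$. Hence
\begin{equation*}
	\chi(M) = 2 + b_2(M) \ge 2 .
\end{equation*}
Since $\int_M |E|^2\, dv_g \ge 0$, the identity gives $\int_M |\Rm|^2\, dv_g \ge 64\pi^2$. Equality forces both nonnegative excess terms to vanish, namely $32\pi^2 b_2(M) = 0$ and $4\int_M |E|^2\, dv_g = 0$, i.e.\ $b_2(M) = 0$ and $E \equiv 0$. Conversely, these two conditions give equality. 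The final sentence of the statement is then immediate, since the bound holds for all metrics.

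\textbf{Expected difficulty.} The only real risk is bookkeeping of normalizations. With the full tensor norm used in this paper, I would double-check that the coefficient $\frac14$ on $|W|^2$ in \Cref{eq:CGB-4} matches the coefficients in \Cref{eq:Rm-norm}. A quick sanity check is the round $S^4$: there $W=0$ and $E=0$, $|\Rm|^2 = 24$, and the volume is $\frac{8\pi^2}{3}$, giving $64\pi^2 = 32\pi^2 \cdot 2$, as the identity predicts.
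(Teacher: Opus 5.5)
Your proposal is correct and follows the paper's proof: you rewrite the total $Q$-curvature as $-\frac12\int_M |E|^2\,dv_g + \frac1{24}\int_M R^2\,dv_g$, subtract four times the Chern--Gauss--Bonnet formula from the integrated decomposition of $|\Rm|^2$ so the Weyl and $R^2$ terms cancel, and then use $\chi(M) = 2 + b_2(M)$ for simply connected $M$. Your intermediate coefficients and the round $S^4$ normalization check are consistent with the paper's conventions, and you spell out the steps the paper leaves implicit.
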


\begin{proof}
	In dimension four,
	\begin{equation*}
		\mathcal J(g) = -\frac{1}{2} \int_M |E|^2 dv_g + \frac{1}{24} \int_M R^2 dv_g.
	\end{equation*}
	Multiplying \Cref{eq:CGB-4} by four and subtracting the result from the decomposition of $|\Rm|^2$ gives \Cref{eq:topological-energy-identity}. A simply connected closed four-manifold is orientable and has $\chi(M) = 2+b_2(M)$. Both remaining terms are nonnegative, proving the bound and its equality statement.
\end{proof}

\begin{proposition}\label{prop:conformal-invariance}
	The total $Q$-curvature $\mathcal{J}(g)$ is conformally invariant in dimension four, if $\tilde g = e^{2u} g$, then $\mathcal{J}(\tilde g) = \mathcal{J}(g)$.
\end{proposition}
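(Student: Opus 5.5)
The quickest route goes through the Chern--Gauss--Bonnet formula \Cref{eq:CGB-4}, which expresses $\mathcal J(g)$ as $8\pi^2\chi(M) - \frac14\int_M |W_g|_g^2\,dv_g$. The Euler characteristic is a topological invariant, so it suffices to show that the Weyl energy $\int_M |W|^2 dv_g$ is conformally invariant in dimension four.

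For the Weyl energy I would use the standard conformal transformation law for the $(1,3)$-Weyl tensor. It is pointwise invariant, $\tilde W^{\ i}_{jkl} = W^{\ i}_{jkl}$, because the trace parts of the curvature decomposition \Cref{eq:Rm-decomp} absorb all of the change under $\tilde g = e^{2u}g$. The $(0,4)$-tensor therefore scales as $\tilde W = e^{2u}W$. Computing the full norm with four inverse metrics gives $|\tilde W|_{\tilde g}^2 = e^{-8u}e^{4u}|W|_g^2 = e^{-4u}|W|_g^2$. Since $dv_{\tilde g} = e^{4u}dv_g$ when $n=4$, the integrand $|W|^2dv$ is pointwise invariant. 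Together with \Cref{eq:CGB-4}, this gives $\mathcal J(\tilde g)=\mathcal J(g)$.

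As a self-contained check that avoids citing Gauss--Bonnet, I would also give a variational argument. Set $g_s = e^{2su}g$. By \Cref{prop:first-variation} with $h = 2u\,g_s$,
\begin{equation*}
\frac{d}{ds}\mathcal J(g_s) = \int_M \left\langle -2J_{g_s} + \tfrac12 Q_{g_s}g_s, 2u g_s\right\rangle dv_{g_s} = \int_M 2u\left(-2Q_{g_s} + 2Q_{g_s}\right)dv_{g_s} = 0.
\end{equation*}
Here I use $\tr_g J = Q$ from \Cref{prop:trace-J} and $\tr_g g = 4$. Integrating from $s=0$ to $s=1$ then yields the claim.

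The only point requiring care is the scaling count for $|W|^2$ in the first argument, or equivalently the cancellation of the trace coefficients in the second. Both are routine, so I expect no real obstacle.
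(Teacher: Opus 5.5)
Your proposal is correct, and both of your arguments take a different route from the paper's. The paper uses the conformal transformation law $Q_{\tilde g} = e^{-4u}(P_g u + Q_g)$ for the Paneitz operator. Since $dv_{\tilde g} = e^{4u}dv_g$, this gives $\mathcal J(\tilde g) = \int_M (P_g u + Q_g)\,dv_g$, and the $u$-term vanishes by self-adjointness of $P_g$ and $P_g 1 = 0$.

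Your Chern--Gauss--Bonnet route replaces that fourth-order transformation law with \Cref{eq:CGB-4} and the elementary pointwise invariance of $|W|^2\,dv$. It is just as short and shows conceptually why the invariance holds: $\mathcal J$ differs from a topological invariant by a conformal invariant. The price is that it rests on a deep global theorem.

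Your variational route is the most self-contained relative to the paper, since it uses only \Cref{prop:first-variation} and $\tr_g J_g = Q_g$. It also yields more: in dimension $n$ the same computation gives $\frac{d}{ds}\mathcal J(g_s) = (n-4)\int_M u\,Q_{g_s}\,dv_{g_s}$, so dimension four is exactly where the conformal direction drops out.

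This second argument is really the infinitesimal form of the paper's. Linearizing the paper's transformation law at $u=0$ gives $\Gamma_g(2fg) = -4fQ_g + P_g f$. Integrating, and using $\int_M P_g f\,dv_g = 0$, gives exactly $\int_M f\,\tr_g J_g\,dv_g = \int_M f\,Q_g\,dv_g$. So the trace identity you rely on, which the paper takes from the Lin--Yuan decomposition, carries the same information as self-adjointness of $P_g$ together with $P_g 1 = 0$.
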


\begin{proof}
	Under a conformal change $\tilde g = e^{2u}g$ the $Q$-curvature transforms according to
	\begin{equation*}
		Q_{\tilde g} = \left(P_g u + Q_g \right) e^{-4u}, \qquad dv_{\tilde g} = e^{4u} dv_g,
	\end{equation*}
	where $P_g$ is the Paneitz operator, which is formally self-adjoint and satisfies $P_g 1 = 0$. Therefore
	\begin{equation*}
		\mathcal{J} (\tilde g) = \int_M (P_gu + Q_g) dv_g = \int_M u P_g1 dv_g + \mathcal{J} (g) = \mathcal{J} (g).
	\end{equation*}
\end{proof}

\begin{corollary}\label{cor:Weyl-monotone}
	Let $g(t)$ be a solution of the $J$-flow on a closed four-manifold. Then the Weyl energy
	\begin{equation*}
		\mathcal{W} (g) := \int_{M^4} |W_g|_g^2 dv_g
	\end{equation*}
	is non-increasing in time.
\end{corollary}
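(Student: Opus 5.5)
The plan is to read the statement off the Chern--Gauss--Bonnet formula \Cref{eq:CGB-4} together with the monotonicity of total $Q$-curvature in \Cref{thm:Q-evolution}. For a smooth solution $g(t)$ on a closed four-manifold, \Cref{eq:CGB-4} holds at every time, which gives
\begin{equation*}
	\mathcal W(g(t)) = 4\left( 8\pi^2 \chi(M) - \mathcal J(g(t)) \right).
\end{equation*}
The Euler characteristic is a topological invariant, so it does not depend on $t$. Hence
\begin{equation*}
	\frac{d}{dt}\mathcal W(g(t)) = -4 \frac{d}{dt}\mathcal J(g(t)).
\end{equation*}

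Next I would use \Cref{eq:Q-evolution-rewritten} with $n=4$:
\begin{equation*}
	\frac{d}{dt}\mathcal J = 4\int_M |\mathring J|^2 dv_g \ge 0.
\end{equation*}
By \Cref{rmk:traceless-gradient}, $\mathring J = -\frac12 B$, so this equals $\|B\|_2^2$. Substituting gives
\begin{equation*}
	\frac{d}{dt}\mathcal W = -4\|B\|_2^2 \le 0,
\end{equation*}
and therefore $\mathcal W$ is non-increasing.

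As a cross-check, I would also derive the same identity from $\mathrm{grad}\,\mathcal W = -4B$. Pairing with $\partial_t g = B - \frac12 Qg$ and using that $B$ is traceless gives
\begin{equation*}
	\frac{d}{dt}\mathcal W = \int_M \langle -4B, B - \tfrac12 Qg\rangle dv_g = -4\|B\|_2^2.
\end{equation*}
This agrees with the computation above.

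The only point needing care is the justification for differentiating under the integral. This is immediate because the flow is smooth on a closed manifold. Beyond that there is no real obstacle.
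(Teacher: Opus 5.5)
Your proposal is correct and is the argument the paper intends; the paper gives no separate proof and places the corollary directly after the Chern--Gauss--Bonnet formula. Writing $\mathcal W(g(t)) = 4\bigl(8\pi^2\chi(M) - \mathcal J(g(t))\bigr)$ and using the four-dimensional monotonicity $\frac{d}{dt}\mathcal J = \|B\|_2^2 \ge 0$ gives $\frac{d}{dt}\mathcal W = -4\|B\|_2^2 \le 0$, and your cross-check via $\mathrm{grad}\,\mathcal W = -4B$ agrees with this.
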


\subsection{Yamabe constant along the flow}
The four-dimensional monotonicity also controls the Yamabe constant when the initial total $Q$-curvature is positive. Recall the definition of Yamabe constant
\begin{equation}\label{eq:Yamabe-constant}
	Y(M,[g]) = \inf_{u \in C^\infty(M) \setminus \{0\}} \frac{\int_M \left( 6 |\nabla u|_g^2 + R_g u^2 \right) dv_g}{\left( \int_M |u|^4 dv_g \right)^{\frac{1}{2}}}.
\end{equation}

\begin{proposition}\label{prop:Yamabe-propagation}
	Let $(M^4, g_0)$ be a closed connected Riemannian four-manifold, and let $g(t)$ be a smooth solution of the $J$-flow on $M^4 \times [0,T)$ with $g(0) = g_0$. If $Y(M, [g_0])> 0$ and $\mathcal J(g_0) > 0$, then
	\begin{equation}\label{eq:Yamabe-Q-lower-bound}
		Y(M,[g(t)]) \ge \sqrt{24 \mathcal J(g(t))} \ge \sqrt{24 \mathcal J(g_0)} > 0
	\end{equation}
	for $0 \le t < T$. No curvature bound or uniform Sobolev inequality along the solution is required for this conclusion.
\end{proposition}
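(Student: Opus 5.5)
The proof splits into a pointwise-in-time inequality $Y(M,[g])^2 \ge 24\,\mathcal J(g)$, valid for any metric with positive Yamabe constant, and a continuity argument along the flow.

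\emph{The static inequality.} This is the classical estimate of Gursky type. By \Cref{prop:conformal-invariance}, both $\mathcal J$ and $Y$ depend only on the conformal class. Since $Y(M,[g])>0$, the Yamabe problem (Aubin, Schoen) provides a metric $\tilde g \in [g]$ of constant scalar curvature $\tilde R > 0$ that realises the infimum in \Cref{eq:Yamabe-constant}. Testing the quotient with $u\equiv 1$ gives
\begin{equation*}
	Y(M,[g]) = \tilde R \, \Vol_{\tilde g}(M)^{1/2}.
\end{equation*}
Since $\tilde R$ is constant, \Cref{eq:Q-dim4-intro} gives
\begin{equation*}
	\mathcal J(\tilde g) = \int_M \left(\frac{1}{24}\tilde R^2 - \frac12 |\tilde E|^2\right) dv_{\tilde g} \le \frac{1}{24} \tilde R^2 \Vol_{\tilde g}(M) = \frac{1}{24} Y(M,[g])^2 .
\end{equation*}
Hence $Y(M,[g]) \ge \sqrt{24\,\mathcal J(g)}$ whenever $Y>0$ and $\mathcal J(g) \ge 0$. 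This step involves no flow at all.

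\emph{Propagation along the flow.} By \Cref{thm:Q-evolution} with $n=4$, $\mathcal J(g(t)) \ge \mathcal J(g_0) > 0$ for all $t\in[0,T)$. It remains to show that $Y(M,[g(t)])$ stays positive, which is the main obstacle. Set
\begin{equation*}
	t^* = \sup\{ s < T : Y(M,[g(t)])>0 \text{ for all } t \in [0,s]\}.
\end{equation*}
Then $t^*>0$, because the Yamabe constant is continuous in $g$ in the $C^2$ topology. For instance, it is the infimum over $u$ of a quotient whose coefficients depend continuously on the metric, so it is upper semicontinuous; lower semicontinuity follows from uniform equivalence of nearby metrics, giving bounds of the form $Y(g') \ge (1-\epsilon)Y(g) - \epsilon$ on a compact time interval.

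Suppose $t^*<T$. On $[0,t^*)$ the static inequality gives $Y(M,[g(t)]) \ge \sqrt{24\,\mathcal J(g_0)}$. The flow is smooth on $[0,t^*]$, so $t \mapsto Y(M,[g(t)])$ is continuous there, and therefore $Y(M,[g(t^*)]) \ge \sqrt{24\,\mathcal J(g_0)} > 0$. Continuity then extends positivity beyond $t^*$, contradicting the definition of $t^*$. Hence $t^*=T$, and the estimate \Cref{eq:Yamabe-Q-lower-bound} holds on all of $[0,T)$.

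This argument uses only the smoothness of the flow on compact subintervals of $[0,T)$. It needs no curvature bound and no uniform Sobolev inequality, as the statement claims.

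The one delicate point is the continuity of $Y$ along a smooth family. I would handle it by comparing Rayleigh quotients for $g(t)$ and $g(s)$, using $C^0$ closeness of the metrics and of the volume forms together with $C^0$ closeness of the scalar curvatures $R_{g(t)}$ and $R_{g(s)}$. The term $R\,u^2$ is controlled by $\|u\|_{L^4}^2$ times a volume factor via H\"older, so the difference in quotients is a small multiple of the quotient plus a small constant. That yields two-sided continuity.
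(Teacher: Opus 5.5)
Your proof is correct. Its static half (a Yamabe metric $\tilde g$ with constant $\tilde R>0$, so that $\mathcal J(g)\le \frac{1}{24}\tilde R^2\Vol_{\tilde g}(M)=\frac{1}{24}Y(M,[g])^2$) is exactly the paper's final step. The propagation of $Y>0$ is done differently.

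The paper works with the first eigenvalue $\lambda_1$ of the conformal Laplacian $-6\Delta_{g(t)}+R_{g(t)}$. This eigenvalue is positive at $t=0$ and continuous in $t$. At a first time $t_*$ with $\lambda_1=0$, the positive eigenfunction $v$ makes $v^2g(t_*)$ scalar-flat. Conformal invariance then gives $\mathcal J(g(t_*))=-\frac12\int_M|\Ric_{v^2g(t_*)}|^2\,dv\le 0$, which contradicts monotonicity. Finally, $\lambda_1>0$ yields a positive-scalar-curvature representative and hence $Y>0$ at each time.

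You instead use continuity of $t\mapsto Y(M,[g(t)])$, and let the quantitative bound $Y\ge\sqrt{24\,\mathcal J(g_0)}$ itself act as the barrier that keeps $Y$ away from zero.

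\emph{What your route buys.} It is more economical. The single static inequality does double duty, and you avoid the detour through $\lambda_1$ and the correspondence between the signs of $\lambda_1$ and $Y$.

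\emph{What it costs.} You need continuity of the Yamabe constant under $C^2$ perturbations of the metric. Your sketch of this is adequate:
\begin{itemize}
\item upper semicontinuity, because $Y$ is an infimum of quotients that are continuous in $g$;
\item lower semicontinuity, from uniform metric equivalence and $C^0$-closeness of the scalar curvatures, with H\"older's inequality absorbing $\int_M R\,u^2$ into $\|u\|_4^2$.
\end{itemize}

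\emph{What the paper's route buys.} Its positivity step needs only continuity of the simple first eigenvalue of a second-order elliptic operator, which is more routine. It also needs the static fact that a scalar-flat conformal representative forces $\mathcal J\le 0$. It does not use the Yamabe theorem at all until the final inequality.
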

\begin{proof}
	By \Cref{thm:Q-evolution}, $\mathcal J(g(t)) \ge \mathcal J(g_0) > 0$ throughout the smooth existence interval. We first show that the first eigenvalue of the conformal Laplacian $-6 \Delta_{g(t)} + R_{g(t)}$ remains positive. At $t = 0$, the Yamabe inequality and H\"older's inequality give
	\begin{equation*}
		\lambda_1( -6 \Delta_{g_0} + R_{g_0}) \ge \frac{ Y(M,[g_0]) }{ \Vol_{g_0}(M)^{\frac{1}{2}}} > 0.
	\end{equation*}
	The first eigenvalue is continuous on every compact time interval contained in $[0,T)$. If positivity failed, there would be a first time $t_*<T$ at which this eigenvalue is zero. Its first eigenfunction $v$ is smooth and strictly positive, since $M$ is connected. Therefore, we have 
	\begin{equation*}
		(-6 \Delta_{g(t_*)} + R_{g(t_*)}) v = 0,
	\end{equation*}
	and using $v$ as a conformal factor, we obtain
	\begin{equation*}
		R_{v^2 g(t_*)} = v^{-3}(-6 \Delta_{g(t_*)} + R_{g(t_*)}) v = 0.
	\end{equation*}
	Conformal invariance from \Cref{prop:conformal-invariance} and the four-dimensional formula for $Q$ then imply
	\begin{equation*}
		\mathcal J(g(t_*)) = \mathcal J(v^2 g (t_*)) = -\frac{1}{2} \int_M |\Ric_{v^2 g (t_*)}|^2 dv_{v^2 g(t_*)} \le 0,
	\end{equation*}
	contradicting $\mathcal J(g(t_*)) > 0$. Hence the first eigenvalue remains positive. At each time its positive first eigenfunction produces a conformal representative with everywhere positive scalar curvature. On that fixed closed representative, the positive minimum of the scalar curvature and the ordinary Sobolev inequality make the Yamabe quotient positive. By conformal invariance of the Yamabe constant, $Y(M,[g(t)]) > 0$. This argument is applied at each time separately and assumes no uniform control of the auxiliary representatives.

	Fix $t < T$. The Yamabe theorem~\cite{Schoen1984} supplies a unit-volume metric $\widehat g$ in $[g(t)]$ with constant scalar curvature $R_{\widehat g} = Y(M,[g(t)])$. No continuous choice of these metrics is needed. Integrating the Laplacian term in $Q$ gives
	\begin{equation*}
		\begin{split}
			\mathcal J(g(t)) 
			= \mathcal J(\widehat g) 
			= \frac{1}{24} \int_M R_{\widehat g}^2 dv_{\widehat g} - \frac{1}{2} \int_M \left|\Ric_{\widehat g} - \frac{1}{4} R_{\widehat g}\widehat g \right|^2 dv_{\widehat g}
			\le \frac{1}{24} Y(M,[g(t)])^2.
		\end{split}
	\end{equation*}
	Taking square roots and using the monotonicity of $\mathcal J$ proves \Cref{eq:Yamabe-Q-lower-bound}.
\end{proof}

\begin{remark}\label{rmk:Yamabe-Sobolev-scope}
	The lower bound in \Cref{eq:Yamabe-Q-lower-bound} increases with $\mathcal J$, but it does not establish monotonicity of the Yamabe constant itself. It gives, for every smooth $u$,
	\begin{equation*}
		\|u\|_{4,g(t)}^2 \le \frac{1}{\sqrt{24 \mathcal J(g_0)}} \left( 6 \|\nabla u \|_{2,g(t)}^2 + \int_M (R_{g(t)})_+ u^2 dv_{g(t)}\right),
	\end{equation*}
	where $(R_g)_+=\max\{R_g,0\}$. On the original metrics, this estimate alone does not bound the last integral by a time-independent multiple of $\|u\|_2^2$, a uniform upper bound for $(R_{g(t)})_+$ would suffice. The rescaled metrics used in the proof of \Cref{theorem:D} admit such a bound, as explained in \Cref{rmk:Yamabe-curvature-scale}. Moreover, positivity of $\mathcal J(g_0)$ forces the maximal original flow to have finite lifetime, since
	\begin{equation*}
		\Vol_{g(t)}(M) \le \Vol_{g_0}(M)-t\mathcal J(g_0), \qquad T_{\max} \le \frac{\Vol_{g_0}(M)}{\mathcal J(g_0)}.
	\end{equation*}
	Thus \Cref{prop:Yamabe-propagation} controls the conformal classes up to the smooth existence endpoint, it does not assert long-time existence.
\end{remark}

\begin{remark}\label{rmk:Yamabe-curvature-scale}
	Suppose that a four-dimensional solution satisfies $Y(M, [g(t)]) \ge Y_*>0$ throughout its maximal existence interval. This lower bound supplies the Sobolev inequality needed in the rescaling argument of \Cref{theorem:D}. Indeed, the rescaled solutions $\hat g_i$ in that proof satisfy $|\Rm_{\hat g_i(s)}|\le 1$ for $0 \le s \le1$. Scale invariance of the Yamabe constant and \Cref{eq:Rm-norm} give
	\begin{equation*}
		Y(M,[\hat g_i(1)])=Y(M,[g(t_i)]) \ge Y_*, \qquad |R_{\hat g_i(1)}| \le \sqrt{6}.
	\end{equation*}
	Consequently, the defining Yamabe inequality implies
	\begin{equation*}
		\begin{split}
			\|u\|_{4, \hat g_i(1)}^2 \le \frac{1}{Y_*} \left(6 \|\nabla u\|_{2, \hat g_i(1)}^2+ \sqrt{6} \|u\|_{2, \hat g_i(1)}^2 \right) \le \frac{6}{Y_*} \left( \|\nabla u\|_{2, \hat g_i(1)}^2 + \|u\|_{2, \hat g_i(1)}^2 \right).
		\end{split}
	\end{equation*}
	Thus \Cref{lem:integral-pointwise} applies with $C_S = \frac{6}{Y_*}$, and the proof of \Cref{theorem:D} proceeds with the corresponding small-energy threshold, provided the curvature-energy smallness assumption is retained throughout the maximal interval. The curvature bound used here follows from the choice of the first maximum times and requires no prior global curvature estimate. In \Cref{theorem:E}, the assumed bound $|\Rm| \le K$ gives directly
	\begin{equation*}
		\|u\|_{4,g(t)}^2 \le \frac{1}{Y_*} \left(6 \|\nabla u \|_{2,g(t)}^2 + \sqrt{6} K \|u\|_{2,g(t)}^2 \right),
	\end{equation*}
	so a uniform positive Yamabe lower bound also supplies its Sobolev hypothesis, for example with $r_0 = 1$ and $C_S = \frac{6 + \sqrt{6} K}{Y_*}$.

	There is, however, a restriction on the possible energy thresholds. Taking $u=1$ in the Yamabe quotient and using \Cref{eq:Rm-norm} yields
	\begin{equation*}
		Y_* \le Y(M,[g(t)]) \le \frac{\int_M R_{g(t)} dv_{g(t)}}{\Vol_{g(t)}(M)^{\frac{1}{2}}} \le \| R_{g(t)} \|_{2,g(t)} \le \sqrt{6} \|\Rm_{g(t)}\|_{2,g(t)}, 
	\end{equation*}
	which implies
	\begin{equation*}
		\int_M |\Rm_{g(t)}|^2dv_{g(t)} \ge \frac{Y_*^2}{6}.
	\end{equation*}
	A strict small-energy assumption with threshold at most $\frac{Y_*^2}{6}$ would therefore be empty. The substitution above explains the analytic role of the Yamabe lower bound, but does not by itself give a stronger small-energy theorem. The Sobolev formulations in \Cref{theorem:D,theorem:E} also allow Yamabe constant zero, including flat metrics on tori, and are retained for this reason.
\end{remark}

The monotonicity of total $Q$-curvature does not extend to dimensions greater than four, even arbitrarily close to a flat metric. The following calculation uses only the curvature formulas established above.

\begin{proposition}\label{prop:total-Q-high-dimensional-decrease}
	Let $(M^n,\bar g)$ be a closed connected flat manifold with $n > 4$, and let $\varphi$ be a nonzero smooth function satisfying $\int_M \varphi dv_{\bar g} = 0$. Set
	\begin{equation*}
		g_0^\varepsilon = \left( \frac{\Vol_{\bar g}(M)}{ \Vol_{e^{2 \varepsilon \varphi}\bar g}(M)} \right)^{\frac{2}{n}} e^{2 \varepsilon \varphi}\bar g,
	\end{equation*}
	and let $g^\varepsilon(t)$ be the smooth $J$-flow with initial metric $g_0^\varepsilon$, provided by \Cref{theorem:A}. Then
	\begin{equation}\label{eq:total-Q-high-dimensional-decrease}
		\left. \frac{d}{dt} \mathcal J(g^\varepsilon(t)) \right|_{t=0} = -\frac{3(n-4)}{4(n-1)} \varepsilon^2 \int_M ( \Delta_{\bar g}^2 \varphi )^2 dv_{\bar g} + O(\varepsilon^3).
	\end{equation}
	In particular, the derivative is strictly negative for every sufficiently small nonzero $\varepsilon$.
\end{proposition}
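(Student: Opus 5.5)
The plan is to evaluate the right-hand side of \Cref{eq:Q-evolution-rewritten} at $t=0$, that is, at the metric $g_0^\varepsilon$, by expanding $J_{g_0^\varepsilon}$ and $Q_{g_0^\varepsilon}$ to first order in $\varepsilon$ around the flat metric $\bar g$. By \Cref{thm:Q-evolution},
\begin{equation*}
	\left. \frac{d}{dt} \mathcal J(g^\varepsilon(t)) \right|_{t=0} = 4 \int_M |\mathring J_{g_0^\varepsilon}|^2 dv_{g_0^\varepsilon} - \frac{n-4}{n} \int_M Q_{g_0^\varepsilon}^2 dv_{g_0^\varepsilon}.
\end{equation*}
Since $J_{\bar g}=0$ and $Q_{\bar g}=0$, both integrands are quadratic in the first-order variations. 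The cubic error therefore comes from the $O(\varepsilon^2)$ Taylor remainders of $J$ and $Q$ multiplied by their $O(\varepsilon)$ linear parts, together with the $O(\varepsilon)$ change of the volume form and metric contractions. This gives the $O(\varepsilon^3)$ term; it is uniform because everything is a smooth function of $\varepsilon$ on a fixed closed manifold.

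First I dispose of the volume normalization. Because $\int_M \varphi \, dv_{\bar g}=0$, the normalizing constant is $c_\varepsilon = 1+O(\varepsilon^2)$. Since $J_{cg}=c^{-1}J_g$ and $Q_{cg}=c^{-2}Q_g$, it only affects quantities of order $\varepsilon^3$, so I may replace $g_0^\varepsilon$ by $e^{2\varepsilon\varphi}\bar g$. Its first variation is $h=2\varphi\bar g$, with
\begin{equation*}
	\tr h = 2n\varphi, \qquad \delta h = -2\,d\varphi, \qquad \delta^2 h = 2\Delta\varphi .
\end{equation*}
At a flat metric the linearization of $-2J$ consists exactly of the fourth-order part in \Cref{lem:ungauged-linearization}. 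Every lower-order term carries at least one curvature factor, since $J$ is at least linear in curvature with quadratic lower-order terms. Likewise $DQ_{\bar g}\cdot h = A_n\Delta(-\Delta\tr h+\delta^2h)$.

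Substituting gives $DQ\cdot h = A_n(2-2n)\Delta^2\varphi = \Delta^2\varphi$. For the traceless part, the $g_{ij}$-terms drop out and the coefficient of $\nabla_i\nabla_j\Delta\varphi$ in $D(-2J)\cdot h$ is
\begin{equation*}
	\frac{4}{(n-2)^2} - 2n\alpha_n + 2\beta_n = -\frac{2}{n-2} - 2(n-1)\beta_n = -\frac{n-4}{2(n-1)},
\end{equation*}
using $\alpha_n = (n-2)^{-2}+\beta_n$. Hence
\begin{equation*}
	\mathring J_{g_0^\varepsilon} = \varepsilon\,\frac{n-4}{4(n-1)}\Big(\nabla^2\Delta\varphi - \tfrac1n(\Delta^2\varphi)\bar g\Big)+O(\varepsilon^2), \qquad Q_{g_0^\varepsilon} = \varepsilon\,\Delta^2\varphi+O(\varepsilon^2).
\end{equation*}

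Next I use flatness. Integrating by parts twice with commuting derivatives gives $\int_M|\nabla^2 u|^2 = \int_M(\Delta u)^2$ for $u=\Delta\varphi$, so
\begin{equation*}
	\int_M\Big|\nabla^2\Delta\varphi-\tfrac1n(\Delta^2\varphi)\bar g\Big|^2 = \frac{n-1}{n}\int_M(\Delta^2\varphi)^2 .
\end{equation*}
The $\varepsilon^2$ coefficient is then
\begin{equation*}
	4\cdot\frac{(n-4)^2}{16(n-1)^2}\cdot\frac{n-1}{n} - \frac{n-4}{n} = \frac{n-4}{n}\cdot\frac{n-4-4(n-1)}{4(n-1)} = -\frac{3(n-4)}{4(n-1)},
\end{equation*}
which is \Cref{eq:total-Q-high-dimensional-decrease}. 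Strict negativity for small $\varepsilon\neq0$ follows because $\Delta^2\varphi\not\equiv0$. Indeed, $\Delta^2\varphi=0$ on a closed manifold forces $\Delta\varphi$ and then $\varphi$ to be constant, and $\int_M\varphi=0$ would give $\varphi=0$.

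The main obstacle is justifying that the principal-part formula of \Cref{lem:ungauged-linearization} is the full linearization at $\bar g$, with no surviving lower-order contributions. I would check this against the explicit formula \Cref{eq:J-full}: every term there is either a second derivative of curvature or quadratic in curvature. The quadratic terms have vanishing first variation at a flat metric, and in the derivative terms the variation of the connection multiplies the background curvature, which is zero. Only the terms $\nabla^2(DR\cdot h)$, $\Delta(D\Ric\cdot h)$ and $g\,\Delta(DR\cdot h)$ survive, and there the commutators of covariant derivatives vanish. Arithmetic slips in the coefficients are the other risk, and the cross-check $\tr D J = DQ$ on the trace part provides a consistency test.
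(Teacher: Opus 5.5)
Your proposal is correct and is essentially the paper's argument. You linearize $J$ and $Q$ at the flat metric in the conformal direction $2\varphi\bar g$ (the volume normalization only enters at order $\varepsilon^2$), use $\int_M|\nabla^2\Delta\varphi|^2\,dv_{\bar g}=\int_M(\Delta^2\varphi)^2\,dv_{\bar g}$, and Taylor-expand the evolution formula for $\mathcal J$; the only difference is bookkeeping, since you work with the trace-free form \Cref{eq:Q-evolution-rewritten} and need only the Hessian coefficient $\frac{n-4}{4(n-1)}$ and $DQ_{\bar g}[2\varphi\bar g]=\Delta^2\varphi$, whereas the paper uses \Cref{eq:Q-evolution} with the full linearization of $J$, including the pure-trace coefficient $\frac{3}{4(n-1)}$.
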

\begin{proof}
	All derivatives and norms in this proof are taken with respect to $\bar g$. $\int_M \varphi dv_{\bar g} = 0$ implies that the volume has zero first derivative at $\varepsilon = 0$. Consequently $\left.\partial_\varepsilon g_0^\varepsilon \right|_{\varepsilon = 0} = 2 \varphi \bar g$. The conformal variations of Ricci and scalar curvature at the flat background are
	\begin{equation*}
		D\Ric_{\bar g}[2 \varphi \bar g] = -(n-2) \nabla^2 \varphi - (\Delta \varphi) \bar g, \qquad DR_{\bar g}[2 \varphi\bar g] = -2(n-1) \Delta \varphi.
	\end{equation*}
	The quadratic curvature terms in \Cref{eq:intro-Q,eq:J-full} have zero linearization at $\bar g$, as do variations of the differential operators acting on its vanishing curvature. Substitution therefore gives
	\begin{equation*}
		D Q_{\bar g}[2 \varphi \bar g] = \Delta^2\varphi, \qquad
		D J_{\bar g}[2 \varphi \bar g] = \frac{n-4}{4(n-1)} \nabla^2 \Delta \varphi + \frac{3}{4(n-1)}(\Delta^2 \varphi)\bar g.
	\end{equation*}
	Flatness and integration by parts give
	\begin{equation*}
		\int_M |\nabla^2 \Delta \varphi|^2 dv_{\bar g} = \int_M (\Delta^2 \varphi)^2 dv_{\bar g}.
	\end{equation*}
	Expanding the square thus yields
	\begin{equation*}
		\begin{split}
			\int_M \left| DJ_{\bar g}[2 \varphi \bar g] \right|^2 dv_{\bar g} 
			= \frac{(n-4)^2+6(n-4)+9n}{16(n-1)^2} \int_M (\Delta^2 \varphi)^2 dv_{\bar g} 
			= \frac{n+8}{16(n-1)} \int_M (\Delta^2 \varphi)^2 dv_{\bar g}.
		\end{split}
	\end{equation*}
	Since $J_{\bar g} = Q_{\bar g} = 0$, Taylor expansion of \Cref{eq:Q-evolution} at $g_0^\varepsilon$ gives
	\begin{equation*}
		\begin{split}
			\left.\frac{d}{dt}\mathcal J(g^\varepsilon(t))\right|_{t=0}
			& = \varepsilon^2 \left(4 \int_M \left| D J_{\bar g}[2 \varphi \bar g] \right|^2 dv_{\bar g} - \int_M \left| D Q_{\bar g}[2 \varphi \bar g] \right|^2 dv_{\bar g} \right) + O(\varepsilon^3) \\
			& = \left(\frac{n+8}{4(n-1)} - 1 \right) \varepsilon^2 \int_M( \Delta^2 \varphi)^2 dv_{\bar g} + O(\varepsilon^3).
		\end{split}
	\end{equation*}
	Variations of the contractions and volume density contribute only to the remainder, since the curvature tensors being squared vanish at $\varepsilon = 0$. This proves \Cref{eq:total-Q-high-dimensional-decrease}. Finally, $\Delta^2 \varphi = 0$ would imply $\int_M (\Delta \varphi)^2 dv_{\bar g} = 0$, so $\varphi$ would be constant and hence zero. The leading integral is therefore strictly positive.
\end{proof}

\begin{remark}\label{rmk:normalized-Yamabe-and-Q}
	Both conclusions hold for the volume-normalized equation as well. In dimension $n\ge4$, this equation is
	\begin{equation*}
		\frac{\partial g}{\partial t} = -2 J_g + \frac{2}{n} \bar Q_g g, \qquad \bar Q_g = \frac{\mathcal J(g)}{\Vol_g(M)},
	\end{equation*}
	which agrees with \Cref{eq:normalized-J-flow} when $n = 4$. By \Cref{eq:first-variation-2}, its normalization term adds
	\begin{equation*}
		D\mathcal J_g \left[ \frac{2}{n} \bar Q_g g \right] = \frac{n-4}{n} \Vol_g(M) \bar Q_g^2
	\end{equation*}
	to \Cref{eq:Q-evolution}. This contribution is zero in dimension four, so the proof and conclusion of \Cref{prop:Yamabe-propagation} remain valid. For the initial metrics in \Cref{prop:total-Q-high-dimensional-decrease}, $\mathcal J(g_0^\varepsilon) = O(\varepsilon^2)$ because $Q_{\bar g} = 0$ and $\int_M \Delta^2 \varphi dv_{\bar g} = 0$. The added term is therefore $O(\varepsilon^4)$, leaving the strictly negative second-order term in \Cref{eq:total-Q-high-dimensional-decrease} unchanged.
\end{remark}

\subsection{Conditional convergence}\label{subsec:convergence}

We prove \Cref{theorem:F} by combining the monotonicity formula, the gradient inequality and time integrability of the $L^2$-norm of $Q$ to establish time integrability of the $L^2$-norm of $J$, as required by \Cref{theorem:E}. We also establish a separate result on sequential limits under a compactness assumption in \Cref{prop:compact-sequential-limits}. Recall that $g$ is $J$-flat if $J_g \equiv 0$, in dimension four, \Cref{eq:J-dim4} gives the following characterization.

\begin{proposition}\label{prop:J-flat-char}
	A Riemannian metric $(M^4, g)$ is $J$-flat if and only if it is Bach-flat and has zero $Q$-curvature.
\end{proposition}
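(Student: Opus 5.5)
The plan is to read the statement directly off the four-dimensional decomposition \Cref{eq:J-dim4}, $J_g = \frac{1}{4} Q_g g - \frac{1}{2} B_g$, and to use the fact that this is an orthogonal splitting of $J_g$ into its pure-trace and trace-free parts.

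For the reverse implication, assume $B_g = 0$ and $Q_g = 0$. Both terms on the right of \Cref{eq:J-dim4} then vanish identically, so $J_g \equiv 0$.

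For the forward implication, assume $J_g \equiv 0$ and take the trace. By \Cref{prop:trace-J}, $\tr_g J_g = Q_g$, so $Q_g = 0$; this step uses only that the Bach tensor is trace-free in dimension four, which was recorded after \Cref{eq:Bach-classical}. Substituting $Q_g = 0$ back into \Cref{eq:J-dim4} leaves $-\frac{1}{2} B_g = 0$, hence $B_g = 0$. Equivalently, one can note that the trace-free part $\mathring J_g = -\frac{1}{2} B_g$ (\Cref{rmk:traceless-gradient}) and the trace part $\frac{1}{4} Q_g g$ must each vanish.

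No step poses a real obstacle. The only point to check is that the trace-freeness of $B_g$ is being used in the form of Lin--Yuan's $\Delta_L S - \nabla^2 \tr S + \dots$ expression from \Cref{prop:J-explicit}. That expression coincides with the classical Bach tensor in dimension four, so the argument is consistent with the conventions fixed in \Cref{subsec:Bach-T}.
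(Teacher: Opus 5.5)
Your proposal is correct and matches the paper's argument: you trace the four-dimensional identity $J_g = \frac{1}{4}Q_g g - \frac{1}{2}B_g$, using that $B_g$ is trace-free, to obtain $Q_g = 0$ and hence $B_g = 0$, and the converse is immediate from the same identity.
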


\begin{proof}
	Taking the trace and using that the Bach tensor is traceless in four dimensions gives $\tr_g J_g = Q_g$. Then the result is trivial.
\end{proof}

\begin{remark}
	By \Cref{prop:J-Schur}, the definition $J = \Lambda(x) g$ is equivalent to Bach-flatness in dimension four. Requiring $\Lambda$ to be constant imposes the additional condition that $Q$ is constant. For example, on a flat four-torus any metric $g = e^{2u} g_{\mathrm{flat}}$ is Bach-flat, whereas $Q_g = e^{-4u} \Delta_{g_{\mathrm{flat}}}^2u$ is generally nonconstant. Thus the Schur conclusion cannot be applied in dimension four. The characterization of $J$-flatness in \Cref{prop:J-flat-char} does require $Q = 0$.
\end{remark}

\begin{remark}
	The round sphere is not $J$-flat because its $Q$-curvature is a positive constant. Consequently there is no simply connected closed $J$-flat four-manifold with small total curvature: Chern--Gauss--Bonnet gives $\int_M |W|^2 dv = 32 \pi^2 \chi(M) \ge 64 \pi^2$ for any simply connected $J$-flat metric. The stronger bound in \Cref{prop:gap} is a topological statement for all metrics. 
\end{remark}

Along the four-dimensional flow, \Cref{thm:Q-evolution} and \Cref{eq:J-dim4,eq:volume-evolution} give
\begin{equation}\label{eq:Q-mono-4D}
	\frac d{dt} \mathcal J(g(t)) = \|B_{g(t)}\|_2^2, \qquad \frac d{dt} \Vol_{g(t)}(M)= -\mathcal J(g(t)).
\end{equation}
We keep the metric in the $L^2$ norm time dependent.

\begin{lemma}\label{lem:compact-J-limit}
	For an immortal $J$-flow on a closed four-manifold, $\mathcal J(g(t)) \le 0$ for every $t$. If its volume is uniformly bounded above, then $\mathcal J(g(t)) \to 0$.
\end{lemma}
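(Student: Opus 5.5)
The plan is to work only with the two scalar identities in \Cref{eq:Q-mono-4D}: $\frac{d}{dt}\mathcal J = \|B\|_2^2 \ge 0$ and $\frac{d}{dt}\Vol_{g(t)}(M) = -\mathcal J(g(t))$. Set $V(t) := \Vol_{g(t)}(M) > 0$. The second identity says that $V$ is the integral of $-\mathcal J$, and the first says that $\mathcal J$ is non-decreasing. From this I expect both claims to follow by elementary ODE comparison.

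\textbf{Step 1: $\mathcal J \le 0$.} Suppose instead that $\mathcal J(g(t_0)) = a > 0$ for some $t_0 \ge 0$. By monotonicity, $\mathcal J(g(t)) \ge a$ for all $t \ge t_0$. Integrating the volume identity then gives
\begin{equation*}
V(t) \le V(t_0) - a(t - t_0)
\end{equation*}
for $t \ge t_0$. This becomes negative once $t > t_0 + V(t_0)/a$, which is impossible because the flow is immortal and $V(t)>0$ at every time. This is the same bound already recorded for $T_{\max}$ in \Cref{rmk:Yamabe-Sobolev-scope}, now started at $t_0$ instead of at $0$. Hence $\mathcal J(g(t)) \le 0$ for every $t$.

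\textbf{Step 2: $\mathcal J \to 0$ under a volume bound.} Assume $V(t) \le V_{\max}$ for all $t$. Since $\mathcal J$ is non-decreasing and bounded above by $0$, the limit $L := \lim_{t\to\infty} \mathcal J(g(t)) \le 0$ exists. If $L < 0$, then $\mathcal J(g(t)) \le L$ for all $t$, again by monotonicity. Then $V'(t) = -\mathcal J(g(t)) \ge -L = |L|$, so
\begin{equation*}
V(t) \ge V(0) + |L|\, t \to \infty,
\end{equation*}
which contradicts the volume bound. Therefore $L = 0$.

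\textbf{Main point to check.} I do not expect a real obstacle; the argument is one-dimensional. What needs care is that \Cref{eq:Q-mono-4D} holds pointwise in $t$ on the whole interval $[0,\infty)$, with $\mathcal J$ and $V$ being $C^1$ functions of $t$. This follows from smoothness of the flow on a closed manifold together with \Cref{eq:volume-evolution} and \Cref{thm:Q-evolution}. With that in place, monotonicity is used in both steps to turn information at a single time, or a limit, into a uniform bound on a half-line.
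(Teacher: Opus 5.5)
Your proof is correct and follows the paper's argument essentially verbatim. If $\mathcal J$ were ever positive, monotonicity together with $\frac{d}{dt}\Vol_{g(t)}(M) = -\mathcal J(g(t))$ would drive the volume negative in finite time; and if the increasing limit $L$ were negative, the volume would grow at least linearly, contradicting the assumed upper bound.
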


\begin{proof}
	If $\mathcal J(g(t_1)) > 0$ at any time, monotonicity gives $\Vol'(t) \le -\mathcal J(g(t_1))$ for $t \ge t_1$, which makes the volume negative in finite time. Hence $\mathcal J \le 0$. Its increasing limit $L$ exists. If the volume is bounded above and $L < 0$, then $\Vol'(t) = -\mathcal J(g(t)) \ge -L > 0$, contradicting that bound. Therefore $L = 0$ in this case.
\end{proof}

\begin{lemma}\label{lem:Bach-decay}
	Let $g(t)$ be an immortal $J$-flow on a closed four-manifold. Suppose that, for some $t_0 > 0$, $\{g(t):t \ge t_0\}$ has compact closure in the $C^\infty$ topology of positive definite metrics on the fixed manifold $M$. Then $\|B_{g(t)}\|_{C^k( \bar{g} )} \to 0$ for every $k \ge 0$ and any fixed smooth background metric $\bar{g}$.
\end{lemma}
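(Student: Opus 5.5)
The plan is to combine the monotonicity identity \Cref{eq:Q-mono-4D} with compactness. The goal is to show that $\|B_{g(t)}\|_{L^2(g(t))}^2$ is integrable in time and has a uniformly bounded time derivative. Integrability plus bounded derivative forces $\|B_{g(t)}\|_2\to0$. Compactness and interpolation then upgrade this to $C^k(\bar g)$ decay.

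\textbf{Integrability.} Compact closure of $\{g(t):t\ge t_0\}$ in $C^\infty$ gives uniform bounds above and below, relative to $\bar g$, on $g(t)$, $g(t)^{-1}$ and all $\bar g$-derivatives of $g(t)$. In particular the volume is bounded above. By \Cref{lem:compact-J-limit}, $\mathcal J(g(t))\le0$ increases to $0$. Integrating \Cref{eq:Q-mono-4D} gives
\begin{equation*}
\int_{t_0}^\infty \|B_{g(t)}\|_2^2\,dt=\lim_{t\to\infty}\mathcal J(g(t))-\mathcal J(g(t_0))=-\mathcal J(g(t_0))<\infty .
\end{equation*}

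\textbf{Uniform derivative bound.} Set $\beta(t):=\|B_{g(t)}\|_2^2$. The Bach tensor is a universal polynomial in $g$, $g^{-1}$ and $\bar g$-derivatives of $g$ up to order four. By compactness its time derivative is likewise controlled by $\partial_t g=-2J$ and its $\bar g$-derivatives up to order four. Since $J$ involves at most four derivatives of $g$, all these quantities depend only on $\bar g$-derivatives of $g$ up to order eight, which are uniformly bounded. The variations of contractions and of the volume form are bounded in the same way. Hence $|\beta'(t)|\le C$ for $t\ge t_0$.

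A nonnegative integrable function with $|\beta'|\le C$ must tend to zero. Indeed, if $\beta(t_i)\ge\epsilon$ along $t_i\to\infty$, then $\beta\ge\epsilon/2$ on $[t_i,t_i+\epsilon/(2C)]$. Summing over disjoint such intervals contradicts integrability.

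\textbf{Higher norms.} This is the step I expect to need the most care, though it is routine. Uniform metric equivalence gives $\|B_{g(t)}\|_{L^2(\bar g)}\to0$, while compactness gives $\|B_{g(t)}\|_{C^{k+1}(\bar g)}\le C_k$ for every $k$. Standard interpolation on the fixed closed manifold $(M,\bar g)$ is Sobolev embedding combined with Gagliardo–Nirenberg interpolation between $L^2$ and $H^N$ for large $N$. It gives $\|B\|_{C^k(\bar g)}\le C\|B\|_{L^2(\bar g)}^{\theta}\|B\|_{H^N(\bar g)}^{1-\theta}$ for some $\theta>0$, which tends to zero.

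An alternative to the interpolation step is a subsequence argument. Any sequence $g(t_i)$ has a $C^\infty$-convergent subsequence with limit $g_\infty$. By continuity of $B$ in the $C^\infty$ topology, $B_{g(t_i)}\to B_{g_\infty}$ in $C^k$, and $L^2$ decay forces $B_{g_\infty}=0$. Since every sequence has such a subsequence, the full family converges to zero in each $C^k(\bar g)$. Norms with respect to any other background metric are equivalent.
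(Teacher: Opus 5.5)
Your proposal is correct and follows the paper's proof essentially step by step. Integrability of $\|B_{g(t)}\|_2^2$ comes from the nonpositivity and limit of $\mathcal J$ together with $\frac{d}{dt}\mathcal J = \|B\|_2^2$, and compactness bounds the time derivative through derivatives of $g$ up to order eight. A nonnegative integrable function with bounded derivative tends to zero, and the subsequence/continuity argument upgrades this to every $C^k(\bar g)$ norm. Your Gagliardo--Nirenberg interpolation variant of the last step is also valid, but the paper uses the subsequence argument you offer as the alternative.
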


\begin{proof}
	Compact closure gives a uniform upper bound for the volume, uniform bounds on all fixed-coordinate derivatives of $g$ and $g^{-1}$, and uniform equivalence to $\bar{g}$. The flow equation also bounds all fixed-coordinate derivatives of $\partial_tg$. By \Cref{lem:compact-J-limit} and \Cref{eq:Q-mono-4D},
	\begin{equation*}
		\int_{t_0}^\infty \|B_{g(t)}\|_2^2 dt = - \mathcal J(g(t_0))<\infty.
	\end{equation*}
	The Bach tensor is a differential expression of order four in $g$. The uniform bounds on the metric, its derivatives through order eight, and $\partial_tg$ and its derivatives through order four give a uniform bound for $\left| \frac{d}{dt} \|B_{g(t)}\|_2^2 \right|$. Therefore $\|B_{g(t)}\|_2^2 \to 0$, since a nonnegative integrable function with bounded derivative tends to zero, otherwise uniform continuity supplies disjoint intervals of definite length on which its integral is bounded below.

	To obtain every $C^k$ conclusion, suppose there were a sequence $t_i \to \infty$ along which the asserted norm stayed positive. Compactness gives a smoothly convergent subsequence of $g(t_i)$, continuity of $B$ and the vanishing $L^2$ norm make its limiting Bach tensor zero, a contradiction.
\end{proof}

\begin{proposition}\label{prop:compact-sequential-limits}
	Let $(M^4, g_0)$ be a closed Riemannian four-manifold, and let $g(t)$ be a smooth solution of the $J$-flow on $M^4 \times [0,\infty)$ with $g(0) = g_0$. Suppose that, for some $t_0 > 0$, $\{g(t): t \ge t_0\}$ has compact closure in the $C^\infty$ topology of positive definite metrics on the fixed manifold $M$. Then
	\begin{equation*}
		\mathcal J(g(t)) \nearrow 0, \qquad \|B_{g(t)}\|_{C^k(\bar{g})} \longrightarrow 0
	\end{equation*}
	for every $k \ge 0$ and any fixed smooth background metric $\bar{g}$. Every sequence $t_i \to \infty$ has a subsequence along which $g(t_i)$ converges smoothly to a positive definite Bach-flat metric with total $Q$-curvature zero. If $\| Q_{g(t)} \|_{L^2(g(t))}\to 0$, every such limit is $J$-flat.
\end{proposition}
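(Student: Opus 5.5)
The proof combines \Cref{lem:compact-J-limit} and \Cref{lem:Bach-decay} with the continuity of the curvature operators under smooth convergence. First I show that $\mathcal J(g(t)) \nearrow 0$. By \Cref{eq:Q-mono-4D}, $\mathcal J(g(t))$ is non-decreasing. Compact closure of $\{g(t): t \ge t_0\}$ in the $C^\infty$ topology gives a uniform upper bound on $\Vol_{g(t)}(M)$ for $t \ge t_0$. On the compact interval $[0,t_0]$ the volume is bounded by smoothness. Hence \Cref{lem:compact-J-limit} applies: $\mathcal J \le 0$ for all times and $\mathcal J(g(t)) \to 0$. Together with monotonicity this is exactly $\mathcal J(g(t)) \nearrow 0$. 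The decay $\|B_{g(t)}\|_{C^k(\bar g)} \to 0$ for every $k$ is the content of \Cref{lem:Bach-decay}, whose hypotheses coincide with ours, so it can be quoted directly.

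For the sequential statement, fix $t_i \to \infty$. By compact closure in the $C^\infty$ topology of positive definite metrics, there is a subsequence along which $g(t_i) \to g_\infty$ smoothly, with $g_\infty$ positive definite. The Bach tensor is a polynomial natural expression in $g$, $g^{-1}$ and fixed-coordinate derivatives of $g$ up to order four. Therefore $B_{g(t_i)} \to B_{g_\infty}$ in every $C^k(\bar g)$. Since the left side tends to zero, $B_{g_\infty} = 0$. Likewise $Q_g\, dv_g$ depends continuously on the $4$-jet of $g$, so $\mathcal J(g(t_i)) \to \mathcal J(g_\infty)$, and the first step gives $\mathcal J(g_\infty) = 0$.

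Finally, assume $\|Q_{g(t)}\|_{L^2(g(t))} \to 0$. The same continuity, now applied to $\int_M Q_g^2\, dv_g$ under smooth convergence, gives $\|Q_{g_\infty}\|_{L^2(g_\infty)} = 0$. Since $Q_{g_\infty}$ is smooth, $Q_{g_\infty} \equiv 0$. Then \Cref{prop:J-flat-char}, or directly \Cref{eq:J-dim4}, shows $J_{g_\infty} = \frac14 Q_{g_\infty} g_\infty - \frac12 B_{g_\infty} = 0$, so $g_\infty$ is $J$-flat.

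There is no serious obstacle, since the analytic work is contained in \Cref{lem:Bach-decay}. The only point needing care is the volume hypothesis of \Cref{lem:compact-J-limit}: compactness is assumed only for $t \ge t_0$, so the bound on $[0,t_0]$ must come separately from smoothness on that compact time interval. That lemma also needs $T_{\max} = \infty$, which holds because the solution is given on $[0,\infty)$.
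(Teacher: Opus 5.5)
Your proposal is correct and follows essentially the same route as the paper. The volume bound from compact closure feeds \Cref{lem:compact-J-limit} and \Cref{lem:Bach-decay}, and subsequential limits then inherit $B=0$, $\mathcal J=0$ and, when $\|Q\|_2\to 0$, $Q=0$ by continuity, so \Cref{prop:J-flat-char} gives $J$-flatness. Your extra care about the volume on $[0,t_0]$ is harmless but unnecessary, since the lemma's contradiction argument only uses an upper bound for large $t$.
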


\begin{proof}
	Compact closure bounds the volume above, so \Cref{lem:compact-J-limit,lem:Bach-decay} give $\mathcal J  \nearrow  0$ and $B \to 0$. Compact closure provides a smoothly convergent subsequence from every sequence of times tending to infinity. Each limit is Bach-flat with total $Q$-curvature zero. If $\|Q\|_2 \to 0$, continuity under smooth metric convergence gives $Q = 0$ at each limit, and \Cref{prop:J-flat-char} makes it $J$-flat.
\end{proof}

To prove convergence of the entire trajectory, we combine the gradient-inequality hypothesis with the monotonicity formula to show that the $L^2$-norm of the Bach tensor is integrable in time.

\begin{lemma}\label{lem:Bach-time-integrability}
	Let $g(t)$ be an immortal $J$-flow on a closed four-manifold satisfying \Cref{eq:LS-hypothesis-F} for all sufficiently large $t$. Then $\mathcal J(g(t)) \nearrow 0$, and for all sufficiently large $a$,
	\begin{equation}\label{eq:Bach-tail-estimate}
		\int_a^\infty \|B_{g(t)}\|_2 dt \le \frac{C_{\mathrm L}}{\theta} \left(-\mathcal J(g(a)) \right)^\theta.
	\end{equation}
\end{lemma}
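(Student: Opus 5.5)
The plan is the standard Łojasiewicz–Simon argument applied to the monotone quantity $-\mathcal J$. First, $\mathcal J(g(t))\le 0$ for every $t$ and $\mathcal J$ is non-decreasing, by \Cref{lem:compact-J-limit} and \Cref{eq:Q-mono-4D}. Hence the limit $L=\lim_{t\to\infty}\mathcal J(g(t))\le 0$ exists. Set $\Phi(t):=-\mathcal J(g(t))\ge 0$, which is non-increasing and satisfies $\Phi'=-\|B\|_2^2$.

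To show $L=0$, argue by contradiction. If $L<0$, then $\Phi\ge -L>0$ for all large $t$. The gradient inequality \Cref{eq:LS-hypothesis-F} then gives $\|B\|_2\ge C_{\mathrm L}^{-1}(-L)^{1-\theta}>0$. Therefore $\Phi'\le -c<0$ for all large $t$, and $\Phi$ would become negative in finite time, contradicting $\Phi\ge 0$. So $\mathcal J(g(t))\nearrow 0$.

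For the tail estimate, fix $a$ large enough that \Cref{eq:LS-hypothesis-F} holds on $[a,\infty)$. Consider $\Phi^\theta$ on the set where $\Phi>0$. There, using $\Phi'=-\|B\|_2^2$ and \Cref{eq:LS-hypothesis-F},
\begin{equation*}
	-\frac{d}{dt}\Phi^\theta=\theta\Phi^{\theta-1}\|B\|_2^2\ge \frac{\theta}{C_{\mathrm L}}\|B\|_2 .
\end{equation*}
Integrating from $a$ to $b$ gives
\begin{equation*}
	\int_a^b\|B\|_2\,dt\le \frac{C_{\mathrm L}}{\theta}\left(\Phi(a)^\theta-\Phi(b)^\theta\right)\le\frac{C_{\mathrm L}}{\theta}\Phi(a)^\theta .
\end{equation*}
Letting $b\to\infty$ yields \Cref{eq:Bach-tail-estimate}.

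The only delicate point is the possibility that $\Phi$ vanishes at a finite time $t_1\ge a$. Since $\Phi$ is non-increasing and nonnegative, it then vanishes identically on $[t_1,\infty)$. Consequently $\|B\|_2^2=-\Phi'=0$ there, so that interval contributes nothing to the integral. Apply the computation above on $[a,t_1)$ and pass to the limit $b\nearrow t_1$, using continuity of $\Phi$. This edge case is the main thing to handle carefully.
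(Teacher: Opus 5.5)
Your proposal is correct and follows the paper's proof essentially step for step. It uses nonpositivity and monotonicity of $\mathcal J$ from \Cref{lem:compact-J-limit} and \Cref{eq:Q-mono-4D}, obtains the limit $0$ by contradiction from the lower bound on $\|B\|_2$ that \Cref{eq:LS-hypothesis-F} would force if the limit were negative, applies the same differential inequality for $(-\mathcal J)^\theta$, and handles a finite-time zero of $\mathcal J$ the same way. Integrating over $[a,b]$ and discarding $\Phi(b)^\theta\ge 0$ is only a minor tidying: it makes the tail bound independent of the fact that $\mathcal J\to 0$.
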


\begin{proof}
	By \Cref{lem:compact-J-limit} and \Cref{eq:Q-mono-4D}, $\mathcal J(g(t)) \le 0$ and
	\begin{equation*}
		\frac{d}{dt} \mathcal J(g(t)) = \| B_{g(t)} \|_2^2 \ge 0.
	\end{equation*}
	Its increasing limit is zero: a negative limit and \Cref{eq:LS-hypothesis-F} would give a positive lower bound for $\| B_{g(t)} \|_2$, forcing $\mathcal J(g(t))$ to become positive. On intervals where $\mathcal J(g(t)) < 0$ and \Cref{eq:LS-hypothesis-F} holds, we have
	\begin{equation*}
		-\frac{d}{dt} \left(-\mathcal J(g(t)) \right)^\theta = \theta \left(-\mathcal J(g(t))\right)^{\theta-1} \|B_{g(t)}\|_2^2 \ge \frac{\theta}{C_{\mathrm L}} \|B_{g(t)}\|_2.
	\end{equation*}
	Integrating from $a$ to infinity and using $\mathcal J(g(t)) \to 0$ proves the estimate when $\mathcal J(g(t))$ stays negative. If it reaches zero at a finite time, it remains zero by its nonpositivity and monotonicity, and then $\| B_{g(t)} \|_2 = 0$ thereafter. Integrating up to that time proves the estimate in this case as well.
\end{proof}

\begin{proof}[Proof of \Cref{theorem:F}]
	The proof of \Cref{theorem:D} gives a uniform curvature bound on $[0, \infty)$, and the uniform Sobolev inequality is part of its hypotheses. It remains to verify that the $L^2$-norm of $J$ is integrable in time, as required by \Cref{theorem:E}. Since $B$ is trace free and $|g|_g^2 = 4$, \Cref{eq:J-dim4} gives
	\begin{equation}\label{eq:velocity-orthogonal-F}
		4\|J\|_{L^2(g(t))}^2 = \|\partial_t g\|_{L^2(g(t))}^2 = \|B\|_{L^2(g(t))}^2 + \|Q\|_{L^2(g(t))}^2.
	\end{equation}
	For all sufficiently large $a$, \Cref{lem:Bach-time-integrability,eq:Q-time-integrability-F} therefore imply
	\begin{equation}\label{eq:J-tail-estimate-F}
		\int_a^\infty\|J\|_2 dt \le \frac{C_{\mathrm L}}{2\theta} \left(-\mathcal J(g(a)) \right)^\theta + \frac{1}{2} \int_a^\infty \|Q\|_2 dt < \infty.
	\end{equation}
	Since the solution is smooth on the compact cylinder $M \times [0, a]$, the integral of $\|J\|_2$ over $[0, a]$ is finite. Combining this with \Cref{eq:J-tail-estimate-F} gives
	\begin{equation*}
		\int_0^\infty \|J_{g(t)}\|_{L^2(g(t))} dt < \infty.
	\end{equation*}
	In particular, \Cref{eq:J-time-integrability} holds, and \Cref{theorem:E} gives smooth convergence in fixed coordinates to a positive definite $J$-flat metric.
\end{proof}

\begin{corollary}\label{cor:Bach-Q-integrability}
	In \Cref{theorem:F}, one may replace the gradient inequality by
	\begin{equation*}
		\int_0^\infty \|B_{g(t)}\|_2 dt < \infty.
	\end{equation*}
	The hypotheses of \Cref{theorem:D} and the time-integrability condition on the $L^2$-norm of $Q$ in \Cref{eq:Q-time-integrability-F} are retained.
\end{corollary}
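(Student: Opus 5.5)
The plan is to rerun the proof of \Cref{theorem:F} with the gradient inequality removed, using the new hypothesis to supply directly the Bach contribution that \Cref{lem:Bach-time-integrability} previously furnished. As in that proof, the hypotheses of \Cref{theorem:D} give $T_{\max} = \infty$. The contradiction argument in the proof of \Cref{theorem:D} also yields a uniform curvature bound $|\Rm| \le K$ on $[0,\infty)$, since it rules out unbounded curvature on the whole maximal interval. The uniform Sobolev inequality \Cref{eq:uniform-Sobolev-D} is part of the retained hypotheses. Thus the curvature and Sobolev assumptions of \Cref{theorem:E} hold, and it remains only to verify the time-integrability condition \Cref{eq:J-time-integrability}.

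For this step I would use the orthogonal decomposition \Cref{eq:velocity-orthogonal-F}, which relies only on \Cref{eq:J-dim4}, the tracelessness of $B$, and $|g|_g^2 = 4$:
\begin{equation*}
	4\|J\|_{L^2(g(t))}^2 = \|B\|_{L^2(g(t))}^2 + \|Q\|_{L^2(g(t))}^2 .
\end{equation*}
Together with $\sqrt{a^2+b^2} \le a + b$ for $a, b \ge 0$, this gives the pointwise-in-time bound
\begin{equation*}
	\|J\|_2 \le \frac{1}{2}\left(\|B\|_2 + \|Q\|_2\right).
\end{equation*}
Integrating over $[0,\infty)$, the assumed integrability of $\|B\|_2$ and \Cref{eq:Q-time-integrability-F} show that $\int_0^\infty \|J\|_2\,dt < \infty$. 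This is \Cref{eq:J-time-integrability}.

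\Cref{theorem:E} then gives smooth convergence in every $C^k(\bar g)$ norm to a positive definite $J$-flat metric. No step here is a genuine obstacle. The only point needing care is that the uniform curvature bound comes from the proof of \Cref{theorem:D}, not from its statement. That argument uses only the Sobolev and small-energy hypotheses, which are retained, so it applies unchanged.
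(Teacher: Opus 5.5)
Your proposal is correct and matches the paper's proof essentially step for step. The paper also obtains $\|J\|_2 \le \frac{1}{2}(\|B\|_2+\|Q\|_2)$ from \Cref{eq:velocity-orthogonal-F}, concludes \Cref{eq:J-time-integrability}, and applies \Cref{theorem:E}, using the curvature bound from the proof of \Cref{theorem:D} and the retained Sobolev hypothesis.
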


\begin{proof}
	By \Cref{eq:velocity-orthogonal-F}, these assumptions give
	\begin{equation*}
		\int_0^\infty \|J\|_2 dt \le \frac{1}{2} \int_0^\infty \left( \|B\|_2 + \|Q\|_2 \right) dt < \infty.
	\end{equation*}
	Thus \Cref{eq:J-time-integrability} holds. The curvature bound from the proof of \Cref{theorem:D} and its Sobolev hypothesis allow us to apply \Cref{theorem:E}.
\end{proof}

\section{J soliton and future work}\label{sec:J-soliton-future}

Ricci solitons are the self-similar solutions of the Ricci flow: their evolution consists of scaling and pullback by diffeomorphisms. Their defining equation and its gradient form are
\begin{equation*}
    \Ric_g + \frac{1}{2} \mathcal L_X g = \lambda g, \qquad \Ric_g + \nabla^2 f = \lambda g,
\end{equation*}
where $X$ is a smooth vector field and $\lambda$ is a constant. Perelman's entropy functionals relate these equations to variational structures and monotonicity along the Ricci flow~\cite{Perelman2002}. On surface, thhe Ricci soliton has to be Einstein~\cite{Cao2010survey}. 

For the $J$-flow, we can define the corresponding self-similar equation
\begin{equation}\label{eq:J-soliton}
    J_g + \frac{1}{2} \mathcal L_X g = \lambda g,
\end{equation}
where $X$ is a smooth vector field and $\lambda$ is a constant. The soliton is shrinking, steady or expanding according as $\lambda > 0$, $\lambda = 0$ or $\lambda < 0$, and is called gradient if $X = \nabla f$. We call it trivial when $J_g = \lambda g$ and $X$ is Killing. In dimension four, triviality is forced by the trace and divergence identities together with the Kazdan--Warner identity for $Q$-curvature.

\begin{proposition}\label{prop:compact-J-soliton}
    Let $(M^4,g)$ be a closed connected Riemannian four-manifold satisfying \Cref{eq:J-soliton}, then every such soliton is trivial with constant Q curvature and Bach-flat. Furthermore, if $X = \nabla f$, then $f$ is constant. Conversely, every Bach-flat metric with constant $Q$-curvature satisfies \Cref{eq:J-soliton} with $\lambda = \frac{1}{4} Q_g$ and any Killing field $X$.
\end{proposition}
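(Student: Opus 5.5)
The plan is to extract two scalar integral identities from \Cref{eq:J-soliton}: one from diffeomorphism invariance of $\mathcal J$, which kills the Bach tensor, and one from a Kazdan--Warner identity, which forces $Q$ to be constant. Throughout, $n=4$, so $J_g=\frac14 Q_g g-\frac12 B_g$ by \Cref{eq:J-dim4}.

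\emph{Step 1 (trace).} Taking the trace of \Cref{eq:J-soliton} and using $\tr_g J_g=Q_g$ from \Cref{prop:trace-J} gives
\begin{equation*}
Q_g+\divv X=4\lambda .
\end{equation*}
Integrating over $M$ then gives $\int_M Q_g\,dv_g=4\lambda\Vol_g(M)$.

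\emph{Step 2 (Bach-flatness).} I will use diffeomorphism invariance of $\mathcal J$, exactly as in the proof of \Cref{prop:J-Schur}. By \Cref{prop:first-variation},
\begin{equation*}
\int_M\big\langle -2J+\tfrac12 Qg,\ \mathcal L_X g\big\rangle\,dv_g=0 .
\end{equation*}
I substitute $\mathcal L_X g=2\lambda g-2J$ from \Cref{eq:J-soliton} and use $\tr J=Q$. The pairing becomes
\begin{equation*}
\int_M\big(-4\lambda Q+4|J|^2+4\lambda Q-Q^2\big)\,dv_g=\int_M\big(4|J|^2-Q^2\big)\,dv_g .
\end{equation*}
Since $B$ is traceless, the four-dimensional decomposition gives $4|J|^2=Q^2+|B|^2$, which is the pointwise version of \Cref{eq:velocity-orthogonal-F}. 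Hence $\int_M|B|^2\,dv_g=0$, so $B\equiv0$. This step is purely algebraic once the invariance identity is in hand.

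\emph{Step 3 (constant $Q$).} With $B=0$ we have $J=\frac14 Qg$, so \Cref{eq:J-soliton} reads
\begin{equation*}
\mathcal L_X g=\big(2\lambda-\tfrac12 Q\big)g ,
\end{equation*}
and $X$ is a conformal Killing field. Here I invoke the Kazdan--Warner type identity for $Q$-curvature in dimension four, $\int_M X(Q_g)\,dv_g=0$ for every conformal vector field $X$. This follows from conformal invariance of $\mathcal J$ (\Cref{prop:conformal-invariance}) together with the Paneitz transformation law, and is standard (Delanoë--Robert, Gursky). Integrating by parts and using Step 1,
\begin{equation*}
0=\int_M X(Q)\,dv_g=-\int_M Q\,\divv X\,dv_g=\int_M Q^2\,dv_g-4\lambda\int_M Q\,dv_g .
\end{equation*}
Since $4\lambda=\bar Q$ by Step 1, the right-hand side equals $\int_M(Q-\bar Q)^2\,dv_g$. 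Therefore $Q\equiv4\lambda$ is constant, $\divv X=0$, $\mathcal L_X g=0$, and $X$ is Killing. So the soliton is trivial, Bach-flat, and has constant $Q$.

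\emph{Step 4 (gradient case and converse).} If $X=\nabla f$, then $\Delta f=\divv X=0$ on a closed connected manifold, so $f$ is constant. For the converse, a Bach-flat metric with constant $Q$ has $J=\frac14 Qg$ by \Cref{eq:J-dim4}, so any Killing $X$ satisfies \Cref{eq:J-soliton} with $\lambda=\frac14 Q_g$.

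The main obstacle is Step 3. The Kazdan--Warner identity for $Q$ is not proved earlier in the paper and must be imported or derived. To derive it, I would differentiate $\mathcal J(\phi_s^*g)$ along the conformal flow $\phi_s$ of $X$, use \Cref{prop:conformal-invariance} to rewrite this as a variation through the conformal factor, and check that the Paneitz contribution integrates to zero via $P_g1=0$ and self-adjointness. The remaining steps are routine.
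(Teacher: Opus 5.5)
\textbf{Steps 1, 2 and 4.} These are the paper's argument. Your identity $\int_M\langle -2J+\frac12 Qg,\mathcal L_Xg\rangle\,dv_g=0$ is the paper's pairing of \Cref{eq:J-soliton} with the divergence-free tensor $J-\frac14Qg=-\frac12B$. Step 3 is the paper's computation once $\int_M X(Q_g)\,dv_g=0$ is granted. If you import that Kazdan--Warner identity, as the paper does from Gover--{\O}rsted, your proof is complete and essentially identical to the paper's.

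\textbf{Step 3: the proposed derivation fails.} The derivation you sketch for that identity would not prove it. In dimension four, $\mathcal J$ is both diffeomorphism invariant and conformally invariant. So $s\mapsto\mathcal J(\phi_s^*g)$ is constant for trivial reasons, and differentiating it gives no information. Concretely, with $\mathcal L_Xg=\frac12(\divv X)g$, \Cref{prop:first-variation} gives
\[
D\mathcal J_g\cdot\mathcal L_Xg=\int_M(-2Q_g+2Q_g)\tfrac12\divv X\,dv_g\equiv0
\]
for every conformal $X$. Equivalently, the Bourguignon--Ezin mechanism pairs $\divv(J-\frac1nQg)=\frac{n-4}{4n}dQ$ with a conformal field. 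It proves $\int_M X(Q)\,dv=0$ when $n\neq4$, but it degenerates exactly at $n=4$. The degenerate identity $\divv(J-\frac14Qg)=0$ is the very one you exploit in Step 2.

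The pointwise version is equally empty. Naturality and $Q_{e^{2u}g}=e^{-4u}(P_gu+Q_g)$ give $X(Q_g)=\frac14P_g(\divv X)-Q_g\divv X$. Integrating this with $P_g1=0$ and self-adjointness only returns the divergence theorem $\int_MX(Q_g)\,dv_g=-\int_MQ_g\divv X\,dv_g$.

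\textbf{What a correct derivation needs.} The argument needs a second metric in the conformal class.
\begin{enumerate}
\item Use the transformation law at $\hat g=e^{2w}g$, the infinitesimal covariance $X(P_gf)-P_g(Xf)=-(\divv X)P_gf$, and self-adjointness. These show that $\int_MX(Q_{\hat g})\,dv_{\hat g}$ does not depend on $\hat g\in[g]$.
\item Evaluate this quantity at a representative for which $X$ is Killing. Such a representative exists by the Lelong-Ferrand--Obata theorem unless $(M,[g])$ is conformally the round $S^4$. In that remaining case, evaluate at the round metric, where $Q$ is constant.
\end{enumerate}
Either cite the identity or supply this argument. As written, the derivation route leaves Step 3 unproved.
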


\begin{proof}
    By \Cref{prop:trace-J,prop:J-Schur}, the tensor $J_g - \frac{1}{4} Q_g g$ is trace free and satisfies
    \begin{equation*}
        \divv_g \left( J_g - \frac{1}{4} Q_g g \right) = \frac{1}{4} d Q_g - \frac{1}{4}d Q_g = 0.
    \end{equation*}
    Pairing \Cref{eq:J-soliton} with this tensor and integrating by parts gives
    \begin{equation*}
        \begin{split}
            \int_M \left\langle J_g - \frac{1}{4} Q_g g, J_g - \frac{1}{4} Q_g g \right\rangle dv_g
            & = \int_M \left\langle J_g - \frac{1}{4} Q_g g, \mathcal \lambda g - \frac{1}{2} L_X g - \frac{1}{4} Q_g g \right\rangle dv_g\\
			& = \int_M \left\langle J_g - \frac{1}{4} Q_g g, \mathcal  - \frac{1}{2} L_X g  \right\rangle dv_g\\
            & = \int_M \divv_g \left( J_g - \frac{1}{4} Q_g g \right)(X) d v_g = 0.
        \end{split}
    \end{equation*}
    Thus $J_g = \frac{1}{4} Q_g g$, and $B_g = 0$. Substitution into the soliton equation yields
    \begin{equation*}
        \mathcal L_X g = 2 \left( \lambda - \frac{1}{4} Q_g \right) g,
    \end{equation*}
    so $X$ is a conformal vector field. The Kazdan--Warner identity for $Q$-curvature on a closed manifold~\cite[Proposition~4.2]{GoverOrsted2013} gives
    \begin{equation*}
        \int_M X(Q_g) dv_g = 0.
    \end{equation*}
    On the other hand, taking trace of \Cref{eq:J-soliton} and integrating gives
    \begin{equation*}
        Q_g + \divv_g X = 4 \lambda.
    \end{equation*}
    Consequently,
	\begin{equation*}
		\int_M (Q_g - 4 \lambda)^2 dv_g = \int_M Q_g (Q_g - 4 \lambda) dv_g = - \int_M Q_g \divv_g X dv_g = \int_M X(Q_g) dv_g = 0.
	\end{equation*}
    Hence $Q_g = 4 \lambda$ and $\mathcal L_X g = 0$. If $X = \nabla f$, then $\nabla^2 f=0$, so $\Delta_g f = 0$ and $f$ is constant by closeness and connectedness. The converse follows from \Cref{eq:J-dim4}.
\end{proof}

This result also shows that every closed four-dimensional $J$-soliton metric admits a gradient representation with constant potential function $f$.

\section*{Declarations}

\subsection*{Conflict of interest}
The authors declare that they have no competing financial or non-financial interests that are directly or indirectly related to the work submitted for publication.

\subsection*{Declaration of generative AI in scientific writing}
The authors declare that no generative artificial intelligence (AI) or AI-assisted technologies were used in the preparation, analysis of this manuscript.

\bibliographystyle{amsplain}
\bibliography{cas-refs}
\end{document}